\newtheorem{lemma}{Lemma}[section]
\newtheorem{theorem}{Theorem}[section]
\newtheorem{construction}{Construction}[section]
\newenvironment{adfenumerate}{
\begin{enumerate}
\setlength{\itemsep}{0.5mm}
\setlength{\parskip}{0mm}
\setlength{\parsep}{0mm}
}{
\end{enumerate}
}
\newcommand{\adfmod}[1]{~(\mathrm{mod}~#1)}
\newcommand{\adfPENT}{\mathop{\mathrm{PENT}} }
\newcommand{\adfhat}[1]{\hat{#1}}
\newcommand{\adfhide}[1]{}
\begin{document}
\title{Generalized pentagonal geometries - II}
\author{A. D. Forbes}
\address{LSBU Business School,
London South Bank University,
103 Borough Road,
London SE1 0AA, UK.}
\email{anthony.d.forbes@gmail.com}
\author{C. G. Rutherford}
\address{LSBU Business School,
London South Bank University,
103 Borough Road,
London SE1 0AA, UK.}
\email{c.g.rutherford@lsbu.ac.uk}
\date{\today, version 6.18}
\subjclass[2010]{05B25}
\keywords{generalized pentagonal geometry, pentagonal geometry, group divisible design}

\maketitle
\begin{abstract}
A generalized pentagonal geometry PENT($k$,$r$,$w$) is a partial linear space, where
every line is incident with $k$ points,
every point is incident with $r$ lines, and
for each point, $x$, the set of points not collinear with $x$ forms the point set of
a Steiner system $S(2,k,w)$ whose blocks are lines of the geometry.
If $w = k$, the structure is called a pentagonal geometry and denoted by PENT($k$,$r$).
The deficiency graph of a PENT($k$,$r$,$w$) has as its vertices the points of the geometry, and
there is an edge between $x$ and $y$ precisely when $x$ and $y$ are not collinear.

Our primary objective is to investigate generalized pentagonal geometries PENT($k$,$r$,$w$) where the deficiency graph has girth 4.
We describe some construction methods, including a procedure that preserves deficiency graph connectedness, and
we prove a number of theorems regarding the existence spectra for $k = 3$ and various values of $w$.
In addition, we present some new PENT(4,$r$) (including PENT(4,25)) and PENT(5,$r$) with connected deficiency graphs.
Consequently, we prove that there exist pentagonal geometries PENT($k$,$r$) with deficiency graphs of girth at least 5
for $r \ge 13$, $r$ congruent to 1 modulo 4 when $k = 4$, and
for $r \ge 200000$, $r$ congruent to 0 or 1 modulo 5 when $k = 5$.
We conclude with a discussion of appropriately defined identifying codes for pentagonal geometries.
\end{abstract}


\section{Introduction}\label{sec:Introduction}
A {\em  partial linear space} is an incidence structure consisting of
a set of points $P$,
a set of lines $L$, and
an incidence relation $\mathcal{I} \subseteq P \times L$ such that
if $(p,\ell)$, $(q,\ell)$, $(p,m)$, $(q,m) \in \mathcal{I}$, then $p = q$ or $\ell = m$.
If $(p, \ell) \in \mathcal{I}$,
we say that $p$ is incident with $\ell$, or $\ell$ is incident with $p$.
However, it is often convenient to think of lines as sets of points in which case we
would use the language of set theory to describe the incidence or otherwise of $p$ and $\ell$.
A partial linear space is {\em uniform} if every line is incident with the same number of points,
and {\em regular} if every point is incident with the same number of lines.

A {\em Steiner system} $S(2,k,w)$ is an ordered pair $(W, \mathcal{B})$ such that
\begin{adfenumerate}
\item[(i)]$W$ is a set of {points} with $|W| = w$,
\item[(ii)]$\mathcal{B}$ is a set of $k$-subsets of $W$, called {blocks}, and
\item[(iii)]each pair $\{x,y\} \subseteq W$ is a subset of precisely one block of $\mathcal{B}$.
\end{adfenumerate}
We refer to Steiner systems generally as designs, and we recognize three special cases.
A {\em Steiner triple system} of order $w$, STS$(w)$, is an $S(2,3,w)$,
an {\em affine plane} of order $n$ is an $S(2,n,n^2)$, and
a {\em projective plane} of order $n$ is an $S(2, n+1, n^2 + n + 1)$.

A {\em generalized pentagonal geometry} is a uniform, regular partial linear space such that
for each point $x$, there is a Steiner system $(W_x, \mathcal{B}_x)$ where
$W_x$ is the set of points which are not collinear with $x$ and
the blocks of $\mathcal{B}_x$ are lines of the geometry.
By uniformity and regularity the number of points that are collinear with $x$ does not depend on $x$.
Therefore $|W_x|$ must also be constant.
If $k$ is the number of points incident with a given line,
$r$ is the number of lines incident with a given point
and $w$ is the point set size of the Steiner systems, then $k$, $r$ and $w$ are constants,
and we denote a generalized pentagonal geometry with these parameters by $\adfPENT(k,r,w)$.
If $w = k$, in which case the Steiner system $S(2,k,k)$ is just a single line,
the structure is called a {\em pentagonal geometry} of order $(k,r)$, \cite{BallBambergDevillersStokes2013},
and we use the notation $\adfPENT(k,r)$, \cite{GriggsStokes2016}.

To avoid triviality issues we assume $k \ge 2$ and $r \ge 1$.
The Steiner system $S(2,k,w)$ consisting of the points that are not collinear with point $x$ is
called the {\em opposite design} to $x$ and is denoted by $x^\mathrm{opp}$.
The notation $x^\mathrm{opp}$ may refer to the $S(2,k,w)$ itself, or its point set, or its block set,
whichever is appropriate from the context.
We sometimes use the term {\em opposite line} to describe any line belonging to $x^\mathrm{opp}$.
The lines of the geometry are also called blocks and we refer to the parameter $k$ of a $\adfPENT(k,r,w)$ as the block size.

The concept of a {pentagonal geometry} was introduced in \cite{BallBambergDevillersStokes2013} to provide a generalization
of the pentagon analogous to the generalization of the polygon as described in \cite{Tits1959} and \cite{FeitHigman1964}.
The geometry described by Ball, Bamberg, Devillers \& Stokes in \cite{BallBambergDevillersStokes2013}
is based on the observation that for each vertex $x$ of a pentagon, the two vertices that are not collinear with $x$ form a line.
According to their definition the pentagon is a pentagonal geometry of order $(2,2)$, or $\adfPENT(2,2)$.

Our generalization is based on the observation that a line containing $k$ points
is the single block of the Steiner system $S(2,k,k)$.
It seems natural, therefore, to extend the definition of $x^\mathrm{opp}$ to a Steiner system $S(2,k,w)$
since these designs have the same relevant property as a single line: any two points are collinear.
According to our definition the pentagon is a $\adfPENT(2,2,2)$.

The {\em deficiency graph} of a generalized pentagonal geometry $\adfPENT(k,r,w)$
has as its vertices the points of the geometry, and
there is an edge $x \sim y$ precisely when $x$ and $y$ are not collinear.
It is clear that the graph must be $w$-regular and triangle-free.

We adopt the notation {\em $(w,g)$-graph} for a graph that is $w$-regular and has girth $g$,
and {\em $(w,g^+)$-graph} for a graph that is $w$-regular and has girth at least $g$.
Thus we can say that a $\adfPENT(k,r,w)$ has a deficiency $(w,4^+)$-graph.
Also we recall some standard notation.
If $G$ is a graph, $V(G)$ is its vertex set, $E(G)$ is its edge set and we write $x \sim y$ if $\{x,y\} \in E(G)$.
If $x, y \in V(G)$, then
$d_G(x,y)$, or just $d(x,y)$ if the graph is clear from the context, is the distance between $x$ and $y$,
and $N_G(x)$ or $N(x)$ is the set of neighbours of $x$, $\{z: z \sim x\}$.

If $w \ge k \ge 2$ and there exists an $S(2,k,w)$, then there exists a $\adfPENT(k, (w - 1)/(k - 1), w)$.
In this generalized pentagonal geometry, which we regard as degenerate,
the lines are the blocks of two Steiner systems $S(2,k,w)$ with disjoint point sets, and
the deficiency graph is a complete bipartite graph $K_{w,w}$.
For any point $x$ in either one of the Steiner systems, $x^\mathrm{opp}$ is the other system.
A degenerate geometry can also occur in the line set of a larger generalized pentagonal geometry, and
we refer to the substructure as an {\em opposite design pair}, or
an {\em opposite line pair} when $w = k$, \cite{BallBambergDevillersStokes2013}.

For the general theory of pentagonal geometries as well as further background material, we refer the reader to \cite{BallBambergDevillersStokes2013}.
The subject was further developed in \cite{GriggsStokes2016}, \cite{Forbes2020P} and \cite{ForbesGriggsStokes2020}.
Generalized pentagonal geometries were introduced by the authors in \cite{ForbesRutherford2022}.

A $\adfPENT(2,r,w)$ is essentially just the complement of a $(w,4^+)$-graph with $r + w + 1$ vertices,
as explained in \cite{BallBambergDevillersStokes2013} for $w = k$ and in \cite{ForbesRutherford2022} for $w > k$.
So we have nothing further to say about this case, and
for the remainder of the paper we assume that $w \ge k \ge 3$.

\newcommand{\adfbullet}{\large$\star$}%
We identify six types of generalized pentagonal geometries, A, B, \dots, F,
according to the following scheme.
\begin{adfenumerate}
\item[\adfbullet]{\bf Type A}~
The deficiency graph has girth at least 5 and is connected.
One might consider these to be the nicest of the generalized pentagonal geometries.
Although \cite{Forbes2020P} gives a construction for infinitely many $\adfPENT(3,r)$,
no corresponding simple construction is known for $\adfPENT(k,r,w)$ of type A with $w > k \ge 3$ or $w = k \ge 4$.
Some small examples with $(k,w) \in \{(4,4), (4,13), (5,5)\}$ appear in \cite{Forbes2020P} and \cite{ForbesRutherford2022}.
Deficiency graph girth at least 5 implies that for any distinct pair of points $x$ and $y$,
$x^\mathrm{opp}$ and $y^\mathrm{opp}$ have no common block (Lemma~\ref{lem:pairwise disjoint opposite designs}).
Moreover, we have a lower bound for $r$:
$$r = \dfrac{w(w - 1)}{k - 1} + e,~~~ e \ge 0,$$
where $e = 0$ is attained only when the number of points, $v = (k - 1)r + w + 1$, is equal to $w^2 + 1$, the Moore bound for $(w,5)$-graphs.
There are $v r/k$ lines (Lemma~\ref{lem:v-b}), which are split between those that belong to opposite designs,
${v w (w - 1)}/(k(k - 1)),$
and those that do not, $e v/k$ (Lemma~\ref{lem:r >= w(w-1)/(k-1)}).
\item[\adfbullet]{\bf Type B}~
The deficiency graph has girth at least 5 but is not connected.
We distinguish these geometries from type A because our primary recursive construction, i.e.\ the
method which involves filling in the groups of group divisible designs, Theorem~\ref{thm:GDD-basic},
does not preserve deficiency graph connectedness.
\item[\adfbullet]{\bf Type C}~
The deficiency graph is connected and has girth 4, but the geometry does not contain an opposite design pair.
This type does not exist when $w = k$, \cite{BallBambergDevillersStokes2013}.
In contrast to types A and B, $x^\mathrm{opp}$ and $y^\mathrm{opp}$ can have common blocks for distinct points $x$ and $y$.
\item[\adfbullet]{\bf Type D}~
As type C except that the deficiency graph is not connected.
Again, opposite designs for distinct points can have common blocks, and this type does not exist when $w = k$.
As with types A and B, the distinction is made between types C and D because the geometries
created by Theorem~\ref{thm:GDD-basic} do not have connected deficiency graphs.
\item[\adfbullet]{\bf Type E}~
Any $\adfPENT(k,r,w)$ other than type F that contains an opposite design pair.
The deficiency graph has girth 4, is not connected, and has at least one component $K_{w,w}$.
\item[\adfbullet]{\bf Type F}~
$\adfPENT(k, (w-1)/(k-1), w)$. This is the degenerate
opposite design pair consisting of two point-wise disjoint Steiner systems $S(2,k,w)$.
The deficiency graph is $K_{w,w}$.
\end{adfenumerate}

The existence spectrum for generalized pentagonal geometries has been settled completely
for $\adfPENT(3,r)$ type E, \cite{GriggsStokes2016},             
                and type B, \cite{ForbesGriggsStokes2020}.       
It has been settled up to a small number of possible exceptions
for $\adfPENT(4,r)$ type E, \cite{ForbesGriggsStokes2020},       
for $\adfPENT(4,r)$ type B, \cite{Forbes2020P},                  
for $\adfPENT(3,r,w)$ type B, $w \in $ \{7, 9, 13, 15, 19, 21, 25, 27, 31, 33\}, \cite{ForbesRutherford2022}, and  
for $\adfPENT(4,r,13)$ types B and E, \cite{ForbesRutherford2022}.                                                 

As far as we are aware, the known $\adfPENT(k,r,w)$ with $k \ge 3$ and {\em connected} deficiency graph of girth at least 5 (i.e.\ type A)
amount to the following:
\begin{enumerate}
\item[(i)]a small number of $\adfPENT(3,r)$ constructed by hand, including
  the Desargues configuration, $\adfPENT(3,3)$, \cite{BallBambergDevillersStokes2013};
\item[(ii)]$\adfPENT(3,r)$ for all $r \equiv 3 \adfmod{6}$, $r \ge 33$, \cite{Forbes2020P};
\item[(iii)]$\adfPENT(3,r,w)$, $w \in \{3, 7, 9, 13, 15, 19, 21, 25, 27, 31, 33\}$
that can be created by hill climbing for $r$ not too large, \cite{ForbesGriggsStokes2020}, \cite{ForbesRutherford2022};
\item[(iv)] $\adfPENT(4,r)$ for $r \in $
\{13, 17, 20, 21, 24,
29, 33, 37, 40, 45, 49, 52, 53, 60, 61, 65, 69, 77, 80, 81, 85, 93, 97, 100, 101, 108, 109, 117, 120, 125, 133, 140, 141, 149, 157, 160, 165, 173, 180\}, \cite{Forbes2020P};
\item[(v)]$\adfPENT(4,r,13)$ for $r \in $ \{112, 116, 120, 124, 128, 132, 136, 140, 144, 148, 152, 156, 160, 164\}, \cite{ForbesRutherford2022};
\item[(vi)]$\adfPENT(5,r)$ for $r \in $ \{20, 25, 30, 35, 40\}, \cite{Forbes2020P};
\item[(vii)]the $\adfPENT(6,7)$ and $\adfPENT(7,7)$ based on the Hoffman--Singleton graph, \cite{BallBambergDevillersStokes2013}.
\end{enumerate}

For most of this paper, we consider geometries of types C and D.
In the next section we recall the basic properties of generalized pentagonal geometries, and
in Section~\ref{sec:Constructions} we describe some general constructions.
Section~\ref{sec:Block size 3} addresses the existence spectrum of type D geometries $\adfPENT(3, r, w)$.
For a few values of $w$, we are able to provide simple proofs that the necessary conditions, $r \equiv 0$ or $w + 1 \adfmod{3}$,
are sufficient whenever $r$ is not small.
These proofs rely on a supply of suitable directly constructed geometries, which we have
for $w \in$ \{3, 7, 9, 13, 15, 19, 21, 25, 27, 33, 39\}.

In Section~\ref{sec:Block sizes 4, 5, 6 and 7} we consider the existence of
$\adfPENT(k, r, w)$, $4 \le k \le 7$, with deficiency graph of girth 4.
Also we report some new type-A pentagonal geometries that we found during our researches:
$\adfPENT(5,r)$, $r \in$ \{21, 26, 31, 36, 41, 45\}.
As a consequence, we are able to determine the spectrum for type-B $\adfPENT(5,r)$
up to a few possible exceptions (Theorem~\ref{thm:PENT-5-r-constructed}).
In Section~\ref{sec:Identifying codes} we define identifying codes for pentagonal geometries.
For $k = 3$ and 4, we show that, apart from a few possible exceptions, there exist
pentagonal geometries $\adfPENT(k,r)$ which have identifying codes that are as small as possible.
Section~\ref{sec:Identifying codes} also includes some new $\adfPENT(4,r)$ with connected deficiency graphs.


\section{Generalized pentagonal geometries}\label{sec:Generalized pentagonal geometries}
For any $k \ge 3$, there exist Steiner systems $S(2, k, 0)$, $S(2, k, 1)$ and $S(2, k, k)$,
with block set sizes 0, 0 and 1, respectively.
If a Steiner system $S(2,k,w)$ exists, then
the number of blocks is $w(w - 1)/(k(k - 1))$ and
the number of blocks that contain a given point is a constant, $(w - 1)/(k - 1)$.
%
The conditions $w \ge 1$, $w(w - 1) \equiv 0 \adfmod{k(k - 1)}$ and $w - 1 \equiv 0 \adfmod{k - 1}$ collectively
are sufficient for the existence of a Steiner system $S(2,k,w)$
when $k = 3$, \cite{Kirkman1847}, and when $k \in \{4,5\}$, \cite{Hanani1961}.
An affine plane and a projective plane of order $n$ exist whenever $n$ is a prime power.

Our first lemma of Section~\ref{sec:Generalized pentagonal geometries} concerns the size of a $\adfPENT(k,r,w)$, which is completely determined by the three parameters $k$, $r$ and $w$.
%
\begin{lemma} \label{lem:v-b}
A generalized pentagonal geometry $\adfPENT(k, r, w)$ has $r(k-1) + w + 1$ points and $(r(k - 1) + w + 1)r/k$ lines.

A $\adfPENT(k, r, w)$ exists only if $r(r - w - 1) \equiv 0 \adfmod{k}$.
\end{lemma}
\begin{proof}
See \cite[Lemma 2.1]{ForbesRutherford2022}.
\end{proof}
Given $k$ and $w$, we say that $r$ is {\em admissible} if $r(w + 1 - r) \equiv 0 \adfmod{k}$.
%
\begin{lemma} \label{lem:deficiency-graph}
Let $D$ be the deficiency graph of a generalized pentagonal geometry $\adfPENT(k,r,w)$.
Then $D$ is $w$-regular and has girth at least $4$.

For distinct points $x$ and $y$, let $U_{x,y}$ denote the set of points common to both $x^\mathrm{opp}$ and $y^\mathrm{opp}$,
and let $u_{x,y} = |U_{x,y}|$.
Then $U_{x,y}$ is the point set of a Steiner system $S(2,k,u_{x,y})$ that exists as a
subdesign of both $x^\mathrm{opp}$ and $y^\mathrm{opp}$.
\end{lemma}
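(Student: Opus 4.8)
The plan is to prove the two assertions of Lemma~\ref{lem:deficiency-graph} separately. The first assertion, that $D$ is $w$-regular with girth at least $4$, follows more or less directly from the definitions already laid out in the introduction. For $w$-regularity, I would note that for any point $x$, the set $W_x$ of points not collinear with $x$ has size exactly $w$, since by definition $W_x$ is the point set of the opposite design $x^{\mathrm{opp}} = S(2,k,w)$; these are precisely the neighbours of $x$ in $D$, so $\deg_D(x) = w$ for every $x$. For the girth bound, I would argue that $D$ is triangle-free: if $x$, $y$, $z$ formed a triangle, then $y$ and $z$ would both lie in $W_x = x^{\mathrm{opp}}$, but any two points of a Steiner system $S(2,k,w)$ are collinear (they lie in a common block, which is a line of the geometry), contradicting the fact that $y \sim z$ in $D$ means $y$ and $z$ are \emph{not} collinear. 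Hence $D$ has no $3$-cycle, and being nonempty and $w$-regular with $w \ge 3$, its girth is at least $4$.

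The second assertion is the substantive part. I would fix distinct points $x$ and $y$ and consider $U_{x,y} = x^{\mathrm{opp}} \cap y^{\mathrm{opp}}$, the set of points lying in both opposite designs. The goal is to exhibit a block set $\mathcal{C}$ on $U_{x,y}$ making $(U_{x,y}, \mathcal{C})$ into a Steiner system $S(2,k,u_{x,y})$ that is simultaneously a subdesign of $x^{\mathrm{opp}}$ and of $y^{\mathrm{opp}}$. The natural candidate for $\mathcal{C}$ is the set of lines $\ell$ of the geometry with $\ell \subseteq U_{x,y}$. To verify the Steiner system axioms, the key step is to take any two distinct points $p, q \in U_{x,y}$ and show the unique line through them lies inside $U_{x,y}$. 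Since $p, q \in x^{\mathrm{opp}}$, there is a unique block $B$ of $x^{\mathrm{opp}}$ containing both, and $B$ is a line of the geometry; similarly there is a unique block $B'$ of $y^{\mathrm{opp}}$ through $p$ and $q$, also a line. Because the geometry is a partial linear space, the line through $p$ and $q$ is unique, so $B = B'$. I would then argue that every point of $B$ lies in $U_{x,y}$: each such point belongs to $x^{\mathrm{opp}}$ (as $B$ is a block of $x^{\mathrm{opp}}$) and to $y^{\mathrm{opp}}$ (as $B = B'$ is a block of $y^{\mathrm{opp}}$), hence to the intersection $U_{x,y}$.

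With that containment established, the remaining verifications are routine: the blocks in $\mathcal{C}$ all have size $k$ (they are blocks of $x^{\mathrm{opp}}$), and each pair in $U_{x,y}$ lies in exactly one of them (existence from the argument above, uniqueness from the partial linear space property). This shows $(U_{x,y}, \mathcal{C})$ is an $S(2,k,u_{x,y})$, and by construction $\mathcal{C} \subseteq x^{\mathrm{opp}}$ and $\mathcal{C} \subseteq y^{\mathrm{opp}}$, so it is a common subdesign. I expect the main obstacle to be the identification $B = B'$ and the subsequent containment $B \subseteq U_{x,y}$: this is where the partial-linear-space axiom is doing the real work, and it is worth stating carefully that the line through $p$ and $q$ is unique precisely because a Steiner system's blocks are lines of the geometry, so the two locally-defined blocks must coincide globally. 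Everything else reduces to unwinding definitions.
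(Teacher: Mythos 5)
Your proof is correct, and since the paper itself gives no argument for this lemma beyond citing \cite[Lemma 2.3]{ForbesRutherford2022}, there is nothing in-paper to diverge from: your argument is exactly the natural one that reference uses (regularity from $|W_x| = w$, triangle-freeness because any two points of $x^{\mathrm{opp}}$ are collinear, and the subdesign property from the partial-linear-space axiom forcing the unique block of $x^{\mathrm{opp}}$ and of $y^{\mathrm{opp}}$ through a pair $p,q \in U_{x,y}$ to coincide and hence lie inside $U_{x,y}$). The only step worth making fully explicit is that every line $\ell \in \mathcal{C}$ is genuinely a block of both opposite designs: pick two points of $\ell$ (possible since $k \ge 2$) and identify $\ell$ with the unique block of each design through that pair, which is the same unwinding you already performed for $B = B'$.
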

\begin{proof}
See \cite[Lemma 2.3]{ForbesRutherford2022}.
\end{proof}
The possible values of $u_{x,y}$ in Lemma~\ref{lem:deficiency-graph} always include 0, 1, $k$ and $w$.
Conversely, recalling that $w > k$ implies $w \ge k^2 - k + 1$ by Fisher's inequality for block designs,
$u_{x,y}$ can never take values in the intervals $[2, k-1]$ and $[k + 1, k^2 - k$].
%
\begin{lemma} \label{lem:pairwise disjoint opposite designs}
For a generalized pentagonal geometry $\adfPENT(k, r, w)$, the following properties are equivalent.
\begin{enumerate}
\item[(i)] For any two distinct points $x$ and $y$, $x^\mathrm{opp}$ and $y^\mathrm{opp}$ have no common block.
\item[(ii)] For any two distinct points $x$ and $y$, the point sets of $x^\mathrm{opp}$ and $y^\mathrm{opp}$ have at most one common point.
\item[(iii)] The deficiency graph has girth at least $5$.
\end{enumerate}
\end{lemma}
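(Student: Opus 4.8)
The plan is to route both equivalences through condition (ii), establishing (ii)$\Leftrightarrow$(iii) and (i)$\Leftrightarrow$(ii) separately. The observation that links the design-theoretic and graph-theoretic sides is that the point set of $x^\mathrm{opp}$ is exactly $W_x = N_D(x)$, the neighbourhood of $x$ in the deficiency graph $D$. Consequently the common point set $U_{x,y}$ of $x^\mathrm{opp}$ and $y^\mathrm{opp}$ equals $N_D(x) \cap N_D(y)$, so that $u_{x,y}$ simply counts the common neighbours of $x$ and $y$ in $D$. Once this dictionary is in place, most of the work is bookkeeping.

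For (ii)$\Leftrightarrow$(iii) I would first invoke Lemma~\ref{lem:deficiency-graph} to record that $D$ is triangle-free, so that girth at least $5$ is equivalent to the absence of a $4$-cycle. A $4$-cycle is a closed walk $x \sim a \sim y \sim b \sim x$ on four distinct vertices, and such a configuration exists precisely when some pair of distinct vertices $x,y$ has two distinct common neighbours $a,b$, i.e.\ when $u_{x,y} \ge 2$ for some distinct $x,y$. Hence ``no $4$-cycle'' is exactly the assertion $u_{x,y} \le 1$ for all distinct $x,y$, which is (ii). The only point needing care is verifying that the four vertices of the putative cycle are genuinely distinct; this follows from $x \ne y$, $a \ne b$, and the absence of loops.

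For (i)$\Leftrightarrow$(ii), the direction (ii)$\Rightarrow$(i) is immediate: a common block of $x^\mathrm{opp}$ and $y^\mathrm{opp}$ is a $k$-set lying in both point sets, hence contained in $U_{x,y}$, forcing $u_{x,y} \ge k \ge 3 > 1$. For the converse I would argue contrapositively. Suppose $u_{x,y} \ge 2$ and choose distinct $p,q \in U_{x,y}$. Because $x^\mathrm{opp}$ is a Steiner system $S(2,k,w)$, there is a unique block $B$ of $x^\mathrm{opp}$ through $p$ and $q$, and likewise a unique block $B'$ of $y^\mathrm{opp}$ through $p$ and $q$. Both $B$ and $B'$ are lines of the geometry incident with the two distinct points $p$ and $q$, so the partial linear space axiom forces $B = B'$; this common block contradicts (i).

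The argument is elementary once the identification $W_x = N_D(x)$ is exploited, and the one genuinely load-bearing step — the place I would be most careful — is the identification $B = B'$ above. Here the defining property of a partial linear space, namely that two distinct points lie on at most one common line, is precisely what rules out $p$ and $q$ lying on two different common lines and thereby converts ``two shared points'' into ``a shared block''. Everything else reduces to the routine translation between common neighbours, $4$-cycles, and the subdesign structure of $U_{x,y}$ guaranteed by Lemma~\ref{lem:deficiency-graph}.
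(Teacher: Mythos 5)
Your proof is correct, and it is complete where the paper is not: the paper simply cites \cite[Lemma 2.4]{ForbesRutherford2022} rather than giving an argument, and your route --- identifying the point set of $x^\mathrm{opp}$ with $N_D(x)$ so that $u_{x,y}$ counts common neighbours, reducing (ii)$\Leftrightarrow$(iii) to ``triangle-free plus no $4$-cycle'' via Lemma~\ref{lem:deficiency-graph}, and using the partial linear space axiom to force the two Steiner blocks through a shared pair $p,q$ to coincide --- is exactly the standard argument that underlies the cited result. You also correctly flag the load-bearing step: without the at-most-one-line-through-two-points axiom, $u_{x,y}\ge 2$ would not yield a \emph{common} block, and the equivalence (i)$\Leftrightarrow$(ii) would fail.
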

\begin{proof}
See \cite[Lemma 2.4]{ForbesRutherford2022}.
\end{proof}
%
\begin{lemma} \label{lem:r >= w(w-1)/(k-1)}
For a $\adfPENT(k, r, w)$ with a deficiency graph of girth at least $5$,
we have
$$r = w(w - 1)/(k - 1) + e,~ \text{~where~} e \ge 0.$$
Furthermore, the geometry contains $\dfrac{v w (w - 1)}{k(k - 1)}$ lines that belong to opposite designs and
$e v/k$ that do not, where $v = (k - 1)r + w + 1$.
\end{lemma}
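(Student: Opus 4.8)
The plan is to partition the lines of the geometry into those that belong to some opposite design and those that do not, count each class separately, and compare the total with the line count $vr/k$ supplied by Lemma~\ref{lem:v-b}, where $v = (k-1)r + w + 1$. The inequality $e \ge 0$ should then drop out of the trivial observation that the opposite lines form a \emph{subset} of all the lines, so the complementary class has non-negative size.

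First I would record the easy half of the count. For each point $x$, the opposite design $x^\mathrm{opp}$ is a Steiner system $S(2,k,w)$ and therefore contains exactly $w(w-1)/(k(k-1))$ blocks, every one of which is a line of the geometry. Summing over all $v$ points yields $v\,w(w-1)/(k(k-1))$ incidences of the form ``line $\ell$ belonging to $x^\mathrm{opp}$''. The crucial step, and the only one I expect to be non-routine, is to show that this sum counts each opposite line exactly once, i.e.\ that no line lies in two distinct opposite designs. This is exactly where the girth hypothesis enters: since the deficiency graph has girth at least $5$, the equivalence of conditions (i) and (iii) in Lemma~\ref{lem:pairwise disjoint opposite designs} guarantees that $x^\mathrm{opp}$ and $y^\mathrm{opp}$ share no common block whenever $x \ne y$. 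Hence the number of lines belonging to opposite designs is precisely $v\,w(w-1)/(k(k-1))$, matching the first claimed formula.

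To finish, since every opposite line is one of the $vr/k$ lines of the geometry, the number of lines belonging to no opposite design equals $vr/k - v\,w(w-1)/(k(k-1))$. Setting $e = r - w(w-1)/(k-1)$, this difference simplifies to $ev/k$, which is the second claimed formula. As it is a count of lines it must be non-negative, and because $v, k > 0$ this forces $e \ge 0$, giving $r = w(w-1)/(k-1) + e$ with $e \ge 0$. The main obstacle is thus concentrated entirely in the no-double-counting argument of the previous paragraph; once the pairwise disjointness of opposite designs is invoked, the remainder is bookkeeping.
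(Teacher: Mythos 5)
Your proposal is correct, and its counting core coincides with the paper's: both use Lemma~\ref{lem:v-b} for the total of $vr/k$ lines, both invoke Lemma~\ref{lem:pairwise disjoint opposite designs} (girth at least $5$ is equivalent to distinct opposite designs having no common block) to conclude that the $v$ opposite designs contribute exactly $v\,w(w-1)/(k(k-1))$ distinct lines, and both obtain $ev/k$ by subtraction and the identity $vr/k - v\,w(w-1)/(k(k-1)) = (v/k)\bigl(r - w(w-1)/(k-1)\bigr)$. The genuine difference lies in the first assertion: the paper does not prove $r = w(w-1)/(k-1) + e$ with $e \ge 0$ at all, but imports it from \cite[Lemma 2.5]{ForbesRutherford2022}, and only then performs the line split. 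You invert the logic — run the global count first and extract $e \ge 0$ from the observation that the non-opposite lines form a set of cardinality $ev/k$, which must be non-negative while $v/k > 0$. This makes the lemma entirely self-contained: one computation delivers both claims, with no appeal to the earlier paper. The trade-off is mild; a local argument of the kind the cited lemma would use (counting, through a fixed point $x$, the pairwise distinct opposite lines contributed by the designs $y^\mathrm{opp}$ for the $w$ neighbours $y$ of $x$) yields the slightly finer fact that every individual point already lies on $w(w-1)/(k-1)$ opposite lines, whereas your global count only sees the aggregate. You were also right to flag the no-double-counting step as the crux and to resolve it via property (i) of Lemma~\ref{lem:pairwise disjoint opposite designs}; that is exactly where the girth hypothesis is spent in the paper as well.
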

\begin{proof}
See \cite[Lemma 2.5]{ForbesRutherford2022} for the first part.
By Lemma~\ref{lem:v-b}, the number of points in the geometry is $v$ and the number of lines is $v r/k$.
The number of blocks in a Steiner system $S(2,k,w)$ is $w(w - 1)/(k(k - 1))$.
Hence, by Lemma~\ref{lem:pairwise disjoint opposite designs}, the number of lines that belong to opposite designs is $v w(w - 1)/(k(k - 1))$.
Subtracting this from the number of lines gives $e v/k$.
\end{proof}
The {\em distance at least 3} graph of a generalized pentagonal geometry has the points of the geometry as its vertices
and there is an edge $x \sim y$ precisely when the distance between $x$ and $y$ in the geometry's deficiency graph is at least 3.
%
\begin{lemma} \label{lem:distance-at-least-3-neighbours}
Let $E$ be the distance at least 3 graph of a generalized pentagonal geometry $\adfPENT(k, r, w)$ with $w \ge k \ge 3$.
Then for every vertex $x$ of $E$,
the subgraph of $E$ induced by neighbours of $x$ can be partitioned into complete graphs $K_{k-1}$.

Furthermore, the degree of $x$ in $E$ is least $r(k - 1) - w(w - 1)$
with equality if the deficiency graph of the $\adfPENT(k, r, w)$ has girth at least $5$.
\end{lemma}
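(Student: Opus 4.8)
The plan is to first translate adjacency in $E$ into a statement purely about opposite designs, and then to exploit the fact that the Steiner system $z^{\mathrm{opp}}$ has a unique block through any pair of its points. The main work lies in a single structural observation; everything else is bookkeeping.

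First I would record the distance dictionary for the deficiency graph $D$. Since $D$ is $w$-regular and triangle-free, two distinct points $a,b$ satisfy $d_D(a,b)=1$ exactly when they are non-collinear (i.e.\ $b\in a^{\mathrm{opp}}$), and $d_D(a,b)=2$ exactly when they are collinear but $a^{\mathrm{opp}}\cap b^{\mathrm{opp}}\ne\emptyset$. Consequently $x\sim_E y$, meaning $d_D(x,y)\ge 3$, holds precisely when $x$ and $y$ are collinear and $u_{x,y}=0$ in the notation of Lemma~\ref{lem:deficiency-graph}. In particular every neighbour of $x$ in $E$ is one of the $r(k-1)$ points collinear with $x$, and these are partitioned by the $r$ lines through $x$, each contributing its $k-1$ points other than $x$.

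The key step is that $a^{\mathrm{opp}}\cap b^{\mathrm{opp}}$ depends only on the line joining two collinear points. Fix a line $\ell$ and distinct $a,b\in\ell$. A point $z$ lies in $a^{\mathrm{opp}}\cap b^{\mathrm{opp}}$ iff $a,b\in z^{\mathrm{opp}}$; but $z^{\mathrm{opp}}$ is a Steiner system $S(2,k,w)$, so the unique block of $z^{\mathrm{opp}}$ through $\{a,b\}$ is a line of the geometry containing $a$ and $b$, which can only be $\ell$. Hence $a,b\in z^{\mathrm{opp}}$ forces $\ell\subseteq z^{\mathrm{opp}}$, and therefore
$$a^{\mathrm{opp}}\cap b^{\mathrm{opp}}=\bigcap_{t\in\ell}t^{\mathrm{opp}}=:C_\ell,$$
a set attached to $\ell$ alone. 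Two consequences give the partition. Taking $a=x$ shows that for a line $\ell$ through $x$ the quantity $u_{x,p}=|C_\ell|$ is the same for every $p\in\ell\setminus\{x\}$, so either all $k-1$ of these points lie in $N_E(x)$ (when $C_\ell=\emptyset$) or none do. Taking $a=p_i,b=p_j$ on such a full line gives $p_i^{\mathrm{opp}}\cap p_j^{\mathrm{opp}}=C_\ell=\emptyset$ with $p_i,p_j$ collinear, so $p_i\sim_E p_j$ and $\ell\setminus\{x\}$ induces a $K_{k-1}$. Grouping $N_E(x)$ by the lines through $x$ with $C_\ell=\emptyset$ therefore exhibits it as the required partition into copies of $K_{k-1}$. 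I would not assert the absence of $E$-edges between different such lines, since a triangle of mutually collinear, mutually opposite-disjoint points shows these can occur; the claim is a partition of the vertices into cliques.

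For the degree bound I would count the points of the geometry by their distance from $x$ in $D$: one point at distance $0$, $w$ at distance $1$, and $\deg_E(x)$ at distance at least $3$, so it remains to bound the number $N_2$ at distance $2$. Counting ordered pairs $(z,y)$ with $z\in x^{\mathrm{opp}}$, $y\sim_D z$ and $y\ne x$ gives $w(w-1)$ pairs; since $x^{\mathrm{opp}}$ is independent in $D$, each such $y$ is collinear with $x$, lies at distance exactly $2$, and is counted $u_{x,y}\ge 1$ times, so $N_2\le w(w-1)$. Using $v=r(k-1)+w+1$ from Lemma~\ref{lem:v-b},
$$\deg_E(x)=v-1-w-N_2\ge r(k-1)-w(w-1).$$
Equality requires $u_{x,y}=1$ for every distance-$2$ point $y$, which by Lemma~\ref{lem:pairwise disjoint opposite designs} holds whenever $D$ has girth at least $5$. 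The only genuine obstacle is the block-uniqueness argument establishing $C_\ell$; once that is in hand, both assertions follow by the elementary counting above.
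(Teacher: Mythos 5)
Your proof is correct, and it splits into a half that matches the paper and a half that takes a genuinely different route. The clique-partition argument is in substance the paper's own: your observation that $a^{\mathrm{opp}}\cap b^{\mathrm{opp}}=\bigcap_{t\in\ell}t^{\mathrm{opp}}=C_\ell$ depends only on the line $\ell$ through $a$ and $b$ is exactly the paper's dichotomy between opposite and non-opposite lines (a line with $C_\ell\neq\emptyset$ is a block of $z^{\mathrm{opp}}$ for each $z\in C_\ell$, by the same block-uniqueness step in a partial linear space), and both proofs then partition $N_E(x)$ by the lines through $x$; your explicit caveat that cross-edges between the cliques can occur, so that one asserts only a vertex partition into $K_{k-1}$'s, is a point the paper's windmill phrasing leaves implicit, and you handle it correctly. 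Where you diverge is the degree bound. The paper stays on the line side: using Lemmas~\ref{lem:pairwise disjoint opposite designs} and~\ref{lem:r >= w(w-1)/(k-1)} it shows that when the girth is at least $5$ there are exactly $w(w-1)/(k-1)$ opposite lines through $x$, hence $s=r-w(w-1)/(k-1)$ non-opposite ones and $\deg_E(x)=(k-1)s$, with duplication among opposite blocks in the girth-$4$ case accounting for the inequality. You instead count vertices: $\deg_E(x)=v-1-w-N_2$, and you bound the distance-two class by double counting $2$-paths, each distance-two vertex $y$ being hit $u_{x,y}\ge 1$ times, so $N_2\le w(w-1)$, with $u_{x,y}\le 1$ under girth at least $5$ (Lemma~\ref{lem:pairwise disjoint opposite designs}) forcing equality. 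Your count is purely graph-theoretic --- it is valid in any $w$-regular triangle-free graph and makes the degree bound logically independent of the clique partition --- whereas the paper's version buys the identification of $\deg_E(x)/(k-1)$ as the number of non-opposite lines through $x$, which is the quantity the paper actually exploits (for instance in the count of $ev/k$ non-opposite lines). Both arguments are sound; yours is a legitimate, slightly more elementary alternative for the second assertion.
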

\begin{proof}
Let $D$ be the deficiency graph of the $\adfPENT(k, r, w)$.

Consider distinct points $x$ and $y$.
If $x$ and $y$ are not collinear, then $x \sim y$ in $D$ and therefore $x \not\sim y$ in $E$.
Now assume $x$ and $y$ are collinear.
If $x$ and $y$ belong to $z^\mathrm{opp}$ for some point $z$, then there is a path $x \sim z \sim y$ in $D$
and hence $x \not\sim y$ in $E$.
Conversely, $x \not\sim y$ in $E$ implies there is a path $x \sim z \sim y$ in $D$;
then $x$ and $y$, being neighbours of $z$ in $D$, must both belong to $z^\mathrm{opp}$.
The only other possible case is when $x$ and $y$ belong to a non-opposite line,
i.e.\ a line that is not in $z^\mathrm{opp}$ for any point $z$.
In this case there cannot be a 2-edge path from $x$ to $y$, and hence $x \sim y$ in $E$.

Therefore each non-opposite line containing the point $x$ yields a subgraph $K_k$ in $E$ containing $x$.
Moreover, these subgraphs are pointwise mutually disjoint except for a common vertex, $x$.
If there are $s$ non-opposite lines containing $x$,
then the subgraphs collectively form a windmill graph $\mathrm{Wd}(k,s$) with centre $x$.
Since all vertices of $N_E(x)$ occur in the windmill,
the subgraph induced by $N_E(x)$ can be partitioned into complete graphs $K_{k-1}$.

If $D$ has girth at least 5, then by Lemma~\ref{lem:r >= w(w-1)/(k-1)}
the number of non-opposite lines containing $x$ is $s = r - w(w - 1)/(k - 1)$.
Therefore $x$ has degree $(k-1)s$ in $E$.
However, if $D$ has girth 4, then possible duplication amongst the lines containing $x$ that belong to opposite designs
implies that $(k-1)s$ will be a lower bound for the degree of $x$ in $E$ that is not necessarily attained.
\end{proof}
If $D$ is a $(w,5^+)$-graph with $v$ vertices and $E$ is its distance at least 3 graph, then $E$ is $(v - w^2 - 1)$-regular and
we have a decomposition of the edges of $K_v$ into sets of sizes $v w/2$, $v w (w - 1)/2$, $v(v - w^2 - 1)/2$
corresponding to $d_D(x,y) = 1$, $d_D(x,y) = 2$, $d_D(x,y) \ge 3$, respectively.
This is not necessarily the case when $D$ has girth 4.
However, $E$ is $(h(v - w^2 - 1))$-regular when $D$ is derived from a $(w,5^+)$-graph with $v$ vertices by
inflating each point by a factor of $h > 1$ and replacing each edge with a $K_{h,h}$.

%

\section{Constructions}\label{sec:Constructions}
For the purpose of this paper, a {\em group divisible design}, $k$-GDD, of type $g_1^{u_1} g_2^{u_2} \dots g_n^{u_n}$ is
an ordered triple ($V,\mathcal{G},\mathcal{B}$) such that:
\begin{adfenumerate}
\item[(i)]{$V$
is a base set of cardinality $u_1 g_1 + u_2 g_2 + \dots + u_n g_n$;}
\item[(ii)]{$\mathcal{G}$
is a partition of $V$ into $u_i$ subsets of cardinality $g_i$, $i = 1, 2, \dots, n$, called \textit{groups};}
\item[(iii)]{$\mathcal{B}$
is a non-empty collection of $k$-subsets of $V,$ called \textit{blocks}; and}
\item[(iv)]{each pair of elements from distinct groups occurs in precisely one block but no pair of
elements from the same group occurs in any block.}
\end{adfenumerate}

In the first theorem of this section, we recall
the main recursive construction for creating large geometries from small ones,
\cite{GriggsStokes2016}, \cite{Forbes2020P}, \cite{ForbesGriggsStokes2020}, \cite{ForbesRutherford2022}.
It is a straightforward adaptation of Wilson's Fundamental Construction, \cite{WilsonRM1972}, \cite[Theorem IV.2.5]{GreigMullen2007}.
%
\begin{theorem}
\label{thm:GDD-basic}
Let $k \ge 3$ and $w \ge k$ be integers.
For $i = 1$, $2$, \dots, $n$, let $r_i$ be a positive integer, let $v_i = (k-1) r_i + w + 1$,
and suppose there exists a generalized pentagonal geometry $\adfPENT(k, r_i, w)$.
Suppose also that there exists a $k$-$\mathrm{GDD}$ of type $v_1^{u_1} v_2^{u_2} \dots v_n^{u_n}$.
Let $N = u_1 + u_2 + \dots + u_n$ and
$R = u_1 r_1 + u_2 r_2 + \dots + u_n r_n$.
Then there exists a generalized pentagonal geometry
$\adfPENT(k, R + (N - 1)(w + 1)/(k - 1), w)$.

Furthermore, the deficiency graph girth of the $\adfPENT(k, R + (N - 1)(w + 1)/(k - 1), w)$
is the minimum of the deficiency graph girths of the $\adfPENT(k, r_i, w)$, $i = 1$, $2$, \dots, $n$.
\end{theorem}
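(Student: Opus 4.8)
The plan is to carry out the standard ``filling in the groups'' construction, a special case of Wilson's Fundamental Construction adapted to the pentagonal setting. I would take the base set $V$ of the $k$-GDD as the point set of the new geometry, and take as its lines the union of two families: the blocks of the GDD, and, for each group $G$ of size $v_i$, the lines of a copy of $\adfPENT(k, r_i, w)$ placed on the $v_i$ points of $G$. This placement makes sense precisely because Lemma~\ref{lem:v-b} guarantees that a $\adfPENT(k, r_i, w)$ has exactly $v_i = (k-1)r_i + w + 1$ points. Since GDD blocks and PENT lines both have $k$ points, the incidence structure is uniform; and it is a partial linear space because no GDD block meets a group in two points while every PENT line lies inside a single group, so a pair of points lies in at most one line regardless of whether the two points share a group.

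The first real computation is to establish regularity and read off the line-degree. Fix a point $x$ in a group $G$ of size $v_i$. It lies on $r_i$ lines coming from the PENT on $G$, and, since each pair of $x$ with a point in another group occurs in exactly one GDD block and each such block accounts for $k-1$ of these pairs, it lies on $(|V| - v_i)/(k-1)$ GDD blocks. I would then substitute $|V| = \sum_j u_j v_j = (k-1)R + (w+1)N$ and $v_i = (k-1)r_i + w + 1$ to simplify the total $r_i + (|V| - v_i)/(k-1)$ to the $i$-independent value $R + (N-1)(w+1)/(k-1)$. The independence from $i$ is exactly what proves regularity and pins down the claimed $r$-parameter; a parallel substitution confirms that $|V|$ equals the point count $(k-1)r + w + 1$ demanded by Lemma~\ref{lem:v-b}.

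Next I would verify the opposite-design property, which is where the GDD structure does its work. For $x$ in group $G$, every point outside $G$ is collinear with $x$ through its unique GDD block, so the points not collinear with $x$ all lie in $G$; and within $G$ collinearity is governed solely by the PENT on $G$, since no GDD block joins two points of a common group. Hence the set of points not collinear with $x$ is exactly the opposite design of $x$ inside the copy of $\adfPENT(k, r_i, w)$ on $G$, which is an $S(2,k,w)$ whose blocks are lines of that PENT and therefore lines of the new geometry. This confirms all defining properties of a $\adfPENT(k, R + (N-1)(w+1)/(k-1), w)$.

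Finally, I would deduce the girth statement by identifying the deficiency graph of the new geometry. The argument above shows that two points are non-collinear only if they share a group, so the deficiency graph is the disjoint union, over all groups, of the deficiency graphs of the PENTs filling them. Since each deficiency graph is $w$-regular with $w \ge 3$ it contains a cycle, so the girth of a disjoint union equals the minimum of the girths of its components; this yields the stated minimum over $i$ and simultaneously explains why connectedness is not preserved when $N > 1$. The main obstacle is not any single hard step but the bookkeeping in the degree computation together with the clean separation of ``within-group'' and ``cross-group'' collinearity; once that separation is established, both the regularity count and the identification of $x^{\mathrm{opp}}$ follow immediately.
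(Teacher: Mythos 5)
Your proof is correct and follows essentially the same route as the paper: the paper's proof delegates the construction to \cite[Theorem 2.5]{ForbesRutherford2022}, which is exactly the ``fill the groups of the $k$-GDD with copies of the $\adfPENT(k,r_i,w)$'' construction you carry out in detail (your degree count $r_i + (|V|-v_i)/(k-1) = R + (N-1)(w+1)/(k-1)$ and the identification of $x^{\mathrm{opp}}$ with the opposite design inside the group are the substance of that cited argument). Your girth argument---the deficiency graph is the vertex-disjoint union of the component deficiency graphs, each of which contains a cycle since it is $w$-regular with $w \ge 3$, so the union inherits the minimum girth---matches the paper's stated reasoning exactly.
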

\begin{proof}
See \cite[Theorem 2.5]{ForbesRutherford2022}.
It is clear that the deficiency graph of the $\adfPENT(k, R + (N - 1)(w + 1)/(k - 1), w)$ is the vertex-disjoint union of
the deficiency graphs of the $\adfPENT(k, r_i, w)$, $i = 1$, $2$, \dots, $n$, and
therefore it inherits the smallest girth of its components.
\end{proof}
For convenience, we provide a summary of results concerning the existence of group divisible designs of the kind that we
require for the application of Theorem~\ref{thm:GDD-basic} and other constructions.
%
\begin{lemma}
\label{lem:k-GDD-existence}
Suppose $g$, $u$ and $m$ are positive integers.
\begin{enumerate}
\item[(i)]There exists a $3$-$\mathrm{GDD}$ of type $g^u$ if
$g$ is even, $u \ge 3$ and $u(u-1)g \equiv 0 \adfmod{3}$.

\item[(ii)]There exists a $3$-$\mathrm{GDD}$ of type $g^u m^1$ if
$g$ is even, $m$ is even, $u \ge 3$, $m \le g(u-1)$ and $g^2 u (u-1) + 2 g u m \equiv 0 \adfmod{3}$.

\item[(iii)]There exists a $4$-$\mathrm{GDD}$ of type $g^u$ if
$u \ge 4$, $g(u - 1) \equiv 0 \adfmod{3}$ and $g^2u(u - 1) \equiv 0 \adfmod{12}$,
except for $(g,u) \in \{(2,4), (6,4)\}$.

\item[(iv)]Suppose $g \equiv 5 \adfmod{6}$ and let $\mu = 3$ if $m$ is odd, $\mu = 9$ if $m$ is even.
Then there exists a $4$-$\mathrm{GDD}$ of type $g^u m^1$, if
$m \equiv 2 \adfmod{3}$, $m \le g(u - 1)/2$,
$u \ge 12$ and $u \equiv 0 \text{ or } \mu \adfmod{12}$, with possible exceptions
$u \in \{12, 24, 15, 27, 39, 51, 21, 33\}$ when $0 < m < g$.

\item[(v)]Suppose $g \in \{80, 128\}$,
$u \equiv 0 \adfmod{3}$, $u \ge 6$,
$m \equiv 2 \adfmod{3}$, $m \le g(u - 1)/2$.
Then there exists a $4$-$\mathrm{GDD}$ of type $g^u m^1$,
with possible exceptions $80^9 m^1$, $m \in \{299, 311, 317\}$.

\item[(vi)]There exists a $5$-$\mathrm{GDD}$ of type $g^5$ if $g \not\in \{2, 3, 6, 10\}$.

\item[(vii)]There exists a $5$-$\mathrm{GDD}$ of type $g^u$ if
$g \equiv 0 \adfmod{10}$, $u \equiv 1 \adfmod{2}$ and $u \ge 5$, with possible exceptions
\begin{align*}
10^5, 10^7, 10^{27}, 10^{39}, 10^{47}, 50^{27}.
\end{align*}

\item[(viii)]There exists a $5$-$\mathrm{GDD}$ of type $g^u$ if
$g \equiv 0 \adfmod{2}$, $g \ge 86$, $u \equiv 1 \adfmod{10}$ and $u \ge 21$.

\item[(ix)]There exist $5$-$\mathrm{GDD}$s of types $10^{10} 18^1$ and $10^{10} 30^1$.

\item[(x)]There exists a $6$-$\mathrm{GDD}$ of type $g^6$ if $g \ge 23$.

\item[(xi)]There exists a $7$-$\mathrm{GDD}$ of type $7^7$.

\item[(xii)]There exists an $11$-$\mathrm{GDD}$ of type $g^{11}$ if $g \equiv 1 \adfmod{2}$ and $g \ge 1937$.
\end{enumerate}
\end{lemma}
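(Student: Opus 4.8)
The plan is to establish each of the ten parts by reducing it to a known existence result in the design-theory literature, grouping the items according to the machinery each requires. The first and cleanest group consists of the transversal-design cases, namely those in which the number of groups equals the block size: (iv) a $5$-GDD of type $g^5$, (viii) a $6$-GDD of type $g^6$, (ix) a $7$-GDD of type $7^7$, and (x) an $11$-GDD of type $g^{11}$. For each of these I would invoke the standard equivalence between a $k$-GDD of type $g^k$, a transversal design TD$(k,g)$, and a family of $k-2$ mutually orthogonal Latin squares of order $g$. Part (iv) then asks for $3$ MOLS of order $g$, which are known to exist for every $g$ outside $\{2,3,6,10\}$; part (viii) asks for $4$ MOLS, available for all $g \ge 23$; part (ix) asks for $5$ MOLS of order $7$, immediate since $7$ is prime and admits $6$ MOLS; and part (x) asks for $9$ MOLS of odd order $g \ge 1937$, which is covered by the asymptotic lower bounds on the MOLS number $N(g)$.

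The second group comprises the small-block-size parts (i), (ii) and (iii), for which the existence question is completely settled. Here I would simply quote the full existence theorems for uniform $3$-GDDs and $4$-GDDs: the stated congruence and inequality conditions are exactly the necessary divisibility conditions, and the two exceptions $(g,u) \in \{(2,4),(6,4)\}$ in (iii) are the well-known genuine exceptions for $4$-GDDs of type $g^u$. Part (ii), the mixed type $g^u m^1$, would follow either directly from the literature on $3$-GDDs with a single group of differing size, or by adjoining an extra group to a $3$-GDD and breaking it up, the stated bound $m \le g(u-1)$ being precisely what such a construction requires.

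The main obstacle I expect is the cluster of $5$-GDD results (v), (vi) and (vii), where the number of groups is not equal to $5$ and so no single clean MOLS statement applies. These rely instead on specialized recursive constructions of Wilson type, combining ingredient GDDs with transversal designs, together with direct (in part computer-assisted) constructions for the sporadic types $10^{10}18^1$ and $10^{10}30^1$ in (vii). The honest route for these is to cite the relevant specialized papers on $5$-GDDs with group sizes divisible by $10$ and to verify that the list of possible exceptions recorded in (v) matches exactly the cases left open there, rather than to reprove them from scratch.
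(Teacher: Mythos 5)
Your treatment of parts (i)--(iv) and (viii)--(x) matches the paper exactly: (i) and (ii) are quoted from the complete existence theory for $3$-GDDs (Colbourn--Hoffman--Rees), (iii) from Brouwer--Schrijver--Hanani, and the four transversal-design cases come from the equivalence of a $k$-GDD of type $g^k$ with $k-2$ MOLS of side $g$, using the known MOLS tables. Your reading of (vii) is also essentially right, except that the paper obtains $10^{10}30^1$ not by an independent direct construction but from a resolvable $4$-GDD of type $10^{10}$ (following the earlier Forbes paper), with $10^{10}18^1$ quoted directly.

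The genuine gap is in your plan for (v). You propose to cite the specialized literature and ``verify that the list of possible exceptions recorded in (v) matches exactly the cases left open there,'' but that verification fails: the relevant theorem (Wei--Ge, Theorem 2.25) leaves open not only the ten types listed in the lemma but also $10^{23}$, $10^{35}$, and $(10\alpha)^{23}$ for various odd $\alpha \ge 5$, none of which appear in the lemma's exception list. So the lemma as stated cannot be obtained by citation alone; the paper must close these three families. It does so by taking $5$-GDDs of types $2^{35}$ and $10^{23}$ from an earlier paper of Forbes, then inflating each point by an odd factor $\alpha \ge 5$ and replacing every block with a $5$-GDD of type $\alpha^5$ --- which exists precisely by part (iv), since odd $\alpha \ge 5$ avoids $\{2,3,6,10\}$ --- yielding types $10^{35}$ (inflate $2^{35}$ by $5$) and $(10\alpha)^{23}$ (inflate $10^{23}$ by $\alpha$). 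Your proposal, which defers entirely to citation for (v), would stall at exactly this point; the inflation step, and its reliance on part (iv) as an ingredient, is the one substantive construction in the proof that your plan omits.
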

\begin{proof}
For (i) and (ii), see \cite{ColbournHoffmanRees1992} or \cite[Theorems IV.4.1 and IV.4.2]{Ge2007};
For (iii), see \cite{BrouwerSchrijverHanani1977}.

For (iv), see \cite[Theorem 1.2]{Forbes2019}.

For (v), see \cite[Theorem 1.2]{Forbes2019B}.

For (vi), (x), (xi) and (xii), the $\ell$-GDD of type $g^{\ell}$ is constructed from
$\ell - 2$ mutually orthogonal Latin squares of side $g$,
the existence of which are given by \cite[Table III.3.88]{AbelColbournDinitz2007}.

For (vii) and (viii), see \cite[Theorem 2.25]{WeiGe2014} and \cite{Forbes2022G}.

For (ix), type $10^{10} 18^1$ is given by \cite[Lemma 4.1]{Forbes2020P}, and
type $10^{10} 30^1$ is constructed from a resolvable 4-GDD of type $10^{10}$ as explained in the proof of \cite[Lemma 4.7]{Forbes2020P}.
\end{proof}

Next, we have a useful application of Theorem~\ref{thm:GDD-basic} for geometries with block size 3.
%
\begin{theorem}
\label{thm:PENT-3-r-w-constructed-large}
Let $w \ge 3$ be an integer.
For $i \in \{0, 1, 2\}$, let $v_i = 2r_i + w + 1$ and
suppose there exists a generalized pentagonal geometry $\adfPENT(3, r_i, w)$
with a deficiency graph of girth $g_i$.
Suppose also that
$r_0 \equiv 0 \adfmod{3}$,
$r_1, r_2 \not\equiv 0 \adfmod{3}$ and
$\gcd(v_1, v_2) = 6$.
Let
$$t_\mathrm{min} = 1 + \max\left(2, \left\lceil\dfrac{v_0}{v_1}\right\rceil\right),$$
$$u_\mathrm{min}(t) = 1 + \max\left(2,
                                    \left\lceil\dfrac{v_1 t + v_1}{v_2}\right\rceil,
                                    \left\lceil\dfrac{v_1 t + v_0}{v_2}\right\rceil\right),$$
$$x_\mathrm{min} = u_\mathrm{min}\left(t_\mathrm{min} + \dfrac{v_2}{6}\right)$$
and
$$r_\mathrm{min} = \dfrac{v_2 \, x_\mathrm{min}}{2} + \dfrac{v_1 \, t_\mathrm{min}}{2}
                 + \dfrac{v_1 v_2}{6} + \max(r_0, r_1).$$
Then for all $r \ge r_\mathrm{min}$, $r \equiv 0$ or $w + 1 \adfmod{3}$,
there exists a generalized pentagonal geometry $\adfPENT(3, r, w)$ with a deficiency graph of girth
$\min(g_0, g_1, g_2)$ if $r \equiv 0 \adfmod{3}$, $\min(g_1, g_2)$ otherwise.
\end{theorem}
\begin{proof}
Let $r_3 \in \{r_0, r_1\}$ and $v_3 = 2r_3 + w + 1$.
We have $w \equiv 1$ or $3 \adfmod{6}$ since we are assuming there exists a Steiner triple system of order $w$.
Also $r_1 \equiv r_2 \equiv w + 1 \adfmod{3}$ by Lemma~\ref{lem:v-b}, and
$v_1 \equiv v_2 \equiv 0 \adfmod{6}$.

Apply Theorem~\ref{thm:GDD-basic} twice, first with
a $\adfPENT(3, r_3, w)$,
$t$ copies of a $\adfPENT(3, r_1, w)$
and a 3-GDD of type $v_1^t v_3^1$ to obtain a
$$\adfPENT\left(3, \dfrac{v_1 t}{2} + r_3, w\right)~~
\text{~for~} t \ge t_\mathrm{min},$$
and then with
$u$ copies of a $\adfPENT(3, r_2, w)$
and a 3-GDD of type $v_2^u (v_1 t + v_3)^1$ to obtain a
\begin{align}\label{eqn:PENT-3-r-w-constructed}
& \adfPENT\left(3, \dfrac{v_2 u}{2} + \dfrac{v_1 t}{2} + r_3, w\right)~~ \text{~for~} u \ge u_\mathrm{min}(t),~~ t \ge t_\mathrm{min},
\end{align}
with deficiency graph girth as stated.
The lower bounds $t_\mathrm{min}$ and $u_\mathrm{min}(t)$ ensure the existence of the two 3-GDDs by Lemma~\ref{lem:k-GDD-existence}.

In (\ref{eqn:PENT-3-r-w-constructed}) the greatest common divisor of the coefficients of $u$ and $t$ is 3 and
the two options for $r_3$ belong to the two admissible residue classes $0$ and $w + 1$ modulo 3.
To show that all $r \ge r_\mathrm{min}$, $r \equiv r_3 \adfmod{3}$ are covered we argue as follows.

Write $p_1 = v_1/6$, $p_2 = v_2/6$, and note that $\gcd(p_1, p_2) = 1$.
By the Chinese Remainder Theorem, for any positive integer $n$, there exist integers $t$, $x$ and $d$ such that
$$n = p_2 x + p_1 t + d p_1 p_2,$$
where
$$t_\mathrm{min} \le t < t_\mathrm{min} + p_2 \text{~~and~~}
  x_\mathrm{min} \le x < x_\mathrm{min} + p_1.$$
Moreover, $d$ will be non-negative for all
\begin{equation} \label{eqn:n-ge-p2xmin-plus-p1tmin-plus-2p1p2}
n \ge p_2 \, x_{\mathrm{min}} + p_1 \, t_{\mathrm{min}} + 2p_1 p_2.
\end{equation}
Multiplying the expression on the right of (\ref{eqn:n-ge-p2xmin-plus-p1tmin-plus-2p1p2}) by 3,
adding $r_3$, and comparing with \ref{eqn:PENT-3-r-w-constructed},
we see that there exists a $\adfPENT(3, r, w)$ for all
$$r \ge \dfrac{v_2 \, x_{\mathrm{min}}}{2}
        + \dfrac{v_1 \, t_{\mathrm{min}}}{2} + \dfrac{v_1 v_2}{6} + r_3,~~r \equiv r_3 \adfmod{3}.$$
The theorem follows by combining the two options for $r_3$.
\end{proof}

In the next construction we are in some sense multiplying a geometry by $h$.
The girth of the constructed deficiency graph is reduced to 4 but,
unlike in Theorem~\ref{thm:GDD-basic}, connectedness is preserved.

%
\begin{theorem}
\label{thm:Product-construction}
Let $r$, $w$ and $h$ be positive integers with $w \ge k \ge 3$ and $h \ge 3$.
Suppose there exists a $k$-$\mathrm{GDD}$ of type $h^k$,
a Steiner system $S(2,k,h)$ and a $\adfPENT(k,r,w)$ with a connected deficiency graph.
Then there exists a $\adfPENT(k, hr + (h-1)/(k-1), hw)$ with a connected deficiency graph of girth $4$.
\end{theorem}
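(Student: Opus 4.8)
The plan is to build the new geometry by \emph{inflating} each point of the given $\adfPENT(k,r,w)$ into $h$ copies and then filling in lines with the two auxiliary objects. Write $\mathcal{G}$ for the given geometry, $P$ for its point set and $D$ for its deficiency graph, and replace each point $x \in P$ by a \emph{fibre} $X = \{x^{(1)}, \dots, x^{(h)}\}$ of $h$ new points; the new point set is the disjoint union of the fibres, and so has $h\bigl((k-1)r + w + 1\bigr) = (k-1)\bigl(hr + (h-1)/(k-1)\bigr) + hw + 1$ points, exactly the count required for a $\adfPENT(k, hr + (h-1)/(k-1), hw)$. The lines come in two kinds. For each line $\ell = \{x_1, \dots, x_k\}$ of $\mathcal{G}$ I would place a copy of the $k$-GDD of type $h^k$ on $X_1 \cup \dots \cup X_k$ with the fibres $X_1, \dots, X_k$ as its groups and take its blocks as lines (the \emph{across-fibre} lines); within each fibre $X$ I would place a copy of the Steiner system $S(2,k,h)$ and take its blocks as lines (the \emph{within-fibre} lines).

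First I would verify that this is a uniform, regular partial linear space. Uniformity is immediate since every line has $k$ points. For the partial-linear-space property: a pair $x^{(a)},y^{(b)}$ with $x\ne y$ collinear in $\mathcal{G}$ lies on a unique line $\ell$, whose GDD contains the cross-group pair in exactly one block while no other line meets both fibres; if $x,y$ are non-collinear in $\mathcal{G}$ no line of the new geometry contains both; and a pair inside one fibre lies in exactly one within-fibre line and (fibres being GDD groups) in no across-fibre line. Counting lines through $x^{(a)}$, each of the $r$ lines of $\mathcal{G}$ through $x$ contributes $h$ GDD-blocks and the within-fibre $S(2,k,h)$ contributes $(h-1)/(k-1)$, giving the required degree $hr + (h-1)/(k-1)$.

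The heart of the argument is identifying the opposite design of each $x^{(a)}$. From the collinearity analysis, $x^{(a)}$ is non-collinear with $y^{(b)}$ precisely when $x\ne y$ and $y \in x^{\mathrm{opp}}$, so its non-neighbourhood is $W = \bigcup_{y \in x^{\mathrm{opp}}} Y$, a set of $hw$ points independent of $a$. I would then show that the lines lying inside $W$ form an $S(2,k,hw)$. Since $\mathcal{G}$ is a partial linear space and $x^{\mathrm{opp}}$ is an $S(2,k,w)$, a line of $\mathcal{G}$ lies inside the point set of $x^{\mathrm{opp}}$ if and only if it is a block of $x^{\mathrm{opp}}$; hence the lines inside $W$ are exactly the GDD-blocks attached to each block of $x^{\mathrm{opp}}$ together with the within-fibre blocks on each $Y$ with $y \in x^{\mathrm{opp}}$. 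A pair of $W$ in a common fibre is covered once by a within-fibre block and never across fibres, while a pair in distinct fibres $Y,Y'$ is covered by the GDD attached to the unique block of $x^{\mathrm{opp}}$ through $y$ and $y'$ and by no other line; so $(W,\cdot)$ is the desired $S(2,k,hw)$, confirming a $\adfPENT(k, hr + (h-1)/(k-1), hw)$.

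Finally I would read off the deficiency graph $D'$: by the collinearity description it is obtained from $D$ by replacing each vertex with an independent fibre of size $h$ and each edge by a $K_{h,h}$, so $D'$ is $hw$-regular, is connected because $D$ is (the fibres are linked through the $K_{h,h}$'s), and contains a $4$-cycle inside any $K_{h,h}$ since $h\ge 3$; as every deficiency graph has girth at least $4$ by Lemma~\ref{lem:deficiency-graph}, the girth of $D'$ is exactly $4$. The step I expect to need the most care is the Steiner-system verification of the opposite designs: it hinges on the interplay between the across-fibre GDDs and the within-fibre designs, and crucially on $x^{\mathrm{opp}}$ being itself an $S(2,k,w)$, so that the GDDs attached to its blocks tile exactly the cross-fibre pairs of $W$ with neither overlaps nor gaps.
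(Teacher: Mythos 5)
Your proposal is correct and follows essentially the same route as the paper's proof: inflate each point into a fibre of size $h$, replace each line by the blocks of a $k$-GDD of type $h^k$ whose groups are the relevant fibres, fill each fibre with an $S(2,k,h)$, check the point/line/replication counts, identify the opposite design of $x^{(a)}$ as the $S(2,k,hw)$ formed by the GDD blocks over the blocks of $x^{\mathrm{opp}}$ together with the within-fibre designs, and observe that the deficiency graph is $D$ with each edge replaced by a $K_{h,h}$, hence connected, $hw$-regular and of girth exactly $4$. Your explicit use of partial linearity to show that any line contained in the point set of $x^{\mathrm{opp}}$ is necessarily a block of $x^{\mathrm{opp}}$ is a detail the paper leaves implicit, but the argument is the same.
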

\begin{proof}
The geometry $\adfPENT(k,r,w)$ is essentially defined by its line set, which we denote by $\mathcal{P}$.
Let $U$ be its point set and let $u = |U| = (k-1)r + w + 1$.
Let $D$ be the deficiency graph of $\mathcal{P}$; then $D$ is connected, is $w$-regular and has girth at least 4.

Let $H = \{1, 2, \dots, h\}$.
Let $V$ = $U \times H$, $v = |V|$, and denote the point $(a,i)$, $a \in U$, $i \in H$ by $a_i$.

Replace each line $A \in \mathcal{P}$ with the $h^2$ lines of a $k$-GDD of type $h^k$ with groups
\begin{equation*} 
\{\{a_1, a_2, \dots, a_h\}:  a \in A\}.
\end{equation*}
For each point $a \in U$ add the lines of a Steiner system $S(2,k,h)$ with point set $\{a_1, a_2, \dots, a_h\}$.
Let $\mathcal{Q}$ denote the set of $k$-tuples thus obtained.
We claim that $\mathcal{Q}$ is the line set of a $\adfPENT(k, h r + (h -1)/(k - 1), hw)$.

Observe that $v = h u$ and that
$$|\mathcal{Q}| = h^2|\mathcal{P}| + \dfrac{u h(h - 1)}{k(k - 1)}
                = \dfrac{h^2 u r}{k} + \dfrac{u h(h - 1)}{k(k - 1)} = \dfrac{v}{k}\left(hr + \dfrac{h - 1}{k - 1}\right),$$
the correct numbers of points and lines for a $\adfPENT(k, hr + (h-1)/(k-1), hw)$.
Moreover, each line of $\mathcal{Q}$ contains $k$ points, and
each point $x$ of $V$ is incident with $rh$ blocks of $k$-GDDs and $(h-1)/(k-1)$ blocks of a Steiner system $S(2,k,h)$,
$rh + (h-1)/(k-1)$ lines altogether.

Consider two distinct points, $a_i, b_j \in V$.
If $a = b$, or if $a \neq b$ and $a$ is collinear with $b$ in $\mathcal{P}$, then
$a_i$ and $b_j$ are collinear in $\mathcal{Q}$, either in a $k$-GDD when $i \neq j$ or in an $S(2,k,h)$ when $i = j$.
In the remaining case $b$ is in the set of points that are not collinear with $a$ in $\mathcal{P}$.
The set of $w$ points of $a^\mathrm{opp}$ corresponds to a set $A(a)$ of $h w$ points in $V$, which are precisely
the points of $V$ that are not collinear with $a_i$.
The $w(w - 1)/(k(k - 1)$ lines of $a^\mathrm{opp}$ correspond to $h^2 w(w - 1)/(k(k - 1))$ blocks of $k$-GDDs of type $h^k$.
Furthermore, the $w$ points of $a^\mathrm{opp}$ correspond to a further $w h(h - 1)/(k(k - 1))$ blocks
of $w$ Steiner systems $S(2,k,h)$ that fill in the groups of the $k$-GDDs.
On collecting them together we have a set $\mathcal{A}(a)$ of
$$\dfrac{h^2 w(w - 1)}{k(k - 1)} + \dfrac{w h(h - 1)}{k(k - 1)} = \dfrac{hw(hw - 1)}{k(k - 1)}$$
lines.
Clearly, any pair of distinct elements of $A(a)$ is a subset of some line in $\mathcal{A}(a)$.
Hence $\mathcal{A}(a)$ is the block set of a Steiner system $S(2,k,hw)$ that forms $a_i^\mathrm{opp}$
in a $\adfPENT(k, hr + (h-1)/(k-1), hw)$.

The deficiency graph is obtained from $D$ be replacing each edge $a \sim b$ with a complete bipartite graph $K_{h,h}$
to yield a connected $(hw)$-regular graph of girth 4.
\end{proof}
As a special case of Theorem~\ref{thm:Product-construction} we describe a simple tripling construction,
which is self-contained---the only input is the $\adfPENT(3,r,w)$ itself.
%
\begin{theorem}
\label{thm:Tripling-construction}
Let $r$ and $w$ be positive integers with $w \ge 3$ and suppose there exists a $\adfPENT(3,r,w)$ with a connected deficiency graph.
Then there exists a $\adfPENT(3,3r+1,3w)$ with a connected deficiency graph of girth $4$.
\end{theorem}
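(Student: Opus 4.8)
The plan is to obtain this as the specialization of Theorem~\ref{thm:Product-construction} to block size $k = 3$ and inflation factor $h = 3$. With these values the output parameters become $hr + (h-1)/(k-1) = 3r + 2/2 = 3r + 1$ and $hw = 3w$, which match the claimed $\adfPENT(3, 3r + 1, 3w)$ exactly. So the whole argument reduces to checking that the two auxiliary designs demanded by Theorem~\ref{thm:Product-construction} are available when $k = h = 3$, after which the conclusion---including connectedness and girth---transfers verbatim.

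Those two ingredients are a $3$-$\mathrm{GDD}$ of type $3^3$ and a Steiner system $S(2,3,3)$. The latter is immediate: an $S(2,3,3)$ is a single block of size $3$ on a $3$-element point set, so it exists unconditionally. For the former, a $3$-$\mathrm{GDD}$ of type $3^3$ is precisely a transversal design $\mathrm{TD}(3,3)$, equivalently a single Latin square of order $3$; it has nine blocks and exists. Hence both auxiliary designs are present for every $w \ge 3$ with no further hypothesis, which is exactly what makes the construction self-contained: the $\adfPENT(3,r,w)$ itself is the only genuine input, in contrast to the general Theorem~\ref{thm:Product-construction} where the existence of a $k$-$\mathrm{GDD}$ of type $h^k$ and an $S(2,k,h)$ must be assumed.

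Having verified the hypotheses, I would simply invoke Theorem~\ref{thm:Product-construction} to conclude that a $\adfPENT(3, 3r + 1, 3w)$ exists, with the connectedness of its deficiency graph and its girth of exactly $4$ inherited directly. There is no real obstacle here; the closest thing to a point requiring care is the elementary arithmetic $(h-1)/(k-1) = 1$ together with the observation that the small designs $S(2,3,3)$ and the $3$-$\mathrm{GDD}$ of type $3^3$ are classical and exist for free, so that no admissibility or exceptional-case analysis of the kind needed for larger block sizes intervenes.
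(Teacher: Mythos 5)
Your proposal is correct and is essentially the paper's own proof: the paper likewise derives the theorem as the special case $k = h = 3$ of Theorem~\ref{thm:Product-construction}, merely writing out explicitly the nine blocks of the $3$-GDD of type $3^3$ (via permutations of $(1,2,3)$) and the single-block $S(2,3,3)$ before invoking that theorem. Your verification that both auxiliary designs exist unconditionally, making the construction self-contained, matches the paper's point exactly.
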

\begin{proof}
Let $\mathcal{P}$ be the line set of the $\adfPENT(3,r,w)$,
let $U$ be its point set and let $D$ be its deficiency graph.
Let $V$ = $U \times \{1,2,3\}$ and write $a_i$ for $(a,i)$.
Let $\Pi$ be the set of permutations of $(1,2,3)$ and let
\begin{align*}
\mathcal{Q} = &\{\{a_1, a_2, a_3\}: a \in U\} \\
              & \cup \{\{a_i, b_i, c_i\}: \{a,b,c\} \in \mathcal{P}, i \in \{1, 2, 3\}\} \\
              & \cup \{\{a_h, b_i, c_j\}: \{a,b,c\} \in \mathcal{P}, (h,i,j) \in \Pi\}.
\end{align*}
In the first set of this expression we are replacing each point with a Steiner system $S(2,3,3)$, and
in the second and third sets
we are replacing each line with the nine blocks of a 3-GDD of type $3^3$.
Hence, by Theorem~\ref{thm:Product-construction}, $\mathcal{Q}$ is the line set of a $\adfPENT(3, 3r+1,3w)$ with point set $V$, and
its deficiency graph is obtained from $D$ by replacing each edge with a $K_{3,3}$.
\end{proof}

The next construction describes a way to build a $\adfPENT(k,r,w)$ of type C directly from a graph with girth at least 5.
%
\begin{construction}\label{con:PENT-k-r-w-constructed}
{\rm

Let $k$, $h$, $w$ and $v$ be integers with $w \ge h \ge k \ge 3$, $v \ge 10$ and $w, v \equiv 0 \adfmod{h}$.

Let $C$ be a connected $(w/h, 5^+)$-graph with $v/h$ vertices, and
let $V(C) = \{0, 1, \dots, v/h - 1\}$.
For $p \in V(C)$, let
$$H(p) = \{hp, hp + 1, \dots, hp + h - 1\}.$$

Construct a graph, $D$, with vertex set $\{0, 1, \dots, v-1\}$ by replacing
each vertex $p$ of $C$ with the vertices of $H(p)$ and
each edge $p \sim q$ of $C$ with the edges of a complete bipartite graph $K_{h,h}$ with parts $H(p)$ and $H(q)$.
For $x \in V(D)$, denote by $\adfhat{x}$ the vertex of $C$ from which $x$ arises by the construction;
i.e.\ $\adfhat{x} = \lfloor x/h \rfloor$.
Then $D$ has $v$ vertices, is $w$-regular and has girth 4.
For $x, y \in V(D)$, we have
$$|N_D(x) \cap N_D(y)| ~=~
\left\{\begin{array}{ll} w & \text{~if~} \adfhat{x} = \adfhat{y},\\
                         0 & \text{~if~} \adfhat{x} \sim \adfhat{y} \text{~in~} C,\\
                         h & \text{~if~} d_C(\adfhat{x}, \adfhat{y}) = 2, \\
                         0 & \text{~if~} d_C(\adfhat{x}, \adfhat{y}) \ge 3.
\end{array}\right.$$

Given a Steiner system $S(2,k,h)$ that has point set $\{0, 1, \dots, h - 1\}$,
let $\mathcal{S}(p)$ be the block set of the same Steiner system with the points relabelled by the mapping $x \mapsto x + h p$.

Given a $k$-GDD of type $h^{w/h}$ that has groups
$$\{0, 1, \dots, h - 1\}, \{h, h + 1, \dots, 2h - 1\}, \dots, \{w - h, w - h + 1, \dots, w\},$$
let $\mathcal{G}(p)$ be the block set of the same $k$-GDD with the points relabelled such that the groups are $H(q)$, $q \in N_C(p)$.

Let $r = (v - w - 1)/(k - 1)$.
We attempt to construct a $\adfPENT(k,r,w)$ with deficiency graph $D$.

For each $p \in V(C)$, let $W(p) = \bigcup_{q \in N(p)} H(q)$,
Then $\mathcal{B}(p) = \mathcal{G}(p) \cup \mathcal{S}(p)$ is the block set of a
Steiner system $S(2,k,w)$ with point set $W(p)$.

For $x \in V(D)$, we designate as $x^\mathrm{opp}$
the Steiner system $S(2,k,w)$ with point set $W(\adfhat{x})$ and block set $\mathcal{B}(\adfhat{x})$.
Observe that $N_D(x) = W(\adfhat{x})$.

Let
$$b_\mathrm{opp} = \left|\bigcup_{p \in V(C)} \mathcal{B}(p)\right|,$$
the number of lines that occur in opposite designs.
Since $C$ has girth at least 5 there is no overlapping of the group divisible designs;
hence there are
$$\dfrac{v}{h} \; h^2 \; \dfrac{w}{h}\left(\dfrac{w}{h} - 1\right) \dfrac{1}{k(k - 1)}$$
distinct lines that occur in $k$-GDDs.
To this we add the number of lines that occur in the $v/h$ Steiner systems $S(2,k,h)$ to obtain
$$b_\mathrm{opp} = v w \left(\dfrac{w}{h} - 1\right) \dfrac{1}{k(k - 1)} + \dfrac{v}{h} \; \dfrac{h(h - 1)}{k(k - 1)}
 = \dfrac{v(w(w - h) + h(h - 1))}{h k (k - 1)}.$$

If $b_\mathrm{opp} = v r/k$, the construction is complete.
On the other hand, when $b_\mathrm{opp} < v r/k$ we need to provide the remaining lines by assembling into $k$-tuples
those pairs $\{x, y\} \subseteq V(D)$ that are neither subsets of opposite designs nor edges of $D$.
If $k = 3$, we can attempt to provide these $v r/3 - b_\mathrm{opp}$ lines by hill climbing,
as described in \cite{Stinson1985} or \cite[Section 2.7.2]{ColbourRosa1999}.
However, the process is doomed if $D$ is not the deficiency graph of a generalized pentagonal geometry.
Nevertheless, experiments suggest that hill climbing is likely to succeed whenever $v$ is not too small and not too large.
If $k \ge 4$, the required non-opposite lines must be assembled by some other method.
}\end{construction}

There does not seem to be any obvious (to the authors) general method for finding $(w,4)$-graphs
that might be suitable as deficiency graphs for geometries of type C.
Lemma~\ref{lem:deficiency-graph} places a severe restriction on values of $|N(x) \cap N(y)| > 1$
and yet this inequality must be satisfied at least once.
The main feature of Construction~\ref{con:PENT-k-r-w-constructed} is that we start with a $(w/h,5^+)$-graph.
Then the multiplication by $h$ process automatically creates a deficiency graph with acceptable neighbourhood intersection sizes.

%

\section{Block size 3}\label{sec:Block size 3}

In a generalized pentagonal geometry $\adfPENT(3,r,w)$ the number of points is $2r + w + 1$
and the admissibility condition is $r \equiv 0$ or $w + 1 \adfmod{3}$.
Also $w \equiv 1$ or $3 \adfmod{6}$, a sufficient condition for the existence of a Steiner triple system of order $w$.

%
\begin{lemma}\label{lem:PENT-3-r-3-direct}
For $r = 3$ and for $r \equiv 0$ or $1 \adfmod{3}$, $9 \le r \le 33$,
there exists a pentagonal geometry $\adfPENT(3,r)$ where the deficiency graph is a
generalized Petersen graph with girth at least $5$.
\end{lemma}
\begin{proof}
These can be created as follows.
For some suitable parameter $s$,
let $D$ be the generalized Petersen graph $\mathrm{GP}(r + 2, s)$ with vertex set $V = \{0, 1, \dots, 2r+3\}$
and edges generated from $\{0, 1\}$, $\{0, 2\}$ and $\{1, 2s+1\}$ by the mapping $x \mapsto x + 2 \adfmod{2r + 4}$.
The point set of the geometry is $V$.
Let $\mathcal{L} = \{N(x): x \in V\}$.
This is $\{x^\mathrm{opp}: x \in V\}$, the set of opposite lines.
If $|\mathcal{L}| = r(2r + 4)/3$, the construction is complete---as happens when $r = 3$ and $D$ is the Petersen graph, $\mathrm{GP}(5,2)$.
When $|\mathcal{L}| < r(2r + 4)/3$ we determine the remaining lines of the geometry by hill climbing.
Observe that $D$ is connected and that the even-numbered vertices form an $(r+2)$-cycle.

For $r = 3$, see \cite{BallBambergDevillersStokes2013};
for $r = 33$, see \cite{Forbes2020P}.
For the remaining $r$, it is not difficult to construct the geometries by the method outlined above.
Therefore we save space by providing details in the appendix, which is present only in this paper's ArXiv preprint, available at
\begin{center}
\url{https://arxiv.org/abs/2111.13599}.
\end{center}
We used $\mathrm{GP}(r + 2, 4)$ for $r \in \{9, 13, 18\}$, $\mathrm{GP}(r + 2, 5)$ for the others.
As an example, we give details for the $\adfPENT(3,12)$ presented in the standard format that we employ throughout the paper.

For a generalized pentagonal geometry $\adfPENT(k,r,w)$, we specify $d$ followed by $dr$ numbers.
Let $v = (k-1)r + w + 1$.
Partition the $dr$ numbers into $dr/k$ $k$-tuples and
then develop these $k$-tuples into the $vr/k$ lines of the geometry by the mapping $x \mapsto x + d \adfmod{v}$.
The point set of the geometry is $\{0, 1, \dots, v - 1\}$.

%
{\boldmath $\adfPENT(3,12)$}, $d = 4$:
{1, 2, 26;  0, 11, 19;  0, 3, 4;  2, 13, 21;  0, 5, 10;  0, 6, 18;  0, 7, 23;  0, 8, 22;  0, 9, 12;  0, 13, 17;  0, 15, 21;  1, 3, 17;  1, 7, 14;  1, 22, 27;  2, 10, 27;  2, 11, 15}.
The deficiency graph is connected and has girth 6.
%
\end{proof}

%
\begin{lemma}\label{lem:PENT-3-r-9-direct}
There exists a generalized pentagonal geometry $\adfPENT(3,r,9)$ with a connected deficiency graph of girth $4$ for
$$r \in \{10\} \cup \{25, 28, 31, \dots, 100\}.$$
\end{lemma}
\begin{proof}
For those $r \equiv 1$ or $4 \adfmod{9}$, use Theorem~\ref{thm:Tripling-construction}
to construct the geometry from a $\adfPENT(3,(r - 1)/3)$ from Lemma~\ref{lem:PENT-3-r-3-direct}.

For $r = 25$, use Construction~\ref{con:PENT-k-r-w-constructed} with the graph generated from
$$\{\{0,4\}, \{1,5\}, \{2,6\}, \{0,3\}, \{1,3\}, \{2,3\}\}$$
by $x \mapsto x + 4 \adfmod{20}$.
This is our smallest example of a Type-C $\adfPENT(3,r,9)$ that is not obtained by tripling a $\adfPENT(3,s)$.


{\boldmath $\adfPENT(3,25,9)$}, $d = 12$:
{0, 1, 2;  0, 3, 7;  0, 4, 6;  0, 5, 8;  1, 3, 6;  1, 4, 8;  1, 5, 7;  2, 3, 8;  2, 4, 7;  2, 5, 6;  3, 4, 5;  6, 7, 8;  9, 10, 11;  9, 12, 49;  9, 13, 48;  9, 14, 50;  9, 15, 52;  9, 16, 51;  9, 17, 53;  9, 18, 55;  9, 19, 54;  9, 20, 56;  10, 12, 48;  10, 13, 50;  10, 14, 49;  10, 15, 51;  10, 16, 53;  10, 17, 52;  10, 18, 54;  10, 19, 56;  10, 20, 55;  11, 12, 50;  11, 13, 49;  11, 14, 48;  11, 15, 53;  11, 16, 52;  11, 17, 51;  11, 18, 56;  11, 19, 55;  11, 20, 54;  0, 15, 44;  0, 16, 46;  0, 17, 43;  0, 18, 45;  0, 19, 41;  0, 20, 47;  0, 27, 42;  0, 28, 54;  0, 29, 56;  0, 30, 40;  0, 31, 52;  0, 32, 39;  0, 33, 53;  0, 34, 55;  0, 35, 51;  1, 15, 32;  1, 16, 43;  1, 17, 46;  1, 18, 40;  1, 19, 47;  1, 20, 41;  1, 27, 54;  1, 28, 56;  1, 29, 42;  1, 30, 51;  1, 31, 39;  1, 33, 55;  1, 34, 45;  1, 35, 53;  1, 44, 52;  2, 15, 47;  2, 16, 45;  2, 17, 42;  2, 18, 46;  2, 19, 39;  2, 20, 40;  2, 27, 55;  2, 28, 43;  2, 29, 44;  2, 30, 53;  2, 31, 41;  2, 32, 51;  2, 33, 52;  2, 34, 56;  2, 35, 54;  3, 19, 45;  3, 33, 46;  3, 34, 54;  4, 18, 47;  4, 20, 46;  4, 35, 55;  5, 19, 46;  5, 33, 54;  5, 35, 56;  8, 33, 45;  9, 23, 47;  9, 33, 59;  9, 34, 46;  10, 23, 34;  10, 35, 47}.
The deficiency graph is connected and has girth 4.
%

For the others, use Construction~\ref{con:PENT-k-r-w-constructed} with a generalized Petersen graph $\mathrm{GP}((r + 5)/3, 2)$.
These geometries can be rather large and so details are given in the appendix.
\end{proof}
The results of Lemma~\ref{lem:PENT-3-r-9-direct} suggest that given sufficient resources one can
create a $\adfPENT(3,r,9)$ for any $r \ge 40$, $r \equiv 1 \adfmod{3}$
using a generalized Petersen graph with $(2r + 10)/3$ vertices.
Unfortunately, the problem with hill climbing is that there seems to be no way other than by trial to prove that it will work.
There is nothing special about the maximum value, $r = 100$; it is merely where we got fed up with verifying the existence of the geometries.
If we allow deficiency graphs with more than one component, i.e.\ type D, we can do better.
First, we require a type-C $\adfPENT(3,r,9)$ with $r \equiv 0 \adfmod{3}$.
%
\begin{lemma}\label{lem:PENT-3-72-9-direct}
There exists a generalized pentagonal geometry $\adfPENT(3,72,9)$ with a connected deficiency graph of girth $4$.
\end{lemma}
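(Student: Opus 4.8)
The goal is to exhibit a single $\adfPENT(3,72,9)$ whose deficiency graph is connected of girth $4$. My plan is to produce it by a direct construction, following exactly the recipe of Construction~\ref{con:PENT-k-r-w-constructed} specialized to $k=3$, $w=9$, $h=3$, since the lemma statement ($r=72$, connected, girth $4$) matches the output of that construction with these parameters. First I would fix the parameters consistently: with $k=3$, $w=9$, $h=3$ we need $v \equiv 0 \adfmod 3$ and a connected $(w/h,5^+)=(3,5^+)$-graph $C$ on $v/h$ vertices, and then $r=(v-w-1)/(k-1)=(v-10)/2$. Solving $r=72$ gives $v=154$, so $C$ must be a connected cubic graph of girth at least $5$ on $154/3$ vertices—which is not an integer. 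This forces a small adjustment: the cleanest route is to take $h=3$ but choose $C$ to be a connected $(3,5^+)$-graph on a number of vertices making $v$ work out, or alternatively to build the geometry by hill climbing seeded from a suitable deficiency graph, exactly as was done for the $\adfPENT(3,25,9)$ and the $r\equiv 0\adfmod 3$ cases discussed just before this lemma.

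The concrete approach I would take is therefore: choose a connected $(3,5^+)$-graph $D_0$ on $2r+w+1 - (\text{inflation count})$ vertices appropriate to the multiplication-by-$h$ framework, apply the inflation to obtain a $9$-regular girth-$4$ graph $D$ on $v=2\cdot 72+9+1=154$ vertices, and designate $x^{\mathrm{opp}}=N_D(x)$ together with the filled-in Steiner systems $S(2,3,3)$ as in Construction~\ref{con:PENT-k-r-w-constructed}. By that construction the opposite designs are automatically $S(2,3,9)$'s with the correct neighbourhood-intersection pattern, and $D$ is guaranteed connected and of girth $4$. The count $b_{\mathrm{opp}}$ of opposite lines is computed from the displayed formula, and the deficit $vr/3 - b_{\mathrm{opp}}$ is the number of remaining non-opposite lines that must be filled in. For $k=3$ these are supplied by hill climbing, and I would present the resulting line set explicitly in the paper's standard $d$-plus-list format (a difference-family-style presentation), then simply assert that one verifies directly that every point lies on $r=72$ lines, every line has $3$ points, each $x^{\mathrm{opp}}$ is an $S(2,3,9)$, and the deficiency graph is connected of girth $4$.

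The main obstacle is not conceptual but computational: hill climbing is not guaranteed to terminate successfully, and—as the surrounding text repeatedly emphasizes—there is \emph{no} way other than trial to prove in advance that the remaining $vr/3 - b_{\mathrm{opp}}$ non-opposite pairs can be packed into disjoint triples completing the partial linear space. So the real content of the proof is the explicit data: one must actually exhibit a valid line set and let the reader (or a machine) check it. Accordingly, I expect the proof to consist almost entirely of a displayed $\adfPENT(3,72,9)$ in the standard format followed by the sentence ``The deficiency graph is connected and has girth $4$,'' with the bulk of the $72$-line-per-point data relegated to the Appendix exactly as was done for the larger geometries in Lemma~\ref{lem:PENT-3-r-9-direct}. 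The only genuine verification to spell out is that the chosen seed graph $C$ is indeed a connected $(3,5^+)$-graph on the right number of vertices and that the inflation produces the claimed girth-$4$, $9$-regular, connected $D$; everything downstream then follows from Construction~\ref{con:PENT-k-r-w-constructed} once the hill-climbing phase is reported to have succeeded.
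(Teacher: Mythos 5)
Your prediction of the final written form is exactly right: the paper's proof of this lemma consists of nothing but an explicit $\adfPENT(3,72,9)$ presented in the standard format ($d=2$, base triples developed by $x \mapsto x+2 \adfmod{154}$), followed by the one-sentence assertion that the deficiency graph is connected and has girth $4$. But the constructive route you foreground has a genuine gap. The number of points is forced by Lemma~\ref{lem:v-b}: $v = 2r + w + 1 = 154$, with no freedom whatsoever, so there is no such thing as ``choosing $C$ on a number of vertices making $v$ work out.'' Construction~\ref{con:PENT-k-r-w-constructed} requires $h \ge k = 3$ with $h \mid w$ and $h \mid v$; since $w = 9$ forces $h \in \{3, 9\}$ and $\gcd(9, 154) = 1$, no admissible $h$ exists at all. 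Consequently no graph obtained by inflating a $(3,5^+)$-graph — or by any vertex blow-up with factor $h \ge 3$ — can be the deficiency graph here, because its vertex count would be divisible by $h$; the step in your second paragraph, ``apply the inflation to obtain a $9$-regular girth-$4$ graph $D$ on $v = 154$ vertices,'' describes an impossible object. This obstruction is in fact the raison d'\^{e}tre of the lemma in the paper's architecture: the $r \equiv 1 \adfmod{3}$ geometries of Lemma~\ref{lem:PENT-3-r-9-direct} do arise from tripling or from Construction~\ref{con:PENT-k-r-w-constructed}, but the type~C $\adfPENT(3,r,9)$ with $r \equiv 0 \adfmod{3}$ needed as the $r_0$ input to Theorem~\ref{thm:PENT-3-r-w-constructed-large} cannot, and must be produced by other means.

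The fallback you mention only in passing — seed a hill climb from a directly constructed connected $(9,4)$-graph on $154$ vertices whose neighbourhood intersection sizes respect Lemma~\ref{lem:deficiency-graph} (here $|N(x) \cap N(y)| \in \{0, 1, 3, 9\}$, since $2$ and $4, \dots, 6$ are excluded and an $S(2,3,7)$ cannot embed in an $S(2,3,9)$) — is the method that actually succeeds, and it is the same one the paper uses for the $w = 7$ geometries of Lemma~\ref{lem:PENT-3-r-7-direct}. Had you committed to that route instead of the multiplication-by-$h$ framework, your proposal would match the paper; as written, the strategy you lead with fails a divisibility obstruction that you noticed but did not correctly resolve.
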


\begin{proof}


{\boldmath $\adfPENT(3,72,9)$}, $d = 2$:
{0, 26, 74;  0, 56, 79;  0, 77, 122;  1, 27, 107;  1, 33, 38;  1, 99, 116;  26, 48, 56;  26, 77, 128;  26, 79, 122;  27, 33, 99;  27, 38, 129;  27, 81, 116;  33, 116, 129;  38, 81, 99;  38, 107, 116;  48, 77, 79;  56, 74, 77;  56, 122, 128;  74, 79, 128;  99, 107, 129;  0, 2, 89;  0, 4, 49;  0, 7, 127;  0, 9, 16;  0, 10, 131;  0, 11, 123;  0, 12, 85;  0, 14, 115;  0, 15, 110;  0, 17, 46;  0, 19, 113;  0, 20, 62;  0, 24, 118;  0, 25, 133;  0, 28, 95;  0, 34, 84;  0, 35, 114;  0, 37, 47;  0, 41, 57;  0, 55, 93;  0, 63, 64;  0, 68, 151;  0, 97, 141;  0, 135, 139;  1, 13, 127;  1, 15, 119;  1, 21, 91;  1, 25, 87}.
The deficiency graph is connected and has girth 4.
%
\end{proof}
%
\begin{theorem}\label{thm:PENT-3-r-9-constructed}
For all admissible $r \ge 1413$, there exists a $\adfPENT(3,r,9)$ of type D
where each component of the deficiency graph has girth $4$.
\end{theorem}
\begin{proof}
Use Theorem~\ref{thm:PENT-3-r-w-constructed-large} with the $\adfPENT(3,72,9)$ from Lemma~\ref{lem:PENT-3-72-9-direct}
as well as the $\adfPENT(3,25,9)$ and $\adfPENT(3,28,9)$ from Lemma~\ref{lem:PENT-3-r-9-direct}.
The parameters of Theorem~\ref{thm:PENT-3-r-w-constructed-large} are $(r_0, r_1, r_2) = (72,25,28)$.
\end{proof}

Next, we deal with $w = 7$.
%
\begin{lemma}
\label{lem:PENT-3-r-7-direct}
There exists a $\adfPENT(3,r,7)$ with a connected deficiency graph of girth $4$ for $r \in \{47, 51, 53\}$.
\end{lemma}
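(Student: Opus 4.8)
The plan is to construct each of the three geometries directly, following the template already used for Lemma~\ref{lem:PENT-3-72-9-direct}. For $w = 7$ the recursive machinery of Section~\ref{sec:Constructions} cannot be brought to bear while keeping $w$ fixed: the only divisor of $7$ that is at least $k = 3$ is $7$ itself, so Construction~\ref{con:PENT-k-r-w-constructed} would force $C$ to be a $(1,5^+)$-graph (a disconnected matching), while Theorems~\ref{thm:Product-construction} and~\ref{thm:Tripling-construction} send $w$ to $hw \ge 3w$. Hence these three values must be supplied as directly-built seeds. The number of points is $v = 2r + 8$, namely $102$, $110$, $114$ for $r = 47, 51, 53$, and all three are admissible since $47 \equiv 53 \equiv 2$ and $51 \equiv 0 \adfmod{3}$.

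For each $r$ I would first locate a $7$-regular graph $D$ on $v$ vertices of girth $4$ that is a legitimate candidate deficiency graph. By Lemma~\ref{lem:deficiency-graph} this requires every pair of vertices to share $0$, $1$, $3$ or $7$ common neighbours, with at least one pair sharing more than one so that the girth is $4$ rather than $5$. The natural device is to take $D$ invariant under the development $x \mapsto x + d \adfmod{v}$, exactly as in the standard $d$-format used throughout the paper; the resulting near-transitivity reduces the verification of the neighbourhood-intersection condition to a handful of orbit representatives and tends to keep $D$ connected.

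Second, on each neighbourhood $N(x)$, which has exactly $w = 7$ vertices, I would impose the structure of a Fano plane $S(2,3,7)$; these planes are the opposite designs $x^\mathrm{opp}$, and under the $+d$ development they can be chosen consistently from a single base plane. This accounts for the $b_\mathrm{opp}$ opposite lines. Since $b_\mathrm{opp} < v r/3$, I would then supply the remaining $v r/3 - b_\mathrm{opp}$ non-opposite lines by hill climbing, precisely as described in Construction~\ref{con:PENT-k-r-w-constructed}; together with the opposite lines these form the full triple system of the $\adfPENT(3,r,7)$.

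The hard part is that hill climbing carries no \emph{a priori} guarantee of success, so the genuine content of the proof is the explicit exhibition of the three geometries in the standard $d$-development format, followed by a direct check that in each case the deficiency graph is connected and has girth $4$. Because the data are bulky, I would relegate them to the Appendix. The specific triple $\{47, 51, 53\}$ is chosen so that $51 \equiv 0$ while $47, 53 \not\equiv 0 \adfmod{3}$, and $\gcd(2 \cdot 47 + 8,\ 2 \cdot 53 + 8) = \gcd(102, 114) = 6$; this makes them exactly the seeds demanded by Theorem~\ref{thm:PENT-3-r-w-constructed-large} for covering all sufficiently large admissible $r$.
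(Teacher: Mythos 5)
Your proposal is correct and takes essentially the same route as the paper, whose entire proof reads that each geometry ``is created from a connected $(7,4)$-graph with $2r+8$ vertices and suitable neighbourhood intersection sizes'' with the explicit data deferred to the Appendix; your elaboration (cyclic development $x \mapsto x + d \adfmod{v}$, Fano planes as opposite designs, hill-climbed non-opposite lines, the $\{0,1,3,7\}$ neighbourhood-intersection constraint from Lemma~\ref{lem:deficiency-graph}, and the seed conditions for Theorem~\ref{thm:PENT-3-r-w-constructed-large}) is exactly the paper's implicit method made explicit. You correctly identify that the proof's genuine content is the exhibited data; note only that the paper's Appendix in fact lists a $\adfPENT(3,59,7)$ where the lemma says $r = 53$ (either value satisfies $r \equiv 2 \adfmod{3}$ and the gcd condition), a discrepancy internal to the paper rather than a flaw in your argument.
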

\begin{proof}
Each of these is created from a connected $(7,4)$-graph with $2r + 8$ vertices
and suitable neighbourhood intersection sizes. Details are provided in the appendix.
\end{proof}

%
\begin{theorem}
\label{thm:PENT-3-r-7-constructed}
For all admissible $r \ge 3396$, there exists a $\adfPENT(3,r,7)$ of type D
where each component of the deficiency graph has girth $4$.
\end{theorem}
\begin{proof}
This follows from Lemma~\ref{lem:PENT-3-r-7-direct} and
Theorem~\ref{thm:PENT-3-r-w-constructed-large} with  $(r_0, r_1, r_2) = (51,47,53)$.
\end{proof}

We can prove similar theorems for some larger values of $w$.
%
\begin{lemma}
\label{lem:PENT-3-r-w-direct}
There exists a $\adfPENT(3,r,w)$ with a connected deficiency graph of girth $4$ for
\begin{align*}
(r,w) \in \{  &(246,  13), (248,  13), (254,  13),
               (333,  15), (55,   15), (61,   15), \\
              &(561,  19), (563,  19), (569,  19),
               (24,   21), (64,   21), (73,   21), \\
              &(1452, 25), (1454, 25), (1484, 25),
                           (31,   27), (217,  27), \\
              &
                           (310,  33), (316,  33),
               (45,   39), (136,  39), (505,  39)  \}.
\end{align*}
There exists a $\adfPENT(3,r,w)$ with a connected deficiency graph for
\begin{align*}
(r,w) \in \{  (1389, 27), (2550, 33) \}.
\end{align*}
\end{lemma}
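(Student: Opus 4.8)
The plan is to treat the lemma as a list of \emph{direct} constructions, exactly in the spirit of the preceding block-size-$3$ lemmas, and to certify each listed geometry by exhibiting an explicit line set (relegated to the Appendix). Before constructing anything I would first confirm that every pair is admissible, i.e.\ $r \equiv 0$ or $w + 1 \adfmod{3}$ (Lemma~\ref{lem:v-b}), and record the combinatorial shape of the data: for each $w$ the three supplied $r$-values are arranged so that one is $\equiv 0 \adfmod{3}$ while the other two are $\equiv w + 1 \adfmod{3}$ with $\gcd(2r_1 + w + 1,\, 2r_2 + w + 1) = 6$. This is precisely the input that Theorem~\ref{thm:PENT-3-r-w-constructed-large} will later consume, so it is worth checking here; note that for $w = 27$ and $w = 33$ the $r \equiv 0 \adfmod{3}$ seed is exactly the ``connected only'' pair, $(1389,27)$ respectively $(2550,33)$.

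For each pair I would then pick one of two routes. The preferred route is Construction~\ref{con:PENT-k-r-w-constructed}. Whenever $w$ has a divisor $h$ with $3 \le h < w$ for which both an $S(2,3,h)$ and a $3$-GDD of type $h^{w/h}$ exist (Lemma~\ref{lem:k-GDD-existence}) and for which $v = 2r + w + 1 \equiv 0 \adfmod{h}$, I would start from a connected $(w/h, 5^+)$-graph on $v/h$ vertices and inflate by $h$. This automatically yields a connected, $w$-regular, girth-$4$ deficiency graph whose neighbourhood-intersection sizes lie in $\{0, h, w\}$, all legal STS orders, so Lemma~\ref{lem:deficiency-graph} is respected; the opposite designs then account for $b_{\mathrm{opp}}$ lines and the remaining $v r/3 - b_{\mathrm{opp}}$ non-opposite lines are supplied by hill climbing. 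The pairs with $w \in \{15, 21, 27, 33, 39\}$ and $v$ divisible by a usable $h$ are the natural candidates for this route.

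The remaining pairs cannot be reached by inflation: this includes $w \in \{13, 19, 25\}$, where $w$ is prime or where $S(2,3,h)$ fails for every proper divisor $h \ge 3$ (so the only admissible $h$ forces $w/h = 1$), together with composite cases such as $(333,15)$ where $v$ is divisible by no usable $h$. For these I would build a connected $(w,4)$-graph $D$ on $2r + w + 1$ vertices directly, the key constraint being Lemma~\ref{lem:deficiency-graph}: every pair-intersection $|N(x) \cap N(y)|$ must avoid the forbidden values $2$ and $4,5,6$, hence must be $0$, $1$, $3$, or a larger STS order. I would search over structured families (circulants, generalized Petersen-type graphs) for such a $D$, designate the neighbourhoods $N(x)$ as the point sets of the opposite designs, install an $S(2,3,w)$ on each, and finish by hill climbing the non-opposite lines. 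For $(1389,27)$ and $(2550,33)$, where only connectedness (not girth $4$) is claimed, I would allow whichever route is feasible and verify only that the resulting deficiency graph is connected.

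The hard part will not be the bookkeeping but the two search steps, neither backed by any a priori guarantee. First, hill climbing for the non-opposite lines may simply fail to terminate successfully; as the paper repeatedly stresses, there is no known way short of actually running it to prove it works, so each geometry must be validated by displaying its blocks. Second, for the prime-$w$ and $v$-indivisible cases the inflation shortcut is unavailable, so one must locate a connected $(w,4)$-graph of the exact prescribed order with an admissible neighbourhood-intersection profile; this delicate, essentially computational search is the real reason the list is finite and somewhat irregular, and it is where I expect the genuine effort to lie.
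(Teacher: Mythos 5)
Your plan agrees with the paper on roughly half the list: $(55,15)$ and $(61,15)$ do come from $(5,5^{+})$-graphs and $(310,33)$, $(316,33)$ from $(11,5^{+})$-graphs via Construction~\ref{con:PENT-k-r-w-constructed}; $(64,21)$ comes from the Hoffman--Singleton graph (Theorem~\ref{thm:PENT-k-r-w-from-Hoffman-Singleton}), which your inflation route with $h=3$ would also find; and the entries with $w\in\{13,19,25\}$ together with $(333,15)$ are, exactly as you predict, direct constructions from suitable $(w,4)$-graphs finished by hill climbing, since $w$ is prime, or $S(2,3,5)$ fails, or $v$ has no usable divisor. Your admissibility bookkeeping and your identification of $(1389,27)$ and $(2550,33)$ as the $r\equiv 0\adfmod{3}$ seeds for Theorem~\ref{thm:PENT-3-r-w-constructed-large} are also correct. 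A few entries you route through Construction~\ref{con:PENT-k-r-w-constructed} even come out exactly: for $(24,21)$ with $h=7$, $(31,27)$ with $h=9$ and $(45,39)$ with $h=13$ one checks $b_{\mathrm{opp}}=vr/3$, so no hill climbing is needed and your construction coincides in substance with the paper's.

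The genuine gap is that you never invoke Theorem~\ref{thm:Product-construction} or its special case Theorem~\ref{thm:Tripling-construction}, which is how the paper obtains $(24,21)$, $(73,21)$ from $\adfPENT(3,3)$, $\adfPENT(3,10)$ with $h=7$; $(45,39)$, $(136,39)$ with $h=13$; $(31,27)$, $(217,27)$ by tripling $\adfPENT(3,10,9)$ and $\adfPENT(3,72,9)$; and $(505,39)$ by tripling $\adfPENT(3,168,13)$. These multiply an entire \emph{geometry}, so every line (opposite and non-opposite) is produced mechanically, connectedness is preserved and girth $4$ is guaranteed --- no search at all. Your substitute, inflating a bare $(w/h,5^{+})$-graph and hill-climbing the non-opposite lines, is quantitatively much weaker precisely on these entries: for $(73,21)$ with $h=7$ you would need $168\cdot 73/3-1344=2744$ hill-climbed lines; for $(136,39)$ with $h=13$ about $9{,}464$; for $(217,27)$, where $h=9$ is unavailable since $9\nmid 462$, about $16{,}632$; and for $(505,39)$, where $13\nmid 1050$ forces $h=3$, about $94{,}500$. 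This is exactly the large-$v$ regime in which the paper stresses hill climbing carries no guarantee and empirically fails. Finally, $(1389,27)$ and $(2550,33)$ are not constructed in this paper at all but cited from \cite[Lemma 3.4]{ForbesRutherford2022}; your proposal to build them ``by whichever route is feasible'' at $v=2806$ and $v=5134$ runs into the same obstruction, so without either that citation or the product/tripling machinery these two entries, and plausibly the four listed above, are out of reach of your method.
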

\begin{proof}
The $\adfPENT(3,55,15)$ and the $\adfPENT(3,61,15)$ were created from $(5,5^+)$-graphs using Construction~\ref{con:PENT-k-r-w-constructed};
details are in the appendix.

For the $\adfPENT(3,24,21)$ and $\adfPENT(3,73,21)$, use Theorem~\ref{thm:Product-construction} with
the $\adfPENT(3,3)$ and the $\adfPENT(3,10)$ of Lemma~\ref{lem:PENT-3-r-3-direct}.
For the $\adfPENT(3,64,21)$, see Theorem~\ref{thm:PENT-k-r-w-from-Hoffman-Singleton}.

For the $\adfPENT(3,31,27)$ and $\adfPENT(3,217,27)$, use the tripling construction, Theorem~\ref{thm:Tripling-construction}, with
the $\adfPENT(3,10,9)$ of Lemma~\ref{lem:PENT-3-r-9-direct} and
the $\adfPENT(3,72,9)$ of Lemma~\ref{lem:PENT-3-72-9-direct}.

For the $\adfPENT(3,1389,27)$ and $\adfPENT(3,2550,33)$, see \cite[Lemma 3.4]{ForbesRutherford2022}.

The $\adfPENT(3,310,33)$ and $\adfPENT(3,316,33)$ were created from $(11,5^+)$-graphs using Construction~\ref{con:PENT-k-r-w-constructed};
details are in the appendix.

For the $\adfPENT(3,45,39)$ and $\adfPENT(3,136,39)$, use Theorem~\ref{thm:Product-construction} with $h = 13$ and
the $\adfPENT(3,s)$, $s \in \{3, 10\}$ of Lemma~\ref{lem:PENT-3-r-3-direct}.
For the $\adfPENT(3,505,39)$, use Theorem~\ref{thm:Tripling-construction} with
the $\adfPENT(3,168,13)$ of \cite[Lemma 3.4]{ForbesRutherford2022}.

The remaining geometries were created from suitable $(w,4)$-graphs, as described in Lemma~\ref{lem:PENT-3-r-3-direct};
details are in the appendix.
\end{proof}
%
\begin{theorem}
\label{thm:PENT-3-r-13-15-19-21-25-27-33-39}
For each $w \in \{13, 15, 19, 21, 25, 27, 33, 39\}$,
for all admissible $r \ge r_{w}$, where
\begin{center}
\begin{tabular}{llll}
$r_{13} = 68873$, &
$r_{15} = 6018$, &
$r_{19} = 336944$, &
$r_{21} = 7009$, \\
$r_{25} = 2209439$, &
$r_{27} = 16503$, &
$r_{33} = 120378$, &
$r_{39} = 84079$,
\end{tabular}
\end{center}
there exists a $\adfPENT(3,r,w)$ of type D.
\end{theorem}
\begin{proof}
The theorem follows from Lemma~\ref{lem:PENT-3-r-w-direct} and Theorem~\ref{thm:PENT-3-r-w-constructed-large} with parameters
$(w, r_0, r_1, r_2)$ =
(13, 246, 248, 254),
(15, 333, 55, 61),
(19, 561, 563, 569),
(21, 24, 64, 73),
(25, 1452, 1454, 1484),
(27, 1389, 31, 217),
(33, 2550, 310, 316) and
(39, 45, 136, 505).
Since $t, u > 0$ in (\ref{eqn:PENT-3-r-w-constructed}) there is always a component with girth 4
in the deficiency graph of the constructed geometry.
Recall that $r$ is admissible if $r \equiv 0$ or $w + 1 \adfmod{3}$.
\end{proof}

%

\section{Block sizes 4, 5, 6 and 7}\label{sec:Block sizes 4, 5, 6 and 7}
For $k \in \{4, 5\}$, the number of points in a generalized pentagonal geometry $\adfPENT(k,r,w)$ is $(k - 1)r + w + 1$,
the parameter $w$ must satisfy $w \equiv 1$ or $k \adfmod{k(k - 1)}$, and the admissibility condition is
$r \equiv 0$ or $w + 1 \adfmod{k}$.

Here is a type-A geometry that arrived too late to be included in \cite{ForbesRutherford2022}.
%
\begin{lemma}
\label{lem:PENT-4-r-13-direct}
There exists a $\adfPENT(4,168,13)$ with a connected deficiency graph of girth $5$.
\end{lemma}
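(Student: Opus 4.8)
The plan is to exhibit the geometry explicitly and to verify the defining properties directly, since for block size $k = 4$ no general existence machinery is available (cf.\ Construction~\ref{con:PENT-k-r-w-constructed}, where hill climbing is offered only for $k = 3$). First I record the parameters. By Lemma~\ref{lem:v-b} the geometry has $v = (k-1)r + w + 1 = 3\cdot 168 + 14 = 518$ points and $vr/k = 21756$ lines. Since $w(w-1)/(k-1) = 13\cdot 12/3 = 52$, Lemma~\ref{lem:r >= w(w-1)/(k-1)} shows that a girth-$5$ example must have $e = r - 52 = 116 > 0$, so it carries $vw(w-1)/(k(k-1)) = 6734$ opposite lines and $ev/k = 15022$ non-opposite lines. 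The deficiency graph $D$ must be a connected $(13,5)$-graph on $518$ vertices, and each neighbourhood $N_D(x)$ must be the point set of an $S(2,4,13)$ whose blocks are opposite lines; such a design exists, being the projective plane $\mathrm{PG}(2,3)$, so the target parameters are not obstructed.

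The construction I would use is cyclic. Work in $\mathbb{Z}_{518}$ and seek $84$ base blocks of size $4$ which, developed by the map $x \mapsto x + 2 \adfmod{518}$, produce the $21756$ lines. Because $+2$ has order $259$ in $\mathbb{Z}_{518}$, each base block yields an orbit of $259$ lines and the group $\mathbb{Z}_{259}$ acts as automorphisms on the two residue classes of points; this symmetry is what makes the search tractable. A key structural reduction comes from Lemma~\ref{lem:distance-at-least-3-neighbours}: when $D$ has girth at least $5$, every point lies on exactly $s = e = 116$ non-opposite lines, and the non-opposite lines are precisely the cliques of a $K_4$-decomposition of the distance at least $3$ graph $E$, which is $(v - w^2 - 1) = 348$-regular. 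Thus once $D$ and its opposite lines are fixed, completing the geometry is equivalent to decomposing $E$ into $K_4$'s.

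Accordingly I would proceed in four steps. (1)~Choose a $\mathbb{Z}_{259}$-invariant connected $(13,5)$-graph $D$ on $\mathbb{Z}_{518}$ and verify its girth and connectivity. (2)~Impose a copy of $\mathrm{PG}(2,3)$ on each neighbourhood $N_D(x)$ to obtain the opposite lines; by Lemma~\ref{lem:pairwise disjoint opposite designs} the girth-$5$ condition guarantees that opposite designs of distinct points share no block and at most one point, so these $6734$ lines are automatically pairwise compatible as a partial linear space, and (since two vertices have a unique common neighbour) each distance-$2$ pair lies in exactly one opposite design. (3)~Form $E$ and find a $\mathbb{Z}_{259}$-invariant $K_4$-decomposition of it, yielding the $15022$ non-opposite lines, which cover exactly the distance at least $3$ pairs and so never clash with the opposite lines. (4)~Check that the union of all lines is a partial linear space with the correct point and line degrees, so that by Lemma~\ref{lem:deficiency-graph} the structure is the desired $\adfPENT(4,168,13)$. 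Presenting the resulting $84$ base blocks in the paper's standard format then constitutes the proof, with the verification being a finite computation.

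The hard part will be step~(3): simultaneously arranging for $D$ to have neighbourhoods that carry $\mathrm{PG}(2,3)$ \emph{and} for the induced distance at least $3$ graph $E$ to admit a $K_4$-decomposition. For $k = 3$ the analogous packing can be attempted by hill climbing, but for $k = 4$ there is no such fallback, so the cyclic symmetry must be exploited to reduce the $K_4$-packing to a system of difference conditions over $\mathbb{Z}_{518}$ small enough to resolve by computer search. Fixing a connection set whose $E$ fails to be $K_4$-decomposable wastes the entire search, so most of the effort goes into selecting $D$ well.
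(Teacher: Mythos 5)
Your proposal takes essentially the same route as the paper: the published proof consists precisely of an explicit list of $84$ base blocks of size $4$ developed by $x \mapsto x + 2 \adfmod{518}$, i.e.\ the cyclic construction with orbits of length $259$ under a $\mathbb{Z}_{259}$-action that you describe, with connectivity and girth $5$ of the deficiency graph checked computationally. Your parameter bookkeeping ($v = 518$, $21756$ lines splitting into $6734$ opposite and $15022$ non-opposite ones, the distance at least $3$ graph $E$ being $348$-regular with the non-opposite lines forming a $K_4$-decomposition of $E$, and the opposite designs being copies of $S(2,4,13)$ pairwise sharing at most one point by Lemma~\ref{lem:pairwise disjoint opposite designs}) is correct and matches exactly what the paper's data realizes; the only difference is that the paper actually exhibits the blocks, which your plan defers to the computer search.
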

\begin{proof}

{\boldmath $\adfPENT(4,168,13)$}, $d = 2$:
{11, 17, 42, 455;  11, 87, 179, 471;  11, 135, 343, 476;  11, 155, 265, 517;  17, 87, 343, 517;  17, 135, 265, 471;  17, 155, 179, 476;  42, 87, 265, 476;  42, 135, 179, 517;  42, 155, 343, 471;  87, 135, 155, 455;  179, 265, 343, 455;  455, 471, 476, 517;  2, 48, 64, 421;  2, 99, 340, 432;  2, 176, 384, 502;  2, 254, 364, 508;  48, 99, 384, 508;  48, 176, 364, 432;  48, 254, 340, 502;  64, 99, 364, 502;  64, 176, 340, 508;  64, 254, 384, 432;  99, 176, 254, 421;  340, 364, 384, 421;  421, 432, 502, 508;  261, 432, 363, 134;  327, 462, 113, 185;  74, 509, 23, 434;  236, 251, 60, 409;  213, 437, 476, 334;  420, 162, 211, 267;  312, 18, 413, 116;  509, 156, 299, 260;  494, 193, 185, 102;  276, 250, 154, 209;  490, 461, 200, 361;  458, 351, 321, 508;  512, 274, 399, 373;  358, 212, 250, 195;  0, 491, 200, 166;  451, 446, 23, 224;  504, 398, 431, 29;  179, 65, 125, 4;  323, 66, 89, 296;  267, 80, 319, 47;  238, 219, 3, 179;  263, 423, 297, 394;  213, 203, 496, 493;  172, 115, 112, 355;  218, 437, 98, 102;  412, 321, 215, 213;  126, 226, 337, 315;  305, 151, 342, 34;  466, 475, 444, 89;  121, 444, 205, 408;  394, 404, 210, 149;  134, 162, 421, 69;  455, 408, 462, 83;  0, 7, 232, 240;  0, 2, 21, 71;  0, 1, 314, 366;  0, 14, 77, 289;  0, 13, 140, 409;  0, 25, 214, 359;  0, 72, 423, 505;  0, 119, 313, 465;  0, 65, 159, 284;  0, 73, 216, 246;  0, 131, 154, 367;  0, 79, 323, 337;  0, 401, 439, 443;  0, 56, 123, 404;  0, 40, 247, 417;  0, 66, 151, 160;  0, 181, 403, 469;  0, 82, 281, 428;  0, 102, 250, 407;  0, 133, 282, 447;  0, 215, 251, 483;  0, 32, 203, 306;  0, 147, 267, 295;  0, 132, 333, 473;  0, 303, 399, 503}.
The deficiency graph is connected and has girth 5.
%
\end{proof}

As we indicated in Section~\ref{sec:Introduction}, small type-A pentagonal geometries with block size 5 are not common.
The only ones we are aware of are the five $\adfPENT(5,r)$ that occur in \cite[Lemma 4.4]{Forbes2020P},
where in each case $r \equiv 0 \adfmod{5}$.
Here, we expand the collection by presenting one more as well as five further examples with $r$ belonging to the other admissible residue class, $r \equiv 1 \adfmod{5}$.
%
\begin{lemma}
\label{lem:PENT-5-r-5-direct}
There exists a pentagonal geometry $\adfPENT(5,r)$ with a connected deficiency graph of girth at least $5$ for
$$r \in \{20, 21, 25, 26, 30, 31, 35, 36, 40, 41, 45\}.$$
\end{lemma}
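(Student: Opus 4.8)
The plan is to adapt the direct method used for $\adfPENT(3,r)$ in Lemma~\ref{lem:PENT-3-r-3-direct} to block size $5$, exploiting the fact that for a pentagonal geometry we have $w = k = 5$, so each opposite design $x^\mathrm{opp}$ is a single line, namely the neighbourhood $N_D(x)$ in the deficiency graph $D$. Every value $r \in \{21,26,31,36,41,45\}$ satisfies $r \equiv 0$ or $1 \adfmod 5$, and by Lemma~\ref{lem:v-b} the geometry has $v = 4r + 6$ points, which is even as required for a $5$-regular graph.

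First I would, for each of the six values of $r$, locate a connected $(5,5^+)$-graph $D$ on $v = 4r + 6$ vertices, preferably one admitting a cyclic automorphism $x \mapsto x + d \adfmod v$ so that the resulting geometry can be recorded compactly in the standard base-block format used throughout the paper. The point set is $V(D) = \{0,1,\dots,v-1\}$, and the $v$ opposite lines are the neighbourhoods $N_D(x)$. These are automatically legitimate lines: triangle-freeness makes each $N_D(x)$ independent in $D$, so its five points are mutually collinear, and girth at least $5$ guarantees via Lemma~\ref{lem:pairwise disjoint opposite designs} that any two opposite lines meet in at most one point. By Lemma~\ref{lem:r >= w(w-1)/(k-1)} we have $e = r - 5$, so there are exactly $v$ opposite lines and $(r-5)v/5$ non-opposite lines still to be supplied.

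The remaining task is to cover, by non-opposite lines, precisely the pairs $\{x,y\}$ with $d_D(x,y) \ge 3$; these are the edges of the distance at least $3$ graph $E$ of Lemma~\ref{lem:distance-at-least-3-neighbours}. Since each non-opposite line must be a $K_5$ in $E$ and, by that lemma, the cliques through any vertex are edge-disjoint and exhaust its $E$-neighbourhood, what is needed is a decomposition of the edges of $E$ into copies of $K_5$. As the block size exceeds $3$, the triple-system hill climbing of Construction~\ref{con:PENT-k-r-w-constructed} does not apply, so I would instead seek this $K_5$-decomposition by a direct computer search, ideally respecting the cyclic symmetry of $D$ so that only base blocks need be found. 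Once a complete set of non-opposite lines is obtained, I would verify directly that every point lies on $r$ lines, that every pair of points is collinear in at most one line, and that the opposite of each point is the single line $N_D(x)$, thereby confirming a $\adfPENT(5,r)$ whose deficiency graph is $D$, connected and of girth at least $5$.

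The main obstacle is this filling step. Unlike the block-size-$3$ situation, there is no guaranteed method for decomposing $E$ into copies of $K_5$, and no a priori reason such a decomposition exists for a given $D$; as noted after Construction~\ref{con:PENT-k-r-w-constructed}, for $k \ge 4$ the non-opposite lines must be assembled by means other than standard hill climbing. Consequently the lemma can only be established by exhibiting, for each of the six values of $r$, an explicit geometry found by computer and then verified, rather than by a uniform argument; the search may also require trying several candidate graphs $D$ before one with a $K_5$-decomposable distance at least $3$ graph is found.
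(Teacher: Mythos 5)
Your proposal matches the paper's proof in approach: the paper establishes the lemma by exhibiting, for each $r \in \{21, 26, 31, 36, 41, 45\}$, an explicit computer-found $\adfPENT(5,r)$ in the standard base-block format (developed by $x \mapsto x + 2 \adfmod{4r+6}$, with a couple of short orbits), whose deficiency graph is verified to be connected with girth $5$ (girth $6$ for $r = 36$), exactly the scheme you describe of taking opposite lines as neighbourhoods in a $(5,5^{+})$-graph and supplying the $(r-5)v/5$ non-opposite lines by a symmetry-respecting search. Your analysis via Lemmas~\ref{lem:pairwise disjoint opposite designs}, \ref{lem:r >= w(w-1)/(k-1)} and \ref{lem:distance-at-least-3-neighbours}, including the observation that the filling step amounts to a $K_5$-decomposition of the distance at least $3$ graph and admits no uniform guarantee, is sound and correctly identifies why the proof must rest on explicit verified examples.
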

\begin{proof}

{\boldmath $\adfPENT(5,21)$}, $d = 2$:
{1, 2, 53, 75, 88;  0, 11, 16, 38, 81;  0, 27, 31, 33, 57;  0, 25, 39, 46, 87;  0, 13, 24, 47, 59;  0, 10, 29, 58, 70;  0, 5, 28, 37, 45;  0, 6, 14, 21, 40;  0, 18, 36, 54, 72;  1, 19, 37, 55, 73}.
The last 2 blocks represent short orbits.
The deficiency graph is connected and has girth 5.
Identifying code: $\{x: x \equiv 0 \text{~or~} 2 \adfmod{6}, 0 \le x < 90\}$,
$\pi_0 = 0$, $\pi_1 = 30$, $\pi_2 = 60$.

{\boldmath $\adfPENT(5,26)$}, $d = 2$:
{15, 43, 47, 52, 58;  41, 64, 68, 71, 96;  0, 1, 60, 100, 102;  0, 21, 35, 41, 86;  0, 5, 54, 77, 90;  0, 12, 31, 81, 84;  0, 16, 33, 34, 64;  0, 29, 37, 53, 71;  0, 39, 91, 93, 103;  0, 13, 49, 75, 96;  0, 22, 44, 66, 88;  1, 23, 45, 67, 89}.
The last 2 blocks represent short orbits.
The deficiency graph is connected and has girth 5.

{\boldmath $\adfPENT(5,30)$}, $d = 2$:
{1, 2, 87, 113, 124;  0, 11, 14, 40, 117;  90, 63, 58, 22, 115;  62, 99, 8, 72, 50;  0, 6, 19, 24, 80;  0, 16, 33, 50, 98;  0, 15, 53, 60, 107;  0, 9, 30, 69, 73;  0, 7, 38, 83, 101;  0, 20, 55, 71, 79;  0, 8, 29, 31, 75;  1, 7, 29, 85, 97}.
The deficiency graph is connected and has girth 5.
Identifying code: $\{x: x \equiv 0 \text{~or~} 2 \adfmod{6}, 0 \le x < 126\}$,
$\pi_0 = 0$, $\pi_1 = 42$, $\pi_2 = 84$.

{\boldmath $\adfPENT(5,31)$}, $d = 2$:
{54, 65, 76, 81, 101;  30, 47, 50, 66, 85;  104, 108, 57, 45, 98;  108, 121, 15, 123, 21;  0, 1, 61, 74, 98;  1, 9, 41, 75, 89;  0, 29, 30, 64, 115;  0, 12, 40, 99, 109;  0, 8, 31, 70, 88;  0, 3, 7, 75, 92;  0, 9, 14, 63, 86;  0, 21, 39, 46, 48;  0, 26, 52, 78, 104;  1, 27, 53, 79, 105}.
The last 2 blocks represent short orbits.
The deficiency graph is connected and has girth 5.

{\boldmath $\adfPENT(5,36)$}, $d = 2$:
{1, 2, 11, 105, 148;  0, 25, 46, 127, 140;  15, 32, 64, 127, 106;  135, 14, 103, 52, 138;  24, 146, 95, 8, 15;  0, 5, 102, 110, 116;  0, 20, 61, 72, 126;  0, 22, 49, 80, 145;  0, 31, 43, 47, 84;  0, 18, 68, 91, 135;  0, 15, 17, 88, 125;  0, 33, 55, 69, 131;  1, 7, 27, 35, 85;  0, 29, 36, 93, 111;  0, 30, 60, 90, 120;  1, 31, 61, 91, 121}.
The last 2 blocks represent short orbits.
The deficiency graph is connected and has girth 5.
Identifying code: $\{x: x \equiv 0 \text{~or~} 2 \adfmod{6}, 0 \le x < 150\}$,
$\pi_0 = 0$, $\pi_1 = 50$, $\pi_2 = 100$.

{\boldmath $\adfPENT(5,41)$}, $d = 2$:
{26, 79, 144, 149, 157;  14, 21, 22, 92, 151;  62, 131, 143, 47, 134;  16, 167, 39, 57, 124;  35, 78, 60, 151, 155;  133, 1, 53, 62, 58;  0, 1, 15, 47, 141;  0, 37, 61, 143, 159;  0, 29, 51, 74, 163;  0, 39, 65, 67, 116;  0, 14, 56, 101, 111;  0, 2, 21, 27, 135;  0, 3, 58, 107, 138;  0, 6, 30, 46, 63;  0, 11, 28, 64, 150;  0, 10, 22, 60, 104;  0, 34, 68, 102, 136;  1, 35, 69, 103, 137}.
The last 2 blocks represent short orbits.
The deficiency graph is connected and has girth 5.

{\boldmath $\adfPENT(5,45)$}, $d = 2$:
{54, 57, 101, 119, 132;  68, 86, 91, 97, 130;  7, 64, 18, 85, 3;  79, 177, 80, 143, 150;  111, 184, 34, 17, 143;  31, 159, 179, 182, 184;  47, 77, 98, 172, 89;  0, 1, 49, 71, 85;  0, 43, 93, 95, 149;  0, 9, 37, 83, 123;  0, 41, 100, 151, 159;  0, 4, 17, 130, 137;  0, 52, 131, 141, 157;  0, 30, 75, 99, 118;  0, 10, 92, 117, 164;  0, 6, 34, 48, 87;  0, 15, 80, 146, 170;  0, 8, 20, 58, 84}.
The deficiency graph is connected and has girth 5.
Identifying code: $\{x: x \equiv 0 \text{~or~} 1 \adfmod{6}, 0 \le x < 186\}$,
$\pi_0 = 0$, $\pi_1 = 62$, $\pi_2 = 124$.

See \cite[Lemma 4.4]{Forbes2020P} for the others.
\end{proof}

Next, we have a small improvement on \cite[Theorem 12]{GriggsStokes2016}.
%
\begin{theorem}
There exist pentagonal geometries $\adfPENT(5,r)$ for all positive integers $r \equiv 1 \adfmod{5}$
except $r = 6$
and except possibly $r \in$ \{$11$, $16$, $66$, $96$, $116$\}. 
\end{theorem}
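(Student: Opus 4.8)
The plan is to realise the entire admissible class $r \equiv 1 \adfmod{5}$ from a single recursive construction, filling the groups of a uniform $5$-GDD with copies of the degenerate geometry, and then to patch the few remaining gaps using Lemma~\ref{lem:PENT-5-r-5-direct}. The degenerate pentagonal geometry $\adfPENT(5,1)$ exists because $(w-1)/(k-1) = 1$ when $k = w = 5$; it is the disjoint pair of lines $S(2,5,5)$ whose deficiency graph is $K_{5,5}$. First I would apply Theorem~\ref{thm:GDD-basic} with a single group type, taking $n = 1$, $r_1 = 1$, $v_1 = 4 \cdot 1 + 6 = 10$ and $u_1 = u$: a $5$-GDD of type $10^{u}$ then yields a $\adfPENT\bigl(5,\, u + (u-1)\cdot 6/4\bigr)$. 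This parameter is an integer exactly when $u$ is odd, and on writing $u = 2m + 1$ it collapses to $\adfPENT(5,\, 5m+1)$. So each $5$-GDD of type $10^{2m+1}$ hands us the target geometry with $r = 5m+1$, i.e.\ the whole class.

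The heart of the argument is then the existence of these GDDs via Lemma~\ref{lem:k-GDD-existence}(v), which provides a $5$-GDD of type $10^{u}$ for all odd $u \ge 5$ except $u \in \{5, 7, 15, 27, 33, 39, 47\}$. Converting through $r = (5u - 3)/2$, the construction covers all $r \equiv 1 \adfmod{5}$ with $r \ge 11$ apart from $r \in \{11, 16, 36, 66, 81, 96, 116\}$. The two boundary cases fall out at once: $u = 1$ (that is $r = 1$) is the degenerate geometry already produced, while $u = 3$ (that is $r = 6$) is unreachable here since a $5$-GDD needs at least five groups. The point that yields the advertised improvement on \cite[Theorem 12]{GriggsStokes2016} is that exactly one of the seven excluded values, $r = 36$ (the case of the unresolved $5$-GDD $10^{15}$), is supplied directly by Lemma~\ref{lem:PENT-5-r-5-direct}, so it leaves the exception list.

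It remains to account for the survivors. The value $r = 6$ is a genuine non-existence --- the single definite exception --- which I would cite rather than reprove. The six values $r \in \{11, 16, 66, 81, 96, 116\}$ correspond precisely to the $5$-GDD types $10^{5}, 10^{7}, 10^{27}, 10^{33}, 10^{39}, 10^{47}$ whose existence is open and for which no direct example is available, so they stay as the stated possible exceptions. The construction itself is routine; the main obstacle is really the bookkeeping --- confirming that the seven exceptions of Lemma~\ref{lem:k-GDD-existence}(v) map bijectively onto $\{11, 16, 36, 66, 81, 96, 116\}$, that precisely $r = 36$ is rescued by the direct geometry, and that $r = 6$ is genuinely impossible rather than merely missed by this particular method.
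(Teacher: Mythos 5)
Your proposal is correct, and it reaches the theorem by a genuinely different route from the paper. The paper's own proof is pure citation: it invokes \cite[Theorem 12]{GriggsStokes2016} for all admissible $r \notin \{36,56,86\}$, \cite[Theorem 4.1]{Forbes2020P} for $r \in \{56,86\}$, and Lemma~\ref{lem:PENT-5-r-5-direct} for the one new case $r = 36$. You instead rebuild the whole spectrum internally from Theorem~\ref{thm:GDD-basic}: filling a $5$-GDD of type $10^{u}$ ($u$ odd) with copies of the degenerate $\adfPENT(5,1)$ gives $\adfPENT\bigl(5,(5u-3)/2\bigr)$, and your arithmetic is right --- the seven open types $10^{5},10^{7},10^{15},10^{27},10^{33},10^{39},10^{47}$ of Lemma~\ref{lem:k-GDD-existence}(v) map bijectively onto $r \in \{11,16,36,66,81,96,116\}$, with $36$ then rescued by the new direct $\adfPENT(5,36)$. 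This is in fact the same construction underlying the cited Griggs--Stokes theorem (the paper remarks as much just after the theorem: their proof fills with the degenerate $\adfPENT(5,1)$ and so produces type E geometries, as yours does; the statement imposes no type restriction, so this is fine). What your version buys is self-containedness and a cleaner explanation of why $56$ and $86$ no longer appear as exceptions: Lemma~\ref{lem:k-GDD-existence}(v) already incorporates the $5$-GDDs of types $10^{23}$ and $10^{35}$ (via \cite[Lemma 4.1]{Forbes2020P} and inflation), so these cases are absorbed by the one uniform construction rather than patched by a separate citation as in the paper. What the paper's version buys is brevity and avoidance of redundant reproof. The only external fact you cannot avoid citing is the genuine non-existence of $\adfPENT(5,6)$ (from \cite{GriggsStokes2016}), which you correctly flag: your construction merely fails to reach $r = 6$ (a $5$-GDD needs at least five groups), and that alone would only make $6$ a possible exception, not a definite one.
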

\begin{proof}
For $r \not\in \{36, 56, 81, 86\}$, see \cite[Theorem 12]{GriggsStokes2016}.
For $r \in \{36, 56, 81, 86\}$,
proceed as in the proof of \cite[Theorem 12]{GriggsStokes2016} using
5-GDDs of types $10^{u}$, $u \in \{15, 23, 33, 35\}$ from Lemma~\ref{lem:k-GDD-existence}.
\end{proof}
The construction in \cite[Theorem 12]{GriggsStokes2016} uses the degenerate $\adfPENT(5,1)$ and therefore it produces type-E geometries.
However, with Lemma~\ref{lem:PENT-5-r-5-direct} we  have enough material for a proof of the following.
\begin{theorem}
\label{thm:PENT-5-r-constructed}
There exist pentagonal geometries $\adfPENT(5,r)$ with deficiency graphs of girth at least $5$ for
all $r \ge 200000$, $r \equiv 0$ or $1 \adfmod{5}$.
\end{theorem}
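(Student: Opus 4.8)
The plan is to produce every sufficiently large admissible geometry by filling the groups of uniform $5$-GDDs with the girth-$\ge 5$ seeds now available, i.e.\ by Theorem~\ref{thm:GDD-basic}; since for a pentagonal geometry we must keep $w=k=5$, the product and tripling constructions are unusable (they replace $w$ by $hw$), so Theorem~\ref{thm:GDD-basic} is the only tool. Its decisive feature is that the deficiency graph girth of the output is the minimum of the girths of the ingredients, so feeding it only girth-$\ge 5$ seeds yields (disconnected, hence type~B) $\adfPENT(5,r)$ whose deficiency graphs again have girth at least $5$. The admissible residues are $r\equiv 0$ or $1\adfmod{5}$, and Lemma~\ref{lem:PENT-5-r-5-direct} together with the five geometries of \cite[Lemma 4.4]{Forbes2020P} supply girth-$\ge 5$ seeds in both classes: $r\in\{20,25,30,35,40,45\}$ with $r\equiv 0$, and $r\in\{21,26,31,36,41\}$ with $r\equiv 1\adfmod{5}$.

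First I would record the one-seed recursion. If $\adfPENT(5,r_0)$ has girth at least $5$ and point count $v_0=4r_0+6$, then Theorem~\ref{thm:GDD-basic} applied to a $5$-GDD of type $v_0^{\,2m+1}$, with every group filled by the same seed, yields a girth-$\ge 5$ geometry $\adfPENT(5,\,r_0+m(2r_0+3))$. For a residue-$1$ seed we have $v_0\equiv 0\adfmod{10}$, so Lemma~\ref{lem:k-GDD-existence}(v) furnishes type $v_0^{\,u}$ for every odd $u\ge 5$ apart from finitely many exceptions; this gives a full arithmetic progression of step $2r_0+3$ lying inside the class $r\equiv 1\adfmod{5}$. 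A residue-$0$ seed has $v_0\equiv 6\adfmod{10}$, and there Lemma~\ref{lem:k-GDD-existence}(iv),(vi) deliver progressions lying inside the class $r\equiv 0\adfmod{5}$.

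A single seed yields only a single progression, so to cover a residue class cofinitely I would run the two-step argument of Theorem~\ref{thm:PENT-3-r-w-constructed-large}, now with block size $5$. Within $r\equiv 1\adfmod{5}$ the seeds $r_0=21$ and $r_0=26$ give steps $45=5\cdot 9$ and $55=5\cdot 11$, whose reduced steps $9$ and $11$ are coprime; applying Theorem~\ref{thm:GDD-basic} first with copies of one seed and then with copies of the other through mixed $5$-GDDs of types $90^{t}v_3^{1}$ and $110^{u}w^{1}$, and invoking the Chinese Remainder Theorem on the coprime reduced steps exactly as in Theorem~\ref{thm:PENT-3-r-w-constructed-large}, covers every sufficiently large $r\equiv 1\adfmod{5}$. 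Mixing one residue-$0$ seed with a residue-$1$ seed sends outputs into the class $r\equiv 0\adfmod{5}$, and a second coprime pair of residue-$0$ steps then covers that class likewise. The finitely many small $r$ and the finitely many GDD exceptions in Lemma~\ref{lem:k-GDD-existence}(v),(vi) are absorbed by ``sufficiently large''.

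The step I expect to be the real obstacle is the supply of the mixed $5$-GDDs $v_1^{u_1}v_2^{u_2}$ that drive the CRT sweep. For block size $3$ these came for free from Lemma~\ref{lem:k-GDD-existence}(ii), but here Lemma~\ref{lem:k-GDD-existence} provides essentially only uniform $5$-GDDs, and the two coprime steps cannot be extracted from uniform types alone: the finite union of the single progressions available from these seeds has density below $1$, so no combination of one-seed recursions is cofinite. I would dispatch this either by constructing the required mixed types by inflation---replacing each point of a smaller coprime base by $f$ points and each block by a $5$-GDD of type $f^{5}$, available from Lemma~\ref{lem:k-GDD-existence}(iv) whenever $f\notin\{2,3,6,10\}$---or by citing the general existence theory for $5$-GDDs of type $g^{u}m^{1}$. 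Once such mixed $5$-GDDs are in hand the remaining counting is routine and mirrors the proof of Theorem~\ref{thm:PENT-3-r-w-constructed-large} line for line.
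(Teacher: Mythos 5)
You have correctly isolated the crux---with $w=k=5$ the product and tripling constructions are unavailable, uniform $5$-GDDs filled with a single seed yield only finitely many arithmetic progressions of total density below $1$, and so mixed-type $5$-GDDs are indispensable---but your proof stands or falls on supplying those mixed types, and neither of your two proposed remedies works as stated. Uniform inflation (weight every point by $f$ and overlay blocks with $5$-GDDs of type $f^5$) preserves uniformity: from a uniform base you only ever get uniform types, and to inflate with \emph{non-uniform} weights on a base with block size $5$ you would need ingredient $5$-GDDs of types such as $f^4e^1$, which is precisely the class of mixed designs you are trying to manufacture, so the argument is circular. Your fallback of ``citing the general existence theory for $5$-GDDs of type $g^um^1$'' steps outside the paper's toolkit---Lemma~\ref{lem:k-GDD-existence} supplies only uniform types apart from the two sporadic designs $10^{10}18^1$ and $10^{10}30^1$ in part (vii)---and is left entirely unverified: you would still have to check that the specific types $90^t x^1$ and $110^u y^1$ your CRT sweep demands, for all the required parameters, are actually covered by known results.

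The paper fills exactly this hole with a construction your sketch does not contain, and it is worth seeing why it sidesteps the circularity: the base design is an $11$-GDD of type $q^{11}$ ($q$ an odd multiple of $11$, $q\ge 1937$, from $9$ MOLS via Lemma~\ref{lem:k-GDD-existence}(x)), whose blocks have size $11$---matching the \emph{group count} of the available mixed ingredients $10^{10}d^1$, $d\in\{10,18,30\}$. Weighting ten groups uniformly by $10$ and the eleventh group by weights $d_1,\dots,d_q\in\{10,18,30\}$ summing to $m=v-100q$ produces a $5$-GDD of type $(10q)^{10}m^1$; refilling the big groups with uniform $110^{q/11}$ and the last with uniform $h^{m/h}$ ($h=86$ or $90$ by residue) yields the mixed type $110^{10q/11}h^{m/h}$, which Theorem~\ref{thm:GDD-basic} then fills with the type~A seeds $\adfPENT(5,26)$ and $\adfPENT(5,20)$ or $\adfPENT(5,21)$. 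The number-theoretic work is a CRT choice of $q$ in the window $\lceil v/129\rceil\le q\le\lfloor v/111\rfloor$ forcing $q\equiv 0\adfmod{11}$ and $h\mid m$, plus the representation of $m$ as a sum of $q$ elements of $\{10,18,30\}$ guaranteed by $11q\le m\le 29q$. So your high-level strategy (GDD-basic with girth-$5$ seeds, two coprime steps, CRT) matches the paper's in spirit, but the decisive step---manufacturing the mixed $5$-GDD from higher block size $11$ so that the only mixed ingredients needed are the two designs of Lemma~\ref{lem:k-GDD-existence}(vii)---is the missing idea, and without it the proof is incomplete.
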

\begin{proof}
Let $\rho = 0$ or 1 and let $r \ge 200000$ be an integer that is congruent to $\rho$ modulo 5.
We show how to assemble a type-B $\adfPENT(5,r)$ using type-A pentagonal geometries
$\adfPENT(5, s)$, $s \in \{20, 21, 26\}$ from Lemma~\ref{lem:PENT-5-r-5-direct}.
We refer to Lemma~\ref{lem:k-GDD-existence} for the existence of the various group divisible designs employed in the construction.

Let $v = 4r + 6$ and note that
\begin{equation}\label{eqn:PENT-5-r-v}
v \equiv 6 + 4 \rho \adfmod{20}.
\end{equation}
Let $h= 86 + 4 \rho = 86$ or 90. Let $q_\mathrm{min} = \lceil v/129 \rceil$, $q_\mathrm{max} = \lfloor v/111 \rfloor$, and
choose $q$ such that
\begin{adfenumerate}
\item[] $q$ is an odd integer in the range $q_\mathrm{min} \le q \le q_\mathrm{max}$,
\item[] $q \equiv 0 \adfmod{11}$ and
\item[] $v - 100q \equiv 0 \adfmod{h}$.
\end{adfenumerate}
This choice is possible by (\ref{eqn:PENT-5-r-v}) and the Chinese Remainder Theorem since
$\gcd(11,h) = 1$ and $q_\mathrm{max} - q_\mathrm{min}$ is sufficiently large.
Let
\begin{equation}\label{eqn:PENT-5-r-m-D}
m = v - 100q, \text{~~} D = \{10, 18, 30\},
\end{equation}
and recall that there exists a 5-GDD of type $10^{10} d^1$ for each $d \in D$.
Since $q \ge 1937$ there exists an 11-GDD of type $q^{11}$.
Also, from (\ref{eqn:PENT-5-r-v}) and (\ref{eqn:PENT-5-r-m-D}) we have
$m \equiv 2 \adfmod{4}$ and
\begin{equation}\label{eqn:PENT-5-r-M}
10q + 16 < 11q \le m \le 29q < 30q - 32.
\end{equation}
Hence $m$ can be represented as a sum of $q$ elements of $D$,
\begin{equation} \label{eqn:PENT-5-r-m}
m = d_1 + d_2 + \dots + d_q, ~~ d_1, d_2, \dots, d_q \in D.
\end{equation}

Inflate each point in ten groups of the 11-GDD by a factor of 10.
Inflate the $q$ points of the remaining group by factors $d_1$, $d_2$, \dots, $d_q$ as determined in (\ref{eqn:PENT-5-r-m}).
Overlay the blocks of the inflated 11-GDD with 5-GDDs of types $10^{10} d^1$, $d \in D$, as appropriate.
The result is a 5-GDD of type $(10q)^{10} m^1$.

Take this 5-GDD and overlay each of the ten groups that have $10q$ points with 5-GDDs of type $110^{q/11}$,
which exist because $q/11 \equiv 1 \adfmod{2}$ and $q/11 \ge 5$.
Overlay the group that has $m$ points with a 5-GDD of type $h^{b}$, where $b = m/h$.
For the existence of this 5-GDD, we have $h \equiv 0 \adfmod{2}$, $b \equiv 1 \adfmod{2}$, $b \ge 21$, and
if $h = 86$ then (\ref{eqn:PENT-5-r-v}) and (\ref{eqn:PENT-5-r-m-D}) together imply $b \equiv 1 \adfmod{10}$.

Thus we have created a 5-GDD of type $110^{10q/11} h^{m/h}$.
Now apply Theorem~\ref{thm:GDD-basic}, overlaying each group of size 86, 90 or 110 with
a type-A $\adfPENT(5, 20)$, $\adfPENT(5, 21)$ or $\adfPENT(5, 26)$, as appropriate.
The final result is a type-B pentagonal geometry with $110 \cdot 10q/11 + m = v$ points.
This is a $\adfPENT(5,r)$ with a deficiency graph of girth at least $5$.
\end{proof}

There is no type-A or type-B $\adfPENT(5,r)$ for $r \in \{1,5,6\}$.
With more work the lower bound for $r$ in Theorem~\ref{thm:PENT-5-r-constructed} can be improved.
If we make use of all known type-A $\adfPENT(5,s)$ as well as
all $q$ for which there exist 9 MOLS of side $q$, and
if in the construction, suitably modified to allow for even $q$,
we consider all $m$ that are sums of $q$ elements from $\{10, 18, 30\}$,
not just those in the range (\ref{eqn:PENT-5-r-M}),
we can reduce the lower bound from 200000 to 18431 and the number of possible exceptions to 1735.
However, the proof is rather messy and will not be presented here. 

Using the product construction, Theorem~\ref{thm:Product-construction},
with our current stock of type-A geometries with block sizes 4 and 5, namely
the $\adfPENT(4,r)$ from \cite{Forbes2020P},
the $\adfPENT(4,r,13)$ from \cite{ForbesRutherford2022},
the $\adfPENT(4,168,13)$ from Lemma~\ref{lem:PENT-4-r-13-direct}, and
the $\adfPENT(5,r)$ from Lemma~\ref{lem:PENT-5-r-5-direct},
we can generate infinitely many examples of type-C generalized pentagonal geometries.
If $h \equiv 1$ or $k \adfmod{k(k - 1)}$, we are guaranteed the existence of
the required Steiner system $S(2,k,h)$ by \cite{Hanani1961} and
the $k$-GDD of type $h^k$ by Lemma~\ref{lem:k-GDD-existence}.
For example, from the $\adfPENT(4,13)$ we obtain
$\adfPENT(4, 13h + (h - 1)/3, 4h)$, $h = 4$, 13, 16, 25, \dots.

As a further illustration of Theorem~\ref{thm:Product-construction},
assume $h \ge 31$, $h \equiv 1$ or $6 \adfmod{15}$
and that a Steiner system $S(2,6,h)$ exists, which will certainly be the case if
$h \notin $
\{36, 46,
51, 61, 81, 166, 226, 231, 256, 261, 286, 316, 321,
346, 351, 376, 406, 411, 436, 441, 471, 501, 561, 591,
616, 646, 651, 676, 771, 796, 801\} (see \cite{AdamsBryantBuchanan} and the references therein).
Since $h \ge 31$ there exists a 6-GDD of type $h^6$ by Lemma~\ref{lem:k-GDD-existence}.
Furthermore, there exists a $\adfPENT(6,7)$, \cite{BallBambergDevillersStokes2013}.
Hence, by Theorem~\ref{thm:Product-construction}, there exists a $\adfPENT(6,7h + (h - 1)/5, 6h)$.
The smallest of these, corresponding to $h = 31$, where the Steiner system is a projective plane of order 5,
is a $\adfPENT(6, 223, 186)$ with a connected deficiency graph of girth 4.

Geometries $\adfPENT(7,7)$ and $\adfPENT(3,21,7)$ were constructed from the Hoffman--Singleton graph in \cite{BallBambergDevillersStokes2013} and \cite{ForbesRutherford2022}, respectively.
Here are three more based on the same graph, including one with block size 7.
%
\begin{theorem}\label{thm:PENT-k-r-w-from-Hoffman-Singleton}
There exist generalized pentagonal geometries
$\adfPENT(3, 64, 21)$, $\adfPENT(4,57,28)$ and $\adfPENT(7,50,49)$,
where each has a connected deficiency graph of girth $4$.
\end{theorem}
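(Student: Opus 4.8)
The plan is to realize all three geometries as a single instance of Construction~\ref{con:PENT-k-r-w-constructed}, taking the input graph $C$ to be the Hoffman--Singleton graph. This graph is the unique $(7,5)$-graph on $50$ vertices; it is connected and, being a Moore graph, has diameter $2$. In each case I would set $h = k$, so that $C$ is required to be a $(w/h, 5^+)$-graph on $v/h$ vertices with $w/h = 7$ and $v/h = 50$. This forces $(k,w,v) = (3,21,150)$, $(4,28,200)$ and $(7,49,350)$, and then $r = (v - w - 1)/(k-1)$ equals $64$, $57$ and $50$, exactly the three target geometries. The hypotheses $w \ge h \ge k \ge 3$, $v \ge 10$ and $w, v \equiv 0 \adfmod{h}$ are immediate.

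Construction~\ref{con:PENT-k-r-w-constructed} also needs a Steiner system $S(2,k,h)$ and a $k$-GDD of type $h^{w/h}$. With $h = k$ the Steiner system is $S(2,k,k)$, a single block, and the group divisible design is a $k$-GDD of type $k^{7}$. For $k = 7$ this is the $7$-GDD of type $7^7$ of Lemma~\ref{lem:k-GDD-existence}(ix) and for $k = 4$ the $4$-GDD of type $4^7$ of Lemma~\ref{lem:k-GDD-existence}(iii). For $k = 3$ one needs a $3$-GDD of type $3^7$, which falls just outside the even-$g$ statement of Lemma~\ref{lem:k-GDD-existence}(i); I would either cite its existence directly from \cite{ColbournHoffmanRees1992} or, to keep the block-size-$3$ case self-contained, instead build $\adfPENT(3,64,21)$ by the tripling construction Theorem~\ref{thm:Tripling-construction} applied to the $\adfPENT(3,21,7)$ of \cite{ForbesRutherford2022}, whose deficiency graph is precisely the Hoffman--Singleton graph.

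The crux of the argument, and the step where the Moore property does all the work, is verifying that the construction terminates with no hill climbing and no extra non-opposite lines. The lines that would otherwise have to be assembled by hand correspond exactly to pairs $\{x,y\} \subseteq V(D)$ with $d_C(\adfhat{x}, \adfhat{y}) \ge 3$; every other collinear pair lies either in a single fibre $H(\adfhat{x})$, where it is covered by the $S(2,k,h)$, or (when $d_C(\adfhat{x}, \adfhat{y}) = 2$) in the $k$-GDD centred at the unique common neighbour, and is therefore an opposite-design line. Because the Hoffman--Singleton graph has diameter $2$ there are no pairs at $C$-distance at least $3$, so $\adfPENT$ completeness is automatic. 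Concretely, the closed form $b_\mathrm{opp} = v(w(w - h) + h(h-1))/(hk(k-1))$ evaluates to $3200$, $2850$ and $2500$ in the three cases, matching $vr/k$ exactly; hence the line set is complete.

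Finally I would record the deficiency graph: $D$ is the Hoffman--Singleton graph with each vertex inflated by $h$ and each edge replaced by a $K_{h,h}$. Connectedness of the Hoffman--Singleton graph gives connectedness of $D$, which is $w$-regular and, the edge replacement creating $4$-cycles, has girth $4$, as claimed. The only genuine obstacle is thus the identity $b_\mathrm{opp} = vr/k$, and I expect this to reduce entirely to the diameter-$2$ property of the Moore graph together with the routine arithmetic above.
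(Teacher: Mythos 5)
Your proposal is correct and follows essentially the same route as the paper: Construction~\ref{con:PENT-k-r-w-constructed} applied to the 50-vertex Hoffman--Singleton graph with $h = k$, the Steiner system $S(2,k,k)$ a single block, a $k$-GDD of type $k^7$, and the diameter-$2$ Moore property forcing $b_\mathrm{opp} = vr/k$ so that no non-opposite lines are needed --- your line counts $3200$, $2850$ and $2500$ agree with the paper's, and your girth-$4$ connectedness argument is the paper's as well. Your one genuine refinement is the observation that Lemma~\ref{lem:k-GDD-existence}(i) covers only even $g$ and so does not literally supply the $3$-GDD of type $3^7$ (which does exist, e.g.\ by \cite{ColbournHoffmanRees1992}); the paper glosses over this, and your fallback of tripling the $\adfPENT(3,21,7)$ of \cite{ForbesRutherford2022} via Theorem~\ref{thm:Tripling-construction} is an equally valid repair for that one case.
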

\begin{proof}
Apply Construction~\ref{con:PENT-k-r-w-constructed} with the 50-vertex Hoffman--Singleton $(7,5)$-graph, \cite{HoffmanSingleton1960}, and appropriate parameters.
In each case,
$h$ is the multiplier,
$v$ is the number of points,
$b$ is the number of lines,
the Steiner system $S(2,k,h)$ is just a single block, and
Lemma~\ref{lem:k-GDD-existence} asserts the existence of the required $k$-GDD of type $h^7$.
There are no non-opposite lines. Details are in the appendix.


\vskip 1mm
{\boldmath\bf $\adfPENT(3, 64, 21)$}: $h = k = 3$, $v = 150$, $b = 3200$.

{\boldmath\bf $\adfPENT(4, 57, 28)$}: $h = k = 4$, $v = 200$, $b = 2850$.

{\boldmath\bf $\adfPENT(7, 50, 49)$}: $h = k = 7$, $v = 350$, $b = 2500$.
\end{proof}

%

\section{Identifying codes for pentagonal geometries}\label{sec:Identifying codes}

Let $G$ be a graph and let $S$ be a subset of its vertex set.
We say that $S$ is an {\em identifying code} for $G$ if
the intersection of neighbourhoods with $S$, $N(x) \cap S$, $x \in V(G)$, are distinct.
A {\em minimum identifying code} for $G$ is an identifying code for $G$ that is as small as possible.
An identifying code for a linear space is an identifying code for its deficiency graph.

This is not the only way to define identifying codes.
We could have used closed neighbourhoods, $(N(x) \cup \{x\}) \cap S$,
or more generally $B_d(x) \cap S$, where $B_d(x)$ is the set of vertices at distance $d$ or less from $x$.
For directed graphs there is also the choice of in- and out-neighbourhoods, ordinary or closed,
\cite{KarpovskyChakrabarty1998}, \cite{LaifenfeldTrachtenberg2008}.
Furthermore, in the extension to linear spaces there are two other graphs that one might consider,
the collinearity graph, i.e.\ the complement of the deficiency graph, or
the incidence graph, where the vertices are the points and lines of the space, and
point $p$ is adjacent to line $\ell$ if $p$ is incident with $\ell$,
\cite{Araujo-PardoBalbuenaMontejanoValenzuela2011}.
However the definitions we have adopted seem to be appropriate for pentagonal geometries since
in this case the neighbourhood $N(x)$ of vertex $x$ in the deficiency graph is a line of the geometry, $x^\mathrm{opp}$.

An identifying code $S$ for $G$ creates a $1:1$ correspondence between $\{N(x) \cap S: x \in V(G)\}$ and $V(G)$.
Clearly, the existence of an identifying code requires the neighbourhoods to be distinct.
Since a complete multipartite graph with at least one part of size greater than 1 does not have distinct neighbourhoods,
a pentagonal geometry can have an identifying code only if the deficiency graph has girth at least 5.
However, it is possible for a generalized pentagonal geometry of type~C
to have an identifying code even though its deficiency graph has girth 4.

Let $S$ be an identifying code for a graph, $G$.
We define $\pi_i(S)$ to be the number of vertices $x$ of $G$ such that $|N(x) \cap S| = i$.
In particular, $\pi_0(S) = 0$ if the empty set is not a member of $\{N(x) \cap S: x \in V(G)\}$,
$\pi_0(S) = 1$ otherwise.
We write just $\pi_i$ instead of $\pi_i(S)$ if the identifying code is clear from the context.

%
\begin{lemma}
\label{lem:id-code-components}
Let $G$ be a graph with components $G_1$, $G_2$, \dots, $G_m$ having identifying codes $S_1$, $S_2$, \dots, $S_m$, respectively.
Suppose $\pi_0(S_i) = 0$ for $i = 1, 2, \dots, m$ with at most one exception.
Then $S = S_1 \cup S_2 \cup \dots \cup S_m$ is an identifying code of size $|S_1| + |S_2| + \dots + |S_m|$ for $G$.
Moreover, if $S_i$ is a minimum identifying code for $G_i$, $i = 1, 2, \dots, m$,
then $S$ is a minimum identifying code for $G$.
\end{lemma}
\begin{proof}
The first part follows since the vertex sets of the components are mutually disjoint and
the empty set occurs at most once amongst all of the neighbourhood intersections
$\{N(x) \cap S_i: x \in V(G_i)\}$, $i = 1, 2, \dots, m$.
For the second part, suppose $T$ is a smaller identifying code for $G$.
Then there exists $i$ such that $|V(G_i) \cap T| < |S_i|$, a contradiction.
\end{proof}

The following theorem and its proof are adapted from \cite{BallBambergDevillersStokes2013}.

%
\begin{theorem} \label{thm:id-code-lower-bound}
Suppose $G$ is a $w$-regular graph with $v$ vertices and $S$ is an identifying code for $G$.
Then
\begin{equation}\label{eqn:id-code-|S|-lower-bound}
|S| \ge \left\lceil \dfrac{2(v - 1)}{w + 1} \right\rceil.
\end{equation}
Moreover, if $|S| = 2(v - 1)/(w + 1)$, then
\begin{equation} \label{eqn:id-code-pi0-pi1-pi2}
\left\{\begin{array}{l}
\pi_0(S) = 1, ~~~~
\pi_1(S) = |S|, \\
\pi_2(S) = \dfrac{(v - 1)(w - 1)}{w + 1}, ~~~~
\pi_3(S) = \pi_4(S) = \dots = 0.
\end{array}\right.
\end{equation}
\end{theorem}
\begin{proof}
Recall that $\pi_i = \pi_i(S)$ is the number of vertices $x$ of $G$ such that $|N(x) \cap S| = i$.
We have $\pi_0 \le 1$, $\pi_1 \le |S|$,
\begin{equation}\label{eqn:id-code-sum-pi-i}
\sum_{i = 0}^w \pi_i = v \text{\;\;\;\; and \;\;\;\;} \sum_{i = 0}^w i \pi_i = |S| w.
\end{equation}
For the latter equality, observe that each $x \in S$ contributes 1 to the sum for each neighbour of $x$.
Therefore
\begin{equation} \label{eqn:id-code-|S|w - 2v}
|S|w - 2v = \sum_{i=0}^w (i - 2)\pi_i
          = -2 \pi_0 - \pi_1 + \sum_{i = 3}^w(i - 2)\pi_i \ge -2 - |S|
\end{equation}
from which (\ref{eqn:id-code-|S|-lower-bound}) follows.
If $|S| = 2(v - 1)/(w + 1)$, then (\ref{eqn:id-code-|S|w - 2v}) becomes an equality, which implies (\ref{eqn:id-code-pi0-pi1-pi2}).
\end{proof}

A {\em best possible identifying code} for a graph or a pentagonal geometry is
an identifying code that meets the bound (\ref{eqn:id-code-|S|-lower-bound}).
%
\begin{lemma}
\label{lem:id-code-cycle}
If $n \ge 3$ and $n \not\equiv 4 \adfmod{6}$, then the $n$-cycle graph $C_n$ has a best possible identifying code.

The cycle graph $C_{6q + 4}$, $q \ge 1$, has a minimum identifying code of size $4q + 3$;
$C_4$ has no identifying code.
\end{lemma}
\begin{proof}
The graph $C_4$ does not have distinct neighbourhoods.
Otherwise, we shall determine $\mu_n$, the size of a minimum identifying code for a $C_n$, $n \neq 4$.

It is not difficult to compute $\mu_3 = 2$ and $\mu_5 = 3$, both of which meet the bound (\ref{eqn:id-code-|S|-lower-bound}).

Assume the $C_n$ has vertex set $V = \{1, 2, \dots, n\}$ and let $n = 6q + d$, $q$ a positive integer.
In Table~\ref{tab:id-code-C-n} we represent as strings of zeros and ones the characteristic functions $V \rightarrow \{0,1\}$ of
identifying codes for the $C_{6q + d}$, $d = 0, 1, \dots, 5$.
From the table we see that $\mu_{6q + 4} \le 4q + 3$ and that
$$ \mu_{6q + d} = 4q + \lceil 2(d - 1)/3\rceil = \lceil 2(n - 1)/3 \rceil,~ d \in \{0,1,2,3,5\}, $$
an equality since the right-hand side meets the bound (\ref{eqn:id-code-|S|-lower-bound}).

\begin{table}[h]
\begin{center}
\begin{tabular}{lll}
$n = 6q$     & $\chi_S = $ 111100\,111100\,\dots111100\,      & $|S| = 4q$     \\
$n = 6q + 1$ & $\chi_S = $ 111100\,111100\,\dots111100\,0     & $|S| = 4q$     \\
$n = 6q + 2$ & $\chi_S = $ 111100\,111100\,\dots111100\,01    & $|S| = 4q + 1$ \\
$n = 6q + 3$ & $\chi_S = $ 111100\,111100\,\dots111100\,011   & $|S| = 4q + 2$ \\
$n = 6q + 4$ & $\chi_S = $ 111100\,111100\,\dots111100\,0111  & $|S| = 4q + 3$ \\
$n = 6q + 5$ & $\chi_S = $ 111100\,111100\,\dots111100\,10110 & $|S| = 4q + 3$
\end{tabular}
\end{center}
\caption{Identifying codes for $n$-cycles}
\label{tab:id-code-C-n}
\end{table}

It remains to prove that there is no identifying code of length $4q + 2$ for the $C_{6q + 4}$.
Assume $q \ge 2$ and suppose $S$, with characteristic function $\chi_S: V \rightarrow \{0,1\}$, is an identifying code of length $4q + 2$ for the $C_{6q + 4}$.
Observe that $4q + 2$ is the bound (\ref{eqn:id-code-|S|-lower-bound}) without the ceiling brackets;
therefore (\ref{eqn:id-code-pi0-pi1-pi2}) holds and hence
the neighbourhood intersections $N(x) \cap S$ must contain the empty set as well as $\{x\}$ for each $x \in S$.
There are two cases to consider.

Case 1.
Assume the empty set arises from $\chi_S$ as {\bf 000}.
That is, three consecutive vertices of the cycle are not in $S$.
A straightforward argument,
where it might help to observe that $S$ cannot contain more than four consecutive vertices of the cycle,
shows that $\chi_S$ in the vicinity of {\bf 000} must look like this:
$$ \dots 0 0 1 1 1 1 0 0 1 1 1 1 {\bf 0 0 0} 1 1 1 1 0 0 1 1 1 1 0 0 \dots.$$
By adding 6-blocks 001111 on the left and 6-blocks 111100 on the right,
we see that where two branches of this pattern meet we have
$$ \dots 1 1 1 1 0 0 1 1 1 1 0 0x0 0 1 1 1 1 0 0 1 1 1 1 \dots. $$
But it is impossible to replace $x$ with two 1s.

Case 2.
Assume the empty set arises as {\bf 010}.
Then, without loss of generality, $\chi_S$ in the vicinity of {\bf 010} looks like this:
$$ \dots 0 1 1 1 1 0 0 1 1 1 1 0 {\bf 0 1 0} 1 1 0 1 1 0 \dots.$$
Now we add 011110 on the left and 110110 on the right until the two branches meet:
$$ \dots  0 1 1 1 1 0 x 0 1 1 0 1 1 \dots.$$
The neighbourhood intersection corresponding to $x$ is $\{\}$, a contradiction.

Finally, a simplified version of the previous argument confirms that $C_{10}$ cannot have an identifying code of size 6.
\end{proof}

%
\begin{theorem}
\label{thm:id-code-PENT-2-r}
A pentagonal geometry $\adfPENT(2,r)$ with a connected deficiency graph has a best possible identifying code if
$r \not\equiv 1 \adfmod{6}$.
\end{theorem}
\begin{proof}
The deficiency graph is an $(r + 3)$-cycle and therefore the theorem follows from Lemma~\ref{lem:id-code-cycle}.
\end{proof}

%
\begin{lemma}
\label{lem:id-code-GP-graph}
Let $q$ and $t$ be integers such that $q \ge 5$ and $1 \le t < q/2$.
Then the generalized Petersen graph $\mathrm{GP}(q,t)$ has a best possible identifying code with $\pi_0 = 0$.
\end{lemma}
\begin{proof}
We may assume that the vertex set of the $\mathrm{GP}(q,t)$ is $Z_q \times Z_2$ and that its edges
consists of
$$\{(i,0), (i+1,0)\},\; \{(i,1), (i+t,1)\},\; \{(i,0), (i,1)\}, \;\;\; i \in Z_q.$$
Let $S = \{(i,0): i \in Z_q\}$ and note that $|S|$ meets the bound (\ref{eqn:id-code-|S|-lower-bound}).
It is clear that $\{N((i,1)) \cap S: i \in Z_q\}$ = $S$,
that $\{N((i,0)) \cap S: i \in Z_q\}$ consists of distinct pairs of elements of $S$, and
that none of these neighbourhood intersections is empty.
\end{proof}

For a 3-regular graph, the number of vertices $v$ is even and
therefore inequality (\ref{eqn:id-code-|S|-lower-bound}) simplifies to $|S| \ge v/2$.
Hence, by Lemma~\ref{lem:id-code-components}, the size of a best possible identifying code for a non-connected 3-regular graph
is the sum of the sizes of the best possible identifying codes for the components,
provided that all with at most one exception have $\pi_0 = 0$.
This is also the case for any $w$-regular graph, $w > 3$, where $2v$ is divisible by $w + 1$, for then (\ref{eqn:id-code-|S|-lower-bound})
simplifies to the integer $2v/(w+1)$.

Next, we prove two theorems concerning the existence of pentagonal geometries $\adfPENT(3,r)$ with best possible identifying codes.
The number of points is $v = 2r + 4$ and the lower bound of Theorem~\ref{thm:id-code-lower-bound} is $v/2 = r + 2$.

%
\begin{lemma}\label{lem:id-code-PENT-3-r-direct}
For each
$$r \in \{3, 9, 10, 12, 15, 16, 19, 21, 22, 24, 25, 27\},$$
there exists a pentagonal geometry $\adfPENT(3,r)$ with a connected deficiency graph and a best possible identifying code $S$
that satisfies $\pi_0(S) = 0$.
\end{lemma}
\begin{proof}
This follows from Lemmas~\ref{lem:PENT-3-r-3-direct} and \ref{lem:id-code-GP-graph}.
\end{proof}

%
\begin{theorem} \label{thm:id-code-PENT-3-r-constructed}
There exists a $\adfPENT(3,r)$ with a best possible identifying code
for all positive integers $r \equiv 0 \text{ or } 1 \adfmod{3}$ except $r \in \{1, 4, 6, 7\}$.
\end{theorem}
\begin{proof}
We refer to Theorem 2.2 of \cite{ForbesGriggsStokes2020} in which the existence spectrum for
$\adfPENT(3,r)$ with deficiency graph girth at least 5 is determined.
The proof of \cite[Theorem 2.2]{ForbesGriggsStokes2020} uses $\adfPENT(3,q)$ for
$$q \in Q = \{3, 9, 10, 12, 15, 16, 19, 21, 22, 24, 25, 27\},$$
to construct pentagonal geometries $\adfPENT(3,r)$ for the range of $r$ as stated in our theorem.
The construction, the same as that of Theorem~\ref{thm:GDD-basic}, creates a $\adfPENT(3,r)$
by filling in the groups of a 3-GDD with $\adfPENT(3,q_i)$, $q_i \in Q$, $i = 1, 2, \dots, m$, say,
which by Lemma~\ref{lem:id-code-PENT-3-r-direct} have best possible identifying codes with $\pi_0 = 0$.
Hence we may apply Lemma~\ref{lem:id-code-components}.
The resulting deficiency graph has a best possible identifying code of size $r + 2 = \sum_{i=1}^m (q_i + 2)$.
\end{proof}

The $\adfPENT(3,r)$ constructed in the proof of Theorem~\ref{thm:id-code-PENT-3-r-constructed} do not have connected deficiency graphs.
We address this partially in the next theorem.

%
\begin{theorem} \label{thm:id-code-PENT-3-r-connected}
For all positive integers $r \equiv 3 \adfmod{6}$,
there exists a $\adfPENT(3,r)$ with a connected deficiency graph and a best possible identifying code.
\end{theorem}
\begin{proof}
See Lemma~\ref{lem:id-code-PENT-3-r-direct} for $r \in \{3, 9, 15, 21, 27\}$.

In the construction for $m \ge 5$ of a $\adfPENT(3, 6m+3)$ described in the proof of \cite[Theorem 2.1]{Forbes2020P}
the deficiency graph is a generalized Petersen graph, $\mathrm{GP}(6m+5,2)$.
Therefore, by Lemma~\ref{lem:id-code-GP-graph}, the geometry has a best possible identifying code.
\end{proof}

We now consider pentagonal geometries $\adfPENT(4,r)$.
The number of points is $3r + 5$ and a best possible identifying code has size $\lceil 2(3r + 4)/5\rceil$.
When $r \equiv 0 \adfmod{5}$, one way of constructing a suitable deficiency graph is given by the next lemma.

%
\begin{lemma}
\label{lem:id-code-5-cycle-GP-graph}
Let $q$ be a positive integer and let $t_0, t_1, \dots, t_4$ be integers such that
$1 \le t_0, t_1, \dots, t_4 < q/2$.
Let $G$ be the graph defined by $V(G) = Z_q \times Z_5$ and, writing $x_i$ for $(x,i)$,
$$E(G) = \bigcup \{\{\{x_i, (x + t_i)_i\}, \{x_i, x_{i+1}\}\}: x \in Z_q, i \in Z_5\}.$$
Then G is $4$-regular and has a best possible identifying code given by
$$S = \{x_0: x \in Z_q\} \cup \{x_2: x \in Z_q\}.$$
Moreover, $\pi_0(S) = 0$.
\end{lemma}
\begin{proof}
The restrictions on $t_0$, $t_1$, \dots, $t_4$  ensure that $G$ is 4-regular.
As $x$ runs through the elements of $Z_q$, we have
\begin{align*}
N(x_0) \cap S &= \{(x - t_0)_0, (x + t_0)_0\}, \\
N(x_2) \cap S &= \{(x - t_2)_2, (x + t_2)_2\}, \\
N(x_1) \cap S &= \{x_0, x_2\}, ~~
N(x_3) \cap S  = \{x_2\}, ~~
N(x_4) \cap S  = \{x_0\},
\end{align*}
which are clearly distinct and do not include the empty set.
\end{proof}

%
\begin{lemma} \label{lem:id-code-PENT-4-r-direct}
For each
\begin{equation} \label{eqn:id-code-PENT-4-r-direct}
\begin{aligned}r \in \{& 13, 17, 20, 21, 24, 25, 40, 45, 60, 65, 80, 85, \\
        & 100, 105, 120, 125, 140, 145, 160, 165, 180, 185\},
\end{aligned}
\end{equation}
there exists a pentagonal geometry $\adfPENT(4,r)$ with a connected deficiency graph of girth at least $5$ and a best possible identifying code.
Additionally, the identifying code of the $\adfPENT(4,r)$ has $\pi_0 = 0$ when $r = 13$ and when $r \equiv 0 \adfmod{5}$.
\end{lemma}
\begin{proof}
We provide details in the standard format.
The $\adfPENT(4,r)$ for $r \equiv 0 \adfmod{20}$ were created from deficiency graphs constructed by Lemma~\ref{lem:id-code-5-cycle-GP-graph}
with parameters $t_0 = t_2 = 1$, $t_1 = t_3 = 2$, $t_4 = 3$.
For $r \equiv 5 \adfmod{20}$ we used the graph with edges generated from
$$\{\{0, 1\}, \{0, 2\}, \{1, 5\}, \{0, 11\}\}$$
by $x \mapsto x + 2 \adfmod{3r+5}$.
This graph has an identifying code generated from
$\{0, 2, 8, 9, 11, 13, 14, 15\}$ by $x \mapsto x + 20 \adfmod{3r+5}$.

%

{\boldmath $\adfPENT(4,13)$}, $d = 4$:
{3, 38, 41, 42;  4, 27, 31, 34;  4, 8, 13, 23;  0, 17, 21, 26;  0, 6, 7, 16;  1, 14, 22, 38;  1, 18, 30, 43;  0, 8, 18, 37;  0, 2, 31, 32;  0, 11, 22, 39;  0, 1, 20, 33;  1, 3, 9, 29;  1, 15, 23, 35}.
The deficiency graph is connected and has girth 6.
Identifying code: \{4, 5, 9, 12, 14, 16, 18, 20, 22, 24, 26, 28, 30, 34, 38, 40, 41, 42\},
$\pi_0 = 0$, $\pi_1 = 18$, $\pi_2 = 24$, $\pi_3 = 2$.

{\boldmath $\adfPENT(4,17)$}, $d = 8$:
{4, 19, 34, 43;  21, 25, 33, 38;  4, 13, 24, 54;  15, 16, 22, 40;  0, 2, 46, 53;  12, 15, 41, 50;  10, 20, 25, 43;  15, 51, 53, 55;  9, 24, 37, 15;  47, 19, 11, 14;  0, 1, 3, 48;  0, 5, 26, 40;  0, 17, 20, 51;  0, 27, 33, 54;  1, 26, 27, 43;  1, 17, 36, 55;  1, 12, 13, 44;  1, 11, 37, 46;  0, 15, 25, 29;  0, 7, 49, 50;  1, 30, 50, 54;  1, 18, 23, 34;  0, 18, 37, 52;  2, 5, 31, 34;  0, 10, 22, 35;  2, 10, 30, 52;  2, 39, 45, 51;  3, 13, 14, 52;  4, 6, 14, 37;  3, 21, 37, 45;  3, 4, 12, 55;  0, 14, 23, 44;  0, 12, 28, 39;  6, 7, 31, 46}.
The deficiency graph is connected and has girth 5.
Identifying code: \{1, 2, 7, 10, 15, 16, 17, 21, 27, 29, 30, 32, 33, 36, 39, 43, 44, 48, 49, 50, 52, 55\},
$\pi_0 = 1$, $\pi_1 = 22$, $\pi_2 = 33$.

{\boldmath $\adfPENT(4,20)$}, $d = 5$:
{1, 4, 5, 60;  0, 2, 11, 56;  1, 3, 7, 62;  2, 4, 13, 58;  0, 3, 19, 54;  55, 27, 35, 62;  45, 31, 39, 2;  38, 56, 22, 8;  55, 52, 40, 13;  0, 8, 16, 40;  0, 14, 30, 58;  0, 18, 31, 43;  0, 29, 36, 52;  0, 13, 44, 47;  0, 17, 32, 39;  0, 21, 46, 53;  0, 24, 34, 42;  0, 26, 48, 63;  1, 24, 36, 63;  1, 6, 27, 47;  2, 23, 29, 42;  1, 19, 52, 64;  1, 14, 39, 43;  1, 16, 44, 49;  2, 19, 38, 43}.
The deficiency graph is connected and has girth 5.
Identifying code: $\{x: 0 \le x < 65,\; x \equiv 0 \text{ or } 2 \adfmod{5}\}$,
$\pi_0 = 0$, $\pi_1 = 26$, $\pi_2 = 39$.

{\boldmath $\adfPENT(4,21)$}, $d = 2$:
{18, 29, 39, 50;  30, 31, 39, 40;  33, 30, 8, 15;  43, 31, 11, 57;  0, 5, 33, 49;  0, 13, 19, 26;  0, 6, 37, 41;  0, 20, 63, 65;  0, 4, 12, 27;  0, 2, 16, 40;  0, 17, 34, 51}.
The last block represents a short orbit.
The deficiency graph is connected and has girth 5.
Identifying code: \{0, 2, 4, 6, 8, 10, 12, 16, 18, 22, 24, 28, 30, 34, 36, 40, 42, 46, 48, 52, 54, 56, 58, 60, 62, 64, 66\},
$\pi_0 = 1$, $\pi_1 = 26$, $\pi_2 = 41$.

{\boldmath $\adfPENT(4,24)$}, $d = 7$:
{4, 10, 55, 59;  23, 27, 30, 46;  23, 50, 57, 58;  19, 21, 34, 70;  0, 36, 39, 46;  40, 41, 47, 66;  28, 47, 52, 57;  47, 25, 55, 71;  54, 34, 51, 2;  44, 17, 8, 28;  76, 62, 39, 33;  46, 55, 34, 44;  46, 13, 62, 70;  0, 43, 54, 58;  48, 71, 50, 75;  75, 43, 29, 26;  58, 20, 18, 59;  0, 1, 6, 48;  0, 9, 27, 34;  1, 3, 17, 20;  0, 8, 14, 76;  1, 13, 18, 31;  1, 24, 34, 45;  2, 31, 55, 68;  1, 19, 41, 72;  0, 15, 50, 74;  1, 29, 66, 67;  1, 40, 53, 74;  0, 2, 22, 35;  1, 52, 59, 65;  0, 25, 37, 73;  2, 10, 45, 53;  0, 11, 31, 40;  0, 7, 30, 52;  0, 17, 47, 67;  0, 5, 38, 61;  0, 3, 18, 65;  0, 12, 51, 56;  2, 11, 12, 74;  2, 47, 59, 61;  0, 32, 60, 68;  2, 16, 33, 60}.
The deficiency graph is connected and has girth 6.
Identifying code: \{0, 2, 3, 4, 7, 9, 10, 14, 15, 17, 21, 24, 27, 30, 31, 35, 38, 39, 40, 44, 45, 49, 51, 52, 56, 58, 64, 66, 70, 72, 73\},
$\pi_0 = 1$, $\pi_1 = 30$, $\pi_2 = 44$, $\pi_3 = 2$.

{\boldmath $\adfPENT(4,25)$}, $d = 2$:
{1, 2, 11, 78;  0, 5, 70, 77;  56, 21, 9, 43;  1, 12, 42, 37;  0, 6, 35, 63;  0, 37, 51, 53;  0, 17, 47, 73;  0, 12, 43, 61;  0, 21, 27, 59;  0, 19, 28, 44;  0, 18, 41, 56;  0, 8, 22, 54;  0, 20, 40, 60;  1, 21, 41, 61}.
The last 2 blocks represent short orbits.
The deficiency graph is connected and has girth 5.
Identifying code: $\{x + 20i: x \in \{0,2,8,9,11,13,14,15\}, 0 \le i < 4\}$,
$\pi_0 = 0$, $\pi_1 = 32$, $\pi_2 = 48$.

{\boldmath $\adfPENT(4,40)$}, $d = 5$:
{1, 4, 5, 120;  0, 2, 11, 116;  1, 3, 7, 122;  2, 4, 13, 118;  0, 3, 19, 114;  65, 89, 62, 9;  104, 97, 58, 65;  64, 30, 3, 72;  71, 13, 32, 49;  97, 80, 31, 47;  62, 32, 83, 111;  6, 116, 74, 39;  67, 103, 41, 91;  71, 50, 38, 44;  101, 5, 27, 58;  81, 112, 52, 65;  122, 97, 109, 35;  94, 117, 72, 38;  115, 54, 100, 80;  0, 94, 27, 123;  0, 7, 65, 77;  0, 13, 57, 97;  0, 14, 82, 102;  0, 37, 52, 71;  0, 8, 72, 107;  1, 6, 14, 62;  0, 36, 54, 117;  1, 18, 39, 42;  1, 22, 36, 64;  1, 37, 53, 119;  2, 34, 68, 94;  2, 49, 89, 99;  2, 54, 73, 78;  1, 54, 58, 72;  0, 26, 49, 104;  1, 89, 94, 114;  0, 44, 46, 80;  1, 28, 66, 109;  1, 8, 59, 108;  0, 25, 55, 84;  1, 43, 74, 113;  0, 38, 68, 109;  0, 31, 61, 101;  0, 48, 51, 88;  0, 18, 63, 111;  0, 33, 41, 86;  0, 56, 81, 103;  0, 40, 83, 106;  0, 28, 78, 93;  0, 23, 58, 75}.
The deficiency graph is connected and has girth 5.
Identifying code: $\{x: 0 \le x < 125,\; x \equiv 0 \text{ or } 2 \adfmod{5}\}$,
$\pi_0 = 0$, $\pi_1 = 50$, $\pi_2 = 75$.

{\boldmath $\adfPENT(4,45)$}, $d = 2$:
{1, 2, 11, 138;  0, 5, 130, 137;  55, 25, 72, 19;  113, 99, 35, 118;  136, 70, 14, 64;  11, 134, 98, 91;  126, 59, 57, 9;  25, 76, 52, 119;  22, 121, 130, 44;  88, 24, 68, 115;  0, 12, 26, 124;  0, 8, 29, 110;  0, 25, 51, 82;  0, 41, 69, 80;  0, 34, 95, 119;  0, 39, 52, 100;  0, 19, 94, 125;  0, 37, 62, 111;  0, 63, 81, 103;  0, 33, 45, 117;  1, 17, 55, 99;  0, 55, 75, 107;  0, 35, 70, 105}.
The last block represents a short orbit.
The deficiency graph is connected and has girth 5.
Identifying code: $\{x + 20i: x \in \{0,2,8,9,11,13,14,15\}, 0 \le i < 7\}$,
$\pi_0 = 0$, $\pi_1 = 56$, $\pi_2 = 84$.

See the appendix for the others.
\end{proof}

As far as the authors are aware, the
$\adfPENT(4,25)$,
$\adfPENT(4,105)$ and
$\adfPENT(4,145)$
of Lemma~\ref{lem:id-code-PENT-4-r-direct} are new---in the sense that
type-A pentagonal geometries with the same parameters have not been previously published.
For the other $r$ in (\ref{eqn:id-code-PENT-4-r-direct}), examples of these $\adfPENT(4,r)$
have appeared in {\cite{Forbes2020P} and \cite{ForbesGriggsStokes2020} but without identifying codes.

Lemma~\ref{lem:id-code-PENT-4-r-direct} enables us to determine up to a small number of possible exceptions
the $r$ for which there exists a $\adfPENT(4,r)$ with a best possible identifying code.
%
\begin{theorem} \label{thm:id-code-PENT-4-r-constructed}

For each integer $r \ge 5316$, $r \equiv 0 \text{ or } 1 \adfmod{4}$,
there exists a pentagonal geometry $\adfPENT(4, r)$ with a deficiency of girth at least $5$
and a best possible identifying code.
\end{theorem}
\begin{proof}
Let
$$Q = \{13, 17, 20, 21, 24\}.$$
We invoke Theorem~\ref{thm:GDD-basic} using as ingredients the $\adfPENT(4,q)$,
$q \in Q \cup \{25\}$, of Lemma~\ref{lem:id-code-PENT-4-r-direct}, each of which
has a best possible identifying code, which we denote by $S_q$.
See Lemma~\ref{lem:k-GDD-existence} for the existence of the relevant 4-GDDs.

For $q \in Q$,
$\beta \in \{3, 4, 5, 6\}$ and
$$\gamma = 0 \text{ or }
\gamma = \left\{\begin{array}{ll} 1 & \text{ if $q$ is even} , \\
                                  3 & \text{ if $q$ is odd},
                \end{array} \right.$$
take one copy of the $\adfPENT(4,q)$,
$12 \beta + 3 \gamma$ copies of the $\adfPENT(4,20)$, and
a 4-GDD of type $65^{12 \beta + 3\gamma} (3q + 5)^1$
to obtain a
$$\adfPENT(4, 260\beta + 65 \gamma + q),~ 3 \le \beta \le 6.$$
For $\alpha \ge 45$, take this $\adfPENT(4, 260\beta + 65 \gamma + q)$,
$3 \alpha$ copies of the $\adfPENT(4,25)$, and
a 4-GDD of type $80^{3 \alpha} (780 \beta + 195 \gamma + 3q + 5)^1$
to obtain a $\adfPENT(4, r)$, where
$$r = 80 \alpha + 260\beta + 65 \gamma + q,~ \alpha \ge 45,~ 3 \le \beta \le 6.$$

The deficiency graph $D_r$ of this $\adfPENT(r)$ has $3 \alpha$ components of 80 vertices,
$12 \beta + 3 \gamma$ components of 65 vertices and one component of $3q + 5$ vertices.
Each component has a best possible identifying code and $\pi_0(S_{20}) = \pi_0(S_{25}) = 0$.
Therefore, by Lemma~\ref{lem:id-code-components},
$S_r$, the union of the $3\alpha + 12\beta + 3 \gamma + 1$ identifying codes is a minimum identifying code for $D_r$.
Also, noting that $|S_{20}| = 26$ and $|S_{25}| = 32$, we have
\begin{align*}
|S_r| &= (3 \alpha) \cdot 32 + (12 \beta + 3\gamma) \cdot 26 + \left\lceil \dfrac{2(3q + 4)}{5} \right\rceil \\
    &= \left\lceil \dfrac{2 \big((3 \alpha) \cdot 80 + (12 \beta + 3\gamma) \cdot 65 + 3q + 4\big)}{5} \right\rceil,
\end{align*}
which meets the bound (\ref{eqn:id-code-|S|-lower-bound}) of Theorem~\ref{thm:id-code-lower-bound}.
Hence $S_r$ is a best possible identifying code for the $\adfPENT(4,r)$.

\begin{table}[h]
\begin{center}
\begin{tabular}{ccc|ccc}
$r \adfmod{20}$ & $\gamma$ & $q$ & $r \adfmod{20}$ & $\gamma$ & $q$ \\
\hline
 0              & 0        & 20  & 1               & 0        & 21 \\
 4              & 0        & 24  & 5               & 1        & 20 \\
 8              & 3        & 13  & 9               & 1        & 24 \\
 12             & 3        & 17  & 13              & 0        & 13 \\
 16             & 3        & 21  & 17              & 0        & 17
\end{tabular}
\end{center}
\caption{Theorem \ref{thm:id-code-PENT-4-r-constructed} construction}
\label{tab:id-code-PENT-4-r-constructed}
\end{table}

By the Chinese Remainder Theorem, for any integer $n$, there exist integers $x$, $y$ and $d$ such that
$$n = 4 x + 13 y + 52 d,\;\; 45 \le x \le 57,\;\; 3 \le y \le 6.$$
It is easily verified that $d$ is non-negative for all $n \ge 255$.
Hence $80 \alpha + 260 \beta$ covers all multiples of 20 that are at least $20 \cdot 255 = 5100$.
Therefore, observing that $65 \gamma + q \le 216$, we conclude that
a $\adfPENT(4,r)$ with a best possible identifying code exists whenever $r \ge 5316$ and $r \equiv 5 \gamma + q \adfmod{20}$.
We choose $\gamma \in \{0,1,3\}$ and $q \in Q$ as specified in Table~\ref{tab:id-code-PENT-4-r-constructed}
to show that all $r \equiv 0 \text{ or } 1 \adfmod{4}$ are represented.
\end{proof}

Using the same technique as for the $\adfPENT(4,25)$,
we have obtained several new type-A $\adfPENT(4,r)$;
details are provided by the next lemma.
We use Lemmas~\ref{lem:id-code-PENT-4-r-direct} and \ref{lem:PENT-4-r-direct} to improve \cite[Theorem 3.2]{Forbes2020P}
and incidentally settle the existence of opposite-line-pair-free $\adfPENT(4,r)$ for odd $r \neq 9$.
%
\begin{lemma} \label{lem:PENT-4-r-direct}
For each
$$\begin{aligned}r \in \{& 41, 57, 73, 89, 113, 121, 129, 137, 153, 161, 169, \\
        & 177, 181, 189, 193, 197\},
\end{aligned}$$
there exists a pentagonal geometry $\adfPENT(4,r)$ with a connected deficiency graph of girth at least $5$.
\end{lemma}
\begin{proof}
%
%

{\boldmath $\adfPENT(4,41)$}, $d = 2$:
{3, 12, 57, 116;  15, 72, 115, 126;  38, 48, 94, 54;  49, 45, 98, 5;  35, 105, 107, 87;  108, 14, 56, 27;  108, 30, 5, 28;  88, 43, 118, 1;  0, 1, 7, 68;  0, 5, 20, 29;  0, 4, 77, 114;  0, 39, 58, 107;  0, 26, 63, 85;  0, 8, 44, 95;  0, 19, 22, 55;  0, 23, 89, 101;  0, 81, 97, 127;  0, 21, 115, 123;  0, 27, 62, 90;  0, 31, 111, 121;  0, 32, 64, 96;  1, 33, 65, 97}.
The last 2 blocks represent short orbits.
The deficiency graph is connected and has girth 6.

{\boldmath $\adfPENT(4,57)$}, $d = 2$:
{23, 82, 94, 107;  55, 70, 123, 154;  174, 112, 9, 74;  123, 140, 111, 82;  166, 156, 133, 124;  175, 115, 9, 158;  23, 81, 148, 137;  35, 96, 41, 62;  0, 1, 2, 21;  0, 3, 112, 172;  0, 5, 8, 148;  0, 6, 20, 110;  0, 15, 130, 160;  0, 18, 68, 103;  0, 22, 78, 102;  0, 37, 39, 124;  0, 43, 48, 93;  0, 47, 54, 119;  0, 59, 81, 106;  0, 40, 127, 163;  0, 69, 83, 135;  0, 137, 141, 167;  0, 26, 101, 139;  0, 57, 99, 131;  0, 49, 97, 125;  0, 55, 79, 95;  1, 19, 65, 99;  0, 63, 71, 157;  0, 44, 88, 132;  1, 45, 89, 133}.
The last 2 blocks represent short orbits.
The deficiency graph is connected and has girth 6.

See the appendix for the others.
\end{proof}

%
\begin{theorem} \label{thm:id-code-PENT-4-r-constructed}
There exist pentagonal geometries $\adfPENT(4,r)$ with deficiency graphs of girth at least $5$
for all positive integers $r \equiv 0 \textrm{~or~} 1 \adfmod{4}$
except for $r \in $
\{$1$, $4$, $5$\}
and except possibly for
$r \in $
\{$8$, $9$, $12$, $16$, $28$, $32$, $36$, $44$, $48$, $56$, $64$, $68$, $72$, $76$, $84$, $88$, $92$, $96$, $104$, $116$, $124$, $128$, $144$, $148$, $164$, $168$\}.
\end{theorem}
\begin{proof}
This follows from \cite[Theorem 3.2]{Forbes2020P}, Lemma~\ref{lem:id-code-PENT-4-r-direct}, Lemma~\ref{lem:PENT-4-r-direct},
and applications of Theorem~\ref{thm:GDD-basic} for $r \in \{212, 308\}$.

For $\adfPENT(4,212)$, use 6 copies of a $\adfPENT(4,25)$, a $\adfPENT(52)$, \cite[Lemma 3.2]{Forbes2020P}, and a 4-GDD of type $80^6 161^1$.

For $\adfPENT(4,308)$, use 6 copies of a $\adfPENT(4,41)$, a $\adfPENT(52)$ and a 4-GDD of type $128^6 161^1$.
\end{proof}

We finish the paper by noting that the $\adfPENT(5,r)$, $r \in \{21, 30, 36, 45\}$ of Lemma~\ref{lem:PENT-5-r-5-direct}
have best possible identifying codes with $\pi_0 = 0$.

%

\section*{ORCID}
\noindent A. D. Forbes     \url{https://orcid.org/0000-0003-3805-7056}\\
C. G. Rutherford           \url{https://orcid.org/0000-0003-1924-207X}

%


\newpage
\appendix

%
\section{$\adfPENT(k,r,w)$ details}
\newcommand{\adfAppGap}{~}%
{\scriptsize


{\boldmath $\adfPENT(3,9)$}, $d = 22$:
{1, 2, 20;  0, 9, 15;  0, 3, 4;  2, 11, 17;  2, 5, 6;  4, 13, 19;  4, 7, 8;  6, 15, 21;  6, 9, 10;  1, 8, 17;  8, 11, 12;  3, 10, 19;  10, 13, 14;  5, 12, 21;  12, 15, 16;  1, 7, 14;  14, 17, 18;  3, 9, 16;  16, 19, 20;  5, 11, 18;  0, 18, 21;  7, 13, 20;  0, 5, 10;  0, 6, 12;  0, 7, 11;  0, 8, 16;  0, 13, 17;  0, 14, 19;  1, 3, 5;  1, 4, 16;  1, 6, 13;  1, 10, 18;  1, 11, 21;  1, 12, 19;  2, 7, 19;  2, 8, 21;  2, 9, 14;  2, 10, 15;  2, 12, 18;  2, 13, 16;  3, 6, 18;  3, 7, 12;  3, 8, 20;  3, 13, 15;  3, 14, 21;  4, 9, 12;  4, 10, 20;  4, 11, 14;  4, 15, 18;  4, 17, 21;  5, 7, 16;  5, 8, 14;  5, 9, 20;  5, 15, 17;  6, 11, 16;  6, 14, 20;  6, 17, 19;  7, 9, 18;  7, 10, 17;  8, 13, 18;  8, 15, 19;  9, 11, 13;  9, 19, 21;  10, 16, 21;  11, 15, 20;  12, 17, 20}.
The deficiency graph is connected and has girth 6.

{\boldmath $\adfPENT(3,10)$}, $d = 24$:
{1, 2, 22;  0, 11, 15;  0, 3, 4;  2, 13, 17;  2, 5, 6;  4, 15, 19;  4, 7, 8;  6, 17, 21;  6, 9, 10;  8, 19, 23;  8, 11, 12;  1, 10, 21;  10, 13, 14;  3, 12, 23;  12, 15, 16;  1, 5, 14;  14, 17, 18;  3, 7, 16;  16, 19, 20;  5, 9, 18;  18, 21, 22;  7, 11, 20;  0, 20, 23;  9, 13, 22;  0, 5, 7;  0, 6, 13;  0, 8, 14;  0, 9, 12;  0, 10, 18;  0, 16, 21;  0, 17, 19;  1, 3, 19;  1, 4, 20;  1, 6, 16;  1, 7, 9;  1, 8, 13;  1, 12, 18;  1, 17, 23;  2, 7, 12;  2, 8, 20;  2, 9, 16;  2, 10, 19;  2, 11, 18;  2, 14, 21;  2, 15, 23;  3, 5, 10;  3, 6, 20;  3, 8, 18;  3, 9, 21;  3, 11, 14;  3, 15, 22;  4, 9, 11;  4, 10, 17;  4, 12, 21;  4, 13, 18;  4, 14, 23;  4, 16, 22;  5, 8, 16;  5, 11, 17;  5, 12, 22;  5, 13, 20;  5, 21, 23;  6, 11, 23;  6, 12, 19;  6, 14, 22;  6, 15, 18;  7, 10, 22;  7, 13, 15;  7, 14, 19;  7, 18, 23;  8, 15, 21;  8, 17, 22;  9, 14, 20;  9, 15, 17;  10, 15, 20;  10, 16, 23;  11, 13, 16;  11, 19, 22;  12, 17, 20;  13, 19, 21}.
The deficiency graph is connected and has girth 6.

{\boldmath $\adfPENT(3,12)$}, $d = 4$:
{1, 2, 26;  0, 11, 19;  0, 3, 4;  2, 13, 21;  0, 5, 10;  0, 6, 18;  0, 7, 23;  0, 8, 22;  0, 9, 12;  0, 13, 17;  0, 15, 21;  1, 3, 17;  1, 7, 14;  1, 22, 27;  2, 10, 27;  2, 11, 15}.
The deficiency graph is connected and has girth 6.

{\boldmath $\adfPENT(3,13)$}, $d = 6$:
{1, 2, 28;  0, 9, 23;  0, 3, 4;  2, 11, 25;  2, 5, 6;  4, 13, 27;  0, 5, 15;  0, 6, 27;  0, 7, 17;  0, 8, 18;  0, 10, 22;  0, 11, 14;  0, 13, 16;  0, 19, 25;  1, 3, 27;  1, 5, 20;  1, 10, 29;  1, 13, 26;  1, 16, 21;  2, 8, 22;  2, 9, 21;  2, 10, 23;  2, 14, 27;  3, 5, 29;  3, 16, 22;  4, 11, 29}.
The deficiency graph is connected and has girth 7.

{\boldmath $\adfPENT(3,15)$}, $d = 2$:
{1, 2, 32;  0, 11, 25;  0, 5, 23;  0, 6, 27;  0, 7, 29;  0, 8, 20;  0, 9, 24;  0, 13, 17;  0, 15, 18;  1, 3, 29}.
The deficiency graph is connected and has girth 7.

{\boldmath $\adfPENT(3,16)$}, $d = 12$:
{1, 2, 34;  0, 11, 27;  0, 3, 4;  2, 13, 29;  2, 5, 6;  4, 15, 31;  4, 7, 8;  6, 17, 33;  6, 9, 10;  8, 19, 35;  8, 11, 12;  1, 10, 21;  0, 5, 12;  0, 6, 13;  0, 7, 10;  0, 8, 15;  0, 9, 20;  0, 14, 30;  0, 16, 33;  0, 17, 22;  0, 18, 23;  0, 19, 25;  0, 21, 28;  0, 26, 31;  1, 3, 5;  1, 4, 16;  1, 6, 15;  1, 7, 9;  1, 8, 28;  1, 13, 35;  1, 14, 20;  1, 18, 32;  1, 19, 33;  1, 22, 29;  2, 9, 21;  2, 10, 19;  2, 11, 14;  2, 15, 33;  2, 16, 31;  2, 17, 23;  2, 20, 28;  2, 22, 32;  2, 27, 30;  3, 7, 15;  3, 8, 22;  3, 9, 17;  3, 10, 34;  3, 11, 16;  3, 18, 35;  3, 20, 33;  4, 9, 11;  4, 10, 23;  4, 17, 29;  4, 18, 34;  4, 22, 30;  5, 7, 19;  5, 8, 18;  5, 9, 22;  5, 20, 32;  5, 23, 35;  6, 18, 31;  6, 21, 35;  7, 11, 22;  7, 20, 35}.
The deficiency graph is connected and has girth 8.

{\boldmath $\adfPENT(3,18)$}, $d = 8$:
{1, 2, 38;  0, 9, 33;  0, 3, 4;  2, 11, 35;  2, 5, 6;  4, 13, 37;  4, 7, 8;  6, 15, 39;  0, 5, 8;  0, 6, 22;  0, 7, 18;  0, 10, 24;  0, 11, 17;  0, 12, 20;  0, 13, 19;  0, 14, 27;  0, 15, 29;  0, 21, 31;  0, 23, 25;  0, 28, 34;  0, 30, 35;  1, 3, 12;  1, 4, 34;  1, 5, 23;  1, 6, 27;  1, 7, 26;  1, 10, 18;  1, 11, 31;  1, 13, 15;  1, 14, 37;  1, 19, 29;  1, 20, 36;  1, 21, 30;  1, 22, 28;  2, 7, 19;  2, 13, 26;  2, 14, 21;  2, 15, 28;  2, 20, 37;  2, 22, 39;  2, 27, 30;  3, 5, 39;  3, 7, 28;  3, 14, 29;  3, 15, 20;  3, 21, 36;  4, 14, 22;  4, 15, 30}.
The deficiency graph is connected and has girth 5.

{\boldmath $\adfPENT(3,19)$}, $d = 6$:
{1, 2, 40;  0, 11, 33;  0, 3, 4;  2, 13, 35;  2, 5, 6;  4, 15, 37;  0, 5, 9;  0, 6, 25;  0, 7, 14;  0, 8, 32;  0, 10, 24;  0, 12, 29;  0, 13, 37;  0, 15, 22;  0, 16, 35;  0, 20, 26;  0, 21, 34;  0, 23, 31;  0, 27, 39;  1, 3, 39;  1, 4, 16;  1, 5, 38;  1, 7, 28;  1, 9, 26;  1, 13, 27;  1, 14, 41;  1, 17, 20;  1, 29, 34;  2, 9, 32;  2, 10, 17;  2, 15, 33;  2, 16, 34;  2, 22, 28;  2, 23, 39;  3, 5, 22;  3, 11, 40;  3, 17, 28;  5, 11, 23}.
The deficiency graph is connected and has girth 7.

{\boldmath $\adfPENT(3,21)$}, $d = 2$:
{1, 2, 44;  0, 11, 37;  0, 5, 27;  0, 6, 30;  0, 7, 20;  0, 8, 36;  0, 9, 32;  0, 12, 29;  0, 13, 31;  0, 15, 21;  0, 19, 35;  0, 25, 39;  0, 41, 43;  1, 5, 13}.
The deficiency graph is connected and has girth 8.

{\boldmath $\adfPENT(3,22)$}, $d = 48$:
{1, 2, 46;  0, 11, 39;  0, 3, 4;  2, 13, 41;  2, 5, 6;  4, 15, 43;  4, 7, 8;  6, 17, 45;  6, 9, 10;  8, 19, 47;  8, 11, 12;  1, 10, 21;  10, 13, 14;  3, 12, 23;  12, 15, 16;  5, 14, 25;  14, 17, 18;  7, 16, 27;  16, 19, 20;  9, 18, 29;  18, 21, 22;  11, 20, 31;  20, 23, 24;  13, 22, 33;  22, 25, 26;  15, 24, 35;  24, 27, 28;  17, 26, 37;  26, 29, 30;  19, 28, 39;  28, 31, 32;  21, 30, 41;  30, 33, 34;  23, 32, 43;  32, 35, 36;  25, 34, 45;  34, 37, 38;  27, 36, 47;  36, 39, 40;  1, 29, 38;  38, 41, 42;  3, 31, 40;  40, 43, 44;  5, 33, 42;  42, 45, 46;  7, 35, 44;  0, 44, 47;  9, 37, 46;  0, 5, 37;  0, 6, 30;  0, 7, 14;  0, 8, 31;  0, 9, 23;  0, 10, 17;  0, 12, 34;  0, 13, 26;  0, 15, 27;  0, 16, 43;  0, 18, 28;  0, 19, 25;  0, 20, 40;  0, 21, 45;  0, 22, 29;  0, 24, 36;  0, 32, 38;  0, 33, 41;  0, 35, 42;  1, 3, 18;  1, 4, 26;  1, 5, 12;  1, 6, 37;  1, 7, 9;  1, 8, 28;  1, 13, 16;  1, 14, 22;  1, 15, 47;  1, 17, 31;  1, 19, 41;  1, 20, 34;  1, 23, 44;  1, 24, 40;  1, 25, 42;  1, 27, 43;  1, 30, 35;  1, 32, 45;  1, 33, 36;  2, 7, 26;  2, 8, 33;  2, 9, 44;  2, 10, 36;  2, 11, 19;  2, 12, 40;  2, 14, 21;  2, 15, 29;  2, 16, 32;  2, 17, 47;  2, 18, 43;  2, 20, 38;  2, 22, 42;  2, 23, 34;  2, 24, 45;  2, 25, 28;  2, 27, 39;  2, 30, 37;  2, 31, 35;  3, 5, 46;  3, 6, 27;  3, 7, 30;  3, 8, 37;  3, 9, 25;  3, 10, 44;  3, 11, 28;  3, 14, 43;  3, 15, 34;  3, 16, 22;  3, 17, 35;  3, 19, 33;  3, 20, 36;  3, 21, 47;  3, 24, 38;  3, 26, 39;  3, 29, 45;  3, 32, 42;  4, 9, 42;  4, 10, 20;  4, 11, 47;  4, 12, 28;  4, 13, 31;  4, 14, 23;  4, 16, 38;  4, 17, 30;  4, 18, 39;  4, 19, 45;  4, 21, 46;  4, 22, 36;  4, 24, 37;  4, 25, 44;  4, 27, 29;  4, 32, 41;  4, 33, 35;  4, 34, 40;  5, 7, 41;  5, 8, 45;  5, 9, 17;  5, 10, 35;  5, 11, 23;  5, 13, 28;  5, 16, 36;  5, 18, 34;  5, 19, 26;  5, 20, 30;  5, 21, 27;  5, 22, 47;  5, 24, 32;  5, 29, 40;  5, 31, 38;  5, 39, 44;  6, 11, 29;  6, 12, 20;  6, 13, 18;  6, 14, 32;  6, 15, 28;  6, 16, 40;  6, 19, 31;  6, 21, 33;  6, 22, 46;  6, 23, 39;  6, 24, 42;  6, 25, 41;  6, 26, 36;  6, 34, 44;  6, 35, 43;  6, 38, 47;  7, 10, 43;  7, 11, 24;  7, 12, 21;  7, 13, 40;  7, 15, 22;  7, 18, 36;  7, 19, 46;  7, 20, 37;  7, 23, 25;  7, 28, 42;  7, 29, 31;  7, 32, 39;  7, 33, 38;  7, 34, 47;  8, 13, 36;  8, 14, 44;  8, 15, 32;  8, 16, 42;  8, 17, 34;  8, 18, 24;  8, 20, 27;  8, 21, 23;  8, 22, 38;  8, 25, 30;  8, 26, 40;  8, 29, 43;  8, 35, 39;  8, 41, 46;  9, 11, 41;  9, 12, 35;  9, 13, 20;  9, 14, 36;  9, 15, 40;  9, 16, 30;  9, 21, 43;  9, 22, 28;  9, 24, 39;  9, 26, 32;  9, 27, 34;  9, 31, 33;  9, 38, 45;  10, 15, 38;  10, 16, 47;  10, 18, 42;  10, 19, 34;  10, 22, 32;  10, 23, 30;  10, 24, 29;  10, 25, 31;  10, 26, 45;  10, 27, 40;  10, 28, 37;  10, 33, 46;  10, 39, 41;  11, 13, 27;  11, 14, 37;  11, 15, 26;  11, 16, 35;  11, 17, 36;  11, 18, 38;  11, 22, 45;  11, 25, 32;  11, 30, 44;  11, 33, 40;  11, 34, 42;  11, 43, 46;  12, 17, 24;  12, 18, 31;  12, 19, 43;  12, 22, 37;  12, 25, 46;  12, 26, 47;  12, 27, 45;  12, 29, 33;  12, 30, 38;  12, 32, 44;  12, 36, 41;  12, 39, 42;  13, 15, 30;  13, 17, 43;  13, 19, 24;  13, 21, 25;  13, 29, 35;  13, 32, 37;  13, 34, 46;  13, 38, 44;  13, 39, 45;  13, 42, 47;  14, 19, 38;  14, 20, 46;  14, 24, 30;  14, 26, 41;  14, 27, 31;  14, 28, 34;  14, 29, 42;  14, 33, 39;  14, 35, 47;  14, 40, 45;  15, 17, 44;  15, 18, 41;  15, 19, 36;  15, 20, 45;  15, 21, 42;  15, 23, 46;  15, 31, 39;  15, 33, 37;  16, 21, 39;  16, 23, 26;  16, 24, 33;  16, 25, 37;  16, 28, 46;  16, 29, 44;  16, 31, 34;  16, 41, 45;  17, 19, 32;  17, 20, 42;  17, 21, 38;  17, 22, 41;  17, 23, 29;  17, 25, 39;  17, 28, 33;  17, 40, 46;  18, 23, 35;  18, 25, 40;  18, 26, 46;  18, 27, 33;  18, 30, 45;  18, 32, 47;  18, 37, 44;  19, 21, 35;  19, 22, 44;  19, 23, 40;  19, 27, 30;  19, 37, 42;  20, 25, 33;  20, 26, 35;  20, 28, 44;  20, 29, 32;  20, 39, 47;  20, 41, 43;  21, 24, 44;  21, 26, 34;  21, 28, 36;  21, 29, 37;  21, 32, 40;  22, 27, 35;  22, 30, 40;  22, 31, 43;  22, 34, 39;  23, 27, 42;  23, 28, 38;  23, 31, 47;  23, 36, 45;  23, 37, 41;  24, 31, 46;  24, 34, 43;  24, 41, 47;  25, 27, 38;  25, 29, 47;  25, 36, 43;  26, 31, 42;  26, 33, 44;  26, 38, 43;  27, 32, 46;  27, 41, 44;  28, 35, 41;  28, 40, 47;  28, 43, 45;  29, 34, 41;  29, 36, 46;  30, 36, 42;  30, 39, 46;  30, 43, 47;  31, 36, 44;  31, 37, 45;  33, 45, 47;  35, 37, 40;  35, 38, 46;  37, 39, 43}.
The deficiency graph is connected and has girth 8.

{\boldmath $\adfPENT(3,24)$}, $d = 4$:
{1, 2, 50;  0, 11, 43;  0, 3, 4;  2, 13, 45;  0, 5, 18;  0, 6, 16;  0, 7, 10;  0, 8, 40;  0, 9, 49;  0, 13, 37;  0, 14, 30;  0, 15, 38;  0, 17, 23;  0, 19, 24;  0, 21, 46;  0, 22, 31;  0, 25, 41;  0, 26, 34;  0, 27, 45;  0, 29, 33;  0, 35, 39;  1, 3, 15;  1, 6, 39;  1, 9, 38;  1, 18, 31;  1, 19, 27;  1, 22, 34;  1, 23, 47;  1, 46, 51;  2, 19, 30;  2, 22, 47;  2, 23, 39}.
The deficiency graph is connected and has girth 8.

{\boldmath $\adfPENT(3,25)$}, $d = 6$:
{1, 2, 52;  0, 11, 45;  0, 3, 4;  2, 13, 47;  2, 5, 6;  4, 15, 49;  0, 5, 18;  0, 6, 22;  0, 7, 24;  0, 8, 39;  0, 9, 44;  0, 10, 51;  0, 12, 43;  0, 13, 28;  0, 14, 35;  0, 15, 46;  0, 17, 49;  0, 19, 47;  0, 20, 25;  0, 21, 33;  0, 23, 38;  0, 26, 34;  0, 27, 32;  0, 29, 40;  1, 3, 5;  1, 4, 33;  1, 7, 37;  1, 8, 43;  1, 9, 28;  1, 14, 40;  1, 15, 53;  1, 17, 46;  1, 22, 32;  1, 26, 38;  1, 27, 51;  1, 34, 39;  1, 41, 47;  2, 8, 40;  2, 9, 35;  2, 11, 16;  2, 15, 32;  2, 17, 20;  2, 22, 29;  2, 27, 45;  3, 9, 17;  3, 10, 40;  3, 35, 53;  4, 10, 46;  4, 17, 41;  4, 23, 35}.
The deficiency graph is connected and has girth 8.

{\boldmath $\adfPENT(3,27)$}, $d = 2$:
{1, 2, 56;  0, 11, 49;  0, 5, 39;  0, 6, 38;  0, 7, 43;  0, 8, 36;  0, 9, 14;  0, 10, 47;  0, 12, 41;  0, 13, 15;  0, 16, 34;  0, 17, 31;  0, 19, 35;  0, 21, 33;  0, 23, 27;  0, 25, 55;  0, 45, 51;  1, 9, 41}.
The deficiency graph is connected and has girth 8.

{\boldmath $\adfPENT(3,28)$}, $d = 12$:
{1, 2, 58;  0, 11, 51;  0, 3, 4;  2, 13, 53;  2, 5, 6;  4, 15, 55;  4, 7, 8;  6, 17, 57;  6, 9, 10;  8, 19, 59;  8, 11, 12;  1, 10, 21;  0, 5, 22;  0, 6, 32;  0, 7, 40;  0, 8, 18;  0, 9, 31;  0, 10, 55;  0, 12, 54;  0, 13, 49;  0, 14, 52;  0, 15, 39;  0, 16, 33;  0, 17, 47;  0, 19, 21;  0, 20, 57;  0, 23, 27;  0, 24, 53;  0, 25, 43;  0, 26, 35;  0, 28, 37;  0, 30, 46;  0, 34, 41;  0, 38, 45;  0, 44, 50;  1, 3, 9;  1, 4, 47;  1, 5, 8;  1, 6, 28;  1, 7, 18;  1, 13, 44;  1, 14, 20;  1, 15, 38;  1, 16, 40;  1, 17, 43;  1, 22, 27;  1, 23, 45;  1, 26, 34;  1, 29, 54;  1, 30, 35;  1, 31, 53;  1, 33, 57;  1, 39, 55;  1, 42, 56;  1, 46, 59;  2, 7, 19;  2, 14, 34;  2, 15, 28;  2, 16, 23;  2, 17, 31;  2, 18, 41;  2, 20, 38;  2, 21, 35;  2, 27, 57;  2, 29, 42;  2, 30, 43;  2, 32, 54;  2, 33, 45;  2, 46, 52;  2, 47, 55;  2, 51, 59;  3, 5, 34;  3, 6, 30;  3, 7, 28;  3, 8, 20;  3, 10, 46;  3, 15, 29;  3, 18, 55;  3, 21, 42;  3, 22, 44;  3, 31, 45;  3, 32, 56;  3, 35, 40;  3, 41, 57;  3, 47, 54;  4, 9, 46;  4, 10, 19;  4, 16, 53;  4, 17, 32;  4, 18, 33;  4, 20, 45;  4, 22, 56;  4, 23, 54;  4, 29, 35;  4, 30, 57;  4, 34, 44;  5, 7, 32;  5, 9, 41;  5, 10, 56;  5, 17, 59;  5, 23, 46;  5, 44, 57;  6, 18, 58;  6, 23, 34;  6, 31, 47;  7, 10, 43;  7, 11, 46;  7, 20, 35;  7, 33, 44;  8, 35, 47;  9, 11, 35;  9, 22, 34}.
The deficiency graph is connected and has girth 6.

{\boldmath $\adfPENT(3,30)$}, $d = 64$:
{1, 2, 62;  0, 11, 55;  0, 3, 4;  2, 13, 57;  2, 5, 6;  4, 15, 59;  4, 7, 8;  6, 17, 61;  6, 9, 10;  8, 19, 63;  8, 11, 12;  1, 10, 21;  10, 13, 14;  3, 12, 23;  12, 15, 16;  5, 14, 25;  14, 17, 18;  7, 16, 27;  16, 19, 20;  9, 18, 29;  18, 21, 22;  11, 20, 31;  20, 23, 24;  13, 22, 33;  22, 25, 26;  15, 24, 35;  24, 27, 28;  17, 26, 37;  26, 29, 30;  19, 28, 39;  28, 31, 32;  21, 30, 41;  30, 33, 34;  23, 32, 43;  32, 35, 36;  25, 34, 45;  34, 37, 38;  27, 36, 47;  36, 39, 40;  29, 38, 49;  38, 41, 42;  31, 40, 51;  40, 43, 44;  33, 42, 53;  42, 45, 46;  35, 44, 55;  44, 47, 48;  37, 46, 57;  46, 49, 50;  39, 48, 59;  48, 51, 52;  41, 50, 61;  50, 53, 54;  43, 52, 63;  52, 55, 56;  1, 45, 54;  54, 57, 58;  3, 47, 56;  56, 59, 60;  5, 49, 58;  58, 61, 62;  7, 51, 60;  0, 60, 63;  9, 53, 62;  0, 5, 53;  0, 6, 41;  0, 7, 59;  0, 8, 13;  0, 9, 61;  0, 10, 22;  0, 12, 29;  0, 14, 32;  0, 15, 57;  0, 16, 52;  0, 17, 42;  0, 18, 23;  0, 19, 40;  0, 20, 33;  0, 21, 39;  0, 24, 30;  0, 25, 50;  0, 26, 38;  0, 27, 58;  0, 28, 36;  0, 31, 37;  0, 34, 51;  0, 35, 46;  0, 43, 47;  0, 44, 56;  0, 45, 48;  0, 49, 54;  1, 3, 40;  1, 4, 31;  1, 5, 27;  1, 6, 48;  1, 7, 49;  1, 8, 15;  1, 9, 47;  1, 12, 32;  1, 13, 28;  1, 14, 29;  1, 16, 33;  1, 17, 39;  1, 18, 25;  1, 19, 51;  1, 20, 44;  1, 22, 57;  1, 23, 63;  1, 24, 34;  1, 26, 61;  1, 30, 56;  1, 35, 60;  1, 36, 46;  1, 37, 42;  1, 38, 50;  1, 41, 52;  1, 43, 58;  1, 53, 59;  2, 7, 48;  2, 8, 21;  2, 9, 52;  2, 10, 24;  2, 11, 34;  2, 12, 50;  2, 14, 47;  2, 15, 32;  2, 16, 25;  2, 17, 60;  2, 18, 51;  2, 19, 49;  2, 20, 41;  2, 22, 28;  2, 23, 40;  2, 26, 55;  2, 27, 46;  2, 29, 44;  2, 30, 58;  2, 31, 59;  2, 33, 61;  2, 35, 42;  2, 36, 56;  2, 37, 54;  2, 38, 53;  2, 39, 63;  2, 43, 45;  3, 5, 30;  3, 6, 22;  3, 7, 42;  3, 8, 18;  3, 9, 50;  3, 10, 25;  3, 11, 60;  3, 14, 39;  3, 15, 52;  3, 16, 32;  3, 17, 24;  3, 19, 37;  3, 20, 53;  3, 21, 29;  3, 26, 54;  3, 27, 38;  3, 28, 49;  3, 31, 63;  3, 33, 51;  3, 34, 46;  3, 35, 58;  3, 36, 59;  3, 41, 44;  3, 43, 61;  3, 45, 62;  3, 48, 55;  4, 9, 35;  4, 10, 53;  4, 11, 38;  4, 12, 51;  4, 13, 48;  4, 14, 28;  4, 16, 26;  4, 17, 49;  4, 18, 33;  4, 19, 52;  4, 20, 34;  4, 21, 36;  4, 22, 56;  4, 23, 44;  4, 24, 47;  4, 25, 62;  4, 27, 45;  4, 29, 55;  4, 30, 54;  4, 32, 41;  4, 37, 39;  4, 40, 61;  4, 42, 58;  4, 43, 50;  4, 46, 60;  4, 57, 63;  5, 7, 50;  5, 8, 35;  5, 9, 38;  5, 10, 40;  5, 11, 52;  5, 12, 17;  5, 13, 20;  5, 16, 48;  5, 18, 56;  5, 19, 41;  5, 21, 43;  5, 22, 45;  5, 23, 39;  5, 24, 31;  5, 26, 62;  5, 28, 55;  5, 29, 61;  5, 32, 44;  5, 33, 54;  5, 34, 60;  5, 36, 57;  5, 37, 51;  5, 42, 47;  5, 46, 63;  6, 11, 23;  6, 12, 19;  6, 13, 58;  6, 14, 38;  6, 15, 63;  6, 16, 28;  6, 18, 32;  6, 20, 37;  6, 21, 51;  6, 24, 43;  6, 25, 44;  6, 26, 56;  6, 27, 39;  6, 29, 54;  6, 30, 60;  6, 31, 33;  6, 34, 53;  6, 35, 57;  6, 36, 52;  6, 40, 50;  6, 42, 55;  6, 45, 47;  6, 46, 59;  6, 49, 62;  7, 9, 22;  7, 10, 31;  7, 11, 32;  7, 12, 55;  7, 13, 15;  7, 14, 23;  7, 18, 47;  7, 19, 26;  7, 20, 56;  7, 21, 62;  7, 24, 44;  7, 25, 36;  7, 28, 46;  7, 29, 33;  7, 30, 35;  7, 34, 52;  7, 37, 63;  7, 38, 54;  7, 39, 58;  7, 40, 53;  7, 41, 43;  7, 45, 57;  8, 14, 51;  8, 16, 30;  8, 17, 40;  8, 20, 45;  8, 22, 49;  8, 23, 61;  8, 24, 41;  8, 25, 54;  8, 26, 42;  8, 27, 43;  8, 28, 47;  8, 29, 48;  8, 31, 53;  8, 32, 55;  8, 33, 46;  8, 34, 59;  8, 36, 50;  8, 37, 56;  8, 38, 58;  8, 39, 57;  8, 44, 60;  8, 52, 62;  9, 11, 58;  9, 12, 54;  9, 13, 34;  9, 14, 59;  9, 15, 43;  9, 16, 56;  9, 17, 44;  9, 20, 27;  9, 21, 24;  9, 23, 46;  9, 25, 42;  9, 26, 51;  9, 28, 40;  9, 30, 55;  9, 31, 49;  9, 32, 39;  9, 33, 57;  9, 36, 60;  9, 37, 48;  9, 41, 45;  10, 15, 30;  10, 16, 42;  10, 17, 33;  10, 18, 36;  10, 19, 46;  10, 20, 50;  10, 23, 45;  10, 26, 35;  10, 27, 29;  10, 28, 52;  10, 32, 37;  10, 34, 54;  10, 38, 60;  10, 39, 43;  10, 41, 63;  10, 44, 58;  10, 47, 55;  10, 48, 56;  10, 49, 61;  10, 51, 62;  10, 57, 59;  11, 13, 24;  11, 14, 49;  11, 15, 54;  11, 16, 35;  11, 17, 59;  11, 18, 57;  11, 19, 42;  11, 22, 62;  11, 25, 37;  11, 26, 50;  11, 27, 33;  11, 28, 43;  11, 29, 41;  11, 30, 39;  11, 36, 45;  11, 40, 56;  11, 44, 51;  11, 46, 53;  11, 47, 61;  11, 48, 63;  12, 18, 44;  12, 20, 36;  12, 21, 35;  12, 22, 42;  12, 24, 58;  12, 25, 60;  12, 26, 46;  12, 27, 62;  12, 28, 48;  12, 30, 47;  12, 31, 38;  12, 33, 59;  12, 34, 39;  12, 37, 43;  12, 40, 45;  12, 41, 56;  12, 49, 63;  12, 52, 61;  12, 53, 57;  13, 16, 51;  13, 17, 50;  13, 18, 42;  13, 19, 61;  13, 21, 60;  13, 25, 31;  13, 26, 36;  13, 27, 30;  13, 29, 45;  13, 32, 56;  13, 35, 37;  13, 38, 52;  13, 39, 41;  13, 40, 46;  13, 43, 62;  13, 44, 54;  13, 47, 63;  13, 49, 53;  13, 55, 59;  14, 19, 50;  14, 20, 58;  14, 21, 53;  14, 22, 52;  14, 24, 57;  14, 26, 44;  14, 27, 34;  14, 30, 42;  14, 31, 48;  14, 33, 63;  14, 35, 40;  14, 36, 61;  14, 37, 45;  14, 41, 46;  14, 43, 56;  14, 54, 62;  14, 55, 60;  15, 17, 62;  15, 18, 61;  15, 19, 45;  15, 20, 48;  15, 21, 37;  15, 22, 38;  15, 23, 36;  15, 26, 39;  15, 27, 56;  15, 28, 53;  15, 29, 42;  15, 31, 46;  15, 33, 60;  15, 34, 41;  15, 40, 55;  15, 44, 49;  15, 47, 51;  15, 50, 58;  16, 21, 57;  16, 22, 31;  16, 23, 59;  16, 24, 29;  16, 34, 43;  16, 36, 49;  16, 37, 50;  16, 38, 62;  16, 39, 54;  16, 40, 47;  16, 41, 60;  16, 44, 61;  16, 45, 53;  16, 46, 58;  16, 55, 63;  17, 19, 35;  17, 20, 54;  17, 21, 45;  17, 22, 47;  17, 23, 58;  17, 25, 56;  17, 28, 57;  17, 29, 43;  17, 30, 38;  17, 31, 52;  17, 32, 48;  17, 34, 55;  17, 36, 63;  17, 41, 53;  17, 46, 51;  18, 24, 55;  18, 26, 53;  18, 27, 60;  18, 28, 38;  18, 30, 40;  18, 31, 39;  18, 34, 49;  18, 35, 52;  18, 37, 41;  18, 43, 48;  18, 45, 58;  18, 46, 62;  18, 50, 63;  18, 54, 59;  19, 21, 48;  19, 22, 53;  19, 23, 30;  19, 24, 36;  19, 25, 38;  19, 27, 59;  19, 31, 56;  19, 32, 54;  19, 33, 44;  19, 34, 57;  19, 43, 60;  19, 47, 58;  19, 55, 62;  20, 25, 43;  20, 26, 63;  20, 28, 51;  20, 29, 32;  20, 30, 46;  20, 35, 62;  20, 38, 47;  20, 39, 60;  20, 40, 52;  20, 42, 59;  20, 49, 55;  20, 57, 61;  21, 23, 47;  21, 25, 63;  21, 26, 59;  21, 27, 52;  21, 28, 56;  21, 32, 49;  21, 33, 50;  21, 34, 42;  21, 38, 44;  21, 40, 58;  21, 46, 55;  21, 54, 61;  22, 27, 55;  22, 29, 50;  22, 30, 51;  22, 32, 46;  22, 34, 63;  22, 35, 43;  22, 36, 58;  22, 37, 40;  22, 39, 44;  22, 41, 59;  22, 48, 61;  22, 54, 60;  23, 25, 49;  23, 26, 31;  23, 27, 51;  23, 28, 41;  23, 29, 57;  23, 34, 62;  23, 35, 54;  23, 37, 53;  23, 38, 48;  23, 42, 56;  23, 50, 55;  23, 52, 60;  24, 32, 61;  24, 33, 52;  24, 37, 60;  24, 38, 45;  24, 39, 62;  24, 40, 49;  24, 42, 63;  24, 46, 56;  24, 48, 53;  24, 50, 59;  24, 51, 54;  25, 27, 48;  25, 28, 61;  25, 29, 47;  25, 30, 53;  25, 32, 51;  25, 33, 41;  25, 39, 46;  25, 40, 59;  25, 52, 58;  25, 55, 57;  26, 32, 58;  26, 33, 40;  26, 34, 48;  26, 41, 57;  26, 43, 49;  26, 45, 52;  26, 47, 60;  27, 31, 57;  27, 32, 40;  27, 35, 49;  27, 41, 54;  27, 42, 50;  27, 44, 63;  27, 53, 61;  28, 33, 62;  28, 34, 44;  28, 35, 50;  28, 37, 58;  28, 42, 54;  28, 45, 60;  28, 59, 63;  29, 31, 35;  29, 34, 40;  29, 36, 51;  29, 37, 59;  29, 46, 52;  29, 53, 60;  29, 56, 62;  29, 58, 63;  30, 36, 43;  30, 37, 62;  30, 44, 59;  30, 45, 50;  30, 48, 57;  30, 49, 52;  30, 61, 63;  31, 34, 58;  31, 36, 42;  31, 43, 55;  31, 44, 62;  31, 45, 61;  31, 47, 54;  31, 50, 60;  32, 38, 57;  32, 42, 60;  32, 45, 63;  32, 47, 53;  32, 50, 62;  32, 52, 59;  33, 35, 47;  33, 36, 55;  33, 37, 49;  33, 38, 56;  33, 39, 45;  33, 48, 58;  34, 47, 50;  34, 56, 61;  35, 38, 63;  35, 39, 51;  35, 41, 48;  35, 53, 56;  35, 59, 61;  36, 41, 62;  36, 44, 53;  36, 48, 54;  37, 44, 52;  37, 55, 61;  38, 43, 59;  38, 46, 61;  38, 51, 55;  39, 42, 61;  39, 47, 52;  39, 50, 56;  39, 53, 55;  40, 48, 60;  40, 54, 63;  40, 57, 62;  41, 47, 49;  41, 55, 58;  42, 48, 62;  42, 49, 51;  42, 52, 57;  43, 46, 54;  43, 51, 57;  44, 50, 57;  45, 49, 56;  45, 51, 59;  47, 59, 62;  49, 57, 60;  51, 53, 58;  51, 56, 63}.
The deficiency graph is connected and has girth 8.

{\boldmath $\adfPENT(3,31)$}, $d = 6$:
{1, 2, 64;  0, 11, 57;  0, 3, 4;  2, 13, 59;  2, 5, 6;  4, 15, 61;  0, 5, 35;  0, 6, 18;  0, 7, 50;  0, 8, 41;  0, 9, 14;  0, 10, 39;  0, 13, 25;  0, 15, 30;  0, 16, 49;  0, 17, 56;  0, 19, 44;  0, 20, 32;  0, 21, 24;  0, 22, 34;  0, 23, 61;  0, 26, 33;  0, 27, 38;  0, 28, 58;  0, 29, 31;  0, 37, 43;  0, 40, 59;  0, 45, 53;  0, 46, 55;  0, 47, 52;  1, 3, 46;  1, 4, 39;  1, 5, 40;  1, 8, 14;  1, 9, 51;  1, 15, 63;  1, 16, 62;  1, 17, 25;  1, 19, 53;  1, 20, 28;  1, 22, 37;  1, 23, 38;  1, 27, 41;  1, 32, 45;  1, 33, 50;  2, 11, 40;  2, 16, 32;  2, 17, 28;  2, 20, 45;  2, 21, 47;  2, 23, 46;  2, 26, 65;  2, 33, 58;  2, 34, 39;  3, 5, 9;  3, 10, 57;  3, 16, 33;  3, 35, 53;  3, 41, 47;  4, 10, 52;  4, 11, 53;  4, 17, 29}.
The deficiency graph is connected and has girth 8.

\adfAppGap

{\boldmath $\adfPENT(3,47,7)$}, $d = 6$:
{0, 21, 36;  0, 34, 68;  0, 46, 83;  1, 35, 69;  1, 38, 57;  1, 64, 67;  2, 23, 48;  2, 38, 85;  3, 40, 66;  3, 59, 69;  4, 25, 50;  4, 40, 87;  5, 42, 61;  5, 68, 71;  21, 34, 83;  21, 46, 68;  23, 36, 38;  23, 70, 85;  25, 38, 87;  25, 40, 72;  34, 36, 46;  35, 38, 67;  35, 57, 64;  36, 48, 85;  36, 68, 83;  37, 40, 59;  37, 66, 69;  38, 40, 50;  38, 48, 70;  38, 64, 69;  39, 42, 71;  39, 61, 68;  40, 69, 71;  42, 68, 73;  50, 72, 87;  57, 67, 69;  59, 66, 71;  61, 71, 73;  0, 4, 62;  0, 6, 94;  0, 7, 58;  0, 8, 86;  0, 9, 50;  0, 11, 42;  0, 13, 27;  0, 14, 93;  0, 16, 20;  0, 17, 25;  0, 18, 72;  0, 23, 101;  0, 24, 98;  0, 28, 91;  0, 33, 41;  0, 40, 63;  0, 43, 61;  0, 44, 97;  0, 45, 59;  0, 51, 75;  0, 52, 82;  0, 53, 79;  0, 81, 85;  1, 2, 76;  1, 5, 22;  1, 7, 45;  1, 9, 25;  1, 10, 53;  1, 17, 55;  1, 27, 86;  1, 28, 73;  1, 29, 70;  1, 32, 43;  1, 44, 62;  1, 51, 94;  1, 59, 89;  1, 75, 80;  2, 8, 62;  2, 9, 47;  2, 10, 64;  2, 11, 27;  2, 15, 16;  2, 22, 35;  2, 29, 87;  2, 32, 83;  2, 52, 77;  2, 65, 93;  2, 88, 95;  3, 9, 51;  3, 21, 52;  3, 29, 33;  3, 53, 64;  3, 88, 94;  4, 22, 46;  4, 83, 101;  5, 11, 53}.
The deficiency graph is connected and has girth 4.

{\boldmath $\adfPENT(3,51,7)$}, $d = 2$:
{0, 36, 52;  0, 72, 104;  0, 76, 88;  1, 7, 39;  1, 23, 35;  1, 59, 75;  7, 35, 75;  35, 39, 59;  36, 76, 104;  52, 72, 76;  0, 2, 49;  0, 3, 57;  0, 5, 54;  0, 8, 25;  0, 9, 11;  0, 10, 97;  0, 13, 96;  0, 15, 107;  0, 18, 95;  0, 19, 60;  0, 21, 51;  0, 26, 67;  0, 29, 73;  0, 30, 101;  0, 31, 93;  0, 33, 83;  0, 37, 44;  0, 43, 48;  0, 45, 64;  0, 53, 79;  0, 55, 65;  0, 63, 109;  0, 81, 89;  0, 85, 99}.
The deficiency graph is connected and has girth 4.

{\boldmath $\adfPENT(3,53,7)$}, $d = 6$:
{11, 49, 87;  11, 56, 75;  11, 58, 95;  13, 58, 77;  13, 60, 97;  15, 60, 79;  15, 62, 99;  17, 20, 28;  17, 40, 104;  17, 66, 99;  19, 22, 30;  19, 42, 106;  19, 68, 101;  20, 40, 66;  20, 99, 104;  21, 24, 32;  21, 44, 108;  21, 70, 103;  22, 42, 68;  22, 101, 106;  24, 44, 70;  24, 103, 108;  28, 40, 99;  28, 66, 104;  30, 42, 101;  32, 44, 103;  49, 56, 95;  49, 58, 75;  51, 58, 97;  51, 60, 77;  53, 60, 99;  53, 62, 79;  56, 58, 87;  58, 60, 89;  60, 62, 91;  75, 87, 95;  77, 89, 97;  79, 91, 99;  0, 1, 34;  0, 3, 36;  0, 4, 66;  0, 5, 42;  0, 6, 57;  0, 7, 100;  0, 9, 113;  0, 10, 54;  0, 13, 55;  0, 14, 32;  0, 15, 28;  0, 16, 73;  0, 18, 53;  0, 21, 83;  0, 22, 74;  0, 23, 47;  0, 24, 104;  0, 25, 40;  0, 41, 82;  0, 43, 44;  0, 45, 98;  0, 61, 101;  0, 62, 63;  0, 85, 89;  0, 86, 110;  0, 92, 99;  0, 93, 97;  1, 3, 55;  1, 7, 93;  1, 11, 25;  1, 14, 67;  1, 15, 75;  1, 19, 110;  1, 22, 100;  1, 23, 38;  1, 26, 35;  1, 29, 80;  1, 32, 68;  1, 37, 106;  1, 45, 74;  1, 52, 53;  1, 57, 59;  1, 64, 112;  1, 71, 113;  1, 81, 105;  2, 5, 112;  2, 8, 82;  2, 15, 56;  2, 16, 57;  2, 17, 46;  2, 23, 44;  2, 27, 45;  2, 34, 50;  2, 47, 83;  2, 53, 106;  2, 59, 99;  2, 88, 94;  3, 4, 28;  3, 9, 89;  3, 17, 34;  3, 39, 95;  3, 40, 94;  3, 45, 112;  3, 47, 51;  3, 64, 82;  4, 17, 76;  4, 29, 47;  5, 11, 71}.
The deficiency graph is connected and has girth 4.

\adfAppGap

{\boldmath $\adfPENT(3,34,9)$}, $d = 6$:
{0, 1, 2;  0, 15, 71;  0, 16, 70;  0, 17, 69;  1, 15, 70;  1, 16, 69;  1, 17, 71;  2, 15, 69;  2, 16, 71;  2, 17, 70;  3, 4, 5;  3, 6, 74;  3, 7, 73;  3, 8, 72;  4, 6, 73;  4, 7, 72;  4, 8, 74;  5, 6, 72;  5, 7, 74;  5, 8, 73;  0, 18, 59;  0, 19, 48;  0, 20, 52;  0, 21, 44;  0, 22, 42;  0, 23, 51;  0, 24, 57;  0, 25, 46;  0, 26, 61;  0, 27, 56;  0, 28, 47;  0, 29, 43;  0, 31, 63;  0, 32, 50;  0, 34, 62;  0, 35, 53;  0, 37, 64;  0, 38, 65;  0, 39, 55;  0, 40, 45;  1, 19, 43;  1, 20, 49;  1, 21, 53;  1, 23, 59;  1, 26, 52;  1, 27, 58;  1, 29, 45;  1, 32, 51;  1, 34, 64;  1, 35, 56;  1, 38, 62;  1, 39, 57;  1, 40, 46;  1, 41, 47;  2, 22, 39;  2, 23, 58;  2, 27, 44;  2, 32, 65;  2, 33, 53;  2, 40, 47;  2, 41, 45;  2, 46, 64;  3, 9, 47;  3, 10, 45;  3, 11, 40;  3, 22, 51;  4, 35, 65;  4, 40, 77}.
The deficiency graph is connected and has girth 4.

{\boldmath $\adfPENT(3,43,9)$}, $d = 96$:
{0, 1, 2;  0, 9, 13;  0, 10, 12;  0, 11, 14;  0, 15, 88;  0, 16, 87;  0, 17, 89;  0, 84, 94;  0, 85, 93;  0, 86, 95;  1, 9, 12;  1, 10, 14;  1, 11, 13;  1, 15, 87;  1, 16, 89;  1, 17, 88;  1, 84, 93;  1, 85, 95;  1, 86, 94;  2, 9, 14;  2, 10, 13;  2, 11, 12;  2, 15, 89;  2, 16, 88;  2, 17, 87;  2, 84, 95;  2, 85, 94;  2, 86, 93;  3, 4, 5;  3, 6, 91;  3, 7, 90;  3, 8, 92;  3, 12, 28;  3, 13, 27;  3, 14, 29;  3, 75, 85;  3, 76, 84;  3, 77, 86;  4, 6, 90;  4, 7, 92;  4, 8, 91;  4, 12, 27;  4, 13, 29;  4, 14, 28;  4, 75, 84;  4, 76, 86;  4, 77, 85;  5, 6, 92;  5, 7, 91;  5, 8, 90;  5, 12, 29;  5, 13, 28;  5, 14, 27;  5, 75, 86;  5, 76, 85;  5, 77, 84;  6, 7, 8;  6, 15, 19;  6, 16, 18;  6, 17, 20;  6, 21, 94;  6, 22, 93;  6, 23, 95;  7, 15, 18;  7, 16, 20;  7, 17, 19;  7, 21, 93;  7, 22, 95;  7, 23, 94;  8, 15, 20;  8, 16, 19;  8, 17, 18;  8, 21, 95;  8, 22, 94;  8, 23, 93;  9, 10, 11;  9, 18, 34;  9, 19, 33;  9, 20, 35;  9, 81, 91;  9, 82, 90;  9, 83, 92;  10, 18, 33;  10, 19, 35;  10, 20, 34;  10, 81, 90;  10, 82, 92;  10, 83, 91;  11, 18, 35;  11, 19, 34;  11, 20, 33;  11, 81, 92;  11, 82, 91;  11, 83, 90;  12, 13, 14;  12, 21, 25;  12, 22, 24;  12, 23, 26;  13, 21, 24;  13, 22, 26;  13, 23, 25;  14, 21, 26;  14, 22, 25;  14, 23, 24;  15, 16, 17;  15, 24, 40;  15, 25, 39;  15, 26, 41;  16, 24, 39;  16, 25, 41;  16, 26, 40;  17, 24, 41;  17, 25, 40;  17, 26, 39;  18, 19, 20;  18, 27, 31;  18, 28, 30;  18, 29, 32;  19, 27, 30;  19, 28, 32;  19, 29, 31;  20, 27, 32;  20, 28, 31;  20, 29, 30;  21, 22, 23;  21, 30, 46;  21, 31, 45;  21, 32, 47;  22, 30, 45;  22, 31, 47;  22, 32, 46;  23, 30, 47;  23, 31, 46;  23, 32, 45;  24, 25, 26;  24, 33, 37;  24, 34, 36;  24, 35, 38;  25, 33, 36;  25, 34, 38;  25, 35, 37;  26, 33, 38;  26, 34, 37;  26, 35, 36;  27, 28, 29;  27, 36, 52;  27, 37, 51;  27, 38, 53;  28, 36, 51;  28, 37, 53;  28, 38, 52;  29, 36, 53;  29, 37, 52;  29, 38, 51;  30, 31, 32;  30, 39, 43;  30, 40, 42;  30, 41, 44;  31, 39, 42;  31, 40, 44;  31, 41, 43;  32, 39, 44;  32, 40, 43;  32, 41, 42;  33, 34, 35;  33, 42, 58;  33, 43, 57;  33, 44, 59;  34, 42, 57;  34, 43, 59;  34, 44, 58;  35, 42, 59;  35, 43, 58;  35, 44, 57;  36, 37, 38;  36, 45, 49;  36, 46, 48;  36, 47, 50;  37, 45, 48;  37, 46, 50;  37, 47, 49;  38, 45, 50;  38, 46, 49;  38, 47, 48;  39, 40, 41;  39, 48, 64;  39, 49, 63;  39, 50, 65;  40, 48, 63;  40, 49, 65;  40, 50, 64;  41, 48, 65;  41, 49, 64;  41, 50, 63;  42, 43, 44;  42, 51, 55;  42, 52, 54;  42, 53, 56;  43, 51, 54;  43, 52, 56;  43, 53, 55;  44, 51, 56;  44, 52, 55;  44, 53, 54;  45, 46, 47;  45, 54, 70;  45, 55, 69;  45, 56, 71;  46, 54, 69;  46, 55, 71;  46, 56, 70;  47, 54, 71;  47, 55, 70;  47, 56, 69;  48, 49, 50;  48, 57, 61;  48, 58, 60;  48, 59, 62;  49, 57, 60;  49, 58, 62;  49, 59, 61;  50, 57, 62;  50, 58, 61;  50, 59, 60;  51, 52, 53;  51, 60, 76;  51, 61, 75;  51, 62, 77;  52, 60, 75;  52, 61, 77;  52, 62, 76;  53, 60, 77;  53, 61, 76;  53, 62, 75;  54, 55, 56;  54, 63, 67;  54, 64, 66;  54, 65, 68;  55, 63, 66;  55, 64, 68;  55, 65, 67;  56, 63, 68;  56, 64, 67;  56, 65, 66;  57, 58, 59;  57, 66, 82;  57, 67, 81;  57, 68, 83;  58, 66, 81;  58, 67, 83;  58, 68, 82;  59, 66, 83;  59, 67, 82;  59, 68, 81;  60, 61, 62;  60, 69, 73;  60, 70, 72;  60, 71, 74;  61, 69, 72;  61, 70, 74;  61, 71, 73;  62, 69, 74;  62, 70, 73;  62, 71, 72;  63, 64, 65;  63, 72, 88;  63, 73, 87;  63, 74, 89;  64, 72, 87;  64, 73, 89;  64, 74, 88;  65, 72, 89;  65, 73, 88;  65, 74, 87;  66, 67, 68;  66, 75, 79;  66, 76, 78;  66, 77, 80;  67, 75, 78;  67, 76, 80;  67, 77, 79;  68, 75, 80;  68, 76, 79;  68, 77, 78;  69, 70, 71;  69, 78, 94;  69, 79, 93;  69, 80, 95;  70, 78, 93;  70, 79, 95;  70, 80, 94;  71, 78, 95;  71, 79, 94;  71, 80, 93;  72, 73, 74;  72, 81, 85;  72, 82, 84;  72, 83, 86;  73, 81, 84;  73, 82, 86;  73, 83, 85;  74, 81, 86;  74, 82, 85;  74, 83, 84;  75, 76, 77;  78, 79, 80;  78, 87, 91;  78, 88, 90;  78, 89, 92;  79, 87, 90;  79, 88, 92;  79, 89, 91;  80, 87, 92;  80, 88, 91;  80, 89, 90;  81, 82, 83;  84, 85, 86;  87, 88, 89;  90, 91, 92;  93, 94, 95;  0, 18, 42;  0, 19, 60;  0, 20, 57;  0, 21, 63;  0, 22, 81;  0, 23, 72;  0, 24, 74;  0, 25, 49;  0, 26, 53;  0, 27, 59;  0, 28, 66;  0, 29, 78;  0, 30, 75;  0, 31, 71;  0, 32, 65;  0, 33, 52;  0, 34, 61;  0, 35, 73;  0, 36, 56;  0, 37, 69;  0, 38, 76;  0, 39, 68;  0, 40, 82;  0, 41, 45;  0, 43, 62;  0, 44, 70;  0, 46, 64;  0, 47, 67;  0, 48, 80;  0, 50, 79;  0, 51, 58;  0, 54, 83;  0, 55, 77;  1, 18, 56;  1, 19, 62;  1, 20, 45;  1, 21, 49;  1, 22, 75;  1, 23, 82;  1, 24, 78;  1, 25, 60;  1, 26, 47;  1, 27, 54;  1, 28, 67;  1, 29, 65;  1, 30, 57;  1, 31, 72;  1, 32, 66;  1, 33, 71;  1, 34, 64;  1, 35, 48;  1, 36, 81;  1, 37, 83;  1, 38, 58;  1, 39, 70;  1, 40, 76;  1, 41, 77;  1, 42, 61;  1, 43, 74;  1, 44, 69;  1, 46, 51;  1, 50, 68;  1, 52, 59;  1, 53, 80;  1, 55, 73;  1, 63, 79;  2, 18, 43;  2, 19, 67;  2, 20, 64;  2, 21, 60;  2, 22, 54;  2, 23, 57;  2, 24, 53;  2, 25, 42;  2, 26, 72;  2, 27, 47;  2, 28, 82;  2, 29, 56;  2, 30, 52;  2, 31, 81;  2, 32, 50;  2, 33, 68;  2, 34, 74;  2, 35, 55;  2, 36, 66;  2, 37, 73;  2, 38, 61;  2, 39, 59;  2, 40, 80;  2, 41, 46;  2, 44, 79;  2, 45, 83;  2, 48, 77;  2, 49, 76;  2, 51, 70;  2, 58, 65;  2, 62, 78;  2, 63, 69;  2, 71, 75;  3, 9, 68;  3, 10, 61;  3, 11, 80;  3, 18, 66;  3, 19, 78;  3, 20, 67;  3, 21, 65;  3, 22, 48;  3, 23, 64;  3, 24, 47;  3, 25, 63;  3, 26, 44;  3, 30, 54;  3, 31, 83;  3, 32, 79;  3, 33, 72;  3, 34, 56;  3, 35, 82;  3, 36, 60;  3, 37, 70;  3, 38, 69;  3, 39, 58;  3, 40, 55;  3, 41, 59;  3, 42, 74;  3, 43, 93;  3, 45, 53;  3, 46, 62;  3, 49, 71;  3, 50, 94;  3, 51, 81;  3, 52, 95;  3, 57, 73;  4, 9, 40;  4, 10, 42;  4, 11, 60;  4, 18, 58;  4, 19, 73;  4, 20, 93;  4, 21, 50;  4, 22, 70;  4, 23, 63;  4, 24, 44;  4, 25, 56;  4, 26, 68;  4, 30, 74;  4, 31, 59;  4, 32, 69;  4, 33, 79;  4, 34, 54;  4, 35, 52;  4, 36, 65;  4, 37, 81;  4, 38, 57;  4, 39, 78;  4, 41, 71;  4, 43, 64;  4, 45, 72;  4, 46, 66;  4, 47, 53;  4, 48, 82;  4, 49, 67;  4, 51, 95;  4, 55, 94;  4, 61, 83;  4, 62, 80;  5, 9, 51;  5, 10, 43;  5, 11, 71;  5, 18, 50;  5, 19, 46;  5, 20, 81;  5, 21, 39;  5, 22, 53;  5, 23, 74;  5, 24, 61;  5, 25, 73;  5, 26, 66;  5, 30, 79;  5, 31, 65;  5, 32, 63;  5, 33, 80;  5, 34, 49;  5, 35, 72;  5, 36, 55;  5, 37, 60;  5, 38, 70;  5, 40, 68;  5, 41, 54;  5, 42, 67;  5, 44, 82;  5, 45, 64;  5, 47, 83;  5, 48, 69;  5, 52, 57;  5, 56, 78;  5, 58, 94;  5, 59, 93;  5, 62, 95;  6, 24, 51;  6, 25, 89;  6, 26, 63;  6, 27, 71;  6, 28, 68;  6, 29, 58;  6, 30, 84;  6, 31, 77;  6, 32, 49;  6, 33, 74;  6, 34, 70;  6, 35, 66;  6, 36, 61;  6, 37, 56;  6, 38, 80;  6, 39, 54;  6, 40, 73;  6, 41, 67;  6, 42, 78;  6, 43, 79;  6, 44, 65;  6, 45, 75;  6, 46, 81;  6, 47, 85;  6, 48, 83;  6, 50, 72;  6, 52, 69;  6, 53, 86;  6, 55, 76;  6, 57, 64;  6, 59, 88;  6, 60, 82;  6, 62, 87;  7, 24, 62;  7, 25, 52;  7, 26, 51;  7, 27, 64;  7, 28, 55;  7, 29, 81;  7, 30, 83;  7, 31, 86;  7, 32, 73;  7, 33, 48;  7, 34, 71;  7, 35, 60;  7, 36, 82;  7, 37, 75;  7, 38, 56;  7, 39, 85;  7, 40, 67;  7, 41, 68;  7, 42, 70;  7, 43, 61;  7, 44, 80;  7, 45, 65;  7, 46, 88;  7, 47, 87;  7, 49, 79;  7, 50, 77;  7, 53, 78;  7, 54, 84;  7, 57, 63;  7, 58, 72;  7, 59, 74;  7, 66, 89;  7, 69, 76;  8, 24, 68;  8, 25, 71;  8, 26, 61;  8, 27, 70;  8, 28, 57;  8, 29, 77;  8, 30, 58;  8, 31, 62;  8, 32, 51;  8, 33, 87;  8, 34, 80;  8, 35, 69;  8, 36, 88;  8, 37, 59;  8, 38, 78;  8, 39, 66;  8, 40, 72;  8, 41, 55;  8, 42, 63;  8, 43, 83;  8, 44, 89;  8, 45, 67;  8, 46, 60;  8, 47, 76;  8, 48, 73;  8, 49, 86;  8, 50, 82;  8, 52, 84;  8, 53, 74;  8, 54, 85;  8, 56, 75;  8, 64, 81;  8, 65, 79;  9, 15, 61;  9, 16, 52;  9, 17, 55;  9, 24, 76;  9, 25, 44;  9, 26, 57;  9, 27, 63;  9, 28, 65;  9, 29, 59;  9, 30, 88;  9, 31, 54;  9, 32, 62;  9, 36, 67;  9, 37, 72;  9, 38, 85;  9, 39, 69;  9, 41, 80;  9, 42, 71;  9, 43, 75;  9, 45, 74;  9, 46, 86;  9, 47, 60;  9, 48, 84;  9, 49, 78;  9, 50, 66;  9, 53, 87;  9, 56, 73;  9, 58, 77;  9, 64, 79;  9, 70, 89;  10, 15, 45;  10, 16, 48;  10, 17, 60;  10, 24, 69;  10, 25, 70;  10, 26, 64;  10, 27, 89;  10, 28, 72;  10, 29, 79;  10, 30, 49;  10, 31, 53;  10, 32, 71;  10, 36, 75;  10, 37, 66;  10, 38, 55;  10, 39, 86;  10, 40, 62;  10, 41, 78;  10, 44, 63;  10, 46, 65;  10, 47, 84;  10, 50, 67;  10, 51, 73;  10, 52, 88;  10, 54, 77;  10, 56, 76;  10, 57, 74;  10, 58, 80;  10, 59, 85;  10, 68, 87;  11, 15, 54;  11, 16, 55;  11, 17, 66;  11, 24, 58;  11, 25, 68;  11, 26, 67;  11, 27, 87;  11, 28, 84;  11, 29, 49;  11, 30, 59;  11, 31, 78;  11, 32, 72;  11, 36, 62;  11, 37, 79;  11, 38, 88;  11, 39, 46;  11, 40, 74;  11, 41, 85;  11, 42, 64;  11, 43, 73;  11, 44, 61;  11, 45, 63;  11, 47, 51;  11, 48, 76;  11, 50, 70;  11, 52, 86;  11, 53, 89;  11, 56, 77;  11, 57, 75;  11, 65, 69;  12, 30, 69;  12, 31, 67;  12, 32, 68;  12, 33, 70;  12, 34, 78;  12, 35, 85;  12, 36, 72;  12, 37, 82;  12, 38, 60;  12, 39, 61;  12, 40, 54;  12, 41, 92;  12, 42, 66;  12, 43, 87;  12, 44, 81;  12, 45, 62;  12, 46, 93;  12, 47, 74;  12, 48, 95;  12, 49, 88;  12, 50, 86;  12, 51, 90;  12, 52, 89;  12, 53, 83;  12, 55, 75;  12, 56, 79;  12, 57, 91;  12, 58, 76;  12, 59, 73;  12, 63, 94;  12, 64, 80;  12, 65, 84;  12, 71, 77;  13, 30, 61;  13, 31, 66;  13, 32, 59;  13, 33, 86;  13, 34, 69;  13, 35, 71;  13, 36, 84;  13, 37, 76;  13, 38, 73;  13, 39, 74;  13, 40, 94;  13, 41, 95;  13, 42, 88;  13, 43, 63;  13, 44, 87;  13, 45, 77;  13, 46, 68;  13, 47, 64;  13, 48, 93;  13, 49, 75;  13, 50, 92;  13, 51, 91;  13, 52, 72;  13, 53, 67;  13, 54, 80;  13, 55, 79;  13, 56, 83;  13, 57, 78;  13, 58, 90;  13, 60, 89;  13, 62, 81;  13, 65, 82;  13, 70, 85;  14, 30, 77;  14, 31, 75;  14, 32, 60;  14, 33, 53;  14, 34, 67;  14, 35, 91;  14, 36, 79;  14, 37, 89;  14, 38, 68;  14, 39, 73;  14, 40, 69;  14, 41, 83;  14, 42, 92;  14, 43, 82;  14, 44, 74;  14, 45, 86;  14, 46, 85;  14, 47, 62;  14, 48, 66;  14, 49, 72;  14, 50, 88;  14, 51, 78;  14, 52, 58;  14, 54, 94;  14, 55, 93;  14, 56, 84;  14, 57, 80;  14, 59, 87;  14, 61, 90;  14, 63, 81;  14, 64, 95;  14, 65, 70;  14, 71, 76;  15, 21, 75;  15, 22, 73;  15, 23, 66;  15, 30, 67;  15, 31, 48;  15, 32, 80;  15, 33, 65;  15, 34, 55;  15, 35, 64;  15, 36, 83;  15, 37, 63;  15, 38, 74;  15, 42, 69;  15, 43, 71;  15, 44, 85;  15, 46, 79;  15, 47, 86;  15, 49, 82;  15, 50, 84;  15, 51, 93;  15, 52, 78;  15, 53, 72;  15, 56, 81;  15, 57, 90;  15, 58, 95;  15, 59, 77;  15, 60, 94;  15, 62, 92;  15, 68, 91;  15, 70, 76;  16, 21, 74;  16, 22, 62;  16, 23, 79;  16, 30, 95;  16, 31, 70;  16, 32, 92;  16, 33, 83;  16, 34, 77;  16, 35, 56;  16, 36, 68;  16, 37, 65;  16, 38, 66;  16, 42, 72;  16, 43, 85;  16, 44, 94;  16, 45, 76;  16, 46, 78;  16, 47, 91;  16, 49, 90;  16, 50, 69;  16, 51, 67;  16, 53, 93;  16, 54, 73;  16, 57, 84;  16, 58, 75;  16, 59, 86;  16, 60, 81;  16, 61, 80;  16, 63, 71;  16, 64, 82;  17, 21, 69;  17, 22, 50;  17, 23, 77;  17, 30, 81;  17, 31, 57;  17, 32, 83;  17, 33, 62;  17, 34, 76;  17, 35, 90;  17, 36, 85;  17, 37, 64;  17, 38, 71;  17, 42, 80;  17, 43, 68;  17, 44, 84;  17, 45, 51;  17, 46, 61;  17, 47, 78;  17, 48, 79;  17, 49, 74;  17, 52, 82;  17, 53, 94;  17, 54, 72;  17, 56, 93;  17, 58, 63;  17, 59, 75;  17, 65, 91;  17, 67, 92;  17, 70, 86;  17, 73, 95;  18, 36, 86;  18, 37, 84;  18, 38, 83;  18, 39, 87;  18, 40, 71;  18, 41, 74;  18, 44, 67;  18, 45, 93;  18, 46, 73;  18, 47, 82;  18, 48, 68;  18, 49, 77;  18, 51, 80;  18, 52, 92;  18, 53, 81;  18, 54, 75;  18, 55, 78;  18, 57, 76;  18, 59, 65;  18, 60, 95;  18, 61, 79;  18, 62, 85;  18, 63, 91;  18, 64, 69;  18, 70, 88;  18, 72, 90;  18, 89, 94;  19, 36, 64;  19, 37, 86;  19, 38, 79;  19, 39, 89;  19, 40, 77;  19, 41, 70;  19, 42, 94;  19, 43, 90;  19, 44, 72;  19, 45, 80;  19, 47, 66;  19, 48, 75;  19, 49, 95;  19, 50, 85;  19, 51, 69;  19, 52, 71;  19, 53, 68;  19, 54, 91;  19, 55, 81;  19, 56, 87;  19, 57, 88;  19, 58, 74;  19, 59, 92;  19, 61, 84;  19, 63, 82;  19, 65, 93;  19, 76, 83;  20, 36, 77;  20, 37, 91;  20, 38, 75;  20, 39, 80;  20, 40, 88;  20, 41, 94;  20, 42, 79;  20, 43, 70;  20, 44, 76;  20, 46, 82;  20, 47, 63;  20, 48, 89;  20, 49, 69;  20, 50, 90;  20, 51, 68;  20, 52, 74;  20, 53, 71;  20, 54, 78;  20, 55, 87;  20, 56, 85;  20, 58, 73;  20, 59, 72;  20, 60, 86;  20, 61, 95;  20, 62, 84;  20, 65, 83;  20, 66, 92;  21, 27, 43;  21, 28, 73;  21, 29, 67;  21, 36, 71;  21, 37, 58;  21, 38, 77;  21, 40, 90;  21, 41, 81;  21, 42, 89;  21, 44, 78;  21, 48, 70;  21, 51, 88;  21, 52, 83;  21, 53, 59;  21, 54, 87;  21, 55, 80;  21, 56, 72;  21, 57, 79;  21, 61, 92;  21, 62, 82;  21, 64, 85;  21, 66, 84;  21, 68, 86;  21, 76, 91;  22, 27, 77;  22, 28, 90;  22, 29, 57;  22, 36, 80;  22, 37, 74;  22, 38, 92;  22, 39, 56;  22, 40, 66;  22, 41, 89;  22, 42, 76;  22, 43, 72;  22, 44, 60;  22, 49, 85;  22, 51, 87;  22, 52, 67;  22, 55, 91;  22, 58, 79;  22, 59, 78;  22, 61, 82;  22, 63, 83;  22, 64, 84;  22, 65, 71;  22, 68, 88;  22, 69, 86;  23, 27, 67;  23, 28, 69;  23, 29, 50;  23, 36, 73;  23, 37, 80;  23, 38, 84;  23, 39, 75;  23, 40, 78;  23, 41, 61;  23, 42, 65;  23, 43, 81;  23, 44, 62;  23, 48, 85;  23, 49, 92;  23, 51, 83;  23, 52, 68;  23, 53, 70;  23, 54, 76;  23, 55, 86;  23, 56, 91;  23, 58, 87;  23, 59, 89;  23, 60, 90;  23, 71, 88;  24, 42, 84;  24, 43, 86;  24, 45, 85;  24, 46, 63;  24, 48, 71;  24, 49, 73;  24, 50, 91;  24, 52, 80;  24, 54, 81;  24, 55, 89;  24, 56, 92;  24, 57, 77;  24, 59, 79;  24, 60, 93;  24, 64, 70;  24, 65, 95;  24, 66, 90;  24, 67, 87;  24, 72, 94;  24, 75, 83;  24, 82, 88;  25, 43, 78;  25, 45, 82;  25, 46, 75;  25, 47, 80;  25, 48, 86;  25, 50, 76;  25, 51, 79;  25, 53, 88;  25, 54, 74;  25, 55, 85;  25, 57, 65;  25, 58, 84;  25, 59, 90;  25, 61, 94;  25, 62, 83;  25, 64, 93;  25, 66, 95;  25, 67, 91;  25, 69, 87;  25, 72, 92;  25, 77, 81;  26, 42, 86;  26, 43, 95;  26, 45, 73;  26, 46, 80;  26, 48, 78;  26, 49, 70;  26, 50, 81;  26, 52, 87;  26, 54, 79;  26, 55, 90;  26, 56, 82;  26, 58, 88;  26, 59, 76;  26, 60, 91;  26, 62, 94;  26, 65, 85;  26, 69, 89;  26, 71, 84;  26, 74, 93;  26, 75, 92;  26, 77, 83;  27, 33, 84;  27, 34, 95;  27, 35, 68;  27, 42, 90;  27, 44, 73;  27, 45, 79;  27, 46, 76;  27, 48, 72;  27, 49, 81;  27, 50, 83;  27, 55, 82;  27, 56, 80;  27, 57, 85;  27, 58, 92;  27, 60, 78;  27, 61, 93;  27, 62, 88;  27, 65, 86;  27, 66, 94;  27, 69, 75;  27, 74, 91;  28, 33, 81;  28, 34, 79;  28, 35, 87;  28, 42, 95;  28, 43, 76;  28, 44, 83;  28, 45, 78;  28, 46, 77;  28, 47, 61;  28, 48, 92;  28, 49, 94;  28, 50, 71;  28, 54, 88;  28, 56, 74;  28, 58, 89;  28, 59, 80;  28, 60, 85;  28, 62, 91;  28, 63, 93;  28, 64, 86;  28, 70, 75;  29, 33, 82;  29, 34, 92;  29, 35, 76;  29, 42, 62;  29, 43, 91;  29, 44, 68;  29, 45, 66;  29, 46, 84;  29, 47, 72;  29, 48, 74;  29, 54, 86;  29, 55, 95;  29, 60, 88;  29, 61, 85;  29, 63, 80;  29, 64, 83;  29, 69, 90;  29, 70, 87;  29, 71, 89;  29, 73, 94;  29, 75, 93;  30, 48, 87;  30, 50, 73;  30, 51, 82;  30, 53, 66;  30, 55, 72;  30, 56, 86;  30, 60, 80;  30, 62, 89;  30, 63, 90;  30, 64, 92;  30, 65, 78;  30, 68, 94;  30, 70, 91;  30, 71, 85;  30, 76, 93;  31, 49, 89;  31, 50, 80;  31, 51, 85;  31, 52, 79;  31, 55, 84;  31, 56, 94;  31, 58, 64;  31, 60, 87;  31, 61, 91;  31, 63, 92;  31, 68, 93;  31, 69, 88;  31, 73, 90;  31, 74, 95;  31, 76, 82;  32, 48, 91;  32, 52, 81;  32, 53, 84;  32, 54, 82;  32, 55, 74;  32, 56, 89;  32, 57, 93;  32, 58, 85;  32, 61, 87;  32, 64, 78;  32, 67, 86;  32, 70, 77;  32, 75, 90;  32, 76, 94;  32, 88, 95;  33, 39, 60;  33, 40, 75;  33, 41, 73;  33, 49, 91;  33, 50, 78;  33, 51, 66;  33, 54, 89;  33, 55, 92;  33, 56, 90;  33, 61, 88;  33, 63, 85;  33, 64, 94;  33, 67, 93;  33, 69, 77;  33, 76, 95;  34, 39, 94;  34, 40, 85;  34, 41, 60;  34, 48, 81;  34, 50, 75;  34, 51, 72;  34, 52, 93;  34, 53, 82;  34, 62, 86;  34, 63, 84;  34, 65, 90;  34, 66, 87;  34, 68, 89;  34, 73, 91;  34, 83, 88;  35, 39, 95;  35, 40, 83;  35, 41, 62;  35, 49, 80;  35, 50, 74;  35, 51, 89;  35, 53, 79;  35, 54, 92;  35, 61, 86;  35, 63, 78;  35, 65, 94;  35, 67, 88;  35, 70, 84;  35, 75, 81;  35, 77, 93;  36, 54, 95;  36, 57, 87;  36, 58, 78;  36, 59, 94;  36, 63, 70;  36, 69, 91;  36, 74, 90;  36, 76, 92;  36, 89, 93;  37, 54, 90;  37, 55, 88;  37, 57, 94;  37, 61, 78;  37, 62, 93;  37, 67, 85;  37, 68, 92;  37, 71, 87;  37, 77, 95;  38, 54, 93;  38, 59, 64;  38, 62, 90;  38, 63, 86;  38, 65, 81;  38, 67, 95;  38, 72, 91;  38, 82, 89;  38, 87, 94;  39, 45, 91;  39, 47, 88;  39, 55, 83;  39, 57, 92;  39, 62, 79;  39, 67, 84;  39, 71, 90;  39, 72, 93;  39, 76, 81;  39, 77, 82;  40, 45, 84;  40, 46, 87;  40, 47, 93;  40, 56, 95;  40, 57, 89;  40, 58, 86;  40, 59, 91;  40, 60, 79;  40, 61, 81;  40, 70, 92;  41, 47, 79;  41, 56, 88;  41, 57, 72;  41, 58, 93;  41, 66, 86;  41, 69, 84;  41, 75, 91;  41, 76, 90;  41, 82, 87;  42, 60, 83;  42, 68, 85;  42, 73, 93;  42, 75, 82;  42, 77, 91;  42, 81, 87;  43, 60, 84;  43, 65, 80;  43, 66, 88;  43, 67, 89;  43, 69, 92;  43, 77, 94;  44, 64, 90;  44, 66, 91;  44, 71, 86;  44, 75, 95;  44, 77, 92;  44, 88, 93;  45, 52, 94;  45, 60, 92;  45, 61, 89;  45, 68, 90;  45, 81, 88;  45, 87, 95;  46, 52, 91;  46, 53, 90;  46, 67, 94;  46, 72, 95;  46, 74, 92;  46, 83, 89;  47, 52, 73;  47, 65, 92;  47, 68, 95;  47, 75, 94;  47, 77, 90;  47, 81, 89;  48, 67, 90;  48, 88, 94;  49, 66, 93;  49, 68, 84;  49, 83, 87;  50, 87, 93;  50, 89, 95;  51, 57, 86;  51, 59, 84;  51, 71, 92;  51, 74, 94;  52, 66, 85;  52, 70, 90;  53, 57, 95;  53, 58, 91;  53, 69, 85;  53, 73, 92;  59, 63, 95;  64, 71, 91}.
The deficiency graph is connected and has girth 4.

{\boldmath $\adfPENT(3,52,9)$}, $d = 6$:
{0, 1, 2;  0, 15, 107;  0, 16, 106;  0, 17, 105;  1, 15, 106;  1, 16, 105;  1, 17, 107;  2, 15, 105;  2, 16, 107;  2, 17, 106;  3, 4, 5;  3, 6, 110;  3, 7, 109;  3, 8, 108;  4, 6, 109;  4, 7, 108;  4, 8, 110;  5, 6, 108;  5, 7, 110;  5, 8, 109;  0, 18, 94;  0, 19, 69;  0, 20, 95;  0, 21, 60;  0, 22, 71;  0, 23, 67;  0, 24, 58;  0, 25, 46;  0, 26, 65;  0, 27, 99;  0, 28, 81;  0, 29, 84;  0, 31, 62;  0, 32, 87;  0, 33, 61;  0, 35, 91;  0, 36, 73;  0, 38, 66;  0, 39, 70;  0, 40, 47;  0, 41, 101;  0, 42, 93;  0, 43, 68;  0, 44, 97;  0, 45, 63;  0, 49, 88;  0, 50, 74;  0, 52, 79;  0, 53, 82;  0, 55, 92;  0, 56, 100;  0, 57, 80;  0, 64, 85;  0, 77, 98;  0, 83, 89;  1, 19, 57;  1, 20, 85;  1, 21, 91;  1, 23, 63;  1, 27, 77;  1, 28, 67;  1, 29, 81;  1, 33, 55;  1, 34, 69;  1, 35, 83;  1, 37, 101;  1, 41, 82;  1, 43, 89;  1, 44, 100;  1, 46, 80;  1, 52, 58;  1, 53, 95;  1, 56, 99;  1, 64, 86;  1, 68, 98;  1, 70, 75;  1, 74, 92;  2, 21, 75;  2, 22, 65;  2, 23, 59;  2, 27, 74;  2, 28, 56;  2, 29, 76;  2, 33, 40;  2, 34, 70;  2, 35, 50;  2, 38, 99;  2, 39, 83;  2, 47, 52;  2, 51, 100;  2, 53, 87;  2, 64, 81;  2, 71, 89;  3, 9, 76;  3, 11, 41;  3, 22, 40;  3, 23, 69;  3, 33, 89;  3, 35, 39;  3, 46, 101;  3, 58, 88;  4, 23, 76;  4, 35, 70;  4, 41, 58}.
The deficiency graph is connected and has girth 4.

{\boldmath $\adfPENT(3,61,9)$}, $d = 12$:
{0, 1, 2;  0, 9, 13;  0, 10, 12;  0, 11, 14;  0, 15, 124;  0, 16, 123;  0, 17, 125;  1, 9, 12;  1, 10, 14;  1, 11, 13;  1, 15, 123;  1, 16, 125;  1, 17, 124;  2, 9, 14;  2, 10, 13;  2, 11, 12;  2, 15, 125;  2, 16, 124;  2, 17, 123;  3, 4, 5;  3, 6, 127;  3, 7, 126;  3, 8, 128;  4, 6, 126;  4, 7, 128;  4, 8, 127;  5, 6, 128;  5, 7, 127;  5, 8, 126;  6, 7, 8;  6, 21, 130;  6, 22, 129;  6, 23, 131;  7, 21, 129;  7, 22, 131;  7, 23, 130;  8, 21, 131;  8, 22, 130;  8, 23, 129;  9, 10, 11;  0, 18, 86;  0, 19, 90;  0, 20, 93;  0, 21, 91;  0, 22, 83;  0, 23, 117;  0, 24, 73;  0, 25, 80;  0, 26, 60;  0, 27, 111;  0, 28, 56;  0, 29, 96;  0, 30, 110;  0, 31, 87;  0, 32, 50;  0, 33, 99;  0, 34, 61;  0, 35, 78;  0, 37, 81;  0, 38, 114;  0, 39, 75;  0, 40, 66;  0, 41, 67;  0, 42, 79;  0, 43, 106;  0, 44, 94;  0, 45, 85;  0, 46, 89;  0, 47, 107;  0, 48, 105;  0, 51, 101;  0, 52, 115;  0, 53, 74;  0, 54, 109;  0, 55, 88;  0, 58, 97;  0, 59, 76;  0, 62, 119;  0, 63, 92;  0, 64, 116;  0, 68, 95;  0, 69, 104;  0, 70, 100;  0, 71, 103;  0, 77, 112;  0, 82, 102;  0, 113, 118;  1, 18, 64;  1, 19, 103;  1, 20, 39;  1, 21, 73;  1, 22, 75;  1, 23, 101;  1, 25, 68;  1, 26, 77;  1, 27, 59;  1, 28, 33;  1, 29, 88;  1, 30, 52;  1, 31, 65;  1, 32, 107;  1, 34, 115;  1, 35, 90;  1, 37, 76;  1, 38, 55;  1, 41, 58;  1, 42, 86;  1, 43, 69;  1, 46, 105;  1, 47, 111;  1, 49, 114;  1, 50, 80;  1, 51, 89;  1, 53, 102;  1, 54, 117;  1, 57, 63;  1, 62, 98;  1, 67, 92;  1, 70, 91;  1, 71, 87;  1, 74, 119;  1, 79, 99;  1, 82, 110;  1, 83, 104;  1, 95, 113;  1, 100, 116;  1, 112, 118;  2, 18, 57;  2, 20, 105;  2, 21, 52;  2, 22, 79;  2, 23, 81;  2, 26, 92;  2, 27, 33;  2, 28, 55;  2, 29, 103;  2, 30, 88;  2, 31, 50;  2, 34, 99;  2, 35, 80;  2, 39, 76;  2, 40, 100;  2, 41, 74;  2, 42, 65;  2, 43, 95;  2, 44, 67;  2, 45, 112;  2, 46, 93;  2, 51, 111;  2, 56, 102;  2, 58, 94;  2, 63, 91;  2, 64, 104;  2, 69, 87;  2, 70, 77;  2, 71, 107;  2, 75, 118;  2, 82, 119;  2, 83, 114;  2, 89, 117;  3, 10, 68;  3, 11, 102;  3, 18, 83;  3, 19, 131;  3, 20, 89;  3, 21, 107;  3, 22, 66;  3, 23, 104;  3, 30, 103;  3, 33, 105;  3, 34, 106;  3, 42, 92;  3, 43, 67;  3, 44, 101;  3, 45, 91;  3, 47, 81;  3, 52, 95;  3, 54, 118;  3, 55, 100;  3, 56, 88;  3, 57, 65;  3, 58, 93;  3, 59, 64;  3, 76, 94;  3, 77, 90;  3, 78, 130;  3, 80, 114;  4, 11, 93;  4, 18, 58;  4, 19, 46;  4, 21, 41;  4, 23, 107;  4, 33, 70;  4, 34, 88;  4, 35, 65;  4, 40, 95;  4, 42, 80;  4, 43, 66;  4, 45, 81;  4, 53, 130;  4, 54, 116;  4, 55, 94;  4, 57, 89;  4, 68, 117;  4, 71, 115;  4, 79, 129;  4, 83, 102;  4, 92, 118;  5, 9, 89;  5, 11, 67;  5, 19, 92;  5, 20, 45;  5, 21, 70;  5, 23, 102;  5, 30, 56;  5, 32, 93;  5, 34, 66;  5, 35, 41;  5, 42, 78;  5, 43, 90;  5, 44, 83;  5, 46, 95;  5, 47, 116;  5, 55, 115;  5, 58, 77;  5, 69, 91;  5, 71, 106;  5, 81, 104;  6, 30, 117;  6, 31, 66;  6, 33, 90;  6, 34, 80;  6, 35, 82;  6, 55, 116;  6, 57, 105;  7, 35, 105;  7, 43, 81;  7, 44, 92;  7, 47, 106;  7, 56, 94;  7, 71, 104;  8, 32, 104;  8, 45, 106;  8, 70, 118}.
The deficiency graph is connected and has girth 4.

{\boldmath $\adfPENT(3,70,9)$}, $d = 6$:
{0, 1, 2;  0, 15, 143;  0, 16, 142;  0, 17, 141;  1, 15, 142;  1, 16, 141;  1, 17, 143;  2, 15, 141;  2, 16, 143;  2, 17, 142;  3, 4, 5;  3, 6, 146;  3, 7, 145;  3, 8, 144;  4, 6, 145;  4, 7, 144;  4, 8, 146;  5, 6, 144;  5, 7, 146;  5, 8, 145;  0, 18, 67;  0, 19, 115;  0, 20, 116;  0, 21, 101;  0, 22, 52;  0, 23, 69;  0, 24, 98;  0, 25, 89;  0, 26, 96;  0, 27, 46;  0, 28, 71;  0, 29, 35;  0, 30, 83;  0, 31, 84;  0, 32, 136;  0, 33, 111;  0, 34, 91;  0, 36, 109;  0, 37, 77;  0, 38, 59;  0, 39, 108;  0, 40, 125;  0, 41, 56;  0, 43, 113;  0, 44, 102;  0, 45, 121;  0, 47, 99;  0, 50, 103;  0, 51, 72;  0, 55, 75;  0, 57, 64;  0, 58, 117;  0, 60, 130;  0, 61, 112;  0, 62, 124;  0, 63, 134;  0, 65, 110;  0, 68, 131;  0, 76, 127;  0, 79, 106;  0, 82, 104;  0, 85, 119;  0, 86, 135;  0, 87, 105;  0, 88, 123;  0, 93, 137;  0, 94, 122;  0, 95, 128;  0, 100, 107;  0, 118, 133;  1, 19, 38;  1, 22, 115;  1, 23, 95;  1, 25, 110;  1, 26, 56;  1, 27, 135;  1, 29, 76;  1, 31, 64;  1, 32, 116;  1, 33, 93;  1, 39, 91;  1, 40, 85;  1, 43, 105;  1, 44, 103;  1, 45, 122;  1, 46, 112;  1, 47, 101;  1, 50, 118;  1, 51, 83;  1, 53, 87;  1, 57, 125;  1, 59, 77;  1, 62, 79;  1, 68, 111;  1, 69, 123;  1, 70, 107;  1, 74, 124;  1, 80, 128;  1, 81, 130;  1, 82, 131;  1, 88, 119;  1, 89, 117;  1, 104, 137;  1, 113, 129;  2, 20, 92;  2, 21, 105;  2, 22, 89;  2, 26, 125;  2, 27, 63;  2, 28, 76;  2, 29, 33;  2, 34, 94;  2, 38, 118;  2, 39, 87;  2, 40, 113;  2, 41, 110;  2, 46, 93;  2, 47, 77;  2, 53, 112;  2, 57, 95;  2, 58, 135;  2, 59, 99;  2, 69, 136;  2, 71, 88;  2, 100, 129;  2, 111, 131;  2, 117, 123;  3, 11, 40;  3, 33, 89;  3, 34, 88;  3, 46, 65;  3, 53, 148;  3, 58, 64;  3, 77, 112;  3, 82, 100;  3, 95, 136;  4, 40, 112;  4, 65, 101;  4, 83, 149;  5, 47, 95}.
The deficiency graph is connected and has girth 4.

{\boldmath $\adfPENT(3,79,9)$}, $d = 24$:
{0, 1, 2;  0, 9, 14;  0, 10, 13;  0, 11, 12;  0, 15, 161;  0, 16, 160;  0, 17, 159;  1, 9, 13;  1, 10, 12;  1, 11, 14;  1, 15, 160;  1, 16, 159;  1, 17, 161;  2, 9, 12;  2, 10, 14;  2, 11, 13;  2, 15, 159;  2, 16, 161;  2, 17, 160;  3, 4, 5;  3, 6, 164;  3, 7, 163;  3, 8, 162;  3, 12, 29;  3, 13, 28;  3, 14, 27;  4, 6, 163;  4, 7, 162;  4, 8, 164;  4, 12, 28;  4, 13, 27;  4, 14, 29;  5, 6, 162;  5, 7, 164;  5, 8, 163;  5, 12, 27;  5, 13, 29;  5, 14, 28;  6, 7, 8;  6, 15, 20;  6, 16, 19;  6, 17, 18;  6, 21, 167;  6, 22, 166;  6, 23, 165;  7, 15, 19;  7, 16, 18;  7, 17, 20;  7, 21, 166;  7, 22, 165;  7, 23, 167;  8, 15, 18;  8, 16, 20;  8, 17, 19;  8, 21, 165;  8, 22, 167;  8, 23, 166;  9, 10, 11;  9, 18, 35;  9, 19, 34;  9, 20, 33;  10, 18, 34;  10, 19, 33;  10, 20, 35;  11, 18, 33;  11, 19, 35;  11, 20, 34;  12, 13, 14;  12, 21, 26;  12, 22, 25;  12, 23, 24;  13, 21, 25;  13, 22, 24;  13, 23, 26;  14, 21, 24;  14, 22, 26;  14, 23, 25;  15, 16, 17;  18, 19, 20;  21, 22, 23;  0, 18, 95;  0, 19, 103;  0, 20, 67;  0, 21, 48;  0, 22, 51;  0, 23, 56;  0, 24, 112;  0, 25, 61;  0, 26, 121;  0, 27, 85;  0, 28, 149;  0, 29, 72;  0, 30, 57;  0, 31, 94;  0, 32, 81;  0, 33, 79;  0, 34, 84;  0, 35, 104;  0, 36, 97;  0, 37, 105;  0, 38, 107;  0, 39, 68;  0, 40, 134;  0, 41, 130;  0, 42, 145;  0, 43, 123;  0, 44, 73;  0, 45, 124;  0, 46, 91;  0, 47, 53;  0, 49, 154;  0, 50, 143;  0, 52, 93;  0, 54, 100;  0, 55, 137;  0, 58, 78;  0, 59, 135;  0, 60, 82;  0, 62, 98;  0, 63, 139;  0, 64, 69;  0, 65, 129;  0, 66, 128;  0, 70, 155;  0, 71, 151;  0, 74, 122;  0, 75, 106;  0, 76, 150;  0, 77, 114;  0, 80, 102;  0, 83, 146;  0, 86, 140;  0, 87, 115;  0, 89, 148;  0, 90, 110;  0, 92, 108;  0, 99, 126;  0, 101, 142;  0, 109, 127;  0, 111, 131;  0, 113, 152;  0, 116, 133;  0, 117, 138;  0, 118, 132;  0, 119, 147;  0, 136, 153;  1, 18, 73;  1, 19, 62;  1, 20, 76;  1, 21, 104;  1, 22, 52;  1, 23, 149;  1, 25, 69;  1, 26, 125;  1, 27, 151;  1, 28, 119;  1, 29, 99;  1, 30, 83;  1, 31, 70;  1, 32, 110;  1, 33, 153;  1, 34, 112;  1, 35, 131;  1, 36, 155;  1, 38, 116;  1, 39, 95;  1, 40, 141;  1, 41, 138;  1, 42, 117;  1, 43, 91;  1, 44, 124;  1, 46, 94;  1, 47, 115;  1, 49, 128;  1, 50, 103;  1, 51, 101;  1, 53, 84;  1, 54, 135;  1, 55, 89;  1, 56, 130;  1, 57, 136;  1, 58, 142;  1, 59, 147;  1, 60, 118;  1, 61, 90;  1, 63, 113;  1, 64, 71;  1, 65, 86;  1, 67, 85;  1, 68, 150;  1, 75, 134;  1, 77, 105;  1, 78, 100;  1, 79, 139;  1, 81, 102;  1, 82, 122;  1, 87, 127;  1, 88, 132;  1, 92, 126;  1, 93, 111;  1, 98, 129;  1, 107, 133;  1, 109, 146;  1, 123, 143;  1, 137, 152;  1, 148, 154;  2, 18, 68;  2, 19, 135;  2, 20, 125;  2, 21, 99;  2, 22, 102;  2, 23, 84;  2, 26, 103;  2, 27, 57;  2, 28, 62;  2, 29, 47;  2, 30, 127;  2, 31, 152;  2, 32, 86;  2, 34, 66;  2, 35, 39;  2, 36, 129;  2, 37, 90;  2, 38, 150;  2, 40, 151;  2, 41, 46;  2, 42, 81;  2, 43, 111;  2, 44, 106;  2, 45, 149;  2, 51, 113;  2, 52, 88;  2, 53, 124;  2, 54, 105;  2, 56, 131;  2, 58, 128;  2, 59, 154;  2, 60, 132;  2, 61, 140;  2, 63, 82;  2, 64, 98;  2, 65, 109;  2, 67, 155;  2, 69, 112;  2, 70, 110;  2, 71, 89;  2, 75, 153;  2, 76, 138;  2, 77, 116;  2, 78, 117;  2, 80, 142;  2, 83, 104;  2, 85, 147;  2, 87, 119;  2, 91, 126;  2, 92, 141;  2, 93, 137;  2, 94, 114;  2, 100, 118;  2, 108, 148;  2, 115, 143;  2, 123, 139;  3, 9, 85;  3, 10, 64;  3, 11, 92;  3, 18, 137;  3, 20, 119;  3, 21, 106;  3, 22, 108;  3, 31, 117;  3, 32, 71;  3, 35, 103;  3, 36, 55;  3, 37, 95;  3, 38, 123;  3, 39, 151;  3, 40, 115;  3, 41, 113;  3, 42, 100;  3, 43, 77;  3, 44, 80;  3, 45, 129;  3, 46, 125;  3, 47, 82;  3, 52, 152;  3, 54, 138;  3, 56, 88;  3, 57, 94;  3, 58, 99;  3, 59, 165;  3, 60, 112;  3, 63, 166;  3, 66, 133;  3, 67, 140;  3, 68, 141;  3, 69, 139;  3, 70, 136;  3, 76, 132;  3, 78, 167;  3, 79, 155;  3, 84, 116;  3, 87, 105;  3, 89, 114;  3, 90, 135;  3, 102, 131;  3, 104, 124;  3, 107, 126;  3, 110, 150;  3, 111, 153;  3, 118, 134;  3, 128, 154;  4, 9, 136;  4, 11, 111;  4, 18, 67;  4, 19, 106;  4, 20, 124;  4, 21, 43;  4, 23, 55;  4, 30, 167;  4, 31, 62;  4, 32, 141;  4, 33, 165;  4, 34, 87;  4, 35, 94;  4, 36, 79;  4, 37, 108;  4, 39, 109;  4, 41, 153;  4, 42, 84;  4, 44, 76;  4, 46, 107;  4, 47, 129;  4, 53, 58;  4, 54, 137;  4, 56, 112;  4, 57, 88;  4, 59, 102;  4, 61, 139;  4, 63, 154;  4, 64, 133;  4, 65, 117;  4, 69, 135;  4, 70, 119;  4, 71, 143;  4, 77, 103;  4, 80, 151;  4, 81, 130;  4, 82, 128;  4, 83, 138;  4, 85, 118;  4, 86, 134;  4, 89, 115;  4, 90, 127;  4, 91, 131;  4, 105, 166;  4, 110, 155;  5, 9, 56;  5, 11, 128;  5, 18, 116;  5, 19, 38;  5, 20, 89;  5, 21, 118;  5, 22, 41;  5, 30, 85;  5, 32, 140;  5, 34, 101;  5, 35, 109;  5, 37, 130;  5, 39, 54;  5, 40, 155;  5, 43, 126;  5, 45, 95;  5, 53, 141;  5, 55, 87;  5, 57, 150;  5, 58, 127;  5, 59, 111;  5, 60, 119;  5, 61, 82;  5, 62, 88;  5, 63, 132;  5, 64, 86;  5, 65, 112;  5, 66, 142;  5, 67, 108;  5, 70, 153;  5, 71, 114;  5, 78, 143;  5, 79, 138;  5, 80, 152;  5, 81, 154;  5, 83, 115;  5, 84, 107;  5, 90, 151;  5, 91, 166;  5, 92, 135;  5, 102, 134;  5, 103, 136;  5, 104, 131;  5, 105, 137;  5, 110, 133;  5, 113, 129;  5, 117, 165;  6, 30, 93;  6, 31, 161;  6, 32, 84;  6, 34, 130;  6, 36, 138;  6, 37, 85;  6, 39, 128;  6, 40, 110;  6, 41, 68;  6, 42, 86;  6, 43, 108;  6, 44, 151;  6, 46, 105;  6, 47, 107;  6, 54, 160;  6, 55, 106;  6, 56, 109;  6, 58, 156;  6, 60, 102;  6, 63, 129;  6, 64, 136;  6, 65, 111;  6, 66, 119;  6, 67, 88;  6, 70, 137;  6, 79, 127;  6, 80, 104;  6, 82, 113;  6, 83, 117;  6, 114, 133;  6, 115, 141;  6, 116, 142;  6, 118, 157;  6, 135, 158;  7, 31, 154;  7, 32, 139;  7, 33, 88;  7, 34, 155;  7, 35, 85;  7, 36, 131;  7, 37, 118;  7, 42, 105;  7, 43, 115;  7, 44, 136;  7, 45, 113;  7, 47, 94;  7, 56, 142;  7, 57, 111;  7, 59, 110;  7, 60, 86;  7, 61, 81;  7, 62, 103;  7, 65, 158;  7, 69, 116;  7, 71, 92;  7, 80, 153;  7, 82, 140;  7, 84, 159;  7, 87, 133;  7, 90, 117;  7, 108, 141;  7, 109, 161;  7, 112, 152;  7, 119, 160;  8, 33, 90;  8, 36, 141;  8, 37, 113;  8, 38, 159;  8, 39, 133;  8, 41, 131;  8, 42, 132;  8, 43, 154;  8, 44, 112;  8, 45, 65;  8, 46, 108;  8, 56, 157;  8, 58, 91;  8, 63, 142;  8, 66, 153;  8, 67, 138;  8, 68, 88;  8, 69, 110;  8, 71, 160;  8, 84, 111;  8, 85, 135;  8, 89, 119;  8, 92, 158;  8, 95, 134;  8, 105, 139;  8, 118, 156;  9, 15, 143;  9, 16, 69;  9, 17, 139;  9, 36, 91;  9, 37, 110;  9, 38, 71;  9, 39, 119;  9, 42, 134;  9, 44, 62;  9, 45, 106;  9, 47, 83;  9, 59, 115;  9, 60, 131;  9, 61, 133;  9, 67, 116;  9, 68, 156;  9, 81, 158;  9, 87, 117;  9, 89, 130;  9, 92, 132;  9, 107, 137;  9, 108, 142;  9, 112, 140;  10, 15, 66;  10, 16, 71;  10, 17, 65;  10, 36, 110;  10, 37, 141;  10, 38, 161;  10, 39, 70;  10, 40, 113;  10, 44, 140;  10, 45, 133;  10, 46, 61;  10, 47, 139;  10, 58, 132;  10, 59, 107;  10, 62, 119;  10, 69, 111;  10, 86, 115;  10, 90, 142;  10, 92, 134;  10, 95, 158;  10, 109, 136;  10, 112, 118;  10, 114, 135;  10, 138, 160;  11, 16, 36;  11, 17, 136;  11, 38, 69;  11, 39, 137;  11, 40, 71;  11, 42, 158;  11, 44, 109;  11, 46, 141;  11, 47, 114;  11, 65, 93;  11, 68, 112;  11, 86, 113;  11, 88, 133;  11, 90, 160;  11, 95, 140;  11, 115, 138;  11, 116, 157;  11, 132, 161;  11, 135, 142;  12, 36, 167;  12, 37, 165;  12, 40, 158;  12, 42, 133;  12, 43, 137;  12, 47, 132;  12, 61, 85;  12, 62, 113;  12, 63, 135;  12, 65, 134;  12, 66, 112;  12, 68, 89;  12, 69, 163;  12, 88, 159;  12, 91, 161;  12, 93, 110;  12, 116, 162;  12, 141, 157;  13, 38, 119;  13, 39, 88;  13, 41, 47;  13, 43, 67;  13, 44, 115;  13, 45, 68;  13, 62, 159;  13, 64, 118;  13, 69, 134;  13, 70, 113;  13, 95, 110;  13, 138, 167;  13, 139, 161;  13, 143, 160;  14, 38, 87;  14, 39, 93;  14, 42, 70;  14, 44, 94;  14, 46, 110;  14, 67, 114;  14, 69, 115;  14, 91, 136;  15, 21, 93;  15, 23, 139;  15, 42, 116;  15, 63, 136;  15, 68, 90;  15, 70, 114;  15, 89, 166;  15, 92, 167;  15, 119, 163;  15, 142, 164;  16, 42, 165;  16, 43, 93;  16, 45, 119;  16, 46, 136;  16, 66, 113;  16, 67, 166;  16, 68, 142;  16, 90, 115;  17, 21, 119;  17, 46, 67;  17, 68, 92;  17, 70, 140;  17, 71, 90;  17, 93, 162;  17, 95, 143;  18, 42, 138;  18, 44, 69;  18, 91, 118;  19, 44, 164;  19, 70, 167;  20, 47, 142;  20, 71, 117;  21, 70, 142}.
The deficiency graph is connected and has girth 4.

{\boldmath $\adfPENT(3,88,9)$}, $d = 6$:
{0, 1, 2;  0, 15, 179;  0, 16, 178;  0, 17, 177;  1, 15, 178;  1, 16, 177;  1, 17, 179;  2, 15, 177;  2, 16, 179;  2, 17, 178;  3, 4, 5;  3, 6, 182;  3, 7, 181;  3, 8, 180;  4, 6, 181;  4, 7, 180;  4, 8, 182;  5, 6, 180;  5, 7, 182;  5, 8, 181;  0, 18, 98;  0, 19, 55;  0, 20, 166;  0, 21, 94;  0, 22, 141;  0, 23, 172;  0, 24, 156;  0, 25, 129;  0, 26, 76;  0, 27, 43;  0, 28, 58;  0, 29, 127;  0, 31, 56;  0, 32, 112;  0, 33, 131;  0, 34, 148;  0, 35, 149;  0, 36, 128;  0, 37, 120;  0, 38, 84;  0, 39, 153;  0, 40, 123;  0, 41, 86;  0, 42, 160;  0, 44, 62;  0, 45, 167;  0, 46, 133;  0, 47, 89;  0, 48, 130;  0, 49, 159;  0, 50, 110;  0, 51, 115;  0, 52, 163;  0, 53, 143;  0, 57, 65;  0, 59, 164;  0, 60, 137;  0, 61, 109;  0, 63, 154;  0, 64, 107;  0, 67, 145;  0, 68, 111;  0, 69, 124;  0, 70, 87;  0, 71, 158;  0, 72, 146;  0, 73, 113;  0, 75, 96;  0, 78, 169;  0, 79, 170;  0, 81, 122;  0, 83, 139;  0, 85, 105;  0, 88, 135;  0, 93, 173;  0, 95, 121;  0, 97, 125;  0, 99, 119;  0, 100, 171;  0, 101, 151;  0, 104, 142;  0, 106, 147;  0, 116, 152;  0, 117, 134;  0, 136, 157;  0, 155, 161;  1, 19, 65;  1, 20, 163;  1, 22, 152;  1, 23, 125;  1, 27, 95;  1, 28, 61;  1, 31, 140;  1, 32, 121;  1, 33, 82;  1, 34, 116;  1, 35, 55;  1, 38, 59;  1, 39, 146;  1, 40, 143;  1, 43, 136;  1, 45, 155;  1, 46, 112;  1, 50, 104;  1, 51, 73;  1, 52, 170;  1, 53, 134;  1, 56, 153;  1, 57, 128;  1, 58, 97;  1, 62, 149;  1, 63, 103;  1, 64, 80;  1, 68, 101;  1, 69, 119;  1, 70, 160;  1, 71, 87;  1, 74, 113;  1, 75, 141;  1, 77, 172;  1, 81, 158;  1, 83, 135;  1, 86, 159;  1, 88, 130;  1, 93, 99;  1, 106, 142;  1, 107, 122;  1, 117, 173;  1, 118, 124;  1, 129, 164;  2, 21, 116;  2, 22, 149;  2, 26, 155;  2, 27, 59;  2, 28, 166;  2, 29, 50;  2, 32, 130;  2, 33, 118;  2, 34, 63;  2, 39, 86;  2, 44, 119;  2, 46, 80;  2, 47, 113;  2, 51, 160;  2, 53, 94;  2, 57, 88;  2, 64, 123;  2, 65, 136;  2, 68, 161;  2, 69, 92;  2, 71, 124;  2, 76, 137;  2, 87, 135;  2, 105, 112;  2, 125, 142;  2, 129, 159;  3, 21, 129;  3, 22, 154;  3, 39, 77;  3, 40, 149;  3, 45, 185;  3, 46, 99;  3, 47, 105;  3, 57, 161;  3, 64, 124;  3, 65, 95;  3, 82, 89;  3, 100, 155;  3, 119, 184;  4, 22, 101;  4, 23, 161;  4, 35, 71;  4, 53, 88;  4, 77, 82;  4, 89, 143;  5, 23, 83}.
The deficiency graph is connected and has girth 4.

{\boldmath $\adfPENT(3,97,9)$}, $d = 12$:
{0, 1, 2;  0, 9, 13;  0, 10, 12;  0, 11, 14;  0, 15, 196;  0, 16, 195;  0, 17, 197;  1, 9, 12;  1, 10, 14;  1, 11, 13;  1, 15, 195;  1, 16, 197;  1, 17, 196;  2, 9, 14;  2, 10, 13;  2, 11, 12;  2, 15, 197;  2, 16, 196;  2, 17, 195;  3, 4, 5;  3, 6, 199;  3, 7, 198;  3, 8, 200;  4, 6, 198;  4, 7, 200;  4, 8, 199;  5, 6, 200;  5, 7, 199;  5, 8, 198;  6, 7, 8;  6, 21, 202;  6, 22, 201;  6, 23, 203;  7, 21, 201;  7, 22, 203;  7, 23, 202;  8, 21, 203;  8, 22, 202;  8, 23, 201;  9, 10, 11;  0, 18, 77;  0, 19, 53;  0, 20, 178;  0, 21, 115;  0, 22, 80;  0, 23, 173;  0, 24, 186;  0, 25, 54;  0, 26, 100;  0, 27, 58;  0, 28, 166;  0, 29, 65;  0, 30, 124;  0, 31, 140;  0, 32, 72;  0, 33, 110;  0, 34, 116;  0, 35, 89;  0, 36, 154;  0, 37, 82;  0, 38, 169;  0, 39, 83;  0, 40, 84;  0, 41, 172;  0, 42, 176;  0, 43, 91;  0, 44, 98;  0, 45, 191;  0, 46, 97;  0, 47, 128;  0, 48, 161;  0, 49, 95;  0, 50, 73;  0, 51, 183;  0, 52, 148;  0, 55, 145;  0, 56, 101;  0, 57, 141;  0, 59, 105;  0, 60, 138;  0, 61, 131;  0, 62, 86;  0, 63, 153;  0, 64, 165;  0, 66, 119;  0, 67, 126;  0, 68, 109;  0, 69, 152;  0, 70, 151;  0, 71, 146;  0, 74, 158;  0, 75, 107;  0, 76, 187;  0, 79, 129;  0, 81, 177;  0, 85, 189;  0, 87, 134;  0, 88, 133;  0, 90, 123;  0, 92, 108;  0, 93, 139;  0, 94, 163;  0, 99, 175;  0, 102, 170;  0, 103, 122;  0, 104, 135;  0, 106, 136;  0, 111, 150;  0, 112, 167;  0, 114, 179;  0, 117, 157;  0, 121, 159;  0, 125, 171;  0, 127, 185;  0, 130, 174;  0, 137, 155;  0, 142, 190;  0, 143, 181;  0, 147, 182;  0, 149, 184;  1, 18, 186;  1, 19, 161;  1, 20, 190;  1, 21, 177;  1, 22, 116;  1, 23, 107;  1, 25, 100;  1, 26, 65;  1, 27, 175;  1, 28, 170;  1, 29, 121;  1, 31, 59;  1, 32, 176;  1, 33, 128;  1, 34, 119;  1, 35, 157;  1, 37, 122;  1, 38, 91;  1, 40, 110;  1, 41, 185;  1, 42, 114;  1, 43, 125;  1, 44, 126;  1, 45, 146;  1, 50, 136;  1, 51, 191;  1, 52, 140;  1, 53, 178;  1, 54, 131;  1, 55, 99;  1, 56, 183;  1, 57, 189;  1, 58, 134;  1, 61, 149;  1, 62, 139;  1, 63, 109;  1, 64, 112;  1, 66, 90;  1, 67, 162;  1, 68, 187;  1, 69, 101;  1, 70, 163;  1, 73, 166;  1, 75, 102;  1, 77, 111;  1, 78, 173;  1, 79, 158;  1, 80, 123;  1, 81, 174;  1, 82, 143;  1, 87, 148;  1, 88, 127;  1, 92, 153;  1, 93, 130;  1, 95, 147;  1, 98, 150;  1, 103, 129;  1, 104, 152;  1, 106, 138;  1, 117, 172;  1, 118, 155;  1, 124, 142;  1, 135, 151;  1, 137, 179;  1, 141, 184;  1, 171, 188;  2, 18, 147;  2, 19, 58;  2, 20, 191;  2, 21, 110;  2, 22, 162;  2, 23, 93;  2, 27, 92;  2, 28, 50;  2, 29, 59;  2, 30, 134;  2, 31, 146;  2, 32, 82;  2, 33, 186;  2, 34, 149;  2, 35, 139;  2, 38, 190;  2, 39, 95;  2, 40, 161;  2, 42, 179;  2, 43, 106;  2, 44, 68;  2, 45, 116;  2, 46, 167;  2, 47, 160;  2, 51, 155;  2, 52, 67;  2, 53, 165;  2, 56, 113;  2, 57, 183;  2, 63, 164;  2, 65, 94;  2, 66, 174;  2, 69, 111;  2, 70, 112;  2, 71, 173;  2, 75, 83;  2, 77, 115;  2, 78, 176;  2, 80, 153;  2, 81, 126;  2, 87, 135;  2, 89, 175;  2, 90, 172;  2, 99, 137;  2, 100, 141;  2, 101, 185;  2, 103, 125;  2, 104, 140;  2, 107, 189;  2, 114, 166;  2, 118, 151;  2, 119, 178;  2, 123, 143;  2, 124, 150;  2, 128, 177;  2, 142, 163;  2, 148, 188;  3, 9, 45;  3, 10, 140;  3, 18, 130;  3, 21, 136;  3, 22, 76;  3, 31, 77;  3, 32, 126;  3, 33, 99;  3, 39, 203;  3, 40, 177;  3, 43, 70;  3, 44, 102;  3, 46, 166;  3, 52, 83;  3, 53, 125;  3, 54, 92;  3, 55, 138;  3, 56, 188;  3, 57, 63;  3, 58, 118;  3, 65, 202;  3, 66, 94;  3, 67, 154;  3, 69, 123;  3, 71, 160;  3, 82, 117;  3, 88, 186;  3, 89, 103;  3, 90, 190;  3, 91, 116;  3, 95, 172;  3, 100, 187;  3, 101, 114;  3, 105, 127;  3, 106, 137;  3, 112, 148;  3, 113, 128;  3, 115, 150;  3, 119, 149;  3, 124, 175;  3, 129, 152;  3, 131, 178;  3, 139, 179;  3, 142, 189;  3, 162, 191;  4, 9, 139;  4, 10, 46;  4, 11, 53;  4, 18, 188;  4, 20, 179;  4, 21, 68;  4, 23, 161;  4, 31, 103;  4, 32, 94;  4, 33, 71;  4, 34, 130;  4, 41, 128;  4, 42, 162;  4, 47, 64;  4, 54, 129;  4, 56, 140;  4, 57, 113;  4, 58, 149;  4, 65, 176;  4, 66, 152;  4, 67, 201;  4, 69, 150;  4, 70, 76;  4, 78, 101;  4, 79, 143;  4, 80, 107;  4, 81, 163;  4, 82, 155;  4, 83, 127;  4, 88, 190;  4, 89, 164;  4, 90, 203;  4, 104, 189;  4, 116, 151;  4, 117, 177;  4, 118, 174;  4, 137, 186;  4, 138, 175;  4, 167, 187;  5, 9, 66;  5, 10, 126;  5, 11, 82;  5, 21, 138;  5, 22, 161;  5, 30, 93;  5, 31, 78;  5, 32, 128;  5, 33, 119;  5, 42, 90;  5, 44, 174;  5, 45, 166;  5, 46, 165;  5, 55, 191;  5, 56, 103;  5, 57, 115;  5, 58, 201;  5, 68, 154;  5, 69, 102;  5, 79, 139;  5, 81, 101;  5, 83, 176;  5, 95, 131;  5, 104, 143;  5, 105, 140;  5, 106, 203;  5, 129, 188;  5, 141, 190;  5, 162, 202;  6, 31, 155;  6, 32, 107;  6, 33, 142;  6, 45, 95;  6, 47, 66;  6, 55, 167;  6, 56, 190;  6, 67, 105;  6, 68, 139;  6, 79, 141;  6, 82, 189;  6, 91, 188;  6, 103, 164;  6, 130, 179;  6, 131, 187;  7, 31, 106;  7, 43, 95;  7, 44, 103;  7, 56, 154;  7, 58, 80;  7, 83, 188;  7, 91, 177;  7, 105, 179;  7, 128, 166;  8, 33, 95;  8, 34, 143;  8, 45, 155;  8, 59, 191;  8, 71, 105;  9, 82, 154;  9, 107, 142;  11, 59, 119}.
The deficiency graph is connected and has girth 4.

\adfAppGap

{\boldmath $\adfPENT(3,246,13)$}, $d = 2$:
{12, 114, 402;  12, 122, 356;  12, 124, 482;  12, 146, 270;  12, 158, 182;  12, 198, 294;  25, 105, 309;  25, 151, 349;  25, 213, 393;  25, 237, 325;  25, 361, 385;  25, 383, 495;  105, 151, 325;  105, 213, 349;  105, 237, 495;  105, 361, 383;  105, 385, 393;  114, 122, 294;  114, 124, 158;  114, 146, 356;  114, 182, 482;  114, 198, 270;  122, 124, 402;  122, 182, 198;  124, 146, 198;  124, 182, 356;  146, 158, 402;  151, 213, 309;  151, 237, 393;  151, 361, 495;  151, 383, 385;  158, 198, 356;  158, 294, 482;  182, 270, 402;  198, 402, 482;  213, 385, 495;  237, 309, 361;  270, 356, 482;  294, 356, 402;  309, 325, 385;  309, 349, 383;  309, 393, 495;  325, 383, 393;  349, 361, 393;  0, 1, 7;  0, 3, 208;  0, 4, 141;  0, 5, 364;  0, 6, 139;  0, 9, 373;  0, 11, 317;  0, 13, 51;  0, 14, 359;  0, 15, 118;  0, 17, 459;  0, 18, 503;  0, 19, 266;  0, 20, 339;  0, 21, 175;  0, 23, 268;  0, 26, 465;  0, 27, 369;  0, 28, 271;  0, 29, 104;  0, 30, 457;  0, 31, 340;  0, 33, 66;  0, 35, 347;  0, 37, 354;  0, 38, 221;  0, 39, 452;  0, 41, 436;  0, 42, 335;  0, 43, 304;  0, 45, 270;  0, 47, 140;  0, 48, 135;  0, 49, 56;  0, 50, 121;  0, 53, 381;  0, 55, 59;  0, 57, 400;  0, 61, 424;  0, 63, 83;  0, 64, 421;  0, 65, 169;  0, 67, 128;  0, 69, 312;  0, 73, 269;  0, 75, 241;  0, 77, 453;  0, 78, 415;  0, 79, 315;  0, 81, 356;  0, 85, 386;  0, 89, 107;  0, 90, 479;  0, 91, 192;  0, 92, 463;  0, 94, 207;  0, 95, 483;  0, 97, 196;  0, 98, 333;  0, 99, 437;  0, 100, 227;  0, 101, 417;  0, 103, 384;  0, 109, 201;  0, 114, 267;  0, 115, 275;  0, 117, 376;  0, 119, 330;  0, 123, 154;  0, 125, 365;  0, 129, 185;  0, 131, 246;  0, 144, 397;  0, 145, 353;  0, 149, 395;  0, 155, 338;  0, 157, 239;  0, 159, 216;  0, 160, 351;  0, 161, 190;  0, 164, 455;  0, 165, 379;  0, 167, 489;  0, 171, 299;  0, 173, 273;  0, 177, 219;  0, 178, 461;  0, 179, 447;  0, 181, 321;  0, 184, 435;  0, 187, 217;  0, 193, 423;  0, 195, 230;  0, 199, 254;  0, 200, 497;  0, 203, 419;  0, 209, 331;  0, 211, 387;  0, 214, 501;  0, 215, 285;  0, 223, 425;  0, 229, 327;  0, 233, 487;  0, 249, 441;  0, 255, 399;  0, 257, 311;  0, 265, 343;  0, 277, 367;  0, 279, 307;  0, 289, 409;  0, 303, 329;  0, 305, 411;  0, 313, 363;  0, 341, 493;  0, 355, 505;  0, 375, 469;  0, 377, 491;  0, 401, 467;  0, 429, 443;  0, 433, 481}.
The deficiency graph is connected and has girth 4.

{\boldmath $\adfPENT(3,248,13)$}, $d = 6$:
{7, 13, 281;  7, 69, 183;  7, 133, 419;  7, 139, 181;  7, 205, 439;  7, 407, 425;  9, 15, 283;  9, 71, 185;  9, 135, 421;  9, 141, 183;  9, 207, 441;  9, 409, 427;  11, 17, 285;  11, 73, 187;  11, 137, 423;  11, 143, 185;  11, 209, 443;  11, 411, 429;  13, 69, 439;  13, 133, 183;  13, 181, 407;  13, 205, 419;  15, 71, 441;  15, 135, 185;  15, 183, 409;  15, 207, 421;  17, 73, 443;  17, 137, 187;  17, 185, 411;  17, 209, 423;  69, 133, 181;  69, 139, 419;  69, 205, 407;  69, 281, 425;  71, 135, 183;  71, 141, 421;  71, 207, 409;  71, 283, 427;  72, 86, 378;  72, 92, 230;  72, 104, 498;  72, 306, 328;  72, 330, 504;  72, 372, 442;  73, 137, 185;  73, 143, 423;  73, 209, 411;  73, 285, 429;  74, 88, 380;  74, 94, 232;  74, 106, 500;  74, 308, 330;  74, 332, 506;  74, 374, 444;  76, 90, 382;  76, 96, 234;  76, 108, 502;  76, 310, 332;  76, 334, 508;  76, 376, 446;  86, 92, 328;  86, 104, 330;  86, 230, 442;  86, 306, 504;  86, 372, 498;  88, 94, 330;  88, 106, 332;  88, 232, 444;  88, 308, 506;  88, 374, 500;  90, 96, 332;  90, 108, 334;  90, 234, 446;  90, 310, 508;  90, 376, 502;  92, 104, 306;  92, 330, 372;  92, 442, 498;  94, 106, 308;  94, 332, 374;  94, 444, 500;  96, 108, 310;  96, 334, 376;  96, 446, 502;  104, 442, 504;  106, 444, 506;  108, 446, 508;  133, 205, 281;  133, 425, 439;  135, 207, 283;  135, 427, 441;  137, 209, 285;  137, 429, 443;  139, 183, 205;  139, 281, 439;  141, 185, 207;  141, 283, 441;  143, 187, 209;  143, 285, 443;  181, 183, 439;  181, 205, 425;  181, 281, 419;  183, 185, 441;  183, 207, 427;  183, 283, 421;  185, 187, 443;  185, 209, 429;  185, 285, 423;  230, 306, 372;  230, 330, 378;  232, 308, 374;  232, 332, 380;  234, 310, 376;  234, 334, 382;  306, 330, 498;  306, 378, 442;  308, 332, 500;  308, 380, 444;  310, 334, 502;  310, 382, 446;  328, 330, 442;  328, 372, 504;  328, 378, 498;  330, 332, 444;  330, 374, 506;  330, 380, 500;  332, 334, 446;  332, 376, 508;  332, 382, 502;  407, 419, 439;  409, 421, 441;  411, 423, 443;  0, 1, 200;  0, 3, 109;  0, 4, 157;  0, 5, 36;  0, 8, 95;  0, 9, 248;  0, 10, 351;  0, 11, 491;  0, 15, 494;  0, 16, 191;  0, 17, 305;  0, 19, 165;  0, 21, 408;  0, 23, 141;  0, 25, 253;  0, 26, 241;  0, 27, 216;  0, 28, 319;  0, 29, 215;  0, 30, 495;  0, 31, 452;  0, 33, 328;  0, 34, 119;  0, 35, 134;  0, 37, 501;  0, 38, 193;  0, 39, 456;  0, 40, 201;  0, 41, 127;  0, 43, 58;  0, 45, 330;  0, 46, 75;  0, 47, 471;  0, 49, 327;  0, 51, 222;  0, 52, 507;  0, 53, 68;  0, 55, 232;  0, 57, 263;  0, 59, 358;  0, 60, 403;  0, 61, 128;  0, 63, 190;  0, 65, 493;  0, 67, 382;  0, 71, 277;  0, 73, 89;  0, 74, 239;  0, 77, 267;  0, 79, 124;  0, 80, 441;  0, 81, 177;  0, 82, 385;  0, 83, 326;  0, 85, 428;  0, 86, 197;  0, 87, 103;  0, 88, 179;  0, 90, 469;  0, 91, 271;  0, 94, 483;  0, 96, 449;  0, 97, 458;  0, 99, 122;  0, 101, 265;  0, 105, 283;  0, 106, 129;  0, 107, 499;  0, 108, 331;  0, 111, 373;  0, 113, 171;  0, 115, 152;  0, 117, 324;  0, 118, 167;  0, 121, 381;  0, 125, 213;  0, 130, 485;  0, 131, 199;  0, 135, 237;  0, 137, 287;  0, 143, 178;  0, 145, 307;  0, 146, 415;  0, 147, 484;  0, 149, 422;  0, 150, 459;  0, 151, 467;  0, 153, 481;  0, 155, 279;  0, 156, 377;  0, 159, 282;  0, 161, 470;  0, 162, 497;  0, 163, 315;  0, 164, 349;  0, 166, 405;  0, 169, 246;  0, 173, 475;  0, 175, 184;  0, 182, 317;  0, 185, 208;  0, 187, 240;  0, 188, 249;  0, 189, 196;  0, 194, 247;  0, 195, 313;  0, 203, 443;  0, 206, 357;  0, 207, 482;  0, 209, 299;  0, 211, 257;  0, 217, 464;  0, 219, 316;  0, 227, 393;  0, 229, 392;  0, 231, 472;  0, 233, 329;  0, 235, 273;  0, 243, 489;  0, 245, 404;  0, 250, 429;  0, 251, 431;  0, 255, 259;  0, 260, 359;  0, 261, 389;  0, 262, 341;  0, 269, 502;  0, 275, 463;  0, 278, 477;  0, 285, 435;  0, 289, 314;  0, 291, 371;  0, 293, 386;  0, 295, 423;  0, 297, 506;  0, 301, 451;  0, 302, 505;  0, 304, 337;  0, 310, 411;  0, 311, 445;  0, 320, 391;  0, 322, 509;  0, 323, 332;  0, 325, 413;  0, 333, 367;  0, 344, 347;  0, 345, 427;  0, 346, 363;  0, 355, 365;  0, 361, 364;  0, 369, 437;  0, 375, 383;  0, 376, 417;  0, 380, 473;  0, 388, 442;  0, 395, 447;  0, 397, 503;  0, 399, 424;  0, 401, 461;  0, 409, 436;  0, 416, 453;  0, 421, 476;  0, 430, 487;  0, 455, 500;  1, 2, 165;  1, 5, 286;  1, 8, 321;  1, 9, 376;  1, 14, 437;  1, 20, 41;  1, 22, 272;  1, 27, 388;  1, 29, 214;  1, 31, 130;  1, 32, 58;  1, 34, 251;  1, 35, 274;  1, 37, 502;  1, 40, 386;  1, 44, 187;  1, 50, 327;  1, 52, 95;  1, 53, 244;  1, 55, 472;  1, 59, 106;  1, 61, 182;  1, 62, 394;  1, 64, 223;  1, 69, 392;  1, 70, 473;  1, 74, 417;  1, 75, 340;  1, 76, 298;  1, 80, 271;  1, 81, 302;  1, 82, 268;  1, 87, 350;  1, 88, 459;  1, 91, 284;  1, 94, 278;  1, 97, 428;  1, 98, 195;  1, 103, 482;  1, 109, 232;  1, 110, 292;  1, 112, 217;  1, 116, 464;  1, 118, 500;  1, 123, 206;  1, 125, 382;  1, 128, 136;  1, 131, 280;  1, 134, 488;  1, 135, 508;  1, 140, 207;  1, 142, 485;  1, 146, 490;  1, 148, 304;  1, 152, 185;  1, 154, 424;  1, 157, 470;  1, 158, 365;  1, 160, 176;  1, 166, 430;  1, 167, 452;  1, 170, 359;  1, 172, 247;  1, 179, 310;  1, 184, 446;  1, 188, 226;  1, 189, 254;  1, 190, 218;  1, 191, 250;  1, 196, 453;  1, 197, 476;  1, 201, 460;  1, 202, 496;  1, 212, 387;  1, 224, 404;  1, 233, 322;  1, 236, 501;  1, 238, 400;  1, 256, 345;  1, 260, 471;  1, 262, 431;  1, 263, 434;  1, 266, 423;  1, 290, 370;  1, 303, 338;  1, 311, 374;  1, 315, 364;  1, 329, 398;  1, 334, 381;  1, 346, 503;  1, 383, 506;  1, 389, 494;  1, 410, 483;  1, 412, 416;  1, 448, 484;  2, 3, 243;  2, 11, 362;  2, 17, 195;  2, 21, 351;  2, 27, 56;  2, 29, 416;  2, 32, 111;  2, 38, 321;  2, 41, 202;  2, 45, 230;  2, 47, 405;  2, 51, 224;  2, 53, 266;  2, 57, 179;  2, 59, 124;  2, 62, 231;  2, 65, 460;  2, 70, 333;  2, 76, 317;  2, 77, 261;  2, 83, 88;  2, 87, 275;  2, 92, 221;  2, 93, 293;  2, 104, 357;  2, 105, 502;  2, 107, 418;  2, 110, 395;  2, 117, 322;  2, 119, 136;  2, 123, 326;  2, 129, 316;  2, 143, 305;  2, 147, 435;  2, 148, 461;  2, 149, 251;  2, 154, 471;  2, 155, 501;  2, 173, 394;  2, 190, 465;  2, 196, 491;  2, 197, 393;  2, 208, 219;  2, 215, 323;  2, 218, 473;  2, 225, 371;  2, 242, 485;  2, 263, 466;  2, 280, 335;  2, 281, 454;  2, 297, 478;  2, 304, 455;  2, 311, 430;  2, 329, 365;  2, 347, 401;  2, 359, 369;  2, 375, 509;  2, 377, 459;  2, 381, 411;  2, 382, 479;  2, 383, 387;  2, 388, 423;  2, 399, 453;  2, 406, 437;  2, 417, 507;  2, 424, 495;  2, 431, 472;  3, 4, 453;  3, 16, 263;  3, 22, 231;  3, 29, 418;  3, 34, 448;  3, 39, 202;  3, 40, 77;  3, 41, 118;  3, 46, 136;  3, 70, 189;  3, 76, 473;  3, 82, 305;  3, 111, 286;  3, 112, 214;  3, 148, 351;  3, 154, 159;  3, 160, 219;  3, 185, 328;  3, 197, 280;  3, 220, 479;  3, 226, 406;  3, 251, 346;  3, 281, 460;  3, 292, 485;  3, 316, 383;  3, 322, 382;  3, 358, 467;  3, 388, 407;  3, 419, 430;  3, 436, 497;  4, 5, 232;  4, 29, 154;  4, 34, 107;  4, 112, 377;  4, 131, 413;  4, 149, 305;  4, 167, 461;  4, 239, 485}.
The deficiency graph is connected and has girth 4.

{\boldmath $\adfPENT(3,254,13)$}, $d = 6$:
{42, 90, 158;  42, 136, 368;  42, 140, 454;  42, 156, 278;  42, 202, 242;  42, 288, 302;  44, 92, 160;  44, 138, 370;  44, 142, 456;  44, 158, 280;  44, 204, 244;  44, 290, 304;  46, 94, 162;  46, 140, 372;  46, 144, 458;  46, 160, 282;  46, 206, 246;  46, 292, 306;  69, 155, 221;  69, 235, 281;  69, 245, 321;  69, 365, 383;  69, 367, 387;  69, 433, 481;  71, 157, 223;  71, 237, 283;  71, 247, 323;  71, 367, 385;  71, 369, 389;  71, 435, 483;  73, 159, 225;  73, 239, 285;  73, 249, 325;  73, 369, 387;  73, 371, 391;  73, 437, 485;  90, 136, 302;  90, 140, 202;  90, 156, 242;  90, 278, 454;  90, 288, 368;  92, 138, 304;  92, 142, 204;  92, 158, 244;  92, 280, 456;  92, 290, 370;  94, 140, 306;  94, 144, 206;  94, 160, 246;  94, 282, 458;  94, 292, 372;  136, 140, 158;  136, 156, 454;  136, 242, 278;  138, 142, 160;  138, 158, 456;  138, 244, 280;  140, 144, 162;  140, 156, 288;  140, 160, 458;  140, 242, 302;  140, 246, 282;  140, 278, 368;  142, 158, 290;  142, 244, 304;  142, 280, 370;  144, 160, 292;  144, 246, 306;  144, 282, 372;  155, 235, 433;  155, 245, 387;  155, 281, 365;  155, 383, 481;  156, 158, 302;  157, 237, 435;  157, 247, 389;  157, 283, 367;  157, 385, 483;  158, 160, 304;  158, 202, 454;  158, 242, 368;  158, 278, 288;  159, 239, 437;  159, 249, 391;  159, 285, 369;  159, 387, 485;  160, 162, 306;  160, 204, 456;  160, 244, 370;  160, 280, 290;  162, 206, 458;  162, 246, 372;  162, 282, 292;  202, 278, 302;  204, 280, 304;  206, 282, 306;  221, 235, 383;  221, 245, 281;  221, 321, 365;  221, 367, 481;  223, 237, 385;  223, 247, 283;  223, 323, 367;  223, 369, 483;  225, 239, 387;  225, 249, 285;  225, 325, 369;  225, 371, 485;  235, 245, 481;  235, 365, 367;  237, 247, 483;  237, 367, 369;  239, 249, 485;  239, 369, 371;  245, 365, 433;  245, 367, 383;  247, 367, 435;  247, 369, 385;  249, 369, 437;  249, 371, 387;  281, 321, 481;  281, 383, 387;  283, 323, 483;  283, 385, 389;  285, 325, 485;  285, 387, 391;  321, 383, 433;  323, 385, 435;  325, 387, 437;  365, 387, 481;  367, 389, 483;  369, 391, 485;  0, 1, 431;  0, 3, 222;  0, 5, 266;  0, 6, 359;  0, 7, 323;  0, 8, 181;  0, 9, 284;  0, 11, 99;  0, 12, 319;  0, 13, 457;  0, 15, 171;  0, 17, 189;  0, 19, 499;  0, 21, 488;  0, 23, 511;  0, 25, 369;  0, 26, 513;  0, 27, 243;  0, 28, 217;  0, 29, 153;  0, 30, 341;  0, 31, 403;  0, 32, 496;  0, 33, 337;  0, 34, 177;  0, 35, 139;  0, 37, 332;  0, 38, 409;  0, 39, 305;  0, 41, 450;  0, 42, 91;  0, 43, 335;  0, 45, 213;  0, 47, 458;  0, 51, 388;  0, 52, 61;  0, 53, 517;  0, 54, 449;  0, 55, 179;  0, 56, 519;  0, 57, 197;  0, 58, 131;  0, 59, 489;  0, 63, 285;  0, 64, 103;  0, 65, 74;  0, 67, 342;  0, 70, 399;  0, 71, 280;  0, 73, 386;  0, 75, 407;  0, 77, 82;  0, 78, 435;  0, 79, 327;  0, 81, 475;  0, 83, 354;  0, 85, 340;  0, 87, 237;  0, 88, 379;  0, 89, 430;  0, 92, 259;  0, 93, 344;  0, 95, 361;  0, 96, 471;  0, 97, 169;  0, 101, 220;  0, 104, 439;  0, 105, 345;  0, 107, 250;  0, 108, 333;  0, 109, 336;  0, 111, 302;  0, 115, 494;  0, 117, 330;  0, 118, 143;  0, 119, 477;  0, 121, 136;  0, 123, 277;  0, 124, 151;  0, 125, 275;  0, 127, 466;  0, 128, 455;  0, 129, 253;  0, 133, 292;  0, 134, 493;  0, 135, 191;  0, 137, 372;  0, 140, 257;  0, 141, 254;  0, 145, 218;  0, 147, 452;  0, 149, 283;  0, 154, 463;  0, 156, 389;  0, 157, 274;  0, 159, 264;  0, 161, 195;  0, 163, 467;  0, 164, 215;  0, 165, 339;  0, 167, 418;  0, 170, 393;  0, 172, 411;  0, 173, 453;  0, 174, 423;  0, 175, 304;  0, 178, 301;  0, 182, 495;  0, 183, 447;  0, 184, 441;  0, 185, 338;  0, 187, 194;  0, 190, 211;  0, 193, 349;  0, 199, 514;  0, 201, 216;  0, 202, 445;  0, 203, 282;  0, 205, 316;  0, 206, 440;  0, 207, 368;  0, 209, 291;  0, 214, 271;  0, 219, 413;  0, 223, 315;  0, 227, 355;  0, 229, 459;  0, 230, 241;  0, 231, 484;  0, 234, 473;  0, 238, 397;  0, 242, 437;  0, 248, 267;  0, 255, 373;  0, 256, 261;  0, 263, 381;  0, 265, 434;  0, 268, 487;  0, 269, 377;  0, 272, 469;  0, 273, 470;  0, 279, 382;  0, 289, 503;  0, 293, 509;  0, 297, 448;  0, 299, 320;  0, 308, 317;  0, 313, 401;  0, 325, 429;  0, 328, 515;  0, 329, 394;  0, 331, 485;  0, 343, 461;  0, 347, 521;  0, 350, 497;  0, 351, 425;  0, 352, 427;  0, 358, 465;  0, 363, 391;  0, 371, 464;  0, 385, 415;  0, 398, 419;  0, 404, 501;  0, 405, 483;  0, 421, 491;  0, 451, 505;  0, 479, 490;  1, 2, 341;  1, 4, 122;  1, 7, 404;  1, 9, 320;  1, 10, 298;  1, 13, 490;  1, 14, 68;  1, 20, 32;  1, 22, 193;  1, 26, 467;  1, 27, 436;  1, 28, 218;  1, 29, 92;  1, 33, 110;  1, 34, 452;  1, 38, 171;  1, 39, 346;  1, 40, 179;  1, 44, 387;  1, 46, 428;  1, 50, 244;  1, 52, 80;  1, 53, 520;  1, 56, 285;  1, 57, 178;  1, 58, 374;  1, 62, 355;  1, 64, 338;  1, 65, 254;  1, 70, 191;  1, 75, 190;  1, 76, 250;  1, 82, 434;  1, 83, 142;  1, 86, 269;  1, 88, 515;  1, 94, 460;  1, 97, 337;  1, 98, 442;  1, 100, 273;  1, 104, 449;  1, 106, 491;  1, 109, 430;  1, 116, 465;  1, 124, 135;  1, 128, 383;  1, 134, 265;  1, 137, 394;  1, 141, 328;  1, 146, 172;  1, 148, 275;  1, 154, 236;  1, 165, 292;  1, 166, 303;  1, 173, 344;  1, 175, 358;  1, 176, 482;  1, 181, 406;  1, 182, 224;  1, 183, 220;  1, 185, 440;  1, 194, 489;  1, 195, 206;  1, 196, 412;  1, 200, 223;  1, 203, 362;  1, 207, 268;  1, 208, 359;  1, 212, 321;  1, 217, 506;  1, 221, 458;  1, 235, 494;  1, 238, 239;  1, 243, 266;  1, 248, 251;  1, 255, 286;  1, 262, 353;  1, 267, 488;  1, 272, 351;  1, 274, 453;  1, 281, 508;  1, 284, 435;  1, 308, 471;  1, 309, 446;  1, 310, 388;  1, 329, 332;  1, 333, 416;  1, 339, 424;  1, 370, 441;  1, 376, 410;  1, 382, 518;  1, 386, 464;  1, 418, 472;  1, 422, 485;  1, 459, 476;  1, 470, 497;  2, 3, 339;  2, 8, 15;  2, 10, 207;  2, 17, 203;  2, 27, 299;  2, 32, 69;  2, 33, 63;  2, 34, 359;  2, 35, 485;  2, 40, 449;  2, 41, 220;  2, 45, 260;  2, 47, 356;  2, 51, 152;  2, 58, 159;  2, 59, 243;  2, 74, 453;  2, 75, 428;  2, 76, 83;  2, 77, 188;  2, 87, 460;  2, 89, 419;  2, 93, 105;  2, 94, 129;  2, 95, 182;  2, 101, 467;  2, 107, 224;  2, 110, 285;  2, 117, 401;  2, 123, 155;  2, 125, 256;  2, 130, 147;  2, 131, 405;  2, 136, 495;  2, 137, 304;  2, 141, 232;  2, 143, 425;  2, 158, 317;  2, 166, 261;  2, 167, 194;  2, 173, 322;  2, 176, 383;  2, 179, 393;  2, 183, 311;  2, 184, 233;  2, 189, 395;  2, 191, 244;  2, 195, 377;  2, 201, 268;  2, 221, 291;  2, 227, 274;  2, 239, 284;  2, 245, 509;  2, 251, 519;  2, 255, 473;  2, 267, 305;  2, 275, 400;  2, 279, 375;  2, 281, 454;  2, 286, 477;  2, 293, 429;  2, 310, 399;  2, 321, 370;  2, 340, 417;  2, 352, 471;  2, 357, 491;  2, 436, 459;  2, 455, 472;  3, 4, 291;  3, 9, 478;  3, 10, 333;  3, 11, 364;  3, 16, 112;  3, 22, 214;  3, 28, 471;  3, 29, 484;  3, 45, 274;  3, 46, 323;  3, 58, 400;  3, 75, 298;  3, 76, 184;  3, 100, 322;  3, 107, 496;  3, 111, 358;  3, 136, 345;  3, 148, 371;  3, 172, 347;  3, 173, 442;  3, 178, 461;  3, 208, 394;  3, 220, 467;  3, 233, 262;  3, 292, 497;  3, 305, 473;  3, 460, 466;  4, 16, 323;  4, 17, 23;  4, 34, 455;  4, 35, 268;  4, 41, 76;  4, 46, 89;  4, 65, 107;  4, 83, 305;  4, 101, 197;  4, 113, 376;  4, 119, 149;  4, 161, 341;  4, 167, 245;  4, 172, 503;  4, 203, 286;  4, 215, 449;  4, 221, 233;  4, 365, 419}.
The deficiency graph is connected and has girth 4.

\adfAppGap

{\boldmath $\adfPENT(3,55,15)$}, $d = 6$:
{0, 1, 2;  0, 3, 11;  0, 4, 9;  0, 5, 81;  0, 6, 29;  0, 7, 27;  0, 8, 54;  0, 10, 67;  0, 15, 62;  0, 16, 60;  0, 17, 69;  0, 21, 47;  0, 22, 45;  0, 28, 52;  0, 46, 49;  0, 48, 121;  0, 50, 122;  0, 51, 55;  0, 53, 56;  0, 61, 64;  0, 63, 70;  0, 65, 71;  0, 68, 83;  0, 74, 123;  0, 79, 100;  0, 99, 124;  0, 101, 125;  1, 3, 81;  1, 5, 67;  1, 7, 51;  1, 8, 52;  1, 9, 82;  1, 10, 68;  1, 11, 83;  1, 15, 69;  1, 16, 63;  1, 17, 64;  1, 23, 49;  1, 27, 55;  1, 28, 53;  1, 29, 56;  1, 46, 50;  1, 47, 122;  1, 62, 71;  1, 74, 100;  1, 80, 125;  2, 3, 83;  2, 4, 68;  2, 5, 9;  2, 8, 27;  2, 10, 82;  2, 15, 71;  2, 16, 65;  2, 22, 50;  2, 23, 45;  2, 29, 52;  2, 63, 69;  2, 99, 123;  3, 4, 5;  3, 52, 58;  4, 11, 82;  4, 77, 125;  0, 12, 87;  0, 13, 36;  0, 14, 104;  0, 18, 42;  0, 19, 107;  0, 20, 43;  0, 25, 119;  0, 26, 38;  0, 37, 93;  0, 44, 85;  0, 57, 94;  0, 58, 110;  0, 59, 92;  0, 76, 86;  0, 77, 118;  0, 88, 117;  0, 89, 106;  0, 91, 109;  0, 95, 115;  0, 105, 116;  1, 13, 37;  1, 14, 117;  1, 20, 44;  1, 26, 43;  1, 38, 93;  1, 58, 92;  1, 59, 118;  1, 75, 94;  1, 76, 119;  1, 77, 116;  1, 87, 105;  1, 88, 106;  2, 20, 106;  2, 44, 119;  2, 58, 93;  2, 59, 75;  2, 87, 107;  3, 33, 64;  3, 35, 69;  3, 39, 101;  3, 41, 71;  3, 45, 88;  3, 47, 89;  3, 70, 112;  4, 23, 41;  4, 34, 70;  4, 35, 95;  4, 65, 101}.
The deficiency graph is connected and has girth 4.

{\boldmath $\adfPENT(3,61,15)$}, $d = 6$:
{0, 1, 2;  0, 3, 118;  0, 4, 16;  0, 5, 15;  0, 12, 94;  0, 13, 34;  0, 14, 33;  0, 17, 44;  0, 21, 127;  0, 22, 25;  0, 23, 24;  0, 26, 83;  0, 35, 38;  0, 36, 95;  0, 37, 93;  0, 42, 119;  0, 43, 117;  0, 57, 136;  0, 58, 103;  0, 75, 112;  0, 76, 97;  0, 81, 128;  0, 98, 101;  0, 99, 113;  0, 100, 111;  0, 104, 116;  0, 115, 135;  1, 3, 15;  1, 4, 117;  1, 5, 43;  1, 13, 93;  1, 14, 37;  1, 16, 44;  1, 17, 119;  1, 23, 82;  1, 25, 83;  1, 26, 128;  1, 34, 38;  1, 35, 95;  1, 58, 135;  1, 76, 111;  1, 77, 100;  1, 98, 113;  1, 99, 112;  1, 104, 137;  2, 3, 117;  2, 4, 44;  2, 15, 119;  2, 16, 118;  2, 22, 83;  2, 23, 81;  2, 26, 82;  2, 33, 95;  2, 34, 94;  2, 57, 135;  2, 75, 111;  2, 76, 101;  2, 77, 99;  3, 4, 5;  3, 29, 41;  4, 17, 118;  4, 107, 131;  0, 6, 129;  0, 7, 46;  0, 8, 45;  0, 9, 109;  0, 10, 20;  0, 11, 60;  0, 18, 125;  0, 19, 72;  0, 30, 86;  0, 31, 41;  0, 32, 71;  0, 39, 55;  0, 40, 110;  0, 47, 87;  0, 54, 134;  0, 61, 130;  0, 62, 105;  0, 67, 74;  0, 68, 133;  0, 69, 124;  0, 70, 121;  0, 73, 79;  0, 88, 106;  0, 122, 131;  1, 9, 130;  1, 10, 105;  1, 19, 73;  1, 20, 125;  1, 31, 71;  1, 32, 61;  1, 45, 89;  1, 46, 80;  1, 47, 129;  1, 56, 86;  1, 62, 69;  1, 68, 134;  1, 87, 107;  1, 106, 122;  1, 124, 131;  2, 8, 46;  2, 10, 62;  2, 20, 87;  2, 47, 131;  2, 56, 125;  2, 89, 105;  2, 123, 129;  3, 10, 99;  3, 11, 76;  3, 21, 75;  3, 22, 77;  3, 51, 137;  3, 53, 94;  3, 70, 136;  3, 71, 88;  3, 95, 100;  4, 10, 94;  4, 23, 95;  4, 46, 89;  4, 53, 71;  5, 11, 101}.
The deficiency graph is connected and has girth 4.

{\boldmath $\adfPENT(3,333,15)$}, $d = 2$:
{0, 112, 486;  0, 174, 672;  0, 230, 472;  0, 236, 428;  0, 270, 664;  0, 332, 340;  0, 386, 680;  1, 3, 413;  1, 11, 509;  1, 19, 297;  1, 197, 211;  1, 255, 447;  1, 343, 351;  1, 453, 571;  3, 19, 453;  3, 197, 447;  3, 211, 509;  3, 255, 571;  3, 297, 351;  11, 197, 255;  11, 211, 453;  11, 297, 413;  11, 447, 571;  19, 197, 413;  19, 343, 571;  112, 174, 270;  112, 230, 664;  112, 236, 340;  112, 332, 386;  112, 428, 680;  112, 472, 672;  174, 230, 680;  174, 340, 486;  174, 386, 472;  197, 297, 571;  197, 343, 509;  197, 351, 453;  211, 255, 297;  211, 343, 413;  211, 351, 571;  230, 236, 386;  230, 270, 340;  230, 332, 486;  230, 428, 672;  236, 270, 680;  236, 486, 672;  255, 343, 453;  255, 351, 413;  270, 386, 672;  270, 472, 486;  297, 343, 447;  297, 453, 509;  332, 472, 680;  340, 386, 664;  340, 428, 472;  386, 428, 486;  413, 447, 453;  486, 664, 680;  0, 4, 527;  0, 5, 540;  0, 7, 548;  0, 9, 562;  0, 12, 577;  0, 13, 382;  0, 15, 315;  0, 17, 376;  0, 20, 369;  0, 21, 32;  0, 22, 573;  0, 23, 400;  0, 24, 427;  0, 25, 518;  0, 26, 411;  0, 27, 320;  0, 28, 245;  0, 29, 38;  0, 30, 73;  0, 31, 265;  0, 33, 237;  0, 35, 554;  0, 36, 89;  0, 37, 437;  0, 39, 420;  0, 41, 409;  0, 45, 355;  0, 47, 207;  0, 48, 277;  0, 49, 663;  0, 50, 219;  0, 51, 511;  0, 52, 381;  0, 55, 544;  0, 57, 476;  0, 59, 125;  0, 60, 579;  0, 61, 406;  0, 63, 91;  0, 64, 593;  0, 65, 592;  0, 66, 181;  0, 67, 352;  0, 68, 525;  0, 69, 107;  0, 71, 502;  0, 72, 609;  0, 74, 493;  0, 75, 584;  0, 76, 275;  0, 77, 109;  0, 78, 557;  0, 79, 214;  0, 80, 379;  0, 81, 621;  0, 82, 643;  0, 83, 601;  0, 84, 555;  0, 85, 421;  0, 87, 500;  0, 92, 667;  0, 93, 569;  0, 94, 325;  0, 95, 143;  0, 97, 258;  0, 99, 489;  0, 101, 675;  0, 103, 576;  0, 105, 177;  0, 108, 513;  0, 111, 234;  0, 113, 188;  0, 117, 318;  0, 119, 268;  0, 121, 172;  0, 123, 152;  0, 126, 669;  0, 127, 261;  0, 131, 485;  0, 133, 458;  0, 135, 303;  0, 136, 599;  0, 137, 629;  0, 139, 175;  0, 144, 603;  0, 145, 363;  0, 148, 353;  0, 149, 171;  0, 151, 289;  0, 153, 328;  0, 157, 398;  0, 159, 423;  0, 160, 417;  0, 161, 309;  0, 162, 549;  0, 165, 170;  0, 167, 433;  0, 168, 641;  0, 179, 535;  0, 183, 302;  0, 185, 267;  0, 187, 445;  0, 190, 443;  0, 191, 390;  0, 195, 226;  0, 201, 415;  0, 203, 487;  0, 204, 499;  0, 213, 393;  0, 215, 439;  0, 218, 501;  0, 221, 383;  0, 222, 515;  0, 223, 637;  0, 225, 407;  0, 227, 517;  0, 233, 597;  0, 235, 361;  0, 239, 304;  0, 241, 321;  0, 243, 623;  0, 247, 307;  0, 249, 627;  0, 259, 589;  0, 260, 595;  0, 264, 635;  0, 266, 539;  0, 271, 314;  0, 279, 655;  0, 281, 338;  0, 285, 545;  0, 287, 310;  0, 290, 681;  0, 291, 367;  0, 311, 649;  0, 317, 401;  0, 319, 356;  0, 327, 647;  0, 331, 425;  0, 333, 336;  0, 339, 467;  0, 341, 365;  0, 345, 375;  0, 347, 373;  0, 359, 531;  0, 377, 497;  0, 395, 657;  0, 399, 587;  0, 429, 503;  0, 431, 495;  0, 435, 661;  0, 449, 585;  0, 451, 455;  0, 461, 613;  0, 465, 477;  0, 469, 567;  0, 475, 619;  0, 491, 581;  0, 505, 583;  0, 541, 633;  0, 553, 605;  0, 591, 611;  0, 615, 665;  1, 107, 277}.
The deficiency graph is connected and has girth 4.

\adfAppGap

{\boldmath $\adfPENT(3,561,19)$}, $d = 2$:
{0, 32, 494;  0, 90, 1016;  0, 210, 264;  0, 236, 962;  0, 250, 752;  0, 332, 460;  0, 508, 966;  0, 514, 790;  0, 768, 832;  1, 127, 683;  1, 177, 635;  1, 181, 907;  1, 311, 1111;  1, 353, 629;  1, 375, 811;  1, 391, 893;  1, 649, 1053;  1, 879, 933;  32, 90, 460;  32, 210, 236;  32, 250, 966;  32, 264, 832;  32, 332, 962;  32, 508, 790;  32, 514, 752;  32, 768, 1016;  90, 210, 250;  90, 236, 514;  90, 264, 768;  90, 332, 494;  90, 508, 962;  90, 752, 790;  90, 832, 966;  127, 177, 649;  127, 311, 933;  127, 353, 1053;  127, 375, 907;  127, 893, 1111;  177, 181, 1053;  177, 311, 879;  177, 375, 893;  177, 391, 683;  177, 629, 907;  177, 933, 1111;  181, 311, 649;  181, 353, 893;  181, 375, 635;  181, 629, 1111;  181, 811, 879;  210, 332, 514;  210, 494, 1016;  210, 508, 832;  210, 752, 966;  210, 768, 790;  236, 250, 494;  236, 264, 790;  236, 332, 832;  236, 460, 508;  236, 752, 768;  236, 966, 1016;  250, 332, 768;  250, 790, 962;  250, 832, 1016;  264, 332, 752;  264, 460, 494;  264, 962, 966;  311, 353, 391;  311, 375, 683;  311, 629, 1053;  311, 635, 907;  311, 811, 893;  353, 375, 879;  353, 635, 1111;  353, 649, 683;  353, 907, 933;  375, 391, 933;  375, 629, 649;  375, 1053, 1111;  391, 629, 811;  391, 635, 649;  391, 879, 1053;  391, 907, 1111;  460, 752, 832;  460, 768, 966;  460, 790, 1016;  494, 514, 966;  494, 768, 962;  494, 790, 832;  508, 514, 768;  514, 832, 962;  629, 635, 933;  635, 683, 1053;  649, 811, 933;  649, 879, 1111;  683, 811, 1111;  683, 879, 907;  811, 907, 1053;  893, 933, 1053;  0, 2, 571;  0, 3, 785;  0, 5, 990;  0, 7, 652;  0, 8, 485;  0, 9, 459;  0, 10, 537;  0, 11, 438;  0, 12, 1047;  0, 13, 1037;  0, 15, 798;  0, 17, 414;  0, 18, 863;  0, 19, 1019;  0, 21, 1090;  0, 23, 246;  0, 24, 81;  0, 25, 396;  0, 27, 994;  0, 29, 1066;  0, 30, 973;  0, 31, 394;  0, 33, 89;  0, 35, 952;  0, 36, 927;  0, 37, 99;  0, 39, 252;  0, 41, 852;  0, 43, 1002;  0, 44, 407;  0, 45, 550;  0, 46, 977;  0, 47, 200;  0, 49, 393;  0, 51, 415;  0, 53, 185;  0, 55, 719;  0, 56, 995;  0, 59, 915;  0, 60, 595;  0, 61, 306;  0, 62, 303;  0, 63, 641;  0, 65, 1070;  0, 66, 475;  0, 67, 710;  0, 69, 902;  0, 70, 283;  0, 71, 855;  0, 74, 341;  0, 75, 1024;  0, 77, 192;  0, 78, 1029;  0, 79, 447;  0, 83, 972;  0, 84, 765;  0, 85, 519;  0, 86, 887;  0, 87, 913;  0, 88, 237;  0, 91, 167;  0, 92, 689;  0, 93, 992;  0, 94, 825;  0, 95, 886;  0, 97, 646;  0, 98, 803;  0, 100, 427;  0, 101, 941;  0, 102, 1057;  0, 103, 202;  0, 104, 923;  0, 106, 279;  0, 107, 188;  0, 108, 217;  0, 110, 573;  0, 111, 350;  0, 112, 1049;  0, 113, 1004;  0, 114, 385;  0, 115, 215;  0, 116, 581;  0, 117, 604;  0, 119, 229;  0, 121, 223;  0, 123, 552;  0, 124, 691;  0, 125, 676;  0, 129, 443;  0, 131, 701;  0, 132, 677;  0, 133, 744;  0, 135, 186;  0, 136, 539;  0, 139, 612;  0, 141, 607;  0, 142, 1113;  0, 143, 599;  0, 144, 549;  0, 145, 220;  0, 147, 774;  0, 151, 197;  0, 153, 967;  0, 154, 381;  0, 155, 750;  0, 156, 809;  0, 159, 557;  0, 161, 367;  0, 163, 802;  0, 164, 749;  0, 165, 365;  0, 166, 827;  0, 168, 589;  0, 169, 450;  0, 171, 179;  0, 187, 664;  0, 189, 474;  0, 191, 265;  0, 195, 1017;  0, 199, 1073;  0, 201, 737;  0, 203, 388;  0, 205, 1101;  0, 206, 697;  0, 207, 732;  0, 209, 358;  0, 211, 572;  0, 219, 1007;  0, 221, 840;  0, 222, 467;  0, 228, 901;  0, 231, 769;  0, 233, 625;  0, 234, 1099;  0, 235, 623;  0, 239, 760;  0, 247, 839;  0, 249, 262;  0, 255, 564;  0, 257, 425;  0, 259, 280;  0, 261, 305;  0, 263, 822;  0, 268, 823;  0, 269, 740;  0, 273, 529;  0, 275, 408;  0, 277, 413;  0, 281, 509;  0, 285, 782;  0, 286, 1039;  0, 287, 733;  0, 288, 947;  0, 289, 356;  0, 291, 830;  0, 293, 449;  0, 294, 837;  0, 295, 696;  0, 297, 841;  0, 299, 829;  0, 301, 1077;  0, 307, 598;  0, 313, 565;  0, 314, 755;  0, 315, 317;  0, 316, 959;  0, 319, 379;  0, 321, 348;  0, 322, 1065;  0, 323, 525;  0, 325, 647;  0, 326, 1005;  0, 328, 1031;  0, 329, 618;  0, 333, 457;  0, 334, 935;  0, 335, 831;  0, 337, 453;  0, 339, 1011;  0, 343, 536;  0, 345, 511;  0, 346, 763;  0, 347, 354;  0, 349, 1109;  0, 355, 757;  0, 357, 380;  0, 361, 439;  0, 364, 1089;  0, 366, 979;  0, 369, 963;  0, 371, 751;  0, 373, 513;  0, 377, 985;  0, 378, 859;  0, 383, 1117;  0, 387, 470;  0, 389, 528;  0, 395, 905;  0, 397, 551;  0, 399, 883;  0, 401, 695;  0, 411, 686;  0, 419, 510;  0, 423, 433;  0, 429, 1133;  0, 431, 917;  0, 434, 889;  0, 435, 507;  0, 437, 777;  0, 445, 969;  0, 451, 1041;  0, 461, 650;  0, 469, 1083;  0, 471, 563;  0, 473, 587;  0, 479, 875;  0, 483, 644;  0, 484, 1063;  0, 486, 1105;  0, 487, 1131;  0, 489, 576;  0, 493, 548;  0, 495, 639;  0, 501, 609;  0, 505, 1081;  0, 517, 991;  0, 521, 877;  0, 533, 1023;  0, 534, 1095;  0, 541, 775;  0, 553, 717;  0, 559, 657;  0, 575, 663;  0, 577, 797;  0, 605, 795;  0, 611, 945;  0, 615, 1107;  0, 627, 739;  0, 631, 1045;  0, 633, 983;  0, 651, 675;  0, 667, 1015;  0, 685, 1079;  0, 687, 873;  0, 693, 759;  0, 699, 961;  0, 707, 987;  0, 709, 1141;  0, 711, 881;  0, 721, 869;  0, 723, 911;  0, 727, 1137;  0, 729, 921;  0, 735, 1025;  0, 747, 1125;  0, 761, 1001;  0, 767, 1093;  0, 773, 1085;  0, 783, 1071;  0, 787, 925;  0, 789, 807;  0, 791, 821;  0, 793, 1139;  0, 799, 835;  0, 805, 909;  0, 813, 899;  0, 815, 885;  0, 817, 1123;  0, 843, 895;  0, 847, 1069;  0, 849, 999;  0, 871, 965;  0, 975, 1127;  0, 997, 1103;  0, 1013, 1097;  0, 1021, 1033}.
The deficiency graph is connected and has girth 4.

{\boldmath $\adfPENT(3,563,19)$}, $d = 6$:
{0, 96, 604;  0, 146, 890;  0, 170, 322;  0, 270, 290;  0, 488, 934;  0, 588, 784;  0, 688, 812;  0, 744, 1000;  0, 908, 998;  1, 147, 403;  1, 149, 825;  1, 213, 559;  1, 239, 857;  1, 257, 1001;  1, 335, 459;  1, 363, 977;  1, 543, 1051;  1, 659, 877;  2, 148, 892;  2, 172, 324;  2, 272, 292;  2, 490, 936;  2, 590, 786;  2, 690, 814;  2, 910, 1000;  3, 149, 405;  3, 151, 827;  3, 215, 561;  3, 241, 859;  3, 365, 979;  3, 661, 879;  4, 174, 326;  4, 274, 294;  4, 492, 938;  4, 592, 788;  4, 912, 1002;  5, 153, 829;  5, 217, 563;  5, 243, 861;  5, 367, 981;  5, 663, 881;  96, 146, 1000;  96, 170, 934;  96, 270, 890;  96, 290, 998;  96, 322, 688;  96, 488, 812;  96, 588, 744;  98, 148, 1002;  98, 172, 936;  98, 272, 892;  98, 292, 1000;  98, 324, 690;  98, 490, 814;  98, 590, 746;  100, 150, 1004;  100, 174, 938;  100, 274, 894;  100, 294, 1002;  100, 326, 692;  100, 492, 816;  100, 592, 748;  146, 170, 290;  146, 270, 604;  146, 322, 908;  146, 488, 998;  146, 588, 812;  146, 688, 784;  146, 744, 934;  147, 149, 543;  147, 213, 857;  147, 239, 1001;  147, 257, 877;  147, 335, 1051;  147, 363, 659;  147, 459, 977;  147, 559, 825;  148, 172, 292;  148, 324, 910;  148, 490, 1000;  148, 590, 814;  148, 746, 936;  149, 151, 545;  149, 213, 239;  149, 215, 859;  149, 241, 1003;  149, 257, 335;  149, 259, 879;  149, 337, 1053;  149, 363, 403;  149, 365, 661;  149, 459, 659;  149, 461, 979;  149, 559, 1051;  149, 561, 827;  149, 857, 1001;  149, 877, 977;  150, 174, 294;  150, 326, 912;  150, 492, 1002;  150, 592, 816;  150, 748, 938;  151, 153, 547;  151, 215, 241;  151, 217, 861;  151, 243, 1005;  151, 259, 337;  151, 261, 881;  151, 339, 1055;  151, 365, 405;  151, 367, 663;  151, 461, 661;  151, 463, 981;  151, 561, 1053;  151, 563, 829;  151, 859, 1003;  153, 217, 243;  153, 261, 339;  153, 367, 407;  153, 463, 663;  153, 563, 1055;  153, 861, 1005;  170, 270, 998;  170, 604, 812;  170, 688, 1000;  170, 744, 908;  170, 784, 890;  172, 272, 1000;  172, 606, 814;  172, 690, 1002;  172, 746, 910;  172, 786, 892;  174, 608, 816;  174, 692, 1004;  174, 748, 912;  174, 788, 894;  213, 257, 1051;  213, 335, 659;  213, 363, 825;  213, 403, 877;  213, 459, 543;  213, 977, 1001;  215, 259, 1053;  215, 337, 661;  215, 365, 827;  215, 405, 879;  215, 461, 545;  215, 979, 1003;  217, 261, 1055;  217, 339, 663;  217, 367, 829;  217, 407, 881;  217, 463, 547;  217, 981, 1005;  239, 257, 977;  239, 335, 877;  239, 403, 559;  239, 459, 825;  239, 543, 659;  241, 259, 979;  241, 405, 561;  241, 461, 827;  241, 545, 661;  243, 261, 981;  243, 407, 563;  243, 463, 829;  243, 547, 663;  257, 363, 559;  257, 459, 857;  257, 543, 825;  259, 365, 561;  259, 461, 859;  259, 545, 827;  261, 367, 563;  261, 463, 861;  261, 547, 829;  270, 322, 488;  270, 588, 688;  270, 744, 784;  270, 812, 908;  270, 934, 1000;  272, 324, 490;  272, 746, 786;  272, 936, 1002;  274, 326, 492;  274, 748, 788;  274, 938, 1004;  290, 322, 784;  290, 488, 688;  290, 588, 1000;  290, 604, 890;  290, 744, 812;  290, 908, 934;  292, 324, 786;  292, 490, 690;  292, 590, 1002;  292, 606, 892;  292, 746, 814;  292, 910, 936;  294, 326, 788;  294, 492, 692;  294, 592, 1004;  294, 608, 894;  294, 748, 816;  294, 912, 938;  322, 588, 998;  322, 604, 744;  322, 812, 934;  322, 890, 1000;  324, 590, 1000;  324, 606, 746;  324, 814, 936;  324, 892, 1002;  326, 592, 1002;  326, 608, 748;  326, 816, 938;  326, 894, 1004;  335, 363, 543;  335, 403, 857;  335, 559, 1001;  335, 825, 977;  337, 365, 545;  337, 405, 859;  337, 561, 1003;  337, 827, 979;  339, 367, 547;  339, 407, 861;  339, 563, 1005;  339, 829, 981;  363, 459, 1001;  363, 857, 877;  365, 859, 879;  367, 861, 881;  403, 459, 1051;  403, 543, 977;  403, 825, 1001;  405, 461, 1053;  405, 545, 979;  405, 827, 1003;  407, 463, 1055;  407, 547, 981;  407, 829, 1005;  459, 559, 877;  461, 561, 879;  488, 604, 784;  488, 908, 1000;  490, 606, 786;  490, 910, 1002;  492, 608, 788;  492, 912, 1004;  543, 559, 857;  543, 877, 1001;  545, 561, 859;  545, 879, 1003;  547, 563, 861;  588, 604, 908;  588, 890, 934;  590, 606, 910;  590, 892, 936;  592, 608, 912;  592, 894, 938;  604, 688, 934;  604, 998, 1000;  606, 690, 936;  606, 1000, 1002;  608, 692, 938;  608, 1002, 1004;  659, 825, 857;  659, 1001, 1051;  661, 827, 859;  661, 1003, 1053;  663, 829, 861;  663, 1005, 1055;  688, 744, 998;  688, 890, 908;  690, 746, 1000;  690, 892, 910;  692, 748, 1002;  692, 894, 912;  784, 812, 1000;  784, 934, 998;  786, 814, 1002;  786, 936, 1000;  788, 816, 1004;  788, 938, 1002;  812, 890, 998;  814, 892, 1000;  816, 894, 1002;  825, 877, 1051;  827, 879, 1053;  829, 881, 1055;  857, 977, 1051;  859, 979, 1053;  861, 981, 1055;  0, 3, 404;  0, 4, 949;  0, 5, 662;  0, 6, 345;  0, 7, 506;  0, 8, 479;  0, 9, 373;  0, 10, 509;  0, 11, 914;  0, 12, 613;  0, 13, 550;  0, 14, 861;  0, 15, 520;  0, 17, 1064;  0, 19, 525;  0, 21, 691;  0, 22, 229;  0, 23, 251;  0, 25, 823;  0, 27, 118;  0, 29, 356;  0, 30, 607;  0, 31, 204;  0, 33, 766;  0, 34, 1013;  0, 35, 1099;  0, 36, 539;  0, 37, 1085;  0, 38, 1101;  0, 39, 303;  0, 41, 53;  0, 42, 921;  0, 43, 173;  0, 45, 1055;  0, 46, 795;  0, 47, 1061;  0, 48, 205;  0, 49, 121;  0, 51, 590;  0, 54, 225;  0, 55, 169;  0, 57, 737;  0, 58, 359;  0, 59, 845;  0, 60, 573;  0, 61, 1029;  0, 62, 765;  0, 63, 486;  0, 65, 288;  0, 67, 284;  0, 69, 798;  0, 70, 593;  0, 71, 206;  0, 72, 1007;  0, 73, 232;  0, 75, 781;  0, 76, 975;  0, 77, 687;  0, 79, 770;  0, 80, 265;  0, 81, 631;  0, 82, 575;  0, 83, 649;  0, 85, 562;  0, 86, 739;  0, 87, 576;  0, 88, 133;  0, 89, 136;  0, 91, 374;  0, 93, 1142;  0, 94, 1137;  0, 95, 499;  0, 97, 497;  0, 98, 507;  0, 99, 1143;  0, 101, 741;  0, 102, 1111;  0, 103, 893;  0, 104, 771;  0, 105, 1047;  0, 107, 1125;  0, 109, 846;  0, 111, 540;  0, 112, 331;  0, 113, 460;  0, 114, 551;  0, 115, 635;  0, 117, 626;  0, 119, 425;  0, 123, 1017;  0, 125, 707;  0, 126, 883;  0, 127, 481;  0, 128, 285;  0, 129, 184;  0, 130, 791;  0, 131, 683;  0, 132, 605;  0, 134, 355;  0, 135, 496;  0, 137, 581;  0, 138, 341;  0, 139, 608;  0, 141, 1059;  0, 142, 493;  0, 143, 391;  0, 145, 154;  0, 151, 450;  0, 153, 999;  0, 155, 965;  0, 158, 645;  0, 159, 222;  0, 160, 1041;  0, 161, 165;  0, 162, 1105;  0, 163, 663;  0, 167, 616;  0, 168, 1005;  0, 172, 853;  0, 175, 411;  0, 177, 234;  0, 178, 197;  0, 179, 252;  0, 181, 215;  0, 182, 746;  0, 183, 360;  0, 185, 1010;  0, 187, 742;  0, 189, 797;  0, 191, 787;  0, 192, 1123;  0, 193, 469;  0, 195, 549;  0, 199, 1012;  0, 201, 745;  0, 207, 985;  0, 209, 910;  0, 210, 805;  0, 211, 964;  0, 217, 799;  0, 219, 400;  0, 221, 1108;  0, 223, 702;  0, 227, 917;  0, 228, 1095;  0, 230, 445;  0, 231, 1084;  0, 233, 806;  0, 235, 731;  0, 236, 243;  0, 237, 1045;  0, 240, 911;  0, 241, 344;  0, 245, 680;  0, 247, 778;  0, 248, 299;  0, 249, 283;  0, 250, 669;  0, 253, 487;  0, 255, 461;  0, 258, 1133;  0, 259, 725;  0, 260, 603;  0, 261, 957;  0, 262, 395;  0, 263, 452;  0, 264, 1043;  0, 267, 878;  0, 268, 719;  0, 269, 1066;  0, 271, 674;  0, 272, 535;  0, 273, 1039;  0, 274, 673;  0, 275, 1076;  0, 276, 627;  0, 277, 721;  0, 278, 389;  0, 279, 582;  0, 280, 501;  0, 281, 1049;  0, 287, 566;  0, 289, 1004;  0, 291, 732;  0, 293, 435;  0, 295, 1069;  0, 297, 986;  0, 301, 817;  0, 305, 862;  0, 306, 983;  0, 307, 585;  0, 309, 1023;  0, 311, 1025;  0, 313, 343;  0, 315, 715;  0, 317, 530;  0, 319, 756;  0, 321, 609;  0, 323, 885;  0, 325, 768;  0, 326, 353;  0, 327, 820;  0, 328, 375;  0, 329, 1103;  0, 332, 521;  0, 333, 536;  0, 336, 955;  0, 337, 1034;  0, 338, 927;  0, 340, 869;  0, 347, 1070;  0, 349, 809;  0, 350, 941;  0, 354, 759;  0, 357, 937;  0, 361, 633;  0, 364, 429;  0, 365, 368;  0, 367, 693;  0, 369, 884;  0, 370, 1139;  0, 371, 651;  0, 372, 1145;  0, 376, 665;  0, 377, 524;  0, 379, 863;  0, 380, 947;  0, 381, 698;  0, 383, 565;  0, 385, 545;  0, 386, 899;  0, 387, 1048;  0, 388, 1129;  0, 393, 615;  0, 397, 1037;  0, 399, 424;  0, 401, 740;  0, 406, 587;  0, 407, 646;  0, 413, 865;  0, 415, 953;  0, 419, 441;  0, 421, 761;  0, 423, 968;  0, 427, 599;  0, 428, 547;  0, 431, 1028;  0, 432, 1087;  0, 433, 1079;  0, 439, 670;  0, 440, 833;  0, 443, 1018;  0, 447, 544;  0, 448, 753;  0, 449, 993;  0, 451, 622;  0, 453, 690;  0, 455, 814;  0, 457, 556;  0, 463, 987;  0, 464, 529;  0, 465, 695;  0, 466, 653;  0, 467, 516;  0, 471, 749;  0, 472, 1019;  0, 475, 868;  0, 476, 995;  0, 477, 849;  0, 478, 989;  0, 483, 1093;  0, 484, 747;  0, 485, 981;  0, 489, 569;  0, 491, 755;  0, 495, 775;  0, 500, 967;  0, 505, 553;  0, 511, 686;  0, 515, 963;  0, 517, 992;  0, 519, 1052;  0, 523, 897;  0, 527, 681;  0, 531, 629;  0, 533, 811;  0, 537, 552;  0, 538, 959;  0, 541, 722;  0, 555, 668;  0, 557, 1141;  0, 561, 621;  0, 563, 772;  0, 567, 729;  0, 571, 713;  0, 579, 1003;  0, 580, 907;  0, 583, 1035;  0, 584, 807;  0, 589, 933;  0, 591, 851;  0, 596, 829;  0, 597, 1081;  0, 602, 819;  0, 610, 991;  0, 611, 871;  0, 617, 905;  0, 623, 891;  0, 625, 783;  0, 637, 650;  0, 639, 988;  0, 640, 997;  0, 641, 873;  0, 643, 1067;  0, 647, 818;  0, 661, 839;  0, 675, 733;  0, 679, 776;  0, 682, 1115;  0, 685, 699;  0, 689, 1088;  0, 694, 1053;  0, 697, 767;  0, 701, 1124;  0, 706, 813;  0, 711, 974;  0, 718, 1063;  0, 727, 769;  0, 735, 1121;  0, 743, 1042;  0, 751, 1136;  0, 758, 1127;  0, 760, 1135;  0, 763, 1011;  0, 777, 919;  0, 782, 859;  0, 785, 1016;  0, 789, 835;  0, 790, 1107;  0, 793, 940;  0, 796, 827;  0, 801, 961;  0, 802, 1109;  0, 803, 962;  0, 808, 901;  0, 815, 895;  0, 821, 951;  0, 831, 1132;  0, 841, 896;  0, 855, 1060;  0, 866, 889;  0, 872, 1077;  0, 874, 1119;  0, 881, 916;  0, 886, 1065;  0, 887, 898;  0, 903, 1021;  0, 913, 1100;  0, 915, 1027;  0, 925, 929;  0, 939, 945;  0, 971, 1057;  0, 979, 1117;  0, 1015, 1075;  0, 1031, 1138;  0, 1033, 1058;  0, 1071, 1113;  0, 1091, 1112;  1, 2, 376;  1, 4, 87;  1, 7, 496;  1, 8, 638;  1, 9, 22;  1, 11, 652;  1, 13, 140;  1, 16, 859;  1, 20, 895;  1, 23, 818;  1, 28, 1133;  1, 32, 733;  1, 34, 958;  1, 37, 484;  1, 38, 553;  1, 39, 746;  1, 40, 627;  1, 44, 989;  1, 46, 308;  1, 47, 1034;  1, 50, 771;  1, 52, 183;  1, 55, 640;  1, 58, 389;  1, 59, 1112;  1, 62, 993;  1, 63, 646;  1, 64, 1053;  1, 68, 974;  1, 70, 634;  1, 74, 200;  1, 76, 416;  1, 77, 508;  1, 80, 259;  1, 81, 364;  1, 82, 429;  1, 86, 1011;  1, 88, 710;  1, 89, 1090;  1, 92, 196;  1, 94, 671;  1, 95, 206;  1, 99, 1084;  1, 103, 224;  1, 105, 836;  1, 106, 223;  1, 110, 340;  1, 112, 280;  1, 113, 332;  1, 116, 873;  1, 118, 281;  1, 119, 886;  1, 124, 898;  1, 127, 1036;  1, 129, 832;  1, 130, 1019;  1, 133, 920;  1, 134, 585;  1, 135, 764;  1, 136, 622;  1, 137, 316;  1, 142, 704;  1, 152, 691;  1, 154, 577;  1, 155, 542;  1, 158, 807;  1, 163, 1004;  1, 164, 1078;  1, 166, 811;  1, 169, 412;  1, 178, 937;  1, 184, 616;  1, 185, 718;  1, 190, 1013;  1, 193, 970;  1, 194, 292;  1, 205, 730;  1, 207, 350;  1, 208, 338;  1, 212, 815;  1, 214, 229;  1, 220, 557;  1, 226, 626;  1, 230, 986;  1, 231, 712;  1, 233, 794;  1, 236, 661;  1, 238, 1139;  1, 241, 868;  1, 242, 683;  1, 248, 451;  1, 250, 405;  1, 251, 1010;  1, 254, 706;  1, 256, 896;  1, 260, 428;  1, 261, 902;  1, 262, 1043;  1, 263, 1052;  1, 265, 566;  1, 266, 620;  1, 268, 874;  1, 269, 1048;  1, 274, 841;  1, 275, 1096;  1, 278, 617;  1, 285, 434;  1, 286, 885;  1, 296, 548;  1, 298, 651;  1, 301, 656;  1, 310, 545;  1, 314, 669;  1, 320, 915;  1, 326, 892;  1, 328, 357;  1, 329, 1018;  1, 333, 952;  1, 334, 544;  1, 344, 1137;  1, 346, 911;  1, 351, 482;  1, 352, 824;  1, 358, 1071;  1, 361, 866;  1, 362, 433;  1, 365, 758;  1, 368, 387;  1, 370, 741;  1, 371, 860;  1, 374, 875;  1, 377, 454;  1, 379, 1060;  1, 380, 956;  1, 382, 1064;  1, 391, 808;  1, 392, 797;  1, 398, 686;  1, 400, 658;  1, 407, 1022;  1, 410, 1109;  1, 422, 681;  1, 424, 560;  1, 430, 572;  1, 436, 568;  1, 440, 675;  1, 442, 897;  1, 449, 1108;  1, 452, 938;  1, 458, 1120;  1, 460, 821;  1, 464, 941;  1, 465, 1088;  1, 472, 700;  1, 473, 694;  1, 479, 1042;  1, 512, 554;  1, 514, 963;  1, 518, 856;  1, 520, 541;  1, 524, 699;  1, 530, 664;  1, 531, 862;  1, 536, 1066;  1, 550, 1006;  1, 562, 719;  1, 574, 676;  1, 578, 773;  1, 584, 777;  1, 590, 838;  1, 591, 850;  1, 592, 826;  1, 596, 734;  1, 598, 872;  1, 602, 760;  1, 609, 904;  1, 610, 770;  1, 611, 992;  1, 614, 917;  1, 623, 662;  1, 644, 806;  1, 650, 767;  1, 668, 1100;  1, 670, 1024;  1, 674, 1114;  1, 687, 922;  1, 707, 1118;  1, 728, 1065;  1, 736, 1017;  1, 740, 803;  1, 742, 800;  1, 752, 819;  1, 759, 782;  1, 761, 830;  1, 776, 975;  1, 779, 1046;  1, 809, 946;  1, 844, 1143;  1, 848, 1125;  1, 854, 916;  1, 863, 994;  1, 878, 1106;  1, 879, 1016;  1, 880, 950;  1, 964, 1012;  1, 976, 1138;  1, 980, 1094;  1, 982, 988;  1, 1040, 1142;  1, 1058, 1130;  1, 1059, 1144;  1, 1072, 1085;  1, 1077, 1136;  2, 5, 944;  2, 8, 987;  2, 10, 393;  2, 11, 923;  2, 14, 1085;  2, 15, 32;  2, 16, 227;  2, 17, 291;  2, 23, 1121;  2, 27, 729;  2, 33, 41;  2, 35, 338;  2, 38, 257;  2, 39, 280;  2, 40, 153;  2, 45, 719;  2, 47, 563;  2, 50, 621;  2, 51, 957;  2, 56, 335;  2, 57, 1043;  2, 59, 130;  2, 62, 557;  2, 63, 308;  2, 71, 1115;  2, 75, 452;  2, 77, 608;  2, 81, 135;  2, 82, 731;  2, 83, 801;  2, 87, 628;  2, 88, 689;  2, 89, 352;  2, 93, 1047;  2, 105, 1083;  2, 107, 953;  2, 111, 897;  2, 117, 994;  2, 123, 592;  2, 125, 652;  2, 129, 362;  2, 131, 243;  2, 134, 951;  2, 137, 263;  2, 141, 212;  2, 143, 909;  2, 147, 711;  2, 155, 358;  2, 165, 848;  2, 167, 419;  2, 171, 872;  2, 173, 183;  2, 179, 554;  2, 184, 851;  2, 185, 807;  2, 189, 1029;  2, 194, 1103;  2, 203, 262;  2, 208, 381;  2, 213, 789;  2, 224, 1059;  2, 231, 363;  2, 233, 964;  2, 236, 683;  2, 238, 299;  2, 239, 293;  2, 249, 260;  2, 251, 704;  2, 255, 776;  2, 266, 665;  2, 267, 1025;  2, 269, 693;  2, 273, 899;  2, 274, 585;  2, 275, 675;  2, 285, 1055;  2, 286, 377;  2, 287, 682;  2, 297, 383;  2, 303, 734;  2, 309, 1114;  2, 311, 1072;  2, 315, 329;  2, 317, 670;  2, 321, 760;  2, 327, 598;  2, 328, 1049;  2, 333, 380;  2, 334, 387;  2, 346, 741;  2, 350, 1113;  2, 351, 1091;  2, 369, 965;  2, 370, 833;  2, 375, 784;  2, 382, 1143;  2, 388, 995;  2, 399, 692;  2, 406, 843;  2, 413, 724;  2, 423, 471;  2, 429, 1030;  2, 430, 743;  2, 431, 820;  2, 435, 921;  2, 437, 495;  2, 455, 1071;  2, 467, 671;  2, 477, 753;  2, 478, 483;  2, 485, 839;  2, 501, 1095;  2, 502, 1041;  2, 509, 687;  2, 513, 976;  2, 527, 867;  2, 531, 875;  2, 538, 681;  2, 539, 1054;  2, 543, 555;  2, 549, 887;  2, 567, 1107;  2, 579, 1019;  2, 581, 657;  2, 599, 1066;  2, 604, 677;  2, 610, 635;  2, 611, 1097;  2, 623, 971;  2, 629, 1138;  2, 641, 1119;  2, 645, 970;  2, 647, 1102;  2, 653, 778;  2, 663, 777;  2, 688, 785;  2, 699, 742;  2, 700, 1139;  2, 707, 969;  2, 735, 993;  2, 772, 813;  2, 779, 941;  2, 783, 1036;  2, 791, 873;  2, 797, 857;  2, 803, 1061;  2, 815, 1007;  2, 825, 861;  2, 863, 893;  2, 868, 1133;  2, 880, 975;  2, 891, 898;  2, 905, 939;  2, 963, 1126;  2, 983, 1144;  2, 1023, 1060;  2, 1031, 1067;  3, 4, 689;  3, 22, 141;  3, 33, 670;  3, 34, 340;  3, 41, 142;  3, 52, 329;  3, 64, 239;  3, 65, 316;  3, 70, 196;  3, 75, 574;  3, 76, 934;  3, 82, 346;  3, 107, 526;  3, 112, 251;  3, 118, 351;  3, 124, 718;  3, 129, 520;  3, 130, 382;  3, 136, 977;  3, 137, 1132;  3, 154, 381;  3, 161, 1012;  3, 178, 592;  3, 185, 862;  3, 190, 213;  3, 202, 455;  3, 214, 915;  3, 220, 735;  3, 226, 393;  3, 232, 693;  3, 275, 1078;  3, 280, 813;  3, 287, 406;  3, 310, 593;  3, 322, 1055;  3, 335, 772;  3, 353, 1024;  3, 358, 407;  3, 359, 742;  3, 370, 424;  3, 371, 964;  3, 376, 820;  3, 377, 460;  3, 418, 519;  3, 430, 1060;  3, 454, 658;  3, 467, 580;  3, 478, 538;  3, 479, 1090;  3, 503, 532;  3, 514, 556;  3, 568, 1138;  3, 569, 784;  3, 598, 898;  3, 628, 899;  3, 646, 682;  3, 652, 1061;  3, 665, 952;  3, 676, 779;  3, 724, 1072;  3, 748, 760;  3, 785, 940;  3, 821, 826;  3, 874, 1103;  3, 946, 1031;  3, 958, 1079;  3, 965, 1114;  4, 11, 382;  4, 34, 203;  4, 41, 760;  4, 47, 953;  4, 59, 131;  4, 71, 700;  4, 76, 395;  4, 83, 1061;  4, 113, 119;  4, 118, 659;  4, 142, 1007;  4, 196, 401;  4, 197, 647;  4, 221, 244;  4, 227, 280;  4, 245, 977;  4, 251, 821;  4, 340, 965;  4, 359, 749;  4, 364, 959;  4, 377, 917;  4, 485, 707;  4, 677, 815;  4, 743, 857;  4, 809, 1085;  4, 833, 875;  4, 923, 1133}.
The deficiency graph is connected and has girth 4.

{\boldmath $\adfPENT(3,569,19)$}, $d = 6$:
{0, 42, 390;  0, 84, 498;  0, 92, 378;  0, 400, 976;  0, 432, 964;  0, 518, 522;  0, 558, 1070;  0, 570, 946;  1, 89, 589;  1, 183, 759;  1, 195, 641;  1, 213, 661;  1, 287, 379;  1, 601, 1075;  1, 637, 727;  1, 769, 1117;  2, 44, 392;  2, 86, 500;  2, 402, 978;  2, 434, 966;  2, 520, 524;  2, 560, 1072;  2, 572, 948;  3, 91, 591;  3, 185, 761;  3, 197, 643;  3, 215, 663;  3, 289, 381;  3, 603, 1077;  3, 639, 729;  3, 771, 1119;  4, 46, 394;  4, 88, 502;  4, 404, 980;  4, 436, 968;  4, 522, 526;  4, 562, 1074;  4, 574, 950;  5, 93, 593;  5, 187, 763;  5, 199, 645;  5, 217, 665;  5, 605, 1079;  5, 641, 731;  5, 773, 1121;  42, 92, 780;  42, 378, 976;  42, 400, 522;  42, 498, 946;  42, 518, 964;  42, 558, 872;  42, 570, 1070;  44, 94, 782;  44, 380, 978;  44, 402, 524;  44, 500, 948;  44, 520, 966;  44, 560, 874;  44, 572, 1072;  46, 96, 784;  46, 382, 980;  46, 404, 526;  46, 502, 950;  46, 522, 968;  46, 562, 876;  46, 574, 1074;  84, 92, 976;  84, 378, 522;  84, 390, 558;  84, 400, 946;  84, 518, 1070;  84, 570, 872;  84, 780, 964;  86, 94, 978;  86, 380, 524;  86, 392, 560;  86, 402, 948;  86, 520, 1072;  86, 572, 874;  86, 782, 966;  88, 96, 980;  88, 382, 526;  88, 394, 562;  88, 404, 950;  88, 522, 1074;  88, 574, 876;  88, 784, 968;  89, 183, 727;  89, 195, 379;  89, 213, 287;  89, 601, 641;  89, 637, 1117;  89, 661, 1067;  89, 759, 1075;  89, 769, 781;  91, 185, 729;  91, 197, 381;  91, 215, 289;  91, 603, 643;  91, 639, 1119;  91, 761, 1077;  92, 390, 518;  92, 400, 432;  92, 498, 1070;  92, 522, 946;  92, 558, 570;  92, 872, 964;  93, 187, 731;  93, 199, 383;  93, 217, 291;  93, 605, 645;  93, 641, 1121;  93, 763, 1079;  94, 392, 520;  94, 402, 434;  94, 500, 1072;  94, 524, 948;  94, 560, 572;  94, 874, 966;  96, 394, 522;  96, 404, 436;  96, 526, 950;  183, 195, 661;  183, 213, 379;  183, 287, 1117;  183, 589, 769;  183, 601, 637;  183, 641, 781;  183, 1067, 1075;  185, 197, 663;  185, 215, 381;  185, 289, 1119;  185, 591, 771;  185, 603, 639;  185, 643, 783;  185, 1069, 1077;  187, 217, 383;  187, 291, 1121;  187, 605, 641;  187, 645, 785;  187, 1071, 1079;  195, 213, 727;  195, 287, 1067;  195, 589, 1117;  195, 637, 759;  195, 769, 1075;  197, 215, 729;  197, 591, 1119;  197, 639, 761;  197, 771, 1077;  199, 217, 731;  199, 593, 1121;  199, 641, 763;  199, 773, 1079;  213, 589, 1075;  213, 601, 769;  213, 637, 1067;  213, 641, 759;  213, 781, 1117;  215, 591, 1077;  215, 603, 771;  215, 639, 1069;  215, 643, 761;  215, 783, 1119;  217, 593, 1079;  217, 605, 773;  217, 641, 1071;  217, 645, 763;  217, 785, 1121;  287, 589, 661;  287, 601, 727;  287, 637, 769;  287, 641, 1075;  287, 759, 781;  289, 591, 663;  289, 603, 729;  289, 639, 771;  289, 643, 1077;  289, 761, 783;  291, 593, 665;  291, 605, 731;  291, 641, 773;  291, 645, 1079;  291, 763, 785;  378, 400, 780;  378, 432, 570;  378, 498, 872;  378, 518, 946;  378, 558, 964;  379, 589, 759;  379, 601, 661;  379, 637, 641;  379, 781, 1075;  379, 1067, 1117;  380, 402, 782;  380, 434, 572;  380, 500, 874;  380, 520, 948;  381, 591, 761;  381, 603, 663;  381, 639, 643;  381, 783, 1077;  381, 1069, 1119;  382, 404, 784;  382, 436, 574;  382, 502, 876;  382, 522, 950;  383, 593, 763;  383, 605, 665;  383, 641, 645;  383, 785, 1079;  383, 1071, 1121;  390, 400, 872;  390, 498, 522;  390, 946, 964;  392, 402, 874;  392, 500, 524;  392, 948, 966;  394, 404, 876;  394, 502, 526;  394, 950, 968;  400, 498, 570;  400, 518, 558;  400, 964, 1070;  402, 500, 572;  402, 520, 560;  402, 966, 1072;  404, 502, 574;  404, 522, 562;  404, 968, 1074;  432, 498, 518;  432, 522, 558;  432, 872, 946;  432, 976, 1070;  434, 500, 520;  434, 524, 560;  434, 874, 948;  434, 978, 1072;  436, 502, 522;  436, 526, 562;  436, 876, 950;  436, 980, 1074;  498, 558, 780;  498, 964, 976;  500, 560, 782;  502, 562, 784;  518, 570, 780;  518, 872, 976;  520, 572, 782;  520, 874, 978;  522, 570, 964;  522, 574, 784;  522, 780, 976;  522, 872, 1070;  522, 876, 980;  524, 572, 966;  524, 782, 978;  524, 874, 1072;  526, 574, 968;  526, 784, 980;  526, 876, 1074;  558, 946, 976;  560, 948, 978;  562, 950, 980;  589, 637, 781;  589, 641, 727;  591, 639, 783;  591, 643, 729;  593, 641, 785;  593, 645, 731;  601, 759, 1117;  603, 761, 1119;  605, 763, 1121;  637, 661, 1075;  639, 663, 1077;  641, 661, 1117;  641, 665, 1079;  641, 769, 1067;  643, 663, 1119;  643, 771, 1069;  645, 665, 1121;  645, 773, 1071;  661, 727, 781;  661, 759, 769;  663, 729, 783;  663, 761, 771;  665, 731, 785;  665, 763, 773;  727, 759, 1067;  729, 761, 1069;  731, 763, 1071;  780, 946, 1070;  782, 948, 1072;  784, 950, 1074;  0, 2, 923;  0, 3, 73;  0, 5, 858;  0, 6, 59;  0, 7, 592;  0, 9, 329;  0, 11, 375;  0, 13, 218;  0, 14, 657;  0, 15, 1142;  0, 16, 839;  0, 17, 865;  0, 19, 162;  0, 21, 318;  0, 23, 618;  0, 25, 233;  0, 26, 109;  0, 27, 103;  0, 28, 565;  0, 29, 906;  0, 31, 739;  0, 33, 115;  0, 34, 1095;  0, 35, 849;  0, 37, 1008;  0, 38, 695;  0, 39, 859;  0, 41, 43;  0, 44, 935;  0, 45, 652;  0, 46, 587;  0, 47, 697;  0, 49, 268;  0, 51, 386;  0, 55, 264;  0, 56, 877;  0, 57, 1013;  0, 58, 703;  0, 61, 723;  0, 62, 631;  0, 63, 384;  0, 64, 547;  0, 65, 452;  0, 67, 93;  0, 68, 871;  0, 69, 332;  0, 70, 413;  0, 71, 766;  0, 75, 1071;  0, 76, 1031;  0, 77, 1099;  0, 78, 411;  0, 79, 1063;  0, 80, 129;  0, 81, 450;  0, 82, 1133;  0, 83, 625;  0, 85, 496;  0, 87, 1055;  0, 91, 617;  0, 95, 101;  0, 96, 299;  0, 97, 994;  0, 99, 583;  0, 100, 447;  0, 102, 543;  0, 105, 527;  0, 107, 311;  0, 110, 303;  0, 111, 1121;  0, 112, 257;  0, 113, 530;  0, 114, 283;  0, 116, 331;  0, 117, 1130;  0, 119, 1096;  0, 121, 874;  0, 123, 604;  0, 125, 1132;  0, 127, 706;  0, 131, 189;  0, 133, 347;  0, 134, 741;  0, 135, 895;  0, 136, 275;  0, 137, 1107;  0, 139, 882;  0, 141, 1094;  0, 142, 649;  0, 143, 908;  0, 145, 385;  0, 146, 1059;  0, 147, 562;  0, 148, 753;  0, 149, 995;  0, 151, 918;  0, 152, 167;  0, 153, 221;  0, 154, 979;  0, 155, 551;  0, 156, 539;  0, 157, 1097;  0, 159, 746;  0, 160, 515;  0, 161, 1157;  0, 163, 279;  0, 164, 341;  0, 165, 312;  0, 171, 973;  0, 173, 801;  0, 174, 1089;  0, 175, 834;  0, 176, 335;  0, 177, 412;  0, 178, 651;  0, 179, 956;  0, 181, 220;  0, 185, 1046;  0, 186, 777;  0, 188, 265;  0, 190, 249;  0, 191, 1017;  0, 193, 555;  0, 197, 1137;  0, 199, 419;  0, 200, 241;  0, 201, 926;  0, 202, 566;  0, 204, 421;  0, 205, 1021;  0, 206, 327;  0, 207, 416;  0, 208, 803;  0, 209, 477;  0, 211, 397;  0, 214, 1155;  0, 215, 898;  0, 216, 1105;  0, 219, 887;  0, 223, 507;  0, 225, 467;  0, 226, 907;  0, 227, 444;  0, 228, 479;  0, 229, 360;  0, 230, 325;  0, 231, 1119;  0, 232, 1003;  0, 234, 571;  0, 235, 933;  0, 237, 633;  0, 238, 1135;  0, 239, 1153;  0, 242, 833;  0, 243, 534;  0, 244, 475;  0, 245, 879;  0, 246, 927;  0, 247, 897;  0, 248, 847;  0, 250, 705;  0, 253, 1141;  0, 255, 319;  0, 256, 459;  0, 259, 783;  0, 260, 371;  0, 261, 950;  0, 263, 309;  0, 267, 690;  0, 269, 550;  0, 270, 1091;  0, 271, 1057;  0, 272, 805;  0, 273, 809;  0, 277, 812;  0, 280, 361;  0, 284, 771;  0, 285, 835;  0, 288, 963;  0, 289, 829;  0, 291, 1076;  0, 292, 883;  0, 293, 662;  0, 295, 717;  0, 297, 648;  0, 301, 1049;  0, 307, 509;  0, 310, 521;  0, 313, 526;  0, 315, 1048;  0, 317, 663;  0, 320, 1051;  0, 321, 1019;  0, 322, 471;  0, 323, 653;  0, 326, 495;  0, 330, 925;  0, 334, 557;  0, 338, 955;  0, 339, 748;  0, 342, 701;  0, 343, 1088;  0, 344, 607;  0, 345, 493;  0, 346, 483;  0, 349, 885;  0, 351, 635;  0, 352, 917;  0, 353, 573;  0, 355, 851;  0, 356, 437;  0, 357, 561;  0, 362, 621;  0, 363, 674;  0, 364, 461;  0, 365, 1124;  0, 366, 1037;  0, 367, 1139;  0, 368, 431;  0, 369, 722;  0, 372, 1065;  0, 373, 502;  0, 377, 824;  0, 381, 666;  0, 382, 1083;  0, 387, 1022;  0, 389, 1144;  0, 392, 685;  0, 393, 503;  0, 395, 729;  0, 396, 893;  0, 398, 993;  0, 399, 627;  0, 401, 866;  0, 403, 1093;  0, 404, 827;  0, 405, 742;  0, 407, 763;  0, 408, 1123;  0, 409, 1061;  0, 410, 647;  0, 417, 1125;  0, 422, 711;  0, 423, 698;  0, 425, 689;  0, 427, 831;  0, 429, 902;  0, 433, 1101;  0, 435, 937;  0, 436, 1035;  0, 439, 944;  0, 443, 855;  0, 445, 802;  0, 449, 655;  0, 451, 1005;  0, 453, 920;  0, 455, 620;  0, 457, 776;  0, 460, 733;  0, 463, 1100;  0, 464, 755;  0, 465, 1112;  0, 469, 524;  0, 473, 575;  0, 481, 799;  0, 484, 553;  0, 485, 790;  0, 487, 629;  0, 489, 596;  0, 490, 939;  0, 491, 536;  0, 501, 953;  0, 504, 1149;  0, 505, 1042;  0, 506, 985;  0, 508, 975;  0, 511, 616;  0, 513, 593;  0, 517, 579;  0, 519, 795;  0, 523, 677;  0, 525, 650;  0, 529, 656;  0, 531, 687;  0, 533, 580;  0, 535, 1039;  0, 537, 745;  0, 538, 983;  0, 541, 814;  0, 542, 817;  0, 545, 1009;  0, 549, 1087;  0, 554, 1053;  0, 559, 971;  0, 567, 1147;  0, 569, 951;  0, 577, 899;  0, 578, 967;  0, 581, 1025;  0, 585, 857;  0, 597, 961;  0, 599, 841;  0, 603, 863;  0, 605, 643;  0, 608, 945;  0, 609, 1078;  0, 611, 832;  0, 613, 1016;  0, 615, 914;  0, 619, 765;  0, 622, 1007;  0, 623, 1131;  0, 628, 1115;  0, 632, 1109;  0, 634, 1043;  0, 639, 1023;  0, 659, 772;  0, 665, 916;  0, 667, 754;  0, 668, 891;  0, 669, 683;  0, 673, 929;  0, 679, 791;  0, 691, 970;  0, 694, 1001;  0, 699, 794;  0, 707, 980;  0, 709, 743;  0, 713, 1073;  0, 719, 1085;  0, 721, 998;  0, 725, 1082;  0, 731, 1024;  0, 736, 869;  0, 737, 938;  0, 747, 1047;  0, 749, 901;  0, 751, 932;  0, 757, 813;  0, 760, 1120;  0, 761, 1113;  0, 767, 1077;  0, 773, 826;  0, 775, 853;  0, 779, 1079;  0, 785, 910;  0, 787, 806;  0, 793, 928;  0, 796, 1127;  0, 797, 982;  0, 811, 1102;  0, 815, 1041;  0, 819, 965;  0, 820, 991;  0, 823, 836;  0, 825, 881;  0, 838, 1143;  0, 843, 1006;  0, 845, 1129;  0, 848, 919;  0, 875, 1010;  0, 878, 913;  0, 886, 1145;  0, 890, 1033;  0, 903, 1151;  0, 905, 1114;  0, 909, 1090;  0, 911, 1156;  0, 921, 1081;  0, 931, 959;  0, 940, 977;  0, 943, 1103;  0, 947, 1004;  0, 952, 1069;  0, 957, 968;  0, 958, 997;  0, 969, 1058;  0, 981, 987;  0, 989, 1012;  0, 999, 1111;  0, 1029, 1045;  1, 2, 344;  1, 3, 1154;  1, 4, 321;  1, 7, 736;  1, 8, 233;  1, 10, 383;  1, 15, 728;  1, 16, 172;  1, 17, 874;  1, 22, 807;  1, 26, 82;  1, 28, 411;  1, 32, 997;  1, 34, 277;  1, 38, 907;  1, 39, 232;  1, 44, 1133;  1, 45, 406;  1, 46, 939;  1, 47, 1012;  1, 50, 1009;  1, 52, 97;  1, 58, 777;  1, 59, 382;  1, 62, 911;  1, 64, 800;  1, 65, 1004;  1, 68, 574;  1, 69, 1052;  1, 70, 890;  1, 71, 226;  1, 74, 347;  1, 76, 503;  1, 77, 1142;  1, 80, 484;  1, 81, 376;  1, 83, 926;  1, 86, 956;  1, 94, 797;  1, 98, 716;  1, 100, 667;  1, 101, 830;  1, 103, 1005;  1, 104, 409;  1, 110, 367;  1, 111, 1028;  1, 112, 899;  1, 115, 778;  1, 116, 597;  1, 118, 124;  1, 122, 623;  1, 134, 974;  1, 135, 458;  1, 140, 1150;  1, 142, 1003;  1, 146, 1034;  1, 148, 871;  1, 149, 164;  1, 152, 694;  1, 153, 184;  1, 154, 177;  1, 158, 825;  1, 160, 424;  1, 165, 370;  1, 166, 833;  1, 170, 322;  1, 176, 567;  1, 178, 261;  1, 179, 682;  1, 188, 743;  1, 189, 610;  1, 190, 952;  1, 191, 620;  1, 196, 1108;  1, 201, 970;  1, 202, 1101;  1, 205, 422;  1, 207, 364;  1, 208, 304;  1, 212, 265;  1, 217, 532;  1, 219, 1148;  1, 224, 649;  1, 227, 860;  1, 229, 500;  1, 230, 764;  1, 231, 880;  1, 235, 818;  1, 236, 814;  1, 238, 530;  1, 239, 836;  1, 242, 614;  1, 243, 662;  1, 244, 859;  1, 247, 1066;  1, 248, 524;  1, 249, 482;  1, 250, 314;  1, 251, 604;  1, 254, 410;  1, 256, 799;  1, 260, 572;  1, 266, 369;  1, 268, 752;  1, 269, 958;  1, 273, 460;  1, 281, 596;  1, 284, 313;  1, 286, 386;  1, 293, 446;  1, 296, 909;  1, 302, 829;  1, 308, 345;  1, 310, 496;  1, 325, 1048;  1, 326, 850;  1, 327, 856;  1, 328, 453;  1, 332, 981;  1, 333, 998;  1, 335, 1040;  1, 340, 445;  1, 346, 353;  1, 350, 983;  1, 352, 635;  1, 362, 508;  1, 365, 986;  1, 368, 755;  1, 374, 393;  1, 380, 791;  1, 385, 992;  1, 387, 670;  1, 392, 581;  1, 394, 644;  1, 397, 1144;  1, 398, 790;  1, 416, 700;  1, 417, 464;  1, 418, 686;  1, 430, 921;  1, 434, 539;  1, 437, 1022;  1, 440, 556;  1, 442, 957;  1, 448, 494;  1, 452, 1006;  1, 454, 512;  1, 461, 1102;  1, 465, 578;  1, 466, 838;  1, 470, 1115;  1, 472, 488;  1, 476, 554;  1, 485, 566;  1, 490, 737;  1, 491, 674;  1, 502, 938;  1, 507, 878;  1, 520, 1103;  1, 526, 742;  1, 531, 550;  1, 535, 1132;  1, 543, 1126;  1, 548, 945;  1, 551, 758;  1, 562, 914;  1, 563, 908;  1, 593, 824;  1, 602, 932;  1, 605, 796;  1, 628, 1043;  1, 629, 950;  1, 633, 1156;  1, 634, 710;  1, 640, 879;  1, 646, 980;  1, 650, 712;  1, 656, 891;  1, 658, 767;  1, 668, 929;  1, 688, 1096;  1, 692, 868;  1, 698, 1025;  1, 704, 839;  1, 706, 927;  1, 707, 772;  1, 718, 788;  1, 722, 1017;  1, 723, 1000;  1, 740, 849;  1, 747, 832;  1, 760, 813;  1, 761, 920;  1, 766, 1131;  1, 776, 827;  1, 782, 867;  1, 784, 1125;  1, 794, 808;  1, 802, 1112;  1, 806, 940;  1, 812, 1072;  1, 815, 844;  1, 821, 968;  1, 826, 995;  1, 837, 982;  1, 842, 969;  1, 848, 892;  1, 862, 1097;  1, 887, 1058;  1, 904, 1136;  1, 910, 1023;  1, 915, 1036;  1, 922, 971;  1, 933, 934;  1, 959, 994;  1, 962, 1091;  1, 1010, 1079;  1, 1013, 1084;  1, 1018, 1113;  1, 1024, 1138;  1, 1030, 1049;  1, 1046, 1059;  1, 1060, 1094;  1, 1100, 1145;  2, 4, 423;  2, 5, 412;  2, 8, 987;  2, 11, 28;  2, 23, 365;  2, 27, 914;  2, 29, 971;  2, 35, 947;  2, 40, 407;  2, 41, 326;  2, 45, 986;  2, 57, 1000;  2, 59, 190;  2, 63, 101;  2, 69, 437;  2, 70, 347;  2, 75, 202;  2, 77, 814;  2, 81, 183;  2, 82, 855;  2, 89, 1131;  2, 93, 459;  2, 98, 1089;  2, 99, 965;  2, 104, 1101;  2, 112, 303;  2, 116, 1055;  2, 117, 441;  2, 119, 189;  2, 125, 819;  2, 135, 764;  2, 141, 759;  2, 143, 797;  2, 149, 598;  2, 152, 561;  2, 153, 692;  2, 155, 670;  2, 159, 974;  2, 164, 785;  2, 165, 371;  2, 166, 677;  2, 167, 969;  2, 173, 506;  2, 201, 922;  2, 203, 452;  2, 206, 755;  2, 208, 533;  2, 209, 970;  2, 213, 617;  2, 218, 881;  2, 220, 363;  2, 230, 477;  2, 232, 735;  2, 233, 813;  2, 236, 1151;  2, 242, 509;  2, 244, 675;  2, 245, 323;  2, 250, 1011;  2, 251, 362;  2, 254, 717;  2, 255, 429;  2, 257, 909;  2, 266, 945;  2, 267, 653;  2, 274, 1125;  2, 279, 904;  2, 281, 302;  2, 285, 1048;  2, 287, 1127;  2, 305, 454;  2, 309, 1006;  2, 311, 748;  2, 315, 446;  2, 321, 952;  2, 322, 1121;  2, 327, 388;  2, 328, 873;  2, 333, 585;  2, 334, 521;  2, 335, 1132;  2, 341, 579;  2, 346, 957;  2, 351, 368;  2, 353, 449;  2, 357, 658;  2, 358, 387;  2, 359, 767;  2, 364, 983;  2, 369, 1031;  2, 370, 549;  2, 377, 633;  2, 386, 1133;  2, 400, 833;  2, 405, 1095;  2, 410, 905;  2, 417, 1037;  2, 418, 461;  2, 419, 1023;  2, 443, 599;  2, 453, 1107;  2, 466, 1005;  2, 467, 711;  2, 473, 555;  2, 485, 725;  2, 491, 975;  2, 494, 1073;  2, 507, 650;  2, 515, 899;  2, 519, 935;  2, 532, 707;  2, 537, 940;  2, 538, 1059;  2, 543, 1018;  2, 545, 1024;  2, 567, 753;  2, 568, 843;  2, 569, 723;  2, 587, 821;  2, 605, 934;  2, 610, 1067;  2, 611, 933;  2, 622, 1061;  2, 627, 867;  2, 629, 879;  2, 634, 737;  2, 641, 958;  2, 652, 719;  2, 657, 891;  2, 664, 1041;  2, 671, 903;  2, 683, 963;  2, 689, 1157;  2, 699, 795;  2, 700, 705;  2, 701, 801;  2, 747, 1109;  2, 765, 916;  2, 777, 779;  2, 778, 869;  2, 809, 1085;  2, 831, 857;  2, 838, 1137;  2, 863, 1078;  2, 880, 1043;  2, 921, 1097;  2, 939, 946;  2, 982, 1155;  2, 1019, 1083;  3, 16, 1001;  3, 28, 363;  3, 40, 352;  3, 46, 447;  3, 47, 274;  3, 52, 219;  3, 58, 136;  3, 65, 664;  3, 70, 513;  3, 76, 394;  3, 81, 514;  3, 82, 333;  3, 106, 934;  3, 112, 401;  3, 117, 1042;  3, 118, 976;  3, 137, 214;  3, 142, 592;  3, 153, 819;  3, 155, 964;  3, 167, 748;  3, 172, 249;  3, 178, 544;  3, 202, 267;  3, 203, 568;  3, 226, 545;  3, 232, 1133;  3, 233, 1000;  3, 244, 315;  3, 250, 868;  3, 256, 291;  3, 292, 983;  3, 316, 850;  3, 321, 1150;  3, 322, 953;  3, 328, 375;  3, 329, 430;  3, 334, 599;  3, 341, 508;  3, 346, 1060;  3, 358, 611;  3, 370, 838;  3, 382, 753;  3, 395, 454;  3, 413, 808;  3, 448, 676;  3, 466, 947;  3, 490, 1025;  3, 496, 537;  3, 502, 1006;  3, 538, 1145;  3, 569, 952;  3, 574, 898;  3, 580, 754;  3, 598, 1127;  3, 604, 725;  3, 635, 904;  3, 659, 802;  3, 682, 916;  3, 880, 1030;  3, 892, 1019;  3, 1049, 1144;  3, 1054, 1085;  4, 17, 527;  4, 29, 863;  4, 59, 1060;  4, 65, 689;  4, 77, 965;  4, 83, 244;  4, 89, 317;  4, 119, 778;  4, 166, 617;  4, 203, 1073;  4, 208, 923;  4, 209, 496;  4, 221, 761;  4, 233, 899;  4, 245, 256;  4, 274, 821;  4, 275, 983;  4, 280, 575;  4, 292, 629;  4, 346, 1043;  4, 497, 749;  4, 509, 514;  4, 557, 743;  4, 653, 1025;  4, 677, 851;  4, 737, 887;  4, 815, 929}.
The deficiency graph is connected and has girth 4.

\adfAppGap

{\boldmath $\adfPENT(3,64,21)$}, $d = 30$:
{0, 1, 2;  0, 3, 99;  0, 4, 70;  0, 5, 71;  0, 21, 27;  0, 22, 146;  0, 23, 29;  0, 24, 117;  0, 25, 97;  0, 26, 98;  0, 28, 101;  0, 66, 81;  0, 67, 125;  0, 68, 83;  0, 69, 144;  0, 82, 119;  0, 96, 123;  0, 100, 145;  0, 118, 124;  1, 3, 145;  1, 4, 21;  1, 5, 22;  1, 23, 69;  1, 24, 124;  1, 25, 66;  1, 26, 67;  1, 27, 70;  1, 28, 144;  1, 29, 101;  1, 68, 96;  1, 71, 99;  1, 81, 97;  1, 82, 123;  1, 83, 119;  1, 98, 117;  1, 100, 146;  1, 118, 125;  2, 3, 28;  2, 4, 145;  2, 5, 99;  2, 21, 100;  2, 22, 101;  2, 23, 70;  2, 24, 82;  2, 25, 124;  2, 26, 117;  2, 27, 71;  2, 29, 144;  2, 66, 118;  2, 67, 119;  2, 68, 97;  2, 69, 146;  2, 81, 98;  2, 83, 123;  2, 96, 125;  3, 4, 5;  3, 21, 144;  3, 22, 100;  3, 23, 101;  3, 24, 105;  3, 25, 91;  3, 26, 92;  3, 27, 69;  3, 29, 71;  3, 51, 114;  3, 52, 109;  3, 53, 110;  3, 57, 102;  3, 58, 85;  3, 59, 86;  3, 60, 78;  3, 61, 119;  3, 62, 80;  3, 70, 146;  3, 72, 81;  3, 73, 122;  3, 74, 83;  3, 75, 87;  3, 76, 143;  3, 77, 89;  3, 79, 116;  3, 82, 104;  3, 84, 120;  3, 88, 107;  3, 90, 141;  3, 103, 121;  3, 106, 142;  3, 108, 117;  3, 115, 118;  4, 22, 27;  4, 23, 28;  4, 24, 142;  4, 25, 75;  4, 26, 76;  4, 29, 99;  4, 51, 118;  4, 52, 60;  4, 53, 61;  4, 57, 121;  4, 58, 72;  4, 59, 73;  4, 62, 108;  4, 69, 100;  4, 71, 146;  4, 74, 84;  4, 77, 90;  4, 78, 109;  4, 79, 117;  4, 80, 116;  4, 81, 85;  4, 82, 120;  4, 83, 104;  4, 86, 102;  4, 87, 91;  4, 88, 141;  4, 89, 107;  4, 92, 105;  4, 101, 144;  4, 103, 122;  4, 106, 143;  4, 110, 114;  4, 115, 119;  5, 21, 70;  5, 23, 144;  5, 24, 88;  5, 25, 142;  5, 26, 105;  5, 27, 145;  5, 28, 146;  5, 29, 100;  5, 51, 79;  5, 52, 118;  5, 53, 114;  5, 57, 82;  5, 58, 121;  5, 59, 102;  5, 60, 115;  5, 61, 116;  5, 62, 109;  5, 69, 101;  5, 72, 103;  5, 73, 104;  5, 74, 85;  5, 75, 106;  5, 76, 107;  5, 77, 91;  5, 78, 110;  5, 80, 117;  5, 81, 86;  5, 83, 120;  5, 84, 122;  5, 87, 92;  5, 89, 141;  5, 90, 143;  5, 108, 119;  6, 7, 8;  6, 9, 108;  6, 10, 76;  6, 11, 77;  6, 15, 102;  6, 16, 79;  6, 17, 80;  6, 30, 42;  6, 31, 122;  6, 32, 44;  6, 39, 54;  6, 40, 146;  6, 41, 56;  6, 43, 110;  6, 55, 104;  6, 75, 120;  6, 78, 144;  6, 103, 145;  6, 109, 121;  7, 9, 121;  7, 10, 30;  7, 11, 31;  7, 15, 145;  7, 16, 39;  7, 17, 40;  7, 32, 75;  7, 41, 78;  7, 42, 76;  7, 43, 120;  7, 44, 110;  7, 54, 79;  7, 55, 144;  7, 56, 104;  7, 77, 108;  7, 80, 102;  7, 103, 146;  7, 109, 122;  8, 9, 43;  8, 10, 121;  8, 11, 108;  8, 15, 55;  8, 16, 145;  8, 17, 102;  8, 30, 109;  8, 31, 110;  8, 32, 76;  8, 39, 103;  8, 40, 104;  8, 41, 79;  8, 42, 77;  8, 44, 120;  8, 54, 80;  8, 56, 144;  8, 75, 122;  8, 78, 146;  9, 10, 11;  9, 12, 36;  9, 13, 34;  9, 14, 35;  9, 15, 18;  9, 16, 125;  9, 17, 20;  9, 19, 38;  9, 30, 120;  9, 31, 109;  9, 32, 110;  9, 33, 123;  9, 37, 124;  9, 42, 75;  9, 44, 77;  9, 45, 138;  9, 46, 103;  9, 47, 104;  9, 51, 96;  9, 52, 143;  9, 53, 98;  9, 57, 132;  9, 58, 106;  9, 59, 107;  9, 66, 78;  9, 67, 149;  9, 68, 80;  9, 76, 122;  9, 79, 134;  9, 97, 140;  9, 102, 141;  9, 105, 147;  9, 133, 148;  9, 139, 142;  10, 12, 124;  10, 13, 15;  10, 14, 16;  10, 17, 33;  10, 18, 34;  10, 19, 123;  10, 20, 38;  10, 31, 42;  10, 32, 43;  10, 35, 36;  10, 37, 125;  10, 44, 108;  10, 45, 142;  10, 46, 51;  10, 47, 52;  10, 53, 102;  10, 57, 148;  10, 58, 66;  10, 59, 67;  10, 68, 105;  10, 75, 109;  10, 77, 122;  10, 78, 106;  10, 79, 147;  10, 80, 134;  10, 96, 103;  10, 97, 141;  10, 98, 140;  10, 104, 138;  10, 107, 132;  10, 110, 120;  10, 133, 149;  10, 139, 143;  11, 12, 19;  11, 13, 124;  11, 14, 36;  11, 15, 37;  11, 16, 38;  11, 17, 34;  11, 18, 35;  11, 20, 123;  11, 30, 76;  11, 32, 120;  11, 33, 125;  11, 42, 121;  11, 43, 122;  11, 44, 109;  11, 45, 97;  11, 46, 142;  11, 47, 138;  11, 51, 139;  11, 52, 140;  11, 53, 103;  11, 57, 79;  11, 58, 148;  11, 59, 132;  11, 66, 133;  11, 67, 134;  11, 68, 106;  11, 75, 110;  11, 78, 107;  11, 80, 147;  11, 96, 104;  11, 98, 141;  11, 102, 143;  11, 105, 149;  12, 13, 14;  12, 15, 123;  12, 16, 37;  12, 17, 38;  12, 18, 33;  12, 20, 35;  12, 34, 125;  13, 16, 18;  13, 17, 19;  13, 20, 36;  13, 33, 37;  13, 35, 125;  13, 38, 123;  14, 15, 34;  14, 17, 123;  14, 18, 124;  14, 19, 125;  14, 20, 37;  14, 33, 38;  15, 16, 17;  15, 19, 124;  15, 20, 125;  15, 33, 36;  15, 35, 38;  15, 39, 144;  15, 40, 103;  15, 41, 104;  15, 54, 78;  15, 56, 80;  15, 79, 146;  16, 19, 33;  16, 20, 34;  16, 35, 123;  16, 36, 124;  16, 40, 54;  16, 41, 55;  16, 56, 102;  16, 78, 103;  16, 80, 146;  16, 104, 144;  17, 18, 37;  17, 35, 124;  17, 36, 125;  17, 39, 79;  17, 41, 144;  17, 54, 145;  17, 55, 146;  17, 56, 103;  17, 78, 104;  18, 19, 20;  18, 36, 123;  18, 38, 125;  19, 34, 36;  19, 35, 37;  20, 33, 124;  21, 22, 23;  21, 28, 145;  21, 29, 146;  21, 69, 99;  21, 71, 101;  22, 28, 69;  22, 29, 70;  22, 71, 144;  22, 99, 145;  23, 27, 100;  23, 71, 145;  23, 99, 146;  24, 25, 26;  24, 66, 123;  24, 67, 118;  24, 68, 119;  24, 75, 141;  24, 76, 106;  24, 77, 107;  24, 81, 96;  24, 83, 98;  24, 87, 90;  24, 89, 92;  24, 91, 143;  24, 97, 125;  25, 67, 81;  25, 68, 82;  25, 76, 87;  25, 77, 88;  25, 83, 117;  25, 89, 105;  25, 90, 106;  25, 92, 143;  25, 96, 118;  25, 98, 125;  25, 107, 141;  25, 119, 123;  26, 66, 97;  26, 68, 123;  26, 75, 91;  26, 77, 141;  26, 81, 124;  26, 82, 125;  26, 83, 118;  26, 87, 142;  26, 88, 143;  26, 89, 106;  26, 90, 107;  26, 96, 119;  27, 28, 29;  27, 99, 144;  27, 101, 146;  28, 70, 99;  28, 71, 100;  29, 69, 145;  30, 43, 121;  30, 44, 122;  30, 75, 108;  30, 77, 110;  31, 43, 75;  31, 44, 76;  31, 77, 120;  31, 108, 121;  32, 42, 109;  32, 77, 121;  32, 108, 122;  34, 37, 123;  34, 38, 124;  39, 55, 145;  39, 56, 146;  39, 78, 102;  39, 80, 104;  40, 55, 78;  40, 56, 79;  40, 80, 144;  40, 102, 145;  41, 54, 103;  41, 80, 145;  41, 102, 146;  42, 108, 120;  42, 110, 122;  43, 76, 108;  43, 77, 109;  44, 75, 121;  45, 51, 141;  45, 52, 139;  45, 53, 140;  45, 96, 102;  45, 98, 104;  45, 103, 143;  46, 52, 96;  46, 53, 97;  46, 98, 138;  46, 102, 139;  46, 104, 143;  46, 140, 141;  47, 51, 103;  47, 53, 141;  47, 96, 142;  47, 97, 143;  47, 98, 139;  47, 102, 140;  51, 60, 117;  51, 61, 115;  51, 62, 116;  51, 78, 108;  51, 80, 110;  51, 97, 142;  51, 98, 143;  51, 102, 138;  51, 104, 140;  51, 109, 119;  52, 61, 78;  52, 62, 79;  52, 80, 114;  52, 97, 102;  52, 98, 103;  52, 104, 141;  52, 108, 115;  52, 110, 119;  52, 116, 117;  52, 138, 142;  53, 60, 109;  53, 62, 117;  53, 78, 118;  53, 79, 119;  53, 80, 115;  53, 96, 139;  53, 104, 142;  53, 108, 116;  53, 138, 143;  54, 102, 144;  54, 104, 146;  55, 79, 102;  55, 80, 103;  56, 78, 145;  57, 66, 147;  57, 67, 133;  57, 68, 134;  57, 72, 120;  57, 73, 103;  57, 74, 104;  57, 78, 105;  57, 80, 107;  57, 81, 84;  57, 83, 86;  57, 85, 122;  57, 106, 149;  58, 67, 78;  58, 68, 79;  58, 73, 81;  58, 74, 82;  58, 80, 132;  58, 83, 102;  58, 84, 103;  58, 86, 122;  58, 104, 120;  58, 105, 133;  58, 107, 149;  58, 134, 147;  59, 66, 106;  59, 68, 147;  59, 72, 85;  59, 74, 120;  59, 78, 148;  59, 79, 149;  59, 80, 133;  59, 81, 121;  59, 82, 122;  59, 83, 103;  59, 84, 104;  59, 105, 134;  60, 79, 118;  60, 80, 119;  60, 108, 114;  60, 110, 116;  61, 79, 108;  61, 80, 109;  61, 110, 117;  61, 114, 118;  62, 78, 115;  62, 110, 118;  62, 114, 119;  66, 79, 148;  66, 80, 149;  66, 82, 124;  66, 83, 125;  66, 96, 117;  66, 98, 119;  66, 105, 132;  66, 107, 134;  67, 79, 105;  67, 80, 106;  67, 82, 96;  67, 83, 97;  67, 98, 123;  67, 107, 147;  67, 117, 124;  67, 132, 148;  68, 78, 133;  68, 81, 118;  68, 98, 124;  68, 107, 148;  68, 117, 125;  68, 132, 149;  70, 100, 144;  70, 101, 145;  72, 82, 121;  72, 83, 122;  72, 84, 102;  72, 86, 104;  73, 82, 84;  73, 83, 85;  73, 86, 120;  73, 102, 121;  74, 81, 103;  74, 86, 121;  74, 102, 122;  75, 88, 142;  75, 89, 143;  75, 90, 105;  75, 92, 107;  76, 88, 90;  76, 89, 91;  76, 92, 141;  76, 105, 142;  76, 109, 120;  76, 110, 121;  77, 87, 106;  77, 92, 142;  77, 105, 143;  78, 114, 117;  78, 116, 119;  78, 132, 147;  78, 134, 149;  79, 103, 144;  79, 104, 145;  79, 106, 132;  79, 107, 133;  79, 109, 114;  79, 110, 115;  80, 105, 148;  80, 108, 118;  81, 102, 120;  81, 104, 122;  81, 117, 123;  81, 119, 125;  82, 85, 102;  82, 86, 103;  82, 97, 117;  82, 98, 118;  83, 84, 121;  83, 96, 124;  85, 103, 120;  85, 104, 121;  87, 105, 141;  87, 107, 143;  88, 91, 105;  88, 92, 106;  89, 90, 142;  91, 106, 141;  91, 107, 142;  96, 138, 141;  96, 140, 143;  97, 103, 138;  97, 104, 139;  97, 118, 123;  97, 119, 124;  98, 102, 142;  103, 139, 141;  103, 140, 142;  106, 133, 147;  106, 134, 148;  109, 115, 117;  109, 116, 118}.
The deficiency graph is connected and has girth 4.

\adfAppGap

{\boldmath $\adfPENT(3,1452,25)$}, $d = 2$:
{0, 454, 2442;  0, 632, 752;  0, 756, 1458;  0, 962, 2574;  0, 1048, 1958;  0, 1182, 2198;  0, 1346, 1376;  0, 1560, 2802;  0, 1624, 2898;  0, 1644, 2762;  0, 2180, 2790;  0, 2182, 2824;  1, 33, 1307;  1, 107, 129;  1, 141, 1371;  1, 169, 1473;  1, 357, 1749;  1, 489, 2175;  1, 733, 1585;  1, 749, 751;  1, 973, 2179;  1, 1287, 2299;  1, 1555, 1883;  1, 1969, 2477;  33, 107, 749;  33, 129, 1473;  33, 141, 489;  33, 169, 1585;  33, 357, 2179;  33, 733, 1371;  33, 751, 1749;  33, 973, 1287;  33, 1555, 1969;  33, 1883, 2299;  33, 2175, 2477;  107, 141, 1287;  107, 169, 733;  107, 357, 1883;  107, 489, 1749;  107, 751, 2179;  107, 973, 1307;  107, 1371, 2477;  107, 1473, 2175;  107, 1555, 1585;  107, 1969, 2299;  129, 141, 973;  129, 169, 2179;  129, 357, 1585;  129, 489, 1287;  129, 733, 1307;  129, 749, 2477;  129, 751, 1555;  129, 1371, 1969;  129, 1749, 2299;  129, 1883, 2175;  141, 169, 749;  141, 357, 1555;  141, 733, 1883;  141, 751, 1585;  141, 1307, 1969;  141, 1473, 1749;  141, 2175, 2299;  141, 2179, 2477;  169, 357, 2175;  169, 489, 2299;  169, 1287, 1749;  169, 1307, 1555;  169, 1883, 1969;  357, 489, 1371;  357, 733, 2477;  357, 749, 1473;  357, 751, 1969;  357, 973, 2299;  357, 1287, 1307;  454, 632, 1376;  454, 752, 2790;  454, 756, 2198;  454, 962, 1182;  454, 1458, 1624;  454, 1560, 2824;  454, 1644, 2574;  454, 1958, 2762;  454, 2180, 2898;  454, 2182, 2802;  489, 733, 2179;  489, 749, 1555;  489, 751, 1883;  489, 973, 1969;  489, 1307, 1585;  489, 1473, 2477;  632, 756, 1346;  632, 962, 1624;  632, 1048, 2762;  632, 1182, 2802;  632, 1458, 2180;  632, 1560, 2898;  632, 1644, 2442;  632, 1958, 2198;  632, 2182, 2790;  632, 2574, 2824;  733, 749, 2299;  733, 751, 973;  733, 1287, 2175;  733, 1473, 1555;  733, 1749, 1969;  749, 973, 1749;  749, 1287, 1883;  749, 1307, 2179;  749, 1585, 1969;  751, 1287, 2477;  751, 1307, 2175;  751, 1473, 2299;  752, 756, 962;  752, 1048, 2898;  752, 1182, 2182;  752, 1376, 2824;  752, 1458, 2762;  752, 1560, 2442;  752, 1624, 2802;  752, 1958, 2574;  752, 2180, 2198;  756, 1048, 2180;  756, 1182, 2762;  756, 1624, 1644;  756, 2442, 2802;  756, 2574, 2898;  756, 2790, 2824;  962, 1048, 1376;  962, 1346, 2180;  962, 1458, 2198;  962, 1560, 1644;  962, 1958, 2182;  962, 2442, 2898;  962, 2762, 2824;  962, 2790, 2802;  973, 1371, 1883;  973, 1473, 1585;  1048, 1182, 1624;  1048, 1458, 1644;  1048, 1560, 2790;  1048, 2182, 2574;  1048, 2198, 2802;  1048, 2442, 2824;  1182, 1346, 2790;  1182, 1376, 2574;  1182, 1458, 2898;  1182, 1560, 1958;  1182, 1644, 2824;  1182, 2180, 2442;  1287, 1371, 1555;  1287, 1473, 1969;  1307, 1371, 1473;  1307, 1749, 1883;  1307, 2299, 2477;  1346, 1458, 2182;  1346, 1560, 2198;  1346, 1624, 1958;  1346, 2442, 2762;  1346, 2574, 2802;  1346, 2824, 2898;  1371, 1585, 2299;  1371, 1749, 2179;  1376, 1458, 2790;  1376, 1560, 1624;  1376, 1644, 2898;  1376, 2182, 2442;  1376, 2198, 2762;  1458, 1560, 2574;  1458, 1958, 2442;  1458, 2802, 2824;  1473, 1883, 2179;  1555, 1749, 2477;  1555, 2179, 2299;  1585, 1749, 2175;  1624, 2180, 2574;  1624, 2182, 2762;  1624, 2198, 2824;  1624, 2442, 2790;  1644, 1958, 2802;  1644, 2180, 2182;  1644, 2198, 2790;  1958, 2180, 2824;  1958, 2790, 2898;  1969, 2175, 2179;  2182, 2198, 2898;  2198, 2442, 2574;  2574, 2762, 2790;  2762, 2802, 2898;  0, 3, 2279;  0, 5, 2788;  0, 6, 1533;  0, 7, 919;  0, 8, 1087;  0, 9, 2271;  0, 10, 2472;  0, 11, 1036;  0, 13, 2244;  0, 14, 690;  0, 15, 412;  0, 17, 77;  0, 19, 2874;  0, 21, 321;  0, 23, 1899;  0, 24, 495;  0, 25, 177;  0, 26, 2239;  0, 27, 2885;  0, 29, 597;  0, 31, 1222;  0, 35, 1695;  0, 36, 2859;  0, 37, 2649;  0, 38, 1099;  0, 39, 1878;  0, 41, 306;  0, 42, 1729;  0, 43, 1614;  0, 44, 111;  0, 45, 94;  0, 46, 2303;  0, 47, 1635;  0, 48, 861;  0, 49, 974;  0, 50, 1921;  0, 51, 143;  0, 52, 825;  0, 53, 526;  0, 54, 2621;  0, 55, 396;  0, 57, 2676;  0, 58, 1199;  0, 59, 1479;  0, 60, 2342;  0, 61, 944;  0, 63, 464;  0, 65, 1664;  0, 66, 507;  0, 68, 2765;  0, 69, 1384;  0, 70, 1797;  0, 71, 2482;  0, 72, 1387;  0, 73, 87;  0, 76, 232;  0, 78, 1774;  0, 79, 2703;  0, 80, 1547;  0, 81, 1395;  0, 83, 463;  0, 85, 1634;  0, 88, 1589;  0, 89, 1341;  0, 90, 575;  0, 91, 1431;  0, 92, 2263;  0, 93, 1041;  0, 95, 1040;  0, 97, 532;  0, 98, 2233;  0, 99, 678;  0, 100, 2757;  0, 101, 288;  0, 103, 176;  0, 104, 1071;  0, 105, 274;  0, 109, 1778;  0, 110, 695;  0, 113, 509;  0, 114, 1925;  0, 115, 2648;  0, 116, 402;  0, 117, 838;  0, 118, 1155;  0, 119, 521;  0, 121, 2211;  0, 122, 1075;  0, 123, 1820;  0, 125, 2456;  0, 126, 1817;  0, 127, 370;  0, 130, 281;  0, 131, 1570;  0, 133, 1205;  0, 135, 2029;  0, 137, 1940;  0, 138, 504;  0, 139, 2780;  0, 144, 2723;  0, 145, 1992;  0, 146, 1559;  0, 148, 2759;  0, 149, 976;  0, 152, 2907;  0, 153, 771;  0, 154, 1136;  0, 155, 1312;  0, 157, 565;  0, 158, 511;  0, 159, 275;  0, 160, 2545;  0, 161, 2921;  0, 162, 1675;  0, 163, 273;  0, 165, 1689;  0, 167, 196;  0, 170, 890;  0, 171, 2455;  0, 172, 853;  0, 173, 672;  0, 174, 617;  0, 175, 667;  0, 179, 1901;  0, 180, 2715;  0, 181, 1783;  0, 182, 1065;  0, 183, 2429;  0, 185, 2851;  0, 187, 2067;  0, 189, 2255;  0, 190, 2362;  0, 191, 680;  0, 192, 1393;  0, 193, 440;  0, 195, 2595;  0, 197, 1078;  0, 198, 848;  0, 199, 310;  0, 200, 2396;  0, 201, 2424;  0, 202, 2309;  0, 203, 2863;  0, 204, 1443;  0, 205, 2238;  0, 207, 2622;  0, 208, 2805;  0, 209, 2325;  0, 211, 725;  0, 212, 1093;  0, 213, 2467;  0, 215, 1054;  0, 217, 1272;  0, 218, 482;  0, 219, 337;  0, 221, 2605;  0, 223, 1013;  0, 225, 2194;  0, 226, 1709;  0, 227, 939;  0, 229, 1181;  0, 230, 1462;  0, 231, 1994;  0, 233, 2089;  0, 234, 573;  0, 235, 778;  0, 236, 819;  0, 237, 2302;  0, 238, 2877;  0, 239, 1673;  0, 241, 1002;  0, 242, 764;  0, 243, 1540;  0, 245, 1324;  0, 246, 677;  0, 247, 1114;  0, 249, 2521;  0, 251, 2237;  0, 252, 1587;  0, 253, 1095;  0, 255, 1830;  0, 256, 1835;  0, 257, 2138;  0, 258, 535;  0, 259, 1846;  0, 261, 2598;  0, 263, 1470;  0, 265, 411;  0, 266, 2449;  0, 267, 465;  0, 269, 2498;  0, 270, 1280;  0, 271, 2243;  0, 272, 1735;  0, 279, 1885;  0, 280, 2592;  0, 283, 1477;  0, 284, 2071;  0, 285, 2496;  0, 287, 935;  0, 290, 2187;  0, 291, 2815;  0, 293, 2160;  0, 294, 2119;  0, 295, 1553;  0, 297, 1577;  0, 299, 2100;  0, 300, 1237;  0, 301, 823;  0, 303, 2256;  0, 304, 2410;  0, 305, 1273;  0, 307, 1094;  0, 309, 1082;  0, 312, 1025;  0, 313, 2024;  0, 315, 661;  0, 316, 1887;  0, 317, 2432;  0, 318, 1278;  0, 319, 755;  0, 322, 1027;  0, 323, 1961;  0, 325, 1786;  0, 326, 1143;  0, 327, 2276;  0, 329, 2491;  0, 331, 2167;  0, 333, 516;  0, 335, 851;  0, 336, 2419;  0, 340, 1939;  0, 341, 727;  0, 342, 1085;  0, 343, 1937;  0, 344, 999;  0, 345, 1086;  0, 346, 2113;  0, 347, 1258;  0, 349, 646;  0, 350, 2236;  0, 351, 419;  0, 352, 857;  0, 354, 1151;  0, 355, 1685;  0, 358, 1495;  0, 359, 1518;  0, 361, 1131;  0, 362, 2065;  0, 363, 2231;  0, 364, 2869;  0, 365, 780;  0, 367, 1826;  0, 368, 2777;  0, 369, 1389;  0, 371, 1348;  0, 372, 1489;  0, 373, 1407;  0, 374, 1398;  0, 375, 2615;  0, 377, 1399;  0, 379, 1320;  0, 380, 2541;  0, 381, 1766;  0, 383, 1510;  0, 385, 2891;  0, 386, 1722;  0, 387, 2054;  0, 388, 2919;  0, 389, 1353;  0, 390, 2475;  0, 391, 2193;  0, 393, 2390;  0, 395, 1857;  0, 399, 839;  0, 400, 1417;  0, 401, 2813;  0, 403, 1357;  0, 404, 2663;  0, 405, 544;  0, 406, 2821;  0, 407, 1231;  0, 408, 1009;  0, 409, 450;  0, 413, 1282;  0, 415, 1475;  0, 417, 1737;  0, 418, 2725;  0, 420, 1843;  0, 421, 1733;  0, 422, 1663;  0, 423, 856;  0, 424, 1557;  0, 425, 2406;  0, 427, 446;  0, 428, 2066;  0, 429, 1402;  0, 433, 1531;  0, 435, 774;  0, 436, 1543;  0, 437, 1880;  0, 438, 1308;  0, 439, 1590;  0, 444, 2273;  0, 445, 849;  0, 447, 1505;  0, 449, 1019;  0, 451, 630;  0, 452, 1161;  0, 453, 1268;  0, 455, 1046;  0, 457, 1022;  0, 459, 1309;  0, 460, 959;  0, 461, 2389;  0, 466, 2847;  0, 467, 1197;  0, 469, 2797;  0, 470, 1655;  0, 472, 1319;  0, 473, 2018;  0, 475, 926;  0, 476, 2245;  0, 477, 2185;  0, 478, 1121;  0, 479, 1145;  0, 480, 2843;  0, 481, 802;  0, 483, 1978;  0, 487, 2691;  0, 490, 2927;  0, 491, 1572;  0, 492, 1781;  0, 493, 1511;  0, 494, 1281;  0, 497, 578;  0, 501, 969;  0, 502, 2333;  0, 503, 2382;  0, 510, 2241;  0, 513, 1602;  0, 514, 2081;  0, 517, 1030;  0, 518, 2274;  0, 523, 2853;  0, 525, 1250;  0, 527, 1534;  0, 528, 1459;  0, 529, 1535;  0, 530, 2321;  0, 531, 1226;  0, 533, 1935;  0, 537, 2701;  0, 539, 652;  0, 541, 783;  0, 542, 2707;  0, 543, 634;  0, 545, 1011;  0, 546, 1327;  0, 547, 1438;  0, 549, 785;  0, 551, 1192;  0, 552, 1847;  0, 553, 1523;  0, 555, 2207;  0, 557, 1789;  0, 559, 1042;  0, 561, 1628;  0, 562, 1251;  0, 563, 968;  0, 566, 2399;  0, 567, 1671;  0, 569, 2203;  0, 570, 1425;  0, 571, 2463;  0, 572, 1877;  0, 577, 1163;  0, 579, 1661;  0, 581, 2655;  0, 584, 1229;  0, 586, 2195;  0, 587, 1636;  0, 589, 2260;  0, 591, 1683;  0, 595, 1949;  0, 600, 2573;  0, 602, 2627;  0, 603, 2769;  0, 605, 1974;  0, 606, 2433;  0, 607, 2270;  0, 609, 1892;  0, 611, 2913;  0, 613, 1410;  0, 614, 2509;  0, 615, 1184;  0, 619, 793;  0, 621, 2151;  0, 623, 1842;  0, 625, 2739;  0, 627, 1889;  0, 629, 2705;  0, 631, 1659;  0, 633, 2099;  0, 635, 2275;  0, 636, 1581;  0, 637, 683;  0, 639, 2899;  0, 640, 1751;  0, 641, 2077;  0, 647, 1896;  0, 649, 2353;  0, 651, 1761;  0, 653, 809;  0, 657, 1603;  0, 658, 1782;  0, 659, 1630;  0, 663, 1372;  0, 665, 854;  0, 666, 1667;  0, 668, 1415;  0, 669, 2317;  0, 671, 1003;  0, 673, 1860;  0, 675, 1998;  0, 679, 916;  0, 684, 1793;  0, 685, 1362;  0, 687, 1813;  0, 688, 2327;  0, 691, 1253;  0, 693, 1943;  0, 696, 2729;  0, 697, 827;  0, 698, 2164;  0, 703, 1064;  0, 704, 2717;  0, 708, 1537;  0, 710, 2003;  0, 711, 1255;  0, 712, 2355;  0, 715, 1736;  0, 717, 1503;  0, 721, 918;  0, 723, 1221;  0, 726, 2619;  0, 729, 2375;  0, 730, 2053;  0, 731, 1493;  0, 735, 2036;  0, 737, 1284;  0, 739, 2215;  0, 741, 1542;  0, 742, 1616;  0, 745, 947;  0, 746, 1802;  0, 753, 2176;  0, 757, 1692;  0, 759, 1976;  0, 761, 2059;  0, 762, 2695;  0, 763, 1697;  0, 765, 1058;  0, 767, 1549;  0, 768, 1719;  0, 769, 1562;  0, 775, 993;  0, 777, 2523;  0, 779, 2139;  0, 782, 2909;  0, 786, 1981;  0, 789, 1434;  0, 790, 2080;  0, 791, 2735;  0, 794, 2771;  0, 795, 2088;  0, 796, 1869;  0, 799, 1432;  0, 801, 2109;  0, 803, 1057;  0, 805, 1950;  0, 807, 2369;  0, 811, 1649;  0, 812, 1753;  0, 814, 2745;  0, 816, 2427;  0, 820, 2827;  0, 821, 2875;  0, 828, 2587;  0, 831, 2675;  0, 833, 1924;  0, 835, 1867;  0, 837, 1944;  0, 840, 2811;  0, 841, 1347;  0, 843, 1391;  0, 845, 1374;  0, 846, 2131;  0, 859, 2565;  0, 860, 2861;  0, 863, 1140;  0, 867, 1679;  0, 869, 1248;  0, 871, 1757;  0, 873, 1721;  0, 875, 2141;  0, 877, 2763;  0, 878, 2407;  0, 879, 1262;  0, 885, 1596;  0, 886, 1832;  0, 887, 2261;  0, 889, 2123;  0, 891, 917;  0, 893, 2699;  0, 895, 1076;  0, 898, 1853;  0, 899, 2331;  0, 900, 2553;  0, 901, 1160;  0, 902, 2767;  0, 903, 1859;  0, 904, 1966;  0, 905, 2323;  0, 907, 1762;  0, 908, 2073;  0, 909, 1113;  0, 911, 1349;  0, 913, 1565;  0, 914, 2371;  0, 915, 1758;  0, 921, 2825;  0, 923, 2079;  0, 925, 1047;  0, 927, 1805;  0, 929, 2603;  0, 934, 2559;  0, 943, 1509;  0, 948, 2227;  0, 957, 1369;  0, 958, 2905;  0, 963, 2173;  0, 965, 1945;  0, 966, 2035;  0, 970, 2539;  0, 971, 2801;  0, 975, 2117;  0, 978, 2561;  0, 979, 1955;  0, 983, 1788;  0, 985, 2397;  0, 987, 1576;  0, 989, 1600;  0, 991, 2483;  0, 995, 1705;  0, 997, 1904;  0, 1005, 1377;  0, 1007, 2121;  0, 1008, 2803;  0, 1015, 1461;  0, 1018, 2197;  0, 1020, 2923;  0, 1021, 1508;  0, 1023, 1454;  0, 1028, 2727;  0, 1029, 1494;  0, 1031, 1383;  0, 1032, 2773;  0, 1033, 1621;  0, 1035, 1837;  0, 1039, 1256;  0, 1043, 2217;  0, 1045, 2055;  0, 1053, 1427;  0, 1055, 1595;  0, 1059, 1654;  0, 1060, 2489;  0, 1067, 2681;  0, 1072, 2677;  0, 1074, 2775;  0, 1077, 2043;  0, 1081, 1105;  0, 1089, 2145;  0, 1092, 2667;  0, 1097, 2345;  0, 1101, 1879;  0, 1103, 1224;  0, 1115, 1252;  0, 1119, 1913;  0, 1122, 2295;  0, 1123, 2471;  0, 1125, 2513;  0, 1126, 2795;  0, 1135, 2469;  0, 1139, 2041;  0, 1147, 2287;  0, 1149, 2887;  0, 1153, 2721;  0, 1157, 2027;  0, 1159, 2453;  0, 1162, 2487;  0, 1169, 2903;  0, 1171, 1734;  0, 1175, 2585;  0, 1177, 1499;  0, 1183, 1409;  0, 1187, 1453;  0, 1189, 1401;  0, 1191, 2883;  0, 1193, 2329;  0, 1203, 1351;  0, 1207, 2249;  0, 1209, 2871;  0, 1210, 2643;  0, 1211, 1212;  0, 1213, 1539;  0, 1215, 2267;  0, 1217, 1512;  0, 1223, 1298;  0, 1225, 1524;  0, 1227, 2101;  0, 1235, 2403;  0, 1243, 1963;  0, 1245, 2159;  0, 1246, 2787;  0, 1247, 1861;  0, 1249, 1983;  0, 1257, 1645;  0, 1265, 2093;  0, 1269, 1965;  0, 1270, 2571;  0, 1271, 1329;  0, 1275, 1365;  0, 1277, 1400;  0, 1283, 1588;  0, 1291, 1573;  0, 1297, 2719;  0, 1299, 2651;  0, 1303, 2425;  0, 1311, 2459;  0, 1313, 2473;  0, 1317, 1352;  0, 1321, 1755;  0, 1333, 1627;  0, 1337, 2583;  0, 1339, 2037;  0, 1345, 1951;  0, 1356, 2731;  0, 1361, 2293;  0, 1363, 1439;  0, 1367, 2451;  0, 1373, 1677;  0, 1379, 2225;  0, 1397, 1873;  0, 1403, 1745;  0, 1405, 2549;  0, 1411, 2301;  0, 1419, 2111;  0, 1421, 2057;  0, 1437, 1941;  0, 1441, 1693;  0, 1445, 2191;  0, 1447, 1927;  0, 1449, 2845;  0, 1451, 1911;  0, 1455, 1613;  0, 1465, 1801;  0, 1481, 2645;  0, 1485, 1775;  0, 1497, 2575;  0, 1515, 2555;  0, 1517, 2915;  0, 1519, 1967;  0, 1521, 2045;  0, 1525, 1975;  0, 1551, 2445;  0, 1563, 2461;  0, 1591, 2277;  0, 1593, 2893;  0, 1597, 2377;  0, 1601, 2359;  0, 1617, 2391;  0, 1619, 1987;  0, 1623, 1631;  0, 1641, 2711;  0, 1651, 1777;  0, 1657, 2021;  0, 1665, 2673;  0, 1707, 2379;  0, 1715, 1763;  0, 1717, 2395;  0, 1725, 2661;  0, 1747, 2229;  0, 1765, 2841;  0, 1799, 2153;  0, 1807, 2837;  0, 1809, 2503;  0, 1815, 2789;  0, 1819, 2493;  0, 1821, 1915;  0, 1845, 2641;  0, 1855, 1993;  0, 1891, 2829;  0, 1907, 2265;  0, 1917, 2709;  0, 1919, 2835;  0, 1929, 2617;  0, 1979, 2155;  0, 1991, 2647;  0, 1997, 2897;  0, 1999, 2343;  0, 2009, 2051;  0, 2011, 2831;  0, 2015, 2599;  0, 2017, 2435;  0, 2031, 2097;  0, 2069, 2349;  0, 2095, 2199;  0, 2105, 2607;  0, 2147, 2537;  0, 2149, 2421;  0, 2163, 2867;  0, 2177, 2423;  0, 2181, 2281;  0, 2201, 2779;  0, 2223, 2373;  0, 2247, 2613;  0, 2251, 2485;  0, 2269, 2577;  0, 2283, 2623;  0, 2291, 2601;  0, 2305, 2799;  0, 2311, 2543;  0, 2313, 2629;  0, 2315, 2393;  0, 2337, 2481;  0, 2347, 2501;  0, 2357, 2785;  0, 2405, 2411;  0, 2413, 2669;  0, 2439, 2519;  0, 2507, 2737;  0, 2511, 2873;  0, 2517, 2689;  0, 2527, 2563;  0, 2557, 2753;  0, 2581, 2679;  0, 2593, 2879;  0, 2659, 2917;  0, 2685, 2865;  0, 2781, 2833;  0, 2783, 2925;  1, 11, 2281;  1, 39, 2619;  1, 45, 573;  1, 51, 2461;  1, 55, 533;  1, 57, 1359;  1, 71, 2511;  1, 89, 2509;  1, 115, 2291;  1, 161, 445;  1, 163, 1941;  1, 183, 2561;  1, 191, 1575;  1, 193, 401;  1, 201, 2657;  1, 239, 919;  1, 289, 1759;  1, 339, 1047;  1, 433, 2459;  1, 453, 2405;  1, 459, 1089;  1, 465, 2005;  1, 535, 2189;  1, 543, 2101;  1, 635, 1659;  1, 737, 1719;  1, 861, 1769;  1, 907, 1867}.
The deficiency graph is connected and has girth 4.

{\boldmath $\adfPENT(3,1454,25)$}, $d = 6$:
{0, 168, 230;  0, 298, 2296;  0, 408, 934;  0, 468, 1546;  0, 488, 1872;  0, 800, 2180;  0, 818, 1188;  0, 870, 2496;  0, 1042, 2332;  0, 1920, 2506;  0, 1952, 2450;  1, 429, 603;  1, 439, 2065;  1, 485, 1747;  1, 639, 2705;  1, 755, 2527;  1, 983, 2001;  1, 1015, 2467;  1, 1063, 2447;  1, 1389, 2637;  1, 1893, 2135;  1, 2117, 2767;  2, 170, 232;  2, 300, 2298;  2, 410, 936;  2, 470, 1548;  2, 490, 1874;  2, 802, 2182;  2, 820, 1190;  2, 872, 2498;  2, 1044, 2334;  2, 1922, 2508;  2, 1954, 2452;  3, 431, 605;  3, 441, 2067;  3, 487, 1749;  3, 641, 2707;  3, 757, 2529;  3, 985, 2003;  3, 1017, 2469;  3, 1065, 2449;  3, 1391, 2639;  3, 1895, 2137;  3, 2119, 2769;  4, 172, 234;  4, 302, 2300;  4, 412, 938;  4, 472, 1550;  4, 492, 1876;  4, 804, 2184;  4, 822, 1192;  4, 874, 2500;  4, 1046, 2336;  4, 1924, 2510;  4, 1956, 2454;  5, 433, 607;  5, 443, 2069;  5, 489, 1751;  5, 643, 2709;  5, 759, 2531;  5, 987, 2005;  5, 1019, 2471;  5, 1067, 2451;  5, 1393, 2641;  5, 1897, 2139;  5, 2121, 2771;  168, 298, 488;  168, 408, 1952;  168, 468, 1042;  168, 800, 1626;  168, 818, 2450;  168, 870, 2180;  168, 934, 1546;  168, 1188, 2506;  168, 1872, 2064;  168, 1920, 2332;  168, 2296, 2496;  170, 300, 490;  170, 410, 1954;  170, 470, 1044;  170, 802, 1628;  170, 820, 2452;  170, 872, 2182;  170, 936, 1548;  170, 1190, 2508;  170, 1874, 2066;  170, 1922, 2334;  170, 2298, 2498;  172, 302, 492;  172, 412, 1956;  172, 472, 1046;  172, 804, 1630;  172, 822, 2454;  172, 874, 2184;  172, 938, 1550;  172, 1192, 2510;  172, 1876, 2068;  172, 1924, 2336;  172, 2300, 2500;  230, 298, 1546;  230, 408, 870;  230, 468, 2296;  230, 488, 800;  230, 818, 1042;  230, 934, 1952;  230, 1188, 2064;  230, 1626, 1872;  230, 1920, 2496;  230, 2180, 2450;  230, 2332, 2506;  232, 300, 1548;  232, 410, 872;  232, 470, 2298;  232, 490, 802;  232, 820, 1044;  232, 936, 1954;  232, 1190, 2066;  232, 1628, 1874;  232, 1922, 2498;  232, 2182, 2452;  232, 2334, 2508;  234, 302, 1550;  234, 412, 874;  234, 472, 2300;  234, 492, 804;  234, 822, 1046;  234, 938, 1956;  234, 1192, 2068;  234, 1630, 1876;  234, 1924, 2500;  234, 2184, 2454;  234, 2336, 2510;  298, 408, 818;  298, 468, 2180;  298, 800, 934;  298, 870, 1920;  298, 1042, 2450;  298, 1188, 1872;  298, 1626, 2332;  298, 1952, 2496;  298, 2064, 2506;  300, 410, 820;  300, 470, 2182;  300, 802, 936;  300, 872, 1922;  300, 1044, 2452;  300, 1190, 1874;  300, 1628, 2334;  300, 1954, 2498;  300, 2066, 2508;  302, 412, 822;  302, 472, 2184;  302, 804, 938;  302, 874, 1924;  302, 1046, 2454;  302, 1192, 1876;  302, 1630, 2336;  302, 1956, 2500;  302, 2068, 2510;  408, 468, 1920;  408, 488, 1042;  408, 800, 2506;  408, 1188, 2332;  408, 1546, 1872;  408, 1626, 2450;  408, 2064, 2296;  408, 2180, 2496;  410, 470, 1922;  410, 490, 1044;  410, 802, 2508;  410, 1190, 2334;  410, 1628, 2452;  410, 2066, 2298;  410, 2182, 2498;  412, 472, 1924;  412, 804, 2510;  412, 1192, 2336;  412, 1630, 2454;  412, 2068, 2300;  412, 2184, 2500;  429, 439, 2705;  429, 485, 2767;  429, 639, 2447;  429, 755, 1893;  429, 871, 2135;  429, 983, 1063;  429, 1015, 1747;  429, 1309, 2065;  429, 1389, 2467;  429, 2001, 2117;  429, 2527, 2637;  431, 441, 2707;  431, 487, 2769;  431, 641, 2449;  431, 873, 2137;  431, 1017, 1749;  431, 1311, 2067;  431, 1391, 2469;  431, 2529, 2639;  433, 443, 2709;  433, 489, 2771;  433, 643, 2451;  433, 875, 2139;  433, 1019, 1751;  433, 1313, 2069;  433, 1393, 2471;  433, 2531, 2641;  439, 485, 1893;  439, 603, 2117;  439, 639, 2467;  439, 755, 983;  439, 871, 2527;  439, 1015, 2637;  439, 1063, 2001;  439, 1389, 2135;  439, 2447, 2767;  441, 487, 1895;  441, 605, 2119;  441, 641, 2469;  441, 757, 985;  441, 873, 2529;  441, 1017, 2639;  441, 1065, 2003;  441, 1391, 2137;  441, 2449, 2769;  443, 489, 1897;  443, 607, 2121;  443, 643, 2471;  443, 759, 987;  443, 875, 2531;  443, 1019, 2641;  443, 1067, 2005;  443, 1393, 2139;  443, 2451, 2771;  468, 488, 2450;  468, 800, 1188;  468, 818, 1872;  468, 870, 2332;  468, 934, 1626;  468, 1952, 2064;  468, 2496, 2506;  470, 490, 2452;  470, 802, 1190;  470, 820, 1874;  470, 872, 2334;  470, 936, 1628;  470, 1954, 2066;  470, 2498, 2508;  472, 492, 2454;  472, 804, 1192;  472, 822, 1876;  472, 874, 2336;  472, 938, 1630;  472, 1956, 2068;  472, 2500, 2510;  485, 603, 2065;  485, 639, 2527;  485, 755, 1015;  485, 871, 1063;  485, 983, 1389;  485, 1309, 2117;  485, 2001, 2637;  485, 2135, 2447;  485, 2467, 2705;  487, 605, 2067;  487, 641, 2529;  487, 757, 1017;  487, 873, 1065;  487, 985, 1391;  487, 1311, 2119;  487, 2003, 2639;  487, 2137, 2449;  487, 2469, 2707;  488, 818, 2296;  488, 870, 1188;  488, 934, 1920;  488, 1546, 2506;  488, 1626, 1952;  488, 2064, 2496;  488, 2180, 2332;  489, 607, 2069;  489, 643, 2531;  489, 759, 1019;  489, 875, 1067;  489, 987, 1393;  489, 1313, 2121;  489, 2005, 2641;  489, 2139, 2451;  489, 2471, 2709;  490, 820, 2298;  490, 872, 1190;  490, 936, 1922;  490, 1548, 2508;  490, 2066, 2498;  490, 2182, 2334;  492, 822, 2300;  492, 874, 1192;  492, 938, 1924;  492, 1550, 2510;  492, 2068, 2500;  492, 2184, 2336;  603, 639, 1063;  603, 755, 2135;  603, 871, 2767;  603, 983, 1015;  603, 1309, 2467;  603, 1389, 1893;  603, 1747, 2001;  603, 2447, 2637;  603, 2527, 2705;  605, 641, 1065;  605, 757, 2137;  605, 873, 2769;  605, 985, 1017;  605, 1311, 2469;  605, 1391, 1895;  605, 1749, 2003;  605, 2449, 2639;  605, 2529, 2707;  607, 643, 1067;  607, 759, 2139;  607, 875, 2771;  607, 987, 1019;  607, 1313, 2471;  607, 1393, 1897;  607, 1751, 2005;  607, 2451, 2641;  607, 2531, 2709;  639, 871, 983;  639, 1015, 1309;  639, 1389, 2117;  639, 1747, 2767;  639, 1893, 2065;  639, 2135, 2637;  641, 873, 985;  641, 1017, 1311;  641, 1391, 2119;  641, 1749, 2769;  641, 1895, 2067;  641, 2137, 2639;  643, 875, 987;  643, 1019, 1313;  643, 1393, 2121;  643, 1751, 2771;  643, 1897, 2069;  643, 2139, 2641;  755, 871, 2117;  755, 1063, 2705;  755, 1309, 1389;  755, 1747, 2447;  755, 2065, 2767;  755, 2467, 2637;  757, 873, 2119;  757, 1065, 2707;  757, 1311, 1391;  757, 1749, 2449;  757, 2067, 2769;  757, 2469, 2639;  759, 1067, 2709;  759, 1751, 2451;  759, 2069, 2771;  759, 2471, 2641;  800, 818, 2496;  800, 870, 1042;  800, 1546, 1952;  800, 1872, 2296;  800, 1920, 2450;  800, 2064, 2332;  802, 820, 2498;  802, 872, 1044;  802, 1548, 1954;  802, 1874, 2298;  802, 1922, 2452;  802, 2066, 2334;  804, 822, 2500;  804, 874, 1046;  804, 1550, 1956;  804, 1876, 2300;  804, 1924, 2454;  804, 2068, 2336;  818, 870, 1952;  818, 934, 2506;  818, 1546, 2332;  818, 1626, 1920;  818, 2064, 2180;  820, 872, 1954;  820, 936, 2508;  820, 1548, 2334;  820, 1628, 1922;  822, 874, 1956;  822, 1550, 2336;  822, 1630, 1924;  870, 934, 1872;  870, 1546, 2296;  870, 1626, 2506;  870, 2064, 2450;  871, 1015, 2447;  871, 1389, 1747;  871, 1893, 2467;  871, 2001, 2705;  871, 2065, 2637;  872, 936, 1874;  872, 1548, 2298;  872, 1628, 2508;  872, 2066, 2452;  873, 1017, 2449;  873, 1391, 1749;  873, 1895, 2469;  873, 2003, 2707;  873, 2067, 2639;  874, 938, 1876;  874, 1550, 2300;  874, 1630, 2510;  874, 2068, 2454;  875, 1019, 2451;  875, 1393, 1751;  875, 1897, 2471;  875, 2005, 2709;  875, 2069, 2641;  934, 1042, 2064;  934, 1188, 2450;  934, 2332, 2496;  936, 1044, 2066;  936, 1190, 2452;  936, 2334, 2498;  938, 1046, 2068;  938, 1192, 2454;  938, 2336, 2500;  983, 1309, 2447;  983, 1747, 1893;  983, 2065, 2705;  983, 2117, 2637;  983, 2135, 2527;  983, 2467, 2767;  985, 1749, 1895;  985, 2067, 2707;  985, 2119, 2639;  985, 2137, 2529;  985, 2469, 2769;  987, 1751, 1897;  987, 2069, 2709;  987, 2121, 2641;  987, 2139, 2531;  987, 2471, 2771;  1015, 1063, 1893;  1015, 1389, 2065;  1015, 2001, 2135;  1015, 2117, 2527;  1015, 2705, 2767;  1017, 1065, 1895;  1017, 1391, 2067;  1017, 2003, 2137;  1017, 2119, 2529;  1017, 2707, 2769;  1019, 1067, 1897;  1019, 1393, 2069;  1019, 2005, 2139;  1019, 2121, 2531;  1019, 2709, 2771;  1042, 1188, 1546;  1042, 1626, 2296;  1042, 1872, 2496;  1042, 1920, 1952;  1042, 2180, 2506;  1044, 1190, 1548;  1044, 1628, 2298;  1044, 1874, 2498;  1044, 1922, 1954;  1046, 1192, 1550;  1046, 1630, 2300;  1046, 1876, 2500;  1046, 1924, 1956;  1063, 1309, 2637;  1063, 1389, 2527;  1063, 1747, 2117;  1063, 2065, 2467;  1063, 2135, 2767;  1065, 1311, 2639;  1065, 1749, 2119;  1065, 2067, 2469;  1065, 2137, 2769;  1067, 1313, 2641;  1067, 1751, 2121;  1067, 2069, 2471;  1067, 2139, 2771;  1188, 1920, 2296;  1188, 1952, 2180;  1190, 1922, 2298;  1190, 1954, 2182;  1192, 1924, 2300;  1192, 1956, 2184;  1309, 1893, 2001;  1309, 2135, 2705;  1309, 2527, 2767;  1311, 1895, 2003;  1311, 2137, 2707;  1311, 2529, 2769;  1313, 1897, 2005;  1313, 2139, 2709;  1313, 2531, 2771;  1389, 2001, 2767;  1389, 2447, 2705;  1391, 2003, 2769;  1391, 2449, 2707;  1393, 2005, 2771;  1393, 2451, 2709;  1546, 1626, 2180;  1546, 1920, 2064;  1546, 2450, 2496;  1548, 1922, 2066;  1548, 2452, 2498;  1550, 1924, 2068;  1550, 2454, 2500;  1747, 2065, 2527;  1747, 2135, 2467;  1747, 2637, 2705;  1749, 2067, 2529;  1749, 2137, 2469;  1749, 2639, 2707;  1751, 2069, 2531;  1751, 2139, 2471;  1751, 2641, 2709;  1872, 1920, 2180;  1872, 2332, 2450;  1874, 1922, 2182;  1874, 2334, 2452;  1876, 1924, 2184;  1876, 2336, 2454;  1893, 2117, 2705;  1893, 2637, 2767;  1895, 2119, 2707;  1895, 2639, 2769;  1897, 2121, 2709;  1897, 2641, 2771;  1952, 2296, 2332;  1954, 2298, 2334;  1956, 2300, 2336;  2001, 2065, 2447;  2001, 2467, 2527;  2003, 2067, 2449;  2003, 2469, 2529;  2005, 2069, 2451;  2005, 2471, 2531;  2065, 2117, 2135;  2067, 2119, 2137;  2069, 2121, 2139;  2117, 2447, 2467;  2119, 2449, 2469;  2121, 2451, 2471;  2296, 2450, 2506;  2298, 2452, 2508;  2300, 2454, 2510;  0, 2, 2145;  0, 3, 1881;  0, 4, 79;  0, 5, 2163;  0, 6, 2507;  0, 7, 1647;  0, 8, 781;  0, 9, 1733;  0, 11, 772;  0, 12, 1145;  0, 13, 2720;  0, 14, 2442;  0, 15, 2926;  0, 16, 1751;  0, 17, 113;  0, 19, 1775;  0, 21, 1921;  0, 22, 1287;  0, 23, 2417;  0, 24, 1727;  0, 25, 2667;  0, 26, 2785;  0, 27, 2823;  0, 28, 1577;  0, 29, 1305;  0, 30, 635;  0, 31, 1980;  0, 33, 2059;  0, 34, 1693;  0, 35, 1401;  0, 37, 141;  0, 38, 179;  0, 39, 2699;  0, 40, 1651;  0, 41, 1927;  0, 42, 2789;  0, 43, 175;  0, 44, 1151;  0, 45, 2654;  0, 47, 213;  0, 49, 2353;  0, 50, 256;  0, 51, 2827;  0, 53, 1367;  0, 54, 1738;  0, 55, 1689;  0, 57, 2843;  0, 58, 2247;  0, 59, 1840;  0, 61, 1059;  0, 63, 2203;  0, 65, 1323;  0, 66, 1525;  0, 67, 556;  0, 69, 1028;  0, 71, 2701;  0, 72, 2187;  0, 73, 2456;  0, 74, 2918;  0, 75, 2650;  0, 76, 2577;  0, 77, 1758;  0, 78, 1008;  0, 81, 1620;  0, 82, 127;  0, 83, 2511;  0, 84, 1275;  0, 85, 540;  0, 86, 2149;  0, 87, 2840;  0, 88, 1045;  0, 89, 752;  0, 90, 685;  0, 91, 2629;  0, 92, 2403;  0, 93, 1707;  0, 94, 995;  0, 95, 1189;  0, 96, 1297;  0, 97, 214;  0, 98, 1817;  0, 99, 2624;  0, 100, 2357;  0, 101, 233;  0, 102, 968;  0, 103, 2470;  0, 104, 2151;  0, 105, 1628;  0, 106, 893;  0, 107, 2068;  0, 109, 657;  0, 111, 389;  0, 114, 1945;  0, 115, 1559;  0, 117, 660;  0, 119, 1593;  0, 120, 821;  0, 121, 817;  0, 122, 131;  0, 123, 1400;  0, 124, 1209;  0, 125, 1928;  0, 126, 2599;  0, 128, 935;  0, 129, 2082;  0, 132, 1537;  0, 133, 2181;  0, 135, 1219;  0, 136, 472;  0, 137, 2445;  0, 138, 2225;  0, 139, 1640;  0, 140, 1603;  0, 142, 1578;  0, 143, 2459;  0, 145, 1813;  0, 147, 989;  0, 148, 1391;  0, 149, 2591;  0, 150, 1261;  0, 151, 2721;  0, 153, 2773;  0, 155, 334;  0, 156, 1260;  0, 157, 2795;  0, 158, 2750;  0, 159, 2406;  0, 160, 2893;  0, 161, 2092;  0, 162, 1977;  0, 163, 1898;  0, 165, 2325;  0, 166, 1911;  0, 167, 1595;  0, 169, 365;  0, 171, 2754;  0, 173, 2592;  0, 176, 691;  0, 177, 2333;  0, 181, 1602;  0, 182, 898;  0, 183, 1286;  0, 184, 2531;  0, 185, 2141;  0, 186, 2917;  0, 187, 2613;  0, 188, 2469;  0, 189, 1143;  0, 191, 1186;  0, 193, 2817;  0, 194, 1633;  0, 195, 854;  0, 196, 558;  0, 197, 420;  0, 198, 2652;  0, 199, 2267;  0, 201, 1210;  0, 202, 2465;  0, 203, 1963;  0, 204, 1623;  0, 205, 1060;  0, 206, 2729;  0, 207, 903;  0, 208, 2713;  0, 209, 2549;  0, 211, 2420;  0, 212, 1857;  0, 215, 2094;  0, 216, 1743;  0, 217, 2090;  0, 218, 811;  0, 219, 573;  0, 220, 777;  0, 221, 1245;  0, 222, 1377;  0, 223, 2906;  0, 225, 1608;  0, 226, 1665;  0, 227, 2424;  0, 229, 364;  0, 231, 1379;  0, 234, 860;  0, 235, 1320;  0, 236, 1340;  0, 237, 2186;  0, 239, 1035;  0, 241, 2131;  0, 243, 908;  0, 244, 997;  0, 245, 1417;  0, 247, 2441;  0, 248, 1885;  0, 249, 2632;  0, 250, 2494;  0, 251, 534;  0, 252, 834;  0, 253, 2733;  0, 255, 2536;  0, 257, 564;  0, 259, 2129;  0, 261, 414;  0, 262, 2103;  0, 263, 1848;  0, 264, 973;  0, 265, 1851;  0, 266, 851;  0, 267, 1469;  0, 269, 2612;  0, 271, 2660;  0, 272, 1824;  0, 273, 2239;  0, 274, 1792;  0, 275, 1203;  0, 276, 2899;  0, 277, 1944;  0, 278, 2045;  0, 279, 1223;  0, 280, 2739;  0, 281, 616;  0, 283, 2535;  0, 284, 1353;  0, 285, 2890;  0, 286, 1627;  0, 287, 956;  0, 288, 1499;  0, 289, 1548;  0, 290, 795;  0, 291, 2193;  0, 292, 1437;  0, 293, 1424;  0, 295, 833;  0, 296, 1276;  0, 297, 1570;  0, 299, 1448;  0, 301, 304;  0, 302, 788;  0, 303, 486;  0, 305, 1970;  0, 306, 2142;  0, 307, 1255;  0, 309, 1860;  0, 310, 1955;  0, 311, 1506;  0, 313, 620;  0, 314, 2727;  0, 315, 1590;  0, 317, 396;  0, 319, 2196;  0, 321, 949;  0, 322, 2482;  0, 323, 1483;  0, 324, 1909;  0, 325, 2839;  0, 327, 2573;  0, 328, 2037;  0, 329, 563;  0, 331, 1259;  0, 333, 963;  0, 335, 765;  0, 336, 2205;  0, 337, 861;  0, 338, 1429;  0, 339, 373;  0, 340, 1517;  0, 341, 1174;  0, 343, 779;  0, 345, 2160;  0, 346, 971;  0, 347, 2787;  0, 348, 927;  0, 349, 1330;  0, 352, 1403;  0, 353, 1966;  0, 354, 2123;  0, 355, 2462;  0, 356, 2377;  0, 357, 962;  0, 359, 2876;  0, 360, 2393;  0, 361, 2334;  0, 362, 925;  0, 363, 2218;  0, 366, 1321;  0, 367, 2132;  0, 368, 902;  0, 369, 1786;  0, 371, 1324;  0, 372, 1375;  0, 375, 2596;  0, 377, 604;  0, 378, 2221;  0, 379, 1169;  0, 381, 463;  0, 383, 1065;  0, 384, 1087;  0, 385, 1625;  0, 387, 2315;  0, 390, 1865;  0, 391, 2853;  0, 393, 2512;  0, 394, 1755;  0, 395, 481;  0, 397, 2014;  0, 398, 1277;  0, 399, 2642;  0, 400, 1999;  0, 401, 1205;  0, 403, 1717;  0, 404, 2517;  0, 405, 1768;  0, 407, 1777;  0, 409, 1291;  0, 411, 1822;  0, 413, 2426;  0, 415, 2529;  0, 416, 1214;  0, 417, 1165;  0, 418, 2813;  0, 419, 1500;  0, 421, 2081;  0, 422, 1883;  0, 423, 2505;  0, 425, 2016;  0, 426, 2412;  0, 427, 1187;  0, 430, 699;  0, 431, 2551;  0, 433, 2398;  0, 434, 1601;  0, 435, 2162;  0, 436, 793;  0, 437, 2365;  0, 440, 1957;  0, 441, 524;  0, 443, 1132;  0, 444, 2382;  0, 445, 629;  0, 447, 865;  0, 448, 1041;  0, 449, 2243;  0, 450, 1721;  0, 451, 1118;  0, 452, 547;  0, 453, 2503;  0, 454, 2259;  0, 455, 1449;  0, 456, 1759;  0, 457, 1148;  0, 458, 1237;  0, 459, 1805;  0, 461, 1894;  0, 464, 1673;  0, 465, 2486;  0, 467, 1714;  0, 469, 1685;  0, 470, 2133;  0, 471, 1940;  0, 473, 2860;  0, 474, 2171;  0, 475, 1197;  0, 476, 673;  0, 477, 742;  0, 478, 2080;  0, 479, 1153;  0, 482, 1453;  0, 483, 513;  0, 487, 1325;  0, 489, 2644;  0, 490, 1381;  0, 491, 2227;  0, 493, 619;  0, 494, 1040;  0, 495, 680;  0, 496, 589;  0, 497, 837;  0, 499, 1164;  0, 500, 1009;  0, 501, 2251;  0, 503, 2777;  0, 505, 2463;  0, 507, 2167;  0, 508, 1105;  0, 509, 828;  0, 511, 1430;  0, 512, 2833;  0, 514, 763;  0, 516, 1265;  0, 517, 2105;  0, 519, 1465;  0, 521, 2475;  0, 523, 1115;  0, 525, 2253;  0, 527, 1668;  0, 529, 2638;  0, 531, 2663;  0, 532, 1122;  0, 533, 2307;  0, 535, 655;  0, 536, 2083;  0, 537, 1709;  0, 538, 2732;  0, 539, 1671;  0, 541, 1710;  0, 542, 1866;  0, 543, 2291;  0, 545, 1564;  0, 546, 1658;  0, 548, 1837;  0, 549, 2797;  0, 550, 1993;  0, 551, 2272;  0, 553, 1723;  0, 555, 686;  0, 557, 1149;  0, 559, 1859;  0, 560, 2241;  0, 561, 1905;  0, 562, 1283;  0, 565, 2114;  0, 566, 2452;  0, 567, 1819;  0, 568, 2137;  0, 569, 1160;  0, 571, 2089;  0, 575, 1117;  0, 577, 2668;  0, 580, 869;  0, 581, 917;  0, 583, 2124;  0, 585, 1109;  0, 587, 1217;  0, 590, 1861;  0, 591, 2347;  0, 592, 2569;  0, 593, 698;  0, 594, 1288;  0, 597, 1125;  0, 598, 863;  0, 599, 2755;  0, 601, 1411;  0, 607, 1809;  0, 608, 2457;  0, 609, 2883;  0, 610, 1495;  0, 611, 2518;  0, 613, 1518;  0, 614, 1579;  0, 615, 1922;  0, 617, 816;  0, 618, 2896;  0, 621, 1342;  0, 622, 1413;  0, 623, 1507;  0, 625, 2061;  0, 627, 1615;  0, 628, 1077;  0, 630, 2583;  0, 631, 2443;  0, 633, 1903;  0, 637, 1051;  0, 641, 2458;  0, 642, 2240;  0, 643, 2858;  0, 644, 2471;  0, 645, 2600;  0, 646, 1599;  0, 647, 1392;  0, 648, 2062;  0, 649, 1842;  0, 651, 1902;  0, 653, 2379;  0, 654, 2717;  0, 656, 1791;  0, 659, 2859;  0, 661, 2453;  0, 662, 1039;  0, 663, 2752;  0, 664, 1005;  0, 665, 2086;  0, 666, 2439;  0, 667, 1083;  0, 669, 2909;  0, 671, 1097;  0, 672, 1708;  0, 674, 2022;  0, 675, 999;  0, 677, 895;  0, 678, 1995;  0, 679, 717;  0, 681, 1878;  0, 682, 2633;  0, 683, 1541;  0, 688, 979;  0, 689, 1026;  0, 690, 2049;  0, 693, 1959;  0, 695, 2497;  0, 696, 2559;  0, 697, 2111;  0, 705, 1612;  0, 707, 2146;  0, 708, 1539;  0, 710, 2395;  0, 711, 1471;  0, 712, 2093;  0, 713, 1350;  0, 715, 1669;  0, 716, 2281;  0, 718, 2273;  0, 719, 2557;  0, 721, 923;  0, 722, 1094;  0, 723, 1154;  0, 724, 1301;  0, 725, 2901;  0, 727, 2216;  0, 729, 2069;  0, 730, 2245;  0, 731, 2235;  0, 733, 1376;  0, 734, 1175;  0, 735, 2314;  0, 739, 877;  0, 740, 2801;  0, 741, 1895;  0, 743, 2672;  0, 745, 2191;  0, 747, 849;  0, 748, 2547;  0, 751, 2427;  0, 753, 2169;  0, 757, 919;  0, 758, 1011;  0, 759, 2537;  0, 760, 1604;  0, 761, 1735;  0, 762, 2875;  0, 767, 791;  0, 768, 1795;  0, 769, 2335;  0, 771, 1369;  0, 773, 1034;  0, 775, 1365;  0, 776, 2015;  0, 778, 2685;  0, 783, 802;  0, 784, 2159;  0, 785, 2743;  0, 787, 2217;  0, 789, 2686;  0, 790, 1567;  0, 794, 2842;  0, 796, 2619;  0, 797, 1798;  0, 798, 2897;  0, 799, 1657;  0, 801, 1981;  0, 803, 2758;  0, 804, 2255;  0, 805, 2894;  0, 807, 1146;  0, 809, 2327;  0, 813, 1662;  0, 814, 2144;  0, 815, 1761;  0, 819, 2519;  0, 820, 1427;  0, 822, 2810;  0, 823, 2222;  0, 825, 1592;  0, 827, 1652;  0, 829, 2052;  0, 835, 1699;  0, 838, 2561;  0, 839, 2215;  0, 841, 1531;  0, 842, 2821;  0, 843, 2046;  0, 844, 1677;  0, 845, 2791;  0, 847, 1220;  0, 848, 1207;  0, 850, 1806;  0, 853, 1583;  0, 855, 1783;  0, 856, 1637;  0, 857, 1941;  0, 858, 2381;  0, 859, 1973;  0, 862, 2809;  0, 864, 2311;  0, 867, 2534;  0, 872, 1821;  0, 873, 2635;  0, 875, 1845;  0, 879, 2413;  0, 881, 2175;  0, 883, 1549;  0, 884, 1173;  0, 885, 1170;  0, 886, 1327;  0, 887, 1629;  0, 889, 2313;  0, 891, 1457;  0, 894, 2665;  0, 897, 1116;  0, 899, 1510;  0, 901, 2386;  0, 905, 2173;  0, 907, 2708;  0, 909, 1142;  0, 911, 1334;  0, 913, 2812;  0, 914, 2867;  0, 915, 2830;  0, 920, 1185;  0, 921, 1841;  0, 924, 1972;  0, 928, 2433;  0, 929, 1956;  0, 931, 1252;  0, 932, 1225;  0, 933, 1073;  0, 937, 1171;  0, 939, 1226;  0, 940, 1473;  0, 941, 2182;  0, 942, 2378;  0, 943, 1504;  0, 944, 1667;  0, 945, 1346;  0, 946, 1439;  0, 947, 1229;  0, 951, 1282;  0, 953, 1989;  0, 957, 2084;  0, 959, 1719;  0, 964, 1423;  0, 965, 1487;  0, 966, 2477;  0, 967, 1294;  0, 969, 1250;  0, 970, 2219;  0, 974, 1339;  0, 975, 1649;  0, 976, 1515;  0, 977, 1560;  0, 980, 1064;  0, 987, 1871;  0, 988, 1557;  0, 991, 1811;  0, 993, 2387;  0, 994, 2589;  0, 998, 2017;  0, 1000, 2025;  0, 1001, 1524;  0, 1004, 2155;  0, 1006, 1085;  0, 1007, 1802;  0, 1012, 2201;  0, 1013, 2096;  0, 1016, 1195;  0, 1017, 1785;  0, 1019, 1960;  0, 1021, 1227;  0, 1023, 2020;  0, 1024, 1089;  0, 1025, 2200;  0, 1029, 1479;  0, 1030, 2343;  0, 1031, 1431;  0, 1033, 2630;  0, 1037, 1887;  0, 1043, 1558;  0, 1044, 2277;  0, 1047, 2835;  0, 1049, 1474;  0, 1053, 2423;  0, 1061, 1159;  0, 1066, 1192;  0, 1067, 2107;  0, 1069, 1371;  0, 1071, 2211;  0, 1075, 1571;  0, 1076, 2101;  0, 1079, 2285;  0, 1080, 2805;  0, 1081, 1648;  0, 1084, 2678;  0, 1088, 2252;  0, 1091, 1778;  0, 1093, 2587;  0, 1095, 1939;  0, 1096, 1781;  0, 1099, 2210;  0, 1101, 2156;  0, 1103, 2212;  0, 1107, 2342;  0, 1113, 1409;  0, 1114, 1281;  0, 1121, 1969;  0, 1123, 1605;  0, 1124, 1734;  0, 1127, 1789;  0, 1129, 1306;  0, 1131, 1204;  0, 1135, 1406;  0, 1136, 2643;  0, 1137, 1553;  0, 1139, 1140;  0, 1141, 2435;  0, 1147, 1216;  0, 1156, 2847;  0, 1157, 1234;  0, 1161, 1387;  0, 1163, 2249;  0, 1166, 1967;  0, 1167, 1236;  0, 1172, 1639;  0, 1177, 2097;  0, 1178, 2275;  0, 1179, 2671;  0, 1180, 2236;  0, 1181, 2147;  0, 1183, 2489;  0, 1184, 2421;  0, 1190, 2318;  0, 1193, 2048;  0, 1196, 2290;  0, 1198, 1655;  0, 1199, 1638;  0, 1202, 1589;  0, 1206, 2771;  0, 1208, 1455;  0, 1213, 2737;  0, 1215, 2396;  0, 1221, 1753;  0, 1231, 2579;  0, 1232, 2361;  0, 1235, 2007;  0, 1239, 1930;  0, 1240, 2487;  0, 1241, 1347;  0, 1243, 2698;  0, 1247, 2369;  0, 1249, 1718;  0, 1251, 1855;  0, 1257, 2887;  0, 1258, 2095;  0, 1263, 2794;  0, 1268, 1975;  0, 1269, 2158;  0, 1270, 2889;  0, 1273, 1971;  0, 1274, 2675;  0, 1279, 2270;  0, 1285, 2806;  0, 1289, 2836;  0, 1293, 2072;  0, 1295, 1875;  0, 1299, 2021;  0, 1300, 1961;  0, 1304, 1461;  0, 1307, 2320;  0, 1311, 2570;  0, 1313, 1749;  0, 1315, 2410;  0, 1319, 1947;  0, 1322, 2425;  0, 1329, 2143;  0, 1331, 2026;  0, 1333, 2476;  0, 1335, 1394;  0, 1336, 1341;  0, 1337, 2749;  0, 1345, 1846;  0, 1348, 2792;  0, 1351, 2464;  0, 1352, 1801;  0, 1355, 2765;  0, 1357, 2825;  0, 1361, 2594;  0, 1363, 1503;  0, 1364, 1568;  0, 1366, 2355;  0, 1368, 2903;  0, 1370, 2213;  0, 1373, 2194;  0, 1385, 1488;  0, 1397, 1494;  0, 1399, 2109;  0, 1407, 1829;  0, 1412, 2769;  0, 1415, 1490;  0, 1421, 1607;  0, 1425, 1619;  0, 1433, 2931;  0, 1435, 2288;  0, 1441, 1917;  0, 1442, 1543;  0, 1443, 2384;  0, 1444, 2271;  0, 1445, 1676;  0, 1460, 1493;  0, 1463, 1736;  0, 1466, 1520;  0, 1467, 1468;  0, 1477, 1519;  0, 1481, 2480;  0, 1485, 1613;  0, 1486, 2399;  0, 1489, 2354;  0, 1491, 2138;  0, 1492, 2260;  0, 1497, 2565;  0, 1501, 2345;  0, 1505, 2606;  0, 1509, 2662;  0, 1521, 1780;  0, 1522, 1617;  0, 1528, 2911;  0, 1529, 1643;  0, 1533, 1591;  0, 1534, 1611;  0, 1540, 2303;  0, 1545, 2405;  0, 1547, 2498;  0, 1552, 2656;  0, 1555, 2204;  0, 1561, 1575;  0, 1563, 1929;  0, 1569, 2738;  0, 1573, 1850;  0, 1581, 2815;  0, 1582, 2319;  0, 1587, 2174;  0, 1588, 1600;  0, 1594, 2683;  0, 1597, 2192;  0, 1609, 2929;  0, 1621, 2402;  0, 1630, 2279;  0, 1631, 2746;  0, 1634, 2483;  0, 1635, 2312;  0, 1641, 2920;  0, 1645, 2326;  0, 1646, 2871;  0, 1653, 1757;  0, 1660, 1910;  0, 1661, 2689;  0, 1663, 2199;  0, 1664, 1799;  0, 1666, 2709;  0, 1679, 2161;  0, 1681, 2075;  0, 1682, 2363;  0, 1687, 2775;  0, 1691, 2513;  0, 1694, 1807;  0, 1695, 1816;  0, 1700, 2422;  0, 1701, 2223;  0, 1702, 2301;  0, 1705, 2829;  0, 1713, 1750;  0, 1715, 2659;  0, 1720, 2803;  0, 1724, 2759;  0, 1726, 2153;  0, 1729, 1934;  0, 1730, 2431;  0, 1732, 2019;  0, 1742, 2140;  0, 1744, 2415;  0, 1745, 2793;  0, 1748, 2657;  0, 1754, 1901;  0, 1756, 2539;  0, 1760, 1931;  0, 1762, 2317;  0, 1763, 2493;  0, 1767, 2774;  0, 1774, 2609;  0, 1779, 2150;  0, 1787, 2039;  0, 1797, 2919;  0, 1803, 2691;  0, 1810, 2043;  0, 1820, 2373;  0, 1823, 2005;  0, 1825, 1991;  0, 1827, 2372;  0, 1833, 2907;  0, 1835, 2515;  0, 1838, 2912;  0, 1839, 2740;  0, 1847, 2566;  0, 1858, 1983;  0, 1867, 2811;  0, 1868, 2183;  0, 1870, 2197;  0, 1873, 1899;  0, 1874, 2849;  0, 1877, 2440;  0, 1879, 2035;  0, 1886, 1937;  0, 1889, 2545;  0, 1891, 1918;  0, 1897, 2491;  0, 1900, 2397;  0, 1904, 2725;  0, 1906, 2905;  0, 1913, 2799;  0, 1915, 2305;  0, 1919, 2757;  0, 1923, 2229;  0, 1925, 2677;  0, 1933, 2383;  0, 1935, 2645;  0, 1936, 2261;  0, 1943, 2578;  0, 1946, 2119;  0, 1949, 2073;  0, 1951, 2869;  0, 1954, 2299;  0, 1958, 2648;  0, 1964, 2295;  0, 1965, 2179;  0, 1979, 2500;  0, 1985, 2407;  0, 1987, 2553;  0, 1990, 2525;  0, 1994, 2263;  0, 1997, 2540;  0, 2002, 2041;  0, 2003, 2863;  0, 2006, 2374;  0, 2009, 2828;  0, 2011, 2321;  0, 2013, 2209;  0, 2023, 2555;  0, 2027, 2246;  0, 2031, 2125;  0, 2032, 2641;  0, 2036, 2337;  0, 2047, 2851;  0, 2050, 2695;  0, 2051, 2504;  0, 2053, 2653;  0, 2055, 2852;  0, 2057, 2323;  0, 2067, 2516;  0, 2071, 2455;  0, 2074, 2655;  0, 2077, 2741;  0, 2078, 2861;  0, 2079, 2265;  0, 2091, 2571;  0, 2120, 2647;  0, 2121, 2283;  0, 2127, 2798;  0, 2139, 2366;  0, 2157, 2693;  0, 2165, 2900;  0, 2176, 2607;  0, 2177, 2385;  0, 2185, 2722;  0, 2195, 2308;  0, 2207, 2716;  0, 2224, 2523;  0, 2231, 2368;  0, 2233, 2601;  0, 2237, 2714;  0, 2248, 2339;  0, 2254, 2723;  0, 2257, 2530;  0, 2287, 2913;  0, 2289, 2881;  0, 2293, 2639;  0, 2306, 2499;  0, 2309, 2359;  0, 2329, 2930;  0, 2330, 2543;  0, 2331, 2873;  0, 2336, 2891;  0, 2341, 2669;  0, 2349, 2509;  0, 2367, 2681;  0, 2371, 2444;  0, 2375, 2611;  0, 2389, 2485;  0, 2392, 2783;  0, 2401, 2588;  0, 2409, 2575;  0, 2419, 2884;  0, 2428, 2745;  0, 2429, 2719;  0, 2434, 2617;  0, 2437, 2877;  0, 2438, 2927;  0, 2449, 2697;  0, 2451, 2786;  0, 2461, 2687;  0, 2481, 2885;  0, 2521, 2908;  0, 2533, 2895;  0, 2541, 2581;  0, 2563, 2915;  0, 2567, 2932;  0, 2582, 2625;  0, 2585, 2728;  0, 2593, 2684;  0, 2603, 2631;  0, 2605, 2879;  0, 2620, 2921;  0, 2621, 2846;  0, 2661, 2923;  0, 2673, 2761;  0, 2679, 2726;  0, 2690, 2703;  0, 2707, 2768;  0, 2753, 2857;  0, 2763, 2848;  0, 2776, 2834;  0, 2779, 2845;  0, 2807, 2819;  0, 2841, 2925;  1, 2, 1743;  1, 3, 2524;  1, 5, 1935;  1, 7, 343;  1, 8, 1065;  1, 9, 2488;  1, 10, 2122;  1, 13, 2249;  1, 14, 2647;  1, 16, 599;  1, 17, 758;  1, 20, 1252;  1, 22, 2495;  1, 23, 2375;  1, 25, 1834;  1, 26, 640;  1, 29, 2223;  1, 31, 2704;  1, 32, 1461;  1, 34, 130;  1, 35, 773;  1, 38, 2176;  1, 40, 2500;  1, 41, 763;  1, 44, 2864;  1, 45, 1967;  1, 46, 2048;  1, 50, 2461;  1, 51, 2600;  1, 52, 257;  1, 55, 196;  1, 56, 741;  1, 58, 823;  1, 59, 101;  1, 64, 1345;  1, 68, 2367;  1, 73, 997;  1, 75, 2404;  1, 76, 1739;  1, 77, 1137;  1, 79, 619;  1, 80, 2492;  1, 82, 2330;  1, 83, 2720;  1, 85, 748;  1, 86, 1676;  1, 87, 1424;  1, 88, 2480;  1, 89, 2324;  1, 91, 358;  1, 93, 2648;  1, 94, 1064;  1, 95, 2631;  1, 98, 915;  1, 99, 1048;  1, 100, 1653;  1, 103, 957;  1, 104, 2248;  1, 106, 2750;  1, 107, 1634;  1, 110, 1945;  1, 112, 2573;  1, 115, 2649;  1, 116, 314;  1, 122, 1505;  1, 123, 1322;  1, 124, 2733;  1, 125, 364;  1, 128, 365;  1, 129, 2314;  1, 134, 2163;  1, 137, 436;  1, 140, 749;  1, 143, 1934;  1, 146, 2792;  1, 148, 526;  1, 149, 1820;  1, 151, 2751;  1, 152, 857;  1, 154, 1745;  1, 158, 629;  1, 160, 1839;  1, 161, 1544;  1, 164, 2663;  1, 166, 2505;  1, 170, 2167;  1, 172, 1376;  1, 177, 1630;  1, 181, 1648;  1, 182, 459;  1, 183, 1130;  1, 184, 1905;  1, 187, 1136;  1, 189, 1030;  1, 190, 673;  1, 194, 263;  1, 195, 1235;  1, 199, 1798;  1, 200, 790;  1, 205, 2041;  1, 208, 1909;  1, 209, 2218;  1, 212, 1017;  1, 213, 392;  1, 214, 2143;  1, 215, 1516;  1, 217, 1190;  1, 218, 291;  1, 219, 580;  1, 220, 1761;  1, 221, 1538;  1, 223, 1258;  1, 224, 1215;  1, 226, 2079;  1, 232, 2428;  1, 236, 2465;  1, 237, 379;  1, 238, 1994;  1, 242, 1193;  1, 244, 967;  1, 245, 2242;  1, 248, 2620;  1, 250, 2120;  1, 251, 706;  1, 253, 1289;  1, 254, 1287;  1, 256, 1799;  1, 260, 2777;  1, 262, 1238;  1, 265, 1696;  1, 266, 1609;  1, 267, 2024;  1, 273, 1779;  1, 277, 2534;  1, 279, 523;  1, 280, 2327;  1, 281, 2422;  1, 283, 2813;  1, 284, 754;  1, 285, 2014;  1, 286, 1167;  1, 287, 1507;  1, 290, 1847;  1, 292, 679;  1, 293, 1076;  1, 296, 2897;  1, 297, 1364;  1, 304, 2353;  1, 307, 1754;  1, 310, 2728;  1, 316, 1013;  1, 320, 836;  1, 323, 1352;  1, 325, 2564;  1, 326, 1781;  1, 332, 535;  1, 334, 1937;  1, 335, 2596;  1, 338, 848;  1, 339, 2657;  1, 340, 1113;  1, 341, 626;  1, 344, 1033;  1, 346, 1719;  1, 349, 1228;  1, 350, 2396;  1, 352, 1624;  1, 355, 1162;  1, 356, 604;  1, 357, 976;  1, 361, 1762;  1, 362, 2247;  1, 367, 1292;  1, 368, 1574;  1, 370, 617;  1, 373, 806;  1, 376, 2702;  1, 380, 2056;  1, 382, 2804;  1, 386, 2021;  1, 394, 1435;  1, 398, 487;  1, 399, 2018;  1, 400, 1375;  1, 401, 1514;  1, 404, 838;  1, 405, 1318;  1, 406, 566;  1, 410, 2174;  1, 412, 2407;  1, 416, 572;  1, 418, 1061;  1, 419, 2383;  1, 422, 2690;  1, 423, 532;  1, 424, 1855;  1, 427, 2421;  1, 428, 1177;  1, 431, 1016;  1, 435, 994;  1, 440, 1029;  1, 442, 2093;  1, 445, 1769;  1, 446, 2151;  1, 448, 947;  1, 449, 2227;  1, 452, 719;  1, 453, 465;  1, 454, 1041;  1, 455, 496;  1, 457, 1191;  1, 458, 820;  1, 460, 1825;  1, 464, 1965;  1, 471, 742;  1, 472, 676;  1, 473, 1690;  1, 476, 2554;  1, 478, 2000;  1, 479, 2374;  1, 481, 2068;  1, 482, 1384;  1, 484, 517;  1, 491, 1394;  1, 493, 2780;  1, 494, 1843;  1, 495, 686;  1, 500, 1532;  1, 501, 844;  1, 506, 2345;  1, 507, 704;  1, 508, 1211;  1, 509, 1000;  1, 511, 2318;  1, 512, 2145;  1, 513, 1244;  1, 514, 2161;  1, 515, 581;  1, 518, 2119;  1, 520, 634;  1, 530, 1491;  1, 536, 2002;  1, 542, 1346;  1, 543, 896;  1, 544, 1972;  1, 547, 1282;  1, 548, 2414;  1, 550, 2499;  1, 551, 1348;  1, 554, 1393;  1, 556, 2926;  1, 557, 2164;  1, 559, 1387;  1, 560, 1147;  1, 561, 1771;  1, 563, 1957;  1, 565, 1904;  1, 569, 898;  1, 574, 2723;  1, 578, 2198;  1, 584, 2900;  1, 586, 739;  1, 590, 874;  1, 592, 2774;  1, 598, 835;  1, 605, 1316;  1, 608, 1108;  1, 609, 878;  1, 610, 1353;  1, 611, 1335;  1, 615, 2546;  1, 616, 1749;  1, 620, 1744;  1, 621, 842;  1, 622, 2589;  1, 623, 2458;  1, 628, 2063;  1, 632, 1201;  1, 638, 2435;  1, 643, 1534;  1, 645, 2362;  1, 646, 1738;  1, 647, 890;  1, 649, 2536;  1, 652, 998;  1, 655, 2798;  1, 656, 2416;  1, 657, 904;  1, 658, 1052;  1, 661, 2402;  1, 662, 1214;  1, 663, 1058;  1, 670, 2003;  1, 674, 1887;  1, 675, 2008;  1, 680, 1280;  1, 681, 2146;  1, 683, 1562;  1, 688, 1867;  1, 689, 2582;  1, 694, 1803;  1, 695, 2788;  1, 698, 1580;  1, 700, 2097;  1, 710, 1526;  1, 712, 986;  1, 713, 1444;  1, 716, 1660;  1, 717, 2440;  1, 718, 1208;  1, 722, 1751;  1, 724, 1579;  1, 725, 2392;  1, 728, 2141;  1, 730, 1946;  1, 734, 1129;  1, 740, 2836;  1, 743, 1786;  1, 746, 1700;  1, 752, 1210;  1, 753, 1709;  1, 759, 1022;  1, 760, 1095;  1, 764, 1216;  1, 766, 1117;  1, 770, 2410;  1, 772, 873;  1, 776, 1132;  1, 777, 2560;  1, 778, 2047;  1, 784, 2309;  1, 785, 1622;  1, 788, 1143;  1, 789, 1948;  1, 794, 2219;  1, 796, 2932;  1, 797, 1640;  1, 799, 2276;  1, 802, 2350;  1, 803, 2924;  1, 812, 2133;  1, 814, 1826;  1, 824, 2470;  1, 826, 832;  1, 830, 1942;  1, 841, 2618;  1, 843, 2134;  1, 849, 2858;  1, 850, 1342;  1, 851, 1204;  1, 853, 2224;  1, 860, 1603;  1, 861, 2685;  1, 862, 2651;  1, 863, 2306;  1, 867, 963;  1, 868, 1202;  1, 884, 1931;  1, 887, 1958;  1, 899, 2606;  1, 902, 2759;  1, 903, 1156;  1, 908, 2429;  1, 910, 2005;  1, 913, 2138;  1, 916, 2445;  1, 922, 2355;  1, 928, 1595;  1, 932, 1120;  1, 933, 1040;  1, 941, 1941;  1, 943, 2201;  1, 944, 1849;  1, 946, 2715;  1, 952, 1150;  1, 962, 1521;  1, 964, 2894;  1, 965, 1858;  1, 968, 2086;  1, 970, 1912;  1, 975, 2595;  1, 980, 1831;  1, 981, 1955;  1, 995, 1906;  1, 1001, 1922;  1, 1004, 1222;  1, 1005, 1378;  1, 1009, 2612;  1, 1010, 1748;  1, 1012, 1297;  1, 1018, 1181;  1, 1024, 1286;  1, 1025, 2015;  1, 1028, 2552;  1, 1031, 2471;  1, 1034, 2859;  1, 1046, 1695;  1, 1054, 1205;  1, 1057, 2624;  1, 1060, 2668;  1, 1066, 1406;  1, 1067, 1360;  1, 1070, 2912;  1, 1072, 1347;  1, 1075, 2866;  1, 1077, 2567;  1, 1078, 2655;  1, 1082, 2354;  1, 1084, 1528;  1, 1085, 2325;  1, 1088, 1331;  1, 1090, 1919;  1, 1094, 2313;  1, 1102, 1106;  1, 1118, 2271;  1, 1119, 1172;  1, 1124, 1804;  1, 1126, 1850;  1, 1138, 1720;  1, 1141, 2848;  1, 1142, 2037;  1, 1148, 2594;  1, 1149, 1702;  1, 1154, 2225;  1, 1155, 1612;  1, 1160, 1438;  1, 1161, 2344;  1, 1166, 1227;  1, 1168, 2062;  1, 1174, 2744;  1, 1178, 1856;  1, 1180, 1868;  1, 1184, 2104;  1, 1186, 1569;  1, 1187, 1256;  1, 1192, 1246;  1, 1196, 2824;  1, 1197, 2828;  1, 1198, 1875;  1, 1207, 2930;  1, 1209, 1358;  1, 1220, 2933;  1, 1221, 2387;  1, 1225, 2614;  1, 1232, 2177;  1, 1233, 2746;  1, 1237, 2747;  1, 1240, 1261;  1, 1251, 1306;  1, 1253, 1556;  1, 1262, 2126;  1, 1264, 2074;  1, 1268, 2033;  1, 1269, 2726;  1, 1270, 1979;  1, 1271, 2432;  1, 1273, 2830;  1, 1274, 1365;  1, 1277, 2284;  1, 1283, 1328;  1, 1294, 1605;  1, 1300, 1351;  1, 1304, 2282;  1, 1310, 1647;  1, 1312, 2795;  1, 1323, 1466;  1, 1330, 2583;  1, 1337, 2906;  1, 1341, 2882;  1, 1343, 2498;  1, 1354, 1454;  1, 1367, 1870;  1, 1371, 2621;  1, 1377, 1426;  1, 1382, 2572;  1, 1383, 2168;  1, 1395, 1412;  1, 1396, 1649;  1, 1407, 1616;  1, 1408, 2818;  1, 1413, 2312;  1, 1414, 2027;  1, 1430, 1911;  1, 1436, 1892;  1, 1442, 1571;  1, 1449, 1954;  1, 1450, 1864;  1, 1460, 2516;  1, 1462, 1631;  1, 1467, 2638;  1, 1474, 1899;  1, 1475, 1484;  1, 1480, 2072;  1, 1487, 2399;  1, 1493, 2398;  1, 1498, 2102;  1, 1499, 2356;  1, 1508, 2710;  1, 1510, 1928;  1, 1520, 2770;  1, 1529, 2734;  1, 1535, 1772;  1, 1540, 1940;  1, 1553, 2579;  1, 1564, 2542;  1, 1576, 2321;  1, 1582, 2528;  1, 1583, 2193;  1, 1593, 1930;  1, 1599, 2474;  1, 1601, 2147;  1, 1606, 2140;  1, 1610, 2139;  1, 1611, 1688;  1, 1613, 1816;  1, 1628, 1703;  1, 1629, 1886;  1, 1636, 1988;  1, 1642, 2672;  1, 1652, 2369;  1, 1658, 2085;  1, 1659, 2530;  1, 1670, 1828;  1, 1672, 2230;  1, 1678, 2794;  1, 1682, 2727;  1, 1684, 2038;  1, 1685, 2150;  1, 1694, 2519;  1, 1706, 1730;  1, 1712, 2439;  1, 1714, 2537;  1, 1718, 2816;  1, 1721, 1726;  1, 1727, 2278;  1, 1731, 2411;  1, 1732, 2807;  1, 1733, 2907;  1, 1737, 1982;  1, 1750, 2206;  1, 1760, 1923;  1, 1768, 2012;  1, 1774, 2633;  1, 1780, 1901;  1, 1787, 2872;  1, 1790, 2441;  1, 1796, 2786;  1, 1811, 2205;  1, 1814, 2687;  1, 1817, 1918;  1, 1821, 2116;  1, 1822, 1984;  1, 1823, 2385;  1, 1859, 2636;  1, 1862, 2921;  1, 1869, 2462;  1, 1876, 2260;  1, 1880, 2613;  1, 1882, 2159;  1, 1898, 2026;  1, 1924, 2504;  1, 1929, 2918;  1, 1971, 1976;  1, 1973, 2194;  1, 1990, 2270;  1, 1996, 2482;  1, 2006, 2541;  1, 2020, 2878;  1, 2032, 2678;  1, 2036, 2518;  1, 2042, 2291;  1, 2054, 2132;  1, 2060, 2236;  1, 2066, 2597;  1, 2073, 2927;  1, 2078, 2423;  1, 2081, 2360;  1, 2128, 2434;  1, 2157, 2211;  1, 2188, 2843;  1, 2200, 2835;  1, 2204, 2319;  1, 2217, 2764;  1, 2222, 2920;  1, 2241, 2854;  1, 2252, 2931;  1, 2254, 2729;  1, 2264, 2787;  1, 2266, 2446;  1, 2272, 2870;  1, 2289, 2806;  1, 2294, 2714;  1, 2300, 2379;  1, 2302, 2522;  1, 2308, 2487;  1, 2315, 2566;  1, 2320, 2891;  1, 2336, 2861;  1, 2373, 2913;  1, 2378, 2444;  1, 2386, 2602;  1, 2397, 2776;  1, 2438, 2483;  1, 2456, 2692;  1, 2457, 2607;  1, 2459, 2800;  1, 2464, 2716;  1, 2477, 2876;  1, 2501, 2810;  1, 2510, 2512;  1, 2588, 2741;  1, 2626, 2919;  1, 2630, 2660;  1, 2632, 2888;  1, 2654, 2909;  1, 2656, 2852;  1, 2661, 2696;  1, 2662, 2811;  1, 2679, 2740;  1, 2680, 2812;  1, 2722, 2908;  1, 2758, 2829;  1, 2768, 2799;  2, 5, 285;  2, 8, 950;  2, 9, 665;  2, 10, 236;  2, 14, 111;  2, 16, 1814;  2, 17, 1761;  2, 21, 1462;  2, 23, 123;  2, 27, 1910;  2, 28, 287;  2, 29, 2728;  2, 39, 2129;  2, 40, 2183;  2, 41, 2495;  2, 44, 2217;  2, 46, 2747;  2, 51, 323;  2, 52, 2907;  2, 57, 2729;  2, 59, 1733;  2, 65, 737;  2, 69, 1840;  2, 74, 2314;  2, 76, 1067;  2, 83, 2309;  2, 87, 1436;  2, 88, 1871;  2, 89, 2660;  2, 94, 471;  2, 95, 2750;  2, 98, 2531;  2, 100, 2549;  2, 101, 1979;  2, 104, 957;  2, 105, 1925;  2, 106, 2849;  2, 107, 122;  2, 113, 2237;  2, 119, 1469;  2, 124, 550;  2, 125, 2381;  2, 128, 2075;  2, 129, 1983;  2, 134, 1107;  2, 135, 1198;  2, 140, 371;  2, 141, 2354;  2, 142, 1499;  2, 147, 567;  2, 152, 2158;  2, 153, 1780;  2, 161, 713;  2, 164, 2009;  2, 167, 568;  2, 171, 1334;  2, 177, 760;  2, 179, 375;  2, 182, 2181;  2, 184, 1270;  2, 185, 1727;  2, 189, 1121;  2, 191, 1654;  2, 196, 1355;  2, 197, 1863;  2, 201, 2224;  2, 203, 2404;  2, 207, 2042;  2, 209, 1388;  2, 213, 964;  2, 214, 2672;  2, 218, 812;  2, 219, 771;  2, 221, 1544;  2, 224, 1438;  2, 225, 2444;  2, 227, 2363;  2, 231, 650;  2, 237, 309;  2, 243, 2558;  2, 254, 909;  2, 261, 1402;  2, 263, 2123;  2, 268, 477;  2, 273, 2134;  2, 274, 1156;  2, 275, 783;  2, 284, 2522;  2, 293, 2885;  2, 297, 2588;  2, 298, 1439;  2, 304, 1803;  2, 305, 1940;  2, 308, 2817;  2, 311, 2681;  2, 315, 2801;  2, 316, 933;  2, 321, 497;  2, 326, 1661;  2, 329, 644;  2, 335, 2165;  2, 338, 1817;  2, 340, 1600;  2, 341, 2819;  2, 345, 945;  2, 351, 2187;  2, 353, 622;  2, 356, 2285;  2, 359, 2369;  2, 362, 1725;  2, 363, 2114;  2, 368, 1713;  2, 369, 1595;  2, 377, 2079;  2, 383, 2896;  2, 386, 1953;  2, 392, 2523;  2, 393, 2499;  2, 395, 2235;  2, 398, 1491;  2, 399, 1996;  2, 401, 2367;  2, 405, 2375;  2, 406, 2303;  2, 407, 2477;  2, 413, 2162;  2, 417, 2194;  2, 418, 2597;  2, 423, 920;  2, 424, 867;  2, 425, 2914;  2, 428, 1719;  2, 435, 646;  2, 437, 741;  2, 442, 2253;  2, 446, 1557;  2, 447, 1011;  2, 449, 1348;  2, 452, 2915;  2, 453, 755;  2, 455, 2925;  2, 459, 958;  2, 461, 1276;  2, 465, 1593;  2, 466, 813;  2, 467, 1352;  2, 476, 1636;  2, 479, 2168;  2, 482, 2276;  2, 485, 647;  2, 489, 2038;  2, 495, 1611;  2, 496, 1253;  2, 501, 1001;  2, 503, 2327;  2, 508, 1283;  2, 509, 731;  2, 513, 633;  2, 515, 2073;  2, 519, 2084;  2, 521, 1565;  2, 526, 1091;  2, 530, 1449;  2, 538, 632;  2, 539, 2027;  2, 542, 1426;  2, 543, 850;  2, 545, 926;  2, 549, 1222;  2, 551, 910;  2, 560, 1383;  2, 562, 1617;  2, 563, 1042;  2, 566, 1313;  2, 569, 2121;  2, 573, 1900;  2, 575, 2024;  2, 579, 1125;  2, 581, 2643;  2, 585, 2595;  2, 597, 1811;  2, 599, 1912;  2, 603, 2241;  2, 609, 1892;  2, 615, 2264;  2, 617, 2219;  2, 623, 765;  2, 627, 1268;  2, 628, 1991;  2, 629, 2177;  2, 635, 2399;  2, 639, 1541;  2, 656, 2697;  2, 658, 1959;  2, 659, 1856;  2, 663, 2879;  2, 664, 2127;  2, 669, 842;  2, 671, 1804;  2, 675, 1226;  2, 676, 1199;  2, 677, 893;  2, 689, 1571;  2, 695, 2006;  2, 699, 773;  2, 705, 969;  2, 710, 2417;  2, 711, 1790;  2, 712, 1971;  2, 736, 2493;  2, 743, 1563;  2, 747, 1361;  2, 753, 1870;  2, 759, 2421;  2, 761, 2261;  2, 764, 2097;  2, 777, 1174;  2, 778, 2144;  2, 779, 1168;  2, 789, 1161;  2, 790, 2781;  2, 791, 1433;  2, 795, 1376;  2, 796, 1419;  2, 797, 1329;  2, 801, 1445;  2, 821, 1041;  2, 830, 2116;  2, 831, 2423;  2, 833, 2589;  2, 836, 1913;  2, 837, 1370;  2, 839, 1389;  2, 843, 2553;  2, 844, 1527;  2, 849, 1797;  2, 856, 2855;  2, 857, 1181;  2, 860, 2373;  2, 861, 1210;  2, 862, 2451;  2, 863, 1277;  2, 868, 2447;  2, 869, 1997;  2, 874, 2333;  2, 879, 1324;  2, 885, 1928;  2, 887, 1151;  2, 891, 1919;  2, 899, 929;  2, 903, 1610;  2, 905, 1336;  2, 916, 1623;  2, 917, 1193;  2, 935, 968;  2, 939, 2632;  2, 941, 1384;  2, 959, 1322;  2, 965, 1580;  2, 970, 1379;  2, 971, 2650;  2, 976, 2789;  2, 983, 2386;  2, 989, 1202;  2, 995, 1760;  2, 999, 1961;  2, 1000, 2405;  2, 1013, 2579;  2, 1018, 2771;  2, 1019, 2788;  2, 1023, 1852;  2, 1025, 1581;  2, 1029, 1397;  2, 1030, 2854;  2, 1036, 2823;  2, 1055, 2547;  2, 1077, 2644;  2, 1078, 1793;  2, 1083, 2571;  2, 1085, 1643;  2, 1088, 2243;  2, 1089, 1779;  2, 1090, 2577;  2, 1097, 2086;  2, 1103, 2176;  2, 1112, 2795;  2, 1115, 2489;  2, 1118, 2339;  2, 1119, 1233;  2, 1127, 2500;  2, 1139, 1948;  2, 1143, 2609;  2, 1144, 1640;  2, 1149, 2193;  2, 1150, 2074;  2, 1163, 1508;  2, 1167, 2122;  2, 1173, 1521;  2, 1175, 1750;  2, 1186, 1589;  2, 1191, 1756;  2, 1197, 1676;  2, 1203, 1941;  2, 1205, 1601;  2, 1217, 1319;  2, 1228, 2189;  2, 1235, 2559;  2, 1238, 2920;  2, 1245, 1533;  2, 1251, 2897;  2, 1257, 1923;  2, 1259, 1281;  2, 1263, 1547;  2, 1275, 2445;  2, 1289, 2297;  2, 1295, 2422;  2, 1299, 2164;  2, 1306, 2933;  2, 1307, 1594;  2, 1325, 1744;  2, 1331, 1751;  2, 1341, 2753;  2, 1343, 2055;  2, 1349, 1917;  2, 1353, 1442;  2, 1354, 2777;  2, 1372, 2669;  2, 1373, 1522;  2, 1377, 1791;  2, 1378, 2913;  2, 1396, 1451;  2, 1401, 2409;  2, 1407, 2255;  2, 1408, 2147;  2, 1418, 2871;  2, 1421, 2019;  2, 1425, 1973;  2, 1432, 1655;  2, 1437, 2091;  2, 1443, 2357;  2, 1444, 2794;  2, 1450, 2039;  2, 1461, 2482;  2, 1473, 2873;  2, 1475, 2429;  2, 1485, 2279;  2, 1497, 2800;  2, 1505, 2152;  2, 1511, 2811;  2, 1517, 2307;  2, 1529, 1679;  2, 1535, 2380;  2, 1539, 2908;  2, 1545, 2337;  2, 1551, 2848;  2, 1575, 2626;  2, 1577, 2608;  2, 1583, 2458;  2, 1588, 1839;  2, 1605, 1649;  2, 1607, 1967;  2, 1612, 2843;  2, 1631, 2105;  2, 1641, 1877;  2, 1660, 2463;  2, 1666, 2561;  2, 1667, 2585;  2, 1671, 1695;  2, 1673, 2770;  2, 1689, 1972;  2, 1691, 2507;  2, 1696, 2368;  2, 1697, 2045;  2, 1702, 1815;  2, 1726, 2247;  2, 1731, 2765;  2, 1738, 2625;  2, 1745, 2813;  2, 1763, 2013;  2, 1785, 1883;  2, 1821, 2619;  2, 1822, 1875;  2, 1845, 2607;  2, 1876, 2199;  2, 1889, 2211;  2, 1899, 2807;  2, 1901, 1905;  2, 1911, 2733;  2, 1930, 2320;  2, 1935, 2865;  2, 1936, 1965;  2, 1942, 2759;  2, 1943, 2433;  2, 2007, 2488;  2, 2021, 2411;  2, 2025, 2464;  2, 2069, 2441;  2, 2085, 2308;  2, 2087, 2735;  2, 2103, 2206;  2, 2109, 2291;  2, 2135, 2254;  2, 2163, 2812;  2, 2188, 2315;  2, 2207, 2213;  2, 2218, 2685;  2, 2223, 2379;  2, 2272, 2645;  2, 2319, 2860;  2, 2325, 2661;  2, 2326, 2398;  2, 2361, 2830;  2, 2387, 2613;  2, 2397, 2615;  2, 2427, 2631;  2, 2428, 2909;  2, 2453, 2799;  2, 2475, 2741;  2, 2481, 2841;  2, 2506, 2902;  2, 2543, 2895;  2, 2567, 2752;  2, 2572, 2835;  2, 2614, 2721;  2, 2637, 2687;  2, 2722, 2825;  2, 2734, 2775;  2, 2783, 2837;  3, 5, 1234;  3, 9, 1739;  3, 10, 2861;  3, 11, 1091;  3, 16, 238;  3, 17, 982;  3, 28, 1029;  3, 29, 2008;  3, 34, 2547;  3, 41, 459;  3, 45, 772;  3, 46, 2500;  3, 69, 1822;  3, 70, 867;  3, 81, 2776;  3, 89, 561;  3, 93, 184;  3, 95, 2800;  3, 100, 2291;  3, 118, 1443;  3, 125, 2680;  3, 129, 718;  3, 130, 640;  3, 135, 2128;  3, 136, 2559;  3, 142, 345;  3, 148, 2777;  3, 154, 1714;  3, 160, 945;  3, 161, 826;  3, 166, 2272;  3, 172, 2843;  3, 178, 1607;  3, 183, 388;  3, 190, 2464;  3, 191, 850;  3, 196, 2855;  3, 201, 2661;  3, 202, 2483;  3, 209, 1959;  3, 215, 940;  3, 219, 813;  3, 220, 1395;  3, 225, 1102;  3, 237, 898;  3, 244, 2890;  3, 251, 934;  3, 255, 2002;  3, 280, 2326;  3, 285, 886;  3, 292, 2308;  3, 293, 2080;  3, 304, 993;  3, 316, 2170;  3, 322, 2392;  3, 341, 2746;  3, 358, 1198;  3, 359, 1618;  3, 365, 964;  3, 370, 443;  3, 387, 796;  3, 394, 1601;  3, 399, 2818;  3, 406, 807;  3, 418, 2295;  3, 429, 2259;  3, 430, 1745;  3, 437, 1413;  3, 447, 2055;  3, 454, 2032;  3, 455, 2926;  3, 461, 2121;  3, 466, 2075;  3, 473, 1768;  3, 479, 2242;  3, 485, 646;  3, 489, 1594;  3, 495, 1846;  3, 496, 2427;  3, 515, 2153;  3, 519, 1378;  3, 526, 1167;  3, 532, 1966;  3, 537, 1312;  3, 538, 1090;  3, 563, 2566;  3, 574, 689;  3, 580, 856;  3, 585, 1845;  3, 593, 1642;  3, 610, 2476;  3, 611, 1089;  3, 621, 1587;  3, 623, 1451;  3, 628, 2603;  3, 634, 1660;  3, 651, 2710;  3, 658, 2428;  3, 665, 1203;  3, 670, 1289;  3, 675, 2878;  3, 682, 1888;  3, 688, 2692;  3, 700, 719;  3, 701, 2782;  3, 706, 791;  3, 711, 1551;  3, 712, 1941;  3, 736, 1079;  3, 742, 755;  3, 760, 1522;  3, 766, 1605;  3, 784, 1503;  3, 790, 2025;  3, 808, 838;  3, 814, 2609;  3, 837, 1906;  3, 861, 1834;  3, 869, 1252;  3, 897, 2542;  3, 921, 1960;  3, 922, 1588;  3, 928, 2068;  3, 994, 2854;  3, 1001, 2446;  3, 1018, 1930;  3, 1019, 1870;  3, 1036, 1654;  3, 1048, 2879;  3, 1067, 2735;  3, 1078, 1409;  3, 1115, 1204;  3, 1121, 2518;  3, 1127, 1655;  3, 1139, 1486;  3, 1145, 1240;  3, 1179, 2656;  3, 1199, 1210;  3, 1222, 2573;  3, 1223, 2572;  3, 1239, 2794;  3, 1264, 1546;  3, 1270, 1359;  3, 1288, 2903;  3, 1307, 2764;  3, 1324, 1426;  3, 1325, 1569;  3, 1337, 2530;  3, 1355, 2278;  3, 1367, 2116;  3, 1377, 2902;  3, 1390, 2681;  3, 1427, 2290;  3, 1462, 1810;  3, 1492, 1907;  3, 1510, 1667;  3, 1523, 2548;  3, 1589, 2189;  3, 1606, 1972;  3, 1631, 2212;  3, 1666, 1990;  3, 1685, 1744;  3, 1721, 1876;  3, 1738, 1871;  3, 1757, 2362;  3, 1798, 2645;  3, 1816, 2176;  3, 1840, 1978;  3, 1841, 2410;  3, 1924, 2801;  3, 1942, 2081;  3, 1949, 2315;  3, 1997, 2849;  3, 2014, 2038;  3, 2020, 2651;  3, 2039, 2752;  3, 2044, 2441;  3, 2074, 2627;  3, 2086, 2897;  3, 2123, 2434;  3, 2236, 2753;  3, 2273, 2806;  3, 2333, 2458;  3, 2374, 2716;  3, 2423, 2632;  3, 2584, 2740;  3, 2722, 2921;  3, 2723, 2920;  4, 11, 1109;  4, 29, 185;  4, 35, 1769;  4, 41, 1942;  4, 46, 641;  4, 47, 563;  4, 53, 550;  4, 65, 1492;  4, 70, 341;  4, 71, 2315;  4, 82, 682;  4, 88, 1859;  4, 94, 2921;  4, 101, 838;  4, 113, 1348;  4, 124, 1205;  4, 149, 2885;  4, 154, 509;  4, 179, 952;  4, 191, 1025;  4, 197, 2134;  4, 215, 238;  4, 221, 2009;  4, 245, 1139;  4, 268, 1762;  4, 287, 532;  4, 299, 856;  4, 311, 376;  4, 317, 2344;  4, 323, 1240;  4, 353, 1894;  4, 365, 646;  4, 371, 820;  4, 389, 1984;  4, 424, 2230;  4, 437, 1091;  4, 454, 2135;  4, 455, 2621;  4, 491, 1768;  4, 515, 1570;  4, 526, 1624;  4, 544, 1673;  4, 545, 1906;  4, 605, 634;  4, 652, 1972;  4, 658, 2099;  4, 683, 989;  4, 695, 1457;  4, 700, 1469;  4, 731, 1847;  4, 755, 1126;  4, 796, 1601;  4, 803, 2393;  4, 869, 2807;  4, 887, 1714;  4, 893, 1150;  4, 923, 1013;  4, 947, 994;  4, 953, 1930;  4, 971, 2297;  4, 1049, 1642;  4, 1061, 2723;  4, 1073, 2309;  4, 1097, 1871;  4, 1115, 1403;  4, 1127, 1199;  4, 1187, 2411;  4, 1204, 2615;  4, 1223, 1349;  4, 1229, 1307;  4, 1241, 2561;  4, 1265, 1715;  4, 1270, 2543;  4, 1283, 1421;  4, 1289, 2231;  4, 1373, 1546;  4, 1451, 1571;  4, 1523, 1967;  4, 1781, 2159;  4, 1877, 2057;  4, 1991, 2501;  4, 2171, 2867;  4, 2237, 2903;  4, 2321, 2525;  5, 89, 1037;  5, 359, 893;  5, 389, 2147;  5, 491, 1847;  5, 845, 1775}.
The deficiency graph is connected and has girth 4.

{\boldmath $\adfPENT(3,1484,25)$}, $d = 6$:
{0, 226, 2968;  0, 312, 1248;  0, 470, 2194;  0, 592, 2408;  0, 800, 1270;  0, 1160, 2908;  0, 1162, 2830;  0, 1256, 1554;  0, 1312, 2730;  0, 1472, 2092;  0, 1740, 2176;  0, 2542, 2876;  1, 27, 819;  1, 87, 1835;  1, 119, 453;  1, 165, 1683;  1, 265, 1441;  1, 587, 2403;  1, 801, 2525;  1, 903, 1833;  1, 1255, 2769;  1, 1523, 1739;  1, 1725, 2195;  1, 1747, 2683;  2, 228, 2970;  2, 594, 2410;  2, 1162, 2910;  2, 1164, 2832;  2, 1258, 1556;  2, 1314, 2732;  2, 1474, 2094;  2, 1742, 2178;  2, 2544, 2878;  3, 29, 821;  3, 89, 1837;  3, 121, 455;  3, 167, 1685;  3, 267, 1443;  3, 589, 2405;  3, 905, 1835;  3, 1257, 2771;  3, 1525, 1741;  4, 230, 2972;  4, 596, 2412;  4, 1164, 2912;  4, 1166, 2834;  4, 1260, 1558;  4, 1316, 2734;  4, 1476, 2096;  4, 1744, 2180;  4, 2546, 2880;  5, 31, 823;  5, 91, 1839;  5, 123, 457;  5, 169, 1687;  5, 269, 1445;  5, 591, 2407;  5, 907, 1837;  5, 1259, 2773;  5, 1527, 1743;  27, 87, 2195;  27, 119, 1523;  27, 165, 2769;  27, 265, 1739;  27, 453, 1747;  27, 587, 1683;  27, 801, 1725;  27, 903, 2525;  27, 1255, 1833;  27, 1441, 1835;  27, 2403, 2683;  29, 89, 2197;  29, 121, 1525;  29, 167, 2771;  29, 267, 1741;  29, 455, 1749;  29, 589, 1685;  29, 803, 1727;  29, 905, 2527;  29, 1257, 1835;  29, 1443, 1837;  29, 2405, 2685;  31, 91, 2199;  31, 123, 1527;  31, 169, 2773;  31, 269, 1743;  31, 457, 1751;  31, 591, 1687;  31, 805, 1729;  31, 907, 2529;  31, 1259, 1837;  31, 1445, 1839;  31, 2407, 2687;  87, 119, 587;  87, 165, 801;  87, 265, 2683;  87, 453, 2403;  87, 819, 1725;  87, 903, 1747;  87, 1255, 1441;  87, 1523, 1683;  87, 1739, 2525;  87, 1833, 2769;  89, 121, 589;  89, 167, 803;  89, 267, 2685;  89, 455, 2405;  89, 821, 1727;  89, 905, 1749;  89, 1257, 1443;  89, 1525, 1685;  89, 1741, 2527;  91, 123, 591;  91, 169, 805;  91, 269, 2687;  91, 457, 2407;  91, 823, 1729;  91, 907, 1751;  91, 1259, 1445;  91, 1527, 1687;  91, 1743, 2529;  119, 165, 1833;  119, 265, 2525;  119, 801, 2403;  119, 819, 903;  119, 1255, 1739;  119, 1441, 2683;  119, 1683, 2195;  119, 1725, 1747;  119, 1835, 2769;  121, 167, 1835;  121, 267, 2527;  121, 803, 2405;  121, 821, 905;  121, 1257, 1741;  121, 1443, 2685;  121, 1685, 2197;  121, 1727, 1749;  121, 1837, 2771;  123, 169, 1837;  123, 269, 2529;  123, 805, 2407;  123, 823, 907;  123, 1259, 1743;  123, 1445, 2687;  123, 1687, 2199;  123, 1729, 1751;  123, 1839, 2773;  165, 265, 1835;  165, 453, 1255;  165, 587, 903;  165, 819, 2403;  165, 1441, 2525;  165, 1523, 1747;  165, 1725, 2683;  165, 1739, 2195;  167, 267, 1837;  167, 455, 1257;  167, 589, 905;  167, 821, 2405;  167, 1443, 2527;  167, 1525, 1749;  167, 1727, 2685;  167, 1741, 2197;  169, 269, 1839;  169, 457, 1259;  169, 591, 907;  169, 823, 2407;  169, 1445, 2529;  169, 1527, 1751;  169, 1729, 2687;  169, 1743, 2199;  226, 470, 1554;  226, 592, 1270;  226, 800, 1312;  226, 1160, 1248;  226, 1162, 2908;  226, 1256, 2092;  226, 1740, 2408;  226, 2194, 2876;  226, 2730, 2830;  228, 472, 1556;  228, 594, 1272;  228, 802, 1314;  228, 1162, 1250;  228, 1258, 2094;  228, 1742, 2410;  228, 2196, 2878;  228, 2732, 2832;  230, 474, 1558;  230, 596, 1274;  230, 804, 1316;  230, 1164, 1252;  230, 1260, 2096;  230, 1744, 2412;  230, 2198, 2880;  230, 2734, 2834;  265, 453, 801;  265, 587, 1725;  265, 819, 2195;  265, 903, 2769;  265, 1255, 1523;  265, 1683, 1747;  265, 1833, 2403;  267, 455, 803;  267, 589, 1727;  267, 821, 2197;  267, 905, 2771;  267, 1257, 1525;  267, 1685, 1749;  267, 1835, 2405;  269, 457, 805;  269, 591, 1729;  269, 823, 2199;  269, 907, 2773;  269, 1259, 1527;  269, 1687, 1751;  269, 1837, 2407;  312, 470, 2092;  312, 592, 2730;  312, 800, 2176;  312, 1160, 1312;  312, 1162, 1256;  312, 1270, 2830;  312, 1554, 2408;  312, 1740, 2908;  312, 2194, 2542;  312, 2876, 2968;  314, 472, 2094;  314, 594, 2732;  314, 802, 2178;  314, 1162, 1314;  314, 1164, 1258;  314, 1272, 2832;  314, 1556, 2410;  314, 1742, 2910;  314, 2196, 2544;  314, 2878, 2970;  316, 474, 2096;  316, 596, 2734;  316, 804, 2180;  316, 1164, 1316;  316, 1166, 1260;  316, 1274, 2834;  316, 1558, 2412;  316, 1744, 2912;  316, 2198, 2546;  316, 2880, 2972;  453, 587, 1739;  453, 903, 1683;  453, 1441, 2195;  453, 1523, 2525;  453, 1725, 1835;  453, 1833, 2683;  455, 589, 1741;  455, 905, 1685;  455, 1443, 2197;  455, 1525, 2527;  455, 1727, 1837;  455, 1835, 2685;  457, 591, 1743;  457, 907, 1687;  457, 1445, 2199;  457, 1527, 2529;  457, 1729, 1839;  457, 1837, 2687;  470, 592, 800;  470, 1160, 2876;  470, 1162, 1312;  470, 1248, 2968;  470, 1256, 2908;  470, 1270, 1740;  470, 1472, 2730;  470, 2176, 2830;  470, 2408, 2542;  472, 594, 802;  472, 1162, 2878;  472, 1164, 1314;  472, 1250, 2970;  472, 1258, 2910;  472, 1474, 2732;  472, 2178, 2832;  472, 2410, 2544;  474, 596, 804;  474, 1164, 2880;  474, 1166, 1316;  474, 1252, 2972;  474, 1260, 2912;  474, 1476, 2734;  474, 2180, 2834;  474, 2412, 2546;  587, 801, 1441;  587, 819, 2683;  587, 1255, 2195;  587, 1523, 1835;  587, 2525, 2769;  589, 803, 1443;  589, 821, 2685;  589, 1257, 2197;  589, 2527, 2771;  591, 805, 1445;  591, 823, 2687;  591, 1259, 2199;  591, 2529, 2773;  592, 1160, 2830;  592, 1162, 2092;  592, 1248, 2876;  592, 1256, 1740;  592, 1312, 1554;  592, 1472, 2194;  592, 2176, 2968;  594, 1162, 2832;  594, 1164, 2094;  594, 1250, 2878;  594, 1258, 1742;  594, 1314, 1556;  594, 1474, 2196;  594, 2178, 2970;  596, 1164, 2834;  596, 1166, 2096;  596, 1252, 2880;  596, 1260, 1744;  596, 1316, 1558;  596, 1476, 2198;  596, 2180, 2972;  800, 1160, 1740;  800, 1162, 1554;  800, 1248, 1256;  800, 1472, 2908;  800, 2092, 2876;  800, 2194, 2830;  800, 2408, 2730;  800, 2542, 2968;  801, 819, 1255;  801, 903, 1523;  801, 1683, 2769;  801, 1739, 1747;  801, 1833, 2195;  801, 1835, 2683;  802, 1162, 1742;  802, 1164, 1556;  802, 1250, 1258;  802, 1474, 2910;  802, 2094, 2878;  802, 2196, 2832;  802, 2410, 2732;  802, 2544, 2970;  803, 821, 1257;  803, 905, 1525;  803, 1685, 2771;  803, 1741, 1749;  803, 1835, 2197;  803, 1837, 2685;  804, 1164, 1744;  804, 1166, 1558;  804, 1252, 1260;  804, 1476, 2912;  804, 2096, 2880;  804, 2198, 2834;  804, 2412, 2734;  804, 2546, 2972;  805, 823, 1259;  805, 907, 1527;  805, 1687, 2773;  805, 1743, 1751;  805, 1837, 2199;  805, 1839, 2687;  819, 1441, 1523;  819, 1683, 1835;  819, 1739, 1833;  819, 1747, 2525;  821, 1443, 1525;  821, 1685, 1837;  821, 1741, 1835;  821, 1749, 2527;  823, 1445, 1527;  823, 1687, 1839;  823, 1743, 1837;  823, 1751, 2529;  903, 1255, 2403;  903, 1441, 1725;  903, 1739, 2683;  903, 1835, 2195;  905, 1257, 2405;  905, 1443, 1727;  905, 1741, 2685;  905, 1837, 2197;  907, 1259, 2407;  907, 1445, 1729;  907, 1743, 2687;  907, 1839, 2199;  1160, 1162, 2968;  1160, 1256, 2194;  1160, 1270, 2092;  1160, 1472, 2408;  1160, 1554, 2176;  1160, 2542, 2730;  1162, 1164, 2970;  1162, 1258, 2196;  1162, 1270, 2730;  1162, 1272, 2094;  1162, 1472, 2542;  1162, 1556, 2178;  1162, 1740, 2194;  1162, 2176, 2876;  1162, 2544, 2732;  1164, 1166, 2972;  1164, 1260, 2198;  1164, 1272, 2732;  1164, 1274, 2096;  1164, 1474, 2544;  1164, 1558, 2180;  1164, 1742, 2196;  1164, 2178, 2878;  1164, 2546, 2734;  1166, 1274, 2734;  1166, 1476, 2546;  1166, 1744, 2198;  1166, 2180, 2880;  1248, 1270, 2176;  1248, 1312, 2542;  1248, 1472, 1554;  1248, 1740, 2830;  1248, 2092, 2908;  1248, 2194, 2730;  1250, 1272, 2178;  1250, 1314, 2544;  1250, 1474, 1556;  1250, 1742, 2832;  1250, 2094, 2910;  1250, 2196, 2732;  1252, 1274, 2180;  1252, 1316, 2546;  1252, 1476, 1558;  1252, 1744, 2834;  1252, 2096, 2912;  1252, 2198, 2734;  1255, 1683, 2683;  1255, 1725, 2525;  1255, 1747, 1835;  1256, 1270, 2542;  1256, 1312, 2968;  1256, 1472, 2176;  1256, 2408, 2830;  1256, 2730, 2876;  1257, 1685, 2685;  1257, 1749, 1837;  1258, 1272, 2544;  1258, 1314, 2970;  1258, 1474, 2178;  1258, 2410, 2832;  1258, 2732, 2878;  1259, 1687, 2687;  1259, 1751, 1839;  1260, 1274, 2546;  1260, 1316, 2972;  1260, 1476, 2180;  1260, 2412, 2834;  1260, 2734, 2880;  1270, 1312, 2194;  1270, 1472, 2968;  1270, 1554, 2908;  1270, 2408, 2876;  1272, 1314, 2196;  1272, 1474, 2970;  1272, 1556, 2910;  1272, 2410, 2878;  1274, 1316, 2198;  1274, 1476, 2972;  1274, 1558, 2912;  1274, 2412, 2880;  1312, 1472, 1740;  1312, 2092, 2830;  1312, 2176, 2408;  1312, 2876, 2908;  1314, 1474, 1742;  1314, 2094, 2832;  1314, 2178, 2410;  1314, 2878, 2910;  1316, 1476, 1744;  1316, 2096, 2834;  1316, 2180, 2412;  1316, 2880, 2912;  1441, 1683, 1833;  1441, 1739, 2769;  1441, 1747, 2403;  1443, 1685, 1835;  1443, 1741, 2771;  1443, 1749, 2405;  1445, 1687, 1837;  1445, 1743, 2773;  1445, 1751, 2407;  1472, 2830, 2876;  1474, 2832, 2878;  1476, 2834, 2880;  1523, 1725, 1833;  1523, 2195, 2403;  1525, 1727, 1835;  1525, 2197, 2405;  1527, 1729, 1837;  1527, 2199, 2407;  1554, 1740, 2876;  1554, 2092, 2730;  1554, 2194, 2968;  1554, 2542, 2830;  1556, 1742, 2878;  1556, 2094, 2732;  1556, 2196, 2970;  1556, 2544, 2832;  1558, 1744, 2880;  1558, 2096, 2734;  1558, 2198, 2972;  1558, 2546, 2834;  1683, 1725, 1739;  1683, 2403, 2525;  1685, 1727, 1741;  1685, 2405, 2527;  1687, 1729, 1743;  1687, 2407, 2529;  1739, 1835, 2403;  1740, 2092, 2542;  1740, 2730, 2968;  1741, 1837, 2405;  1742, 2094, 2544;  1742, 2732, 2970;  1743, 1839, 2407;  1744, 2096, 2546;  1744, 2734, 2972;  1747, 2195, 2769;  1749, 2197, 2771;  1751, 2199, 2773;  1833, 1835, 2525;  1835, 1837, 2527;  1837, 1839, 2529;  2092, 2176, 2194;  2092, 2408, 2968;  2094, 2178, 2196;  2094, 2410, 2970;  2096, 2180, 2198;  2096, 2412, 2972;  2176, 2730, 2908;  2178, 2732, 2910;  2180, 2734, 2912;  2194, 2408, 2908;  2195, 2525, 2683;  2196, 2410, 2910;  2197, 2527, 2685;  2198, 2412, 2912;  2199, 2529, 2687;  2830, 2908, 2968;  2832, 2910, 2970;  2834, 2912, 2972;  0, 3, 1598;  0, 4, 1454;  0, 5, 716;  0, 6, 987;  0, 7, 2567;  0, 9, 2241;  0, 10, 2749;  0, 11, 1690;  0, 12, 1591;  0, 13, 1686;  0, 15, 830;  0, 16, 1820;  0, 17, 197;  0, 19, 2082;  0, 20, 1525;  0, 21, 2885;  0, 23, 1563;  0, 24, 1911;  0, 25, 2871;  0, 28, 2960;  0, 29, 1755;  0, 30, 765;  0, 31, 633;  0, 33, 1540;  0, 34, 1378;  0, 35, 641;  0, 36, 2164;  0, 37, 1892;  0, 38, 1609;  0, 39, 1715;  0, 40, 1885;  0, 41, 2589;  0, 43, 2618;  0, 44, 1390;  0, 45, 1350;  0, 47, 2390;  0, 48, 1753;  0, 49, 614;  0, 50, 204;  0, 51, 2698;  0, 52, 2161;  0, 53, 2310;  0, 54, 711;  0, 55, 1127;  0, 57, 1407;  0, 58, 1490;  0, 59, 2010;  0, 61, 2716;  0, 62, 852;  0, 63, 2598;  0, 65, 2601;  0, 66, 1275;  0, 67, 937;  0, 68, 1624;  0, 69, 1110;  0, 70, 627;  0, 71, 263;  0, 72, 835;  0, 73, 1863;  0, 74, 1721;  0, 75, 769;  0, 76, 1865;  0, 77, 1076;  0, 79, 434;  0, 80, 2411;  0, 81, 1279;  0, 83, 2636;  0, 85, 2371;  0, 89, 1595;  0, 90, 791;  0, 91, 563;  0, 93, 2881;  0, 95, 381;  0, 97, 1171;  0, 98, 2175;  0, 99, 136;  0, 101, 2631;  0, 103, 2612;  0, 104, 2079;  0, 105, 1895;  0, 106, 1381;  0, 107, 838;  0, 109, 1083;  0, 111, 432;  0, 112, 2801;  0, 113, 1009;  0, 114, 2095;  0, 115, 2046;  0, 116, 433;  0, 117, 2712;  0, 120, 2087;  0, 121, 1805;  0, 123, 951;  0, 124, 562;  0, 125, 1728;  0, 126, 811;  0, 127, 2530;  0, 128, 2309;  0, 129, 2134;  0, 130, 319;  0, 131, 1135;  0, 132, 367;  0, 133, 1316;  0, 135, 2655;  0, 137, 2637;  0, 139, 259;  0, 140, 2083;  0, 141, 1485;  0, 142, 1677;  0, 143, 191;  0, 144, 661;  0, 145, 250;  0, 147, 889;  0, 148, 1643;  0, 149, 440;  0, 151, 1696;  0, 153, 2089;  0, 154, 2443;  0, 155, 292;  0, 156, 2325;  0, 157, 2385;  0, 159, 821;  0, 161, 258;  0, 162, 343;  0, 163, 2319;  0, 166, 2343;  0, 167, 2837;  0, 168, 1881;  0, 169, 2117;  0, 170, 2059;  0, 171, 1102;  0, 172, 2199;  0, 173, 758;  0, 174, 2744;  0, 175, 412;  0, 176, 2273;  0, 177, 2608;  0, 179, 1020;  0, 180, 1423;  0, 182, 2547;  0, 183, 2444;  0, 184, 1449;  0, 185, 2348;  0, 187, 1812;  0, 189, 1943;  0, 190, 1119;  0, 192, 916;  0, 193, 2941;  0, 194, 531;  0, 195, 215;  0, 196, 2111;  0, 198, 625;  0, 199, 1052;  0, 200, 1659;  0, 201, 1789;  0, 203, 1750;  0, 205, 901;  0, 206, 968;  0, 207, 623;  0, 209, 2700;  0, 210, 907;  0, 211, 674;  0, 212, 767;  0, 213, 702;  0, 217, 1455;  0, 218, 1757;  0, 219, 727;  0, 220, 2548;  0, 221, 372;  0, 222, 1348;  0, 223, 1247;  0, 225, 2695;  0, 227, 1623;  0, 228, 1453;  0, 229, 2630;  0, 230, 1395;  0, 231, 1456;  0, 233, 2519;  0, 234, 1650;  0, 236, 2221;  0, 237, 2745;  0, 239, 520;  0, 240, 726;  0, 241, 1227;  0, 243, 612;  0, 245, 1193;  0, 246, 2025;  0, 247, 2699;  0, 248, 349;  0, 249, 384;  0, 251, 2163;  0, 253, 1099;  0, 254, 2676;  0, 255, 1226;  0, 256, 550;  0, 257, 2422;  0, 260, 575;  0, 261, 2395;  0, 262, 2427;  0, 266, 439;  0, 267, 2162;  0, 269, 2534;  0, 270, 941;  0, 271, 1037;  0, 272, 2469;  0, 273, 1094;  0, 274, 1629;  0, 275, 846;  0, 276, 2283;  0, 277, 474;  0, 278, 1903;  0, 279, 2369;  0, 281, 1265;  0, 283, 881;  0, 285, 1466;  0, 286, 1417;  0, 287, 548;  0, 289, 2466;  0, 290, 2668;  0, 291, 1333;  0, 293, 1986;  0, 295, 2099;  0, 296, 1839;  0, 297, 2352;  0, 299, 1502;  0, 300, 2603;  0, 301, 1691;  0, 302, 606;  0, 303, 1467;  0, 304, 888;  0, 305, 1940;  0, 307, 2151;  0, 308, 1006;  0, 309, 736;  0, 311, 2081;  0, 313, 1263;  0, 314, 2541;  0, 315, 1262;  0, 317, 2849;  0, 320, 2657;  0, 321, 1364;  0, 323, 1744;  0, 324, 2051;  0, 325, 1788;  0, 326, 1028;  0, 327, 1055;  0, 328, 546;  0, 329, 332;  0, 331, 2460;  0, 333, 1489;  0, 335, 2813;  0, 336, 2953;  0, 337, 615;  0, 339, 1431;  0, 340, 1208;  0, 341, 2705;  0, 342, 1897;  0, 345, 757;  0, 346, 2219;  0, 347, 1692;  0, 350, 893;  0, 351, 861;  0, 353, 1058;  0, 354, 956;  0, 355, 2264;  0, 356, 1429;  0, 357, 613;  0, 358, 1341;  0, 359, 2536;  0, 361, 2990;  0, 363, 477;  0, 364, 2555;  0, 365, 808;  0, 368, 2552;  0, 369, 2247;  0, 370, 1697;  0, 371, 1616;  0, 373, 2308;  0, 374, 2661;  0, 375, 582;  0, 376, 1025;  0, 377, 2775;  0, 378, 1607;  0, 379, 2728;  0, 380, 741;  0, 382, 1899;  0, 383, 1955;  0, 385, 603;  0, 386, 1433;  0, 387, 909;  0, 388, 1493;  0, 389, 979;  0, 391, 1196;  0, 393, 1551;  0, 395, 2028;  0, 397, 1311;  0, 400, 2121;  0, 401, 1383;  0, 402, 983;  0, 403, 1732;  0, 404, 851;  0, 405, 688;  0, 406, 1561;  0, 407, 1323;  0, 408, 2215;  0, 409, 1507;  0, 410, 621;  0, 411, 2110;  0, 413, 1804;  0, 414, 2507;  0, 415, 2825;  0, 416, 2477;  0, 417, 961;  0, 418, 2077;  0, 419, 1552;  0, 420, 2019;  0, 421, 616;  0, 423, 925;  0, 424, 1966;  0, 425, 2379;  0, 429, 1631;  0, 431, 1926;  0, 435, 2746;  0, 437, 2449;  0, 438, 1559;  0, 441, 2973;  0, 442, 2571;  0, 443, 2665;  0, 444, 2015;  0, 445, 1080;  0, 446, 482;  0, 447, 558;  0, 449, 943;  0, 451, 2055;  0, 455, 2034;  0, 457, 1802;  0, 458, 744;  0, 460, 1813;  0, 461, 1486;  0, 462, 2979;  0, 463, 1996;  0, 464, 2713;  0, 465, 1007;  0, 466, 1195;  0, 467, 1046;  0, 469, 539;  0, 471, 2851;  0, 472, 544;  0, 473, 1800;  0, 475, 1343;  0, 478, 2485;  0, 479, 2945;  0, 480, 2963;  0, 481, 2903;  0, 483, 2582;  0, 485, 1218;  0, 487, 2793;  0, 489, 2486;  0, 491, 823;  0, 493, 1886;  0, 494, 1259;  0, 495, 1220;  0, 497, 2721;  0, 498, 1173;  0, 499, 1337;  0, 501, 529;  0, 502, 2689;  0, 504, 1573;  0, 505, 2233;  0, 506, 1103;  0, 507, 1268;  0, 508, 2971;  0, 509, 1613;  0, 510, 2931;  0, 511, 2271;  0, 513, 2286;  0, 514, 878;  0, 515, 2621;  0, 516, 2539;  0, 518, 809;  0, 519, 1803;  0, 521, 2727;  0, 522, 1594;  0, 523, 2439;  0, 524, 1517;  0, 525, 980;  0, 526, 1073;  0, 527, 1795;  0, 530, 921;  0, 532, 2432;  0, 533, 2462;  0, 535, 652;  0, 537, 1657;  0, 540, 2445;  0, 541, 1873;  0, 542, 2357;  0, 543, 2779;  0, 545, 989;  0, 547, 2362;  0, 549, 1999;  0, 551, 2136;  0, 552, 1695;  0, 553, 1157;  0, 555, 1754;  0, 557, 1305;  0, 559, 2663;  0, 561, 1589;  0, 564, 1688;  0, 565, 2400;  0, 566, 1503;  0, 567, 729;  0, 569, 759;  0, 571, 2143;  0, 573, 2758;  0, 577, 696;  0, 579, 1957;  0, 583, 2279;  0, 584, 2132;  0, 585, 1470;  0, 589, 995;  0, 590, 1662;  0, 591, 1709;  0, 593, 2576;  0, 595, 669;  0, 596, 2313;  0, 597, 873;  0, 598, 2035;  0, 599, 2858;  0, 600, 1784;  0, 601, 1045;  0, 604, 2012;  0, 605, 2361;  0, 607, 2870;  0, 608, 870;  0, 609, 1932;  0, 610, 1297;  0, 611, 2383;  0, 617, 1851;  0, 619, 655;  0, 624, 2115;  0, 626, 2584;  0, 628, 2709;  0, 629, 1772;  0, 630, 2347;  0, 631, 2009;  0, 632, 1061;  0, 635, 2365;  0, 637, 1553;  0, 639, 1952;  0, 643, 1991;  0, 644, 1313;  0, 645, 1550;  0, 646, 2845;  0, 647, 2101;  0, 648, 1831;  0, 649, 914;  0, 650, 2470;  0, 651, 2038;  0, 653, 1379;  0, 658, 2704;  0, 659, 1118;  0, 660, 1909;  0, 662, 1906;  0, 663, 2084;  0, 665, 1556;  0, 666, 2867;  0, 667, 1891;  0, 670, 2415;  0, 673, 839;  0, 676, 1539;  0, 677, 2841;  0, 679, 2681;  0, 680, 1495;  0, 681, 2001;  0, 683, 2853;  0, 686, 2212;  0, 687, 1432;  0, 689, 1889;  0, 691, 950;  0, 693, 2866;  0, 694, 827;  0, 695, 929;  0, 698, 1012;  0, 699, 1329;  0, 703, 1530;  0, 705, 1300;  0, 706, 2501;  0, 707, 2468;  0, 709, 1492;  0, 712, 1581;  0, 713, 2729;  0, 715, 2701;  0, 717, 1363;  0, 718, 857;  0, 719, 1023;  0, 721, 1224;  0, 723, 969;  0, 725, 2950;  0, 728, 1915;  0, 730, 1385;  0, 731, 2905;  0, 733, 2397;  0, 739, 2389;  0, 740, 2614;  0, 742, 1775;  0, 743, 770;  0, 745, 1786;  0, 746, 2179;  0, 747, 1646;  0, 748, 1018;  0, 749, 1810;  0, 750, 2633;  0, 751, 1527;  0, 752, 2891;  0, 753, 1166;  0, 755, 2553;  0, 760, 1737;  0, 761, 1902;  0, 762, 1841;  0, 766, 1284;  0, 768, 2821;  0, 771, 1180;  0, 772, 1001;  0, 773, 2647;  0, 775, 2878;  0, 776, 1295;  0, 777, 2819;  0, 779, 1963;  0, 781, 2177;  0, 782, 977;  0, 783, 2393;  0, 785, 1860;  0, 787, 1146;  0, 788, 1792;  0, 789, 2299;  0, 790, 2453;  0, 793, 1993;  0, 794, 1941;  0, 795, 1201;  0, 796, 853;  0, 797, 2944;  0, 798, 2677;  0, 799, 2044;  0, 803, 841;  0, 804, 1635;  0, 805, 2512;  0, 806, 1347;  0, 807, 847;  0, 810, 2183;  0, 813, 2306;  0, 814, 1147;  0, 815, 1293;  0, 820, 2331;  0, 824, 1461;  0, 825, 2459;  0, 828, 2285;  0, 829, 2154;  0, 832, 2353;  0, 833, 868;  0, 834, 2599;  0, 837, 2753;  0, 843, 1532;  0, 845, 1123;  0, 849, 1451;  0, 855, 2100;  0, 859, 2731;  0, 860, 2645;  0, 862, 2855;  0, 863, 1190;  0, 866, 1111;  0, 867, 1346;  0, 869, 1951;  0, 871, 2771;  0, 872, 1815;  0, 874, 1928;  0, 875, 1759;  0, 877, 2236;  0, 879, 1036;  0, 883, 1402;  0, 884, 2791;  0, 885, 1603;  0, 887, 2094;  0, 890, 2499;  0, 891, 1021;  0, 892, 933;  0, 895, 971;  0, 896, 1053;  0, 897, 996;  0, 899, 2675;  0, 904, 2764;  0, 905, 1678;  0, 908, 2529;  0, 910, 2611;  0, 911, 986;  0, 913, 1200;  0, 915, 2141;  0, 917, 2899;  0, 919, 1823;  0, 923, 2475;  0, 926, 1419;  0, 927, 2102;  0, 935, 2932;  0, 942, 2925;  0, 945, 1165;  0, 947, 2978;  0, 949, 1469;  0, 952, 1801;  0, 953, 2071;  0, 954, 2985;  0, 955, 1245;  0, 957, 2879;  0, 959, 2500;  0, 963, 1970;  0, 965, 1526;  0, 967, 2229;  0, 970, 2127;  0, 972, 2339;  0, 973, 2419;  0, 974, 1142;  0, 975, 2803;  0, 976, 2259;  0, 978, 2523;  0, 982, 1370;  0, 985, 2318;  0, 991, 1743;  0, 992, 1506;  0, 993, 1422;  0, 994, 2323;  0, 997, 1869;  0, 998, 1985;  0, 999, 1756;  0, 1003, 1916;  0, 1004, 2447;  0, 1005, 2654;  0, 1010, 2129;  0, 1011, 2057;  0, 1013, 2626;  0, 1015, 1840;  0, 1017, 1250;  0, 1019, 2300;  0, 1024, 2209;  0, 1027, 2171;  0, 1029, 2003;  0, 1031, 2091;  0, 1033, 1475;  0, 1035, 1515;  0, 1038, 2119;  0, 1039, 1654;  0, 1040, 1917;  0, 1041, 2384;  0, 1042, 1314;  0, 1047, 2224;  0, 1048, 1401;  0, 1049, 1077;  0, 1050, 2288;  0, 1051, 1320;  0, 1054, 1131;  0, 1057, 1773;  0, 1059, 1849;  0, 1060, 2005;  0, 1065, 2266;  0, 1066, 2185;  0, 1067, 2935;  0, 1071, 2027;  0, 1074, 2381;  0, 1075, 1937;  0, 1078, 1339;  0, 1082, 2246;  0, 1085, 1213;  0, 1087, 1244;  0, 1088, 2715;  0, 1089, 1397;  0, 1091, 1852;  0, 1093, 2818;  0, 1095, 1989;  0, 1097, 1699;  0, 1098, 2463;  0, 1100, 2435;  0, 1101, 1733;  0, 1104, 2276;  0, 1105, 1936;  0, 1106, 1701;  0, 1107, 1231;  0, 1108, 1508;  0, 1109, 2936;  0, 1113, 1694;  0, 1114, 1125;  0, 1115, 2257;  0, 1116, 2809;  0, 1117, 2854;  0, 1120, 1611;  0, 1122, 2969;  0, 1129, 1651;  0, 1132, 1335;  0, 1133, 1546;  0, 1137, 1790;  0, 1139, 2198;  0, 1140, 2489;  0, 1141, 1170;  0, 1144, 1639;  0, 1145, 1164;  0, 1149, 2829;  0, 1150, 1443;  0, 1151, 2515;  0, 1153, 2597;  0, 1154, 1877;  0, 1155, 2265;  0, 1156, 2217;  0, 1158, 2467;  0, 1161, 2396;  0, 1163, 2332;  0, 1167, 2464;  0, 1169, 2861;  0, 1174, 2344;  0, 1175, 2125;  0, 1177, 2314;  0, 1179, 2446;  0, 1181, 1758;  0, 1185, 1931;  0, 1187, 1997;  0, 1189, 2943;  0, 1191, 2527;  0, 1192, 2755;  0, 1197, 2122;  0, 1198, 1703;  0, 1199, 1519;  0, 1202, 2609;  0, 1203, 1822;  0, 1204, 1285;  0, 1205, 1396;  0, 1207, 1330;  0, 1210, 2917;  0, 1211, 2069;  0, 1212, 2777;  0, 1215, 2659;  0, 1216, 1827;  0, 1217, 1947;  0, 1219, 2492;  0, 1222, 1439;  0, 1223, 1704;  0, 1232, 1315;  0, 1233, 1511;  0, 1234, 2409;  0, 1235, 1438;  0, 1237, 2747;  0, 1239, 1615;  0, 1240, 2000;  0, 1241, 2135;  0, 1251, 2404;  0, 1253, 2213;  0, 1257, 2926;  0, 1260, 2697;  0, 1261, 2228;  0, 1264, 2399;  0, 1267, 2239;  0, 1269, 2336;  0, 1271, 1988;  0, 1273, 2800;  0, 1277, 1483;  0, 1281, 2451;  0, 1283, 1542;  0, 1287, 2392;  0, 1289, 2937;  0, 1291, 2741;  0, 1298, 2075;  0, 1299, 2139;  0, 1303, 2711;  0, 1304, 2341;  0, 1306, 2723;  0, 1310, 2193;  0, 1317, 1468;  0, 1318, 1925;  0, 1319, 2914;  0, 1325, 1829;  0, 1327, 1548;  0, 1331, 2457;  0, 1336, 2865;  0, 1340, 2189;  0, 1345, 1627;  0, 1351, 2021;  0, 1352, 1501;  0, 1353, 2575;  0, 1355, 1462;  0, 1357, 2024;  0, 1359, 2337;  0, 1360, 1487;  0, 1362, 2767;  0, 1368, 2955;  0, 1371, 2107;  0, 1375, 1633;  0, 1377, 1536;  0, 1384, 1871;  0, 1387, 1421;  0, 1389, 2235;  0, 1393, 2578;  0, 1399, 2061;  0, 1400, 1921;  0, 1403, 1948;  0, 1406, 2789;  0, 1408, 2131;  0, 1411, 2041;  0, 1413, 2479;  0, 1425, 1679;  0, 1427, 2717;  0, 1435, 2585;  0, 1442, 2873;  0, 1444, 2613;  0, 1445, 1549;  0, 1447, 1976;  0, 1448, 2535;  0, 1450, 2807;  0, 1459, 1777;  0, 1463, 1535;  0, 1465, 2812;  0, 1471, 1658;  0, 1473, 2804;  0, 1477, 2205;  0, 1478, 2949;  0, 1479, 2387;  0, 1481, 2080;  0, 1484, 1619;  0, 1497, 1533;  0, 1504, 2543;  0, 1505, 2211;  0, 1509, 1870;  0, 1510, 2351;  0, 1513, 2301;  0, 1516, 1673;  0, 1521, 2067;  0, 1528, 2751;  0, 1529, 2918;  0, 1537, 2889;  0, 1538, 2781;  0, 1541, 2329;  0, 1543, 2495;  0, 1544, 2763;  0, 1547, 2045;  0, 1557, 1927;  0, 1562, 1929;  0, 1567, 2533;  0, 1569, 1965;  0, 1575, 1861;  0, 1577, 2516;  0, 1583, 2218;  0, 1585, 1630;  0, 1586, 2375;  0, 1588, 2521;  0, 1592, 2188;  0, 1593, 1645;  0, 1597, 2739;  0, 1601, 2248;  0, 1604, 2561;  0, 1605, 1653;  0, 1610, 2651;  0, 1617, 1761;  0, 1621, 2864;  0, 1625, 2200;  0, 1634, 2281;  0, 1637, 2833;  0, 1641, 2157;  0, 1642, 1675;  0, 1647, 2452;  0, 1648, 2085;  0, 1655, 1769;  0, 1661, 2620;  0, 1663, 1880;  0, 1664, 2913;  0, 1665, 2137;  0, 1676, 2623;  0, 1681, 2528;  0, 1684, 2522;  0, 1685, 2355;  0, 1687, 1726;  0, 1707, 2255;  0, 1711, 2225;  0, 1719, 2363;  0, 1723, 2831;  0, 1729, 2156;  0, 1730, 1933;  0, 1731, 2252;  0, 1735, 2014;  0, 1741, 2882;  0, 1745, 2174;  0, 1751, 1969;  0, 1760, 2815;  0, 1762, 2413;  0, 1763, 2877;  0, 1767, 1844;  0, 1768, 2269;  0, 1771, 2431;  0, 1774, 2734;  0, 1778, 2679;  0, 1781, 2305;  0, 1783, 2893;  0, 1785, 2002;  0, 1791, 1946;  0, 1793, 2039;  0, 1796, 2795;  0, 1797, 2090;  0, 1798, 2465;  0, 1799, 1918;  0, 1809, 2511;  0, 1811, 2324;  0, 1814, 2983;  0, 1817, 1838;  0, 1819, 2702;  0, 1821, 2245;  0, 1825, 2828;  0, 1828, 2367;  0, 1837, 1874;  0, 1843, 2455;  0, 1845, 2026;  0, 1850, 2961;  0, 1855, 2811;  0, 1857, 2180;  0, 1859, 2013;  0, 1862, 2685;  0, 1867, 2759;  0, 1868, 2869;  0, 1875, 2920;  0, 1876, 2113;  0, 1888, 2733;  0, 1893, 2924;  0, 1894, 2047;  0, 1900, 2765;  0, 1901, 2017;  0, 1907, 2433;  0, 1912, 2289;  0, 1913, 2649;  0, 1923, 2043;  0, 1934, 2545;  0, 1935, 2474;  0, 1939, 2896;  0, 1942, 2345;  0, 1945, 2666;  0, 1949, 2959;  0, 1954, 2243;  0, 1958, 2587;  0, 1959, 2919;  0, 1961, 2073;  0, 1971, 2497;  0, 1973, 2605;  0, 1975, 2029;  0, 1977, 2398;  0, 1979, 2977;  0, 1982, 2149;  0, 1984, 2293;  0, 1987, 2593;  0, 1990, 2488;  0, 1995, 2373;  0, 2008, 2287;  0, 2011, 2907;  0, 2018, 2037;  0, 2020, 2641;  0, 2033, 2577;  0, 2042, 2624;  0, 2049, 2493;  0, 2063, 2554;  0, 2065, 2991;  0, 2068, 2590;  0, 2078, 2911;  0, 2086, 2947;  0, 2097, 2315;  0, 2098, 2104;  0, 2103, 2126;  0, 2105, 2662;  0, 2116, 2401;  0, 2120, 2320;  0, 2123, 2692;  0, 2133, 2817;  0, 2145, 2155;  0, 2147, 2680;  0, 2159, 2429;  0, 2165, 2923;  0, 2170, 2921;  0, 2173, 2560;  0, 2181, 2743;  0, 2186, 2569;  0, 2187, 2974;  0, 2191, 2686;  0, 2197, 2822;  0, 2203, 2915;  0, 2206, 2669;  0, 2207, 2405;  0, 2222, 2653;  0, 2223, 2967;  0, 2227, 2487;  0, 2231, 2311;  0, 2234, 2275;  0, 2237, 2591;  0, 2242, 2719;  0, 2249, 2720;  0, 2251, 2619;  0, 2253, 2278;  0, 2254, 2984;  0, 2258, 2951;  0, 2263, 2321;  0, 2267, 2674;  0, 2270, 2595;  0, 2277, 2810;  0, 2282, 2327;  0, 2291, 2738;  0, 2295, 2594;  0, 2296, 2827;  0, 2297, 2349;  0, 2307, 2890;  0, 2317, 2993;  0, 2333, 2757;  0, 2335, 2887;  0, 2342, 2839;  0, 2350, 2581;  0, 2366, 2957;  0, 2368, 2644;  0, 2377, 2966;  0, 2378, 2638;  0, 2380, 2691;  0, 2386, 2799;  0, 2391, 2461;  0, 2407, 2783;  0, 2425, 2927;  0, 2428, 2901;  0, 2437, 2794;  0, 2441, 2981;  0, 2450, 2761;  0, 2471, 2607;  0, 2473, 2847;  0, 2481, 2805;  0, 2503, 2846;  0, 2509, 2942;  0, 2531, 2740;  0, 2537, 2852;  0, 2549, 2823;  0, 2551, 2606;  0, 2557, 2648;  0, 2559, 2956;  0, 2563, 2857;  0, 2573, 2785;  0, 2579, 2888;  0, 2583, 2588;  0, 2615, 2987;  0, 2627, 2774;  0, 2629, 2798;  0, 2639, 2737;  0, 2643, 2989;  0, 2667, 2954;  0, 2671, 2933;  0, 2687, 2909;  0, 2693, 2782;  0, 2703, 2788;  0, 2824, 2863;  0, 2929, 2939;  1, 2, 1635;  1, 4, 1874;  1, 5, 542;  1, 7, 1790;  1, 8, 1433;  1, 10, 2654;  1, 13, 2827;  1, 14, 2116;  1, 16, 728;  1, 17, 1157;  1, 20, 2027;  1, 21, 2947;  1, 22, 1693;  1, 25, 122;  1, 26, 64;  1, 28, 2101;  1, 29, 2680;  1, 31, 794;  1, 32, 2311;  1, 34, 2839;  1, 39, 308;  1, 41, 1262;  1, 44, 1815;  1, 45, 2752;  1, 50, 944;  1, 51, 896;  1, 52, 2803;  1, 53, 580;  1, 58, 1049;  1, 62, 667;  1, 63, 2882;  1, 67, 1520;  1, 68, 1943;  1, 70, 332;  1, 73, 2470;  1, 74, 2008;  1, 76, 80;  1, 81, 171;  1, 82, 2309;  1, 86, 1046;  1, 88, 632;  1, 91, 838;  1, 94, 1825;  1, 99, 2516;  1, 100, 1789;  1, 104, 327;  1, 105, 853;  1, 107, 2756;  1, 110, 1736;  1, 112, 182;  1, 113, 2084;  1, 115, 904;  1, 116, 292;  1, 117, 1832;  1, 125, 472;  1, 127, 1576;  1, 128, 1469;  1, 129, 1226;  1, 130, 1245;  1, 131, 1868;  1, 133, 2618;  1, 134, 2015;  1, 136, 1593;  1, 137, 2834;  1, 140, 419;  1, 141, 2920;  1, 142, 1563;  1, 143, 2878;  1, 145, 2723;  1, 146, 727;  1, 148, 2102;  1, 149, 2617;  1, 152, 183;  1, 154, 1414;  1, 155, 495;  1, 160, 2074;  1, 163, 2381;  1, 164, 1286;  1, 166, 2719;  1, 172, 1649;  1, 173, 1191;  1, 175, 1880;  1, 176, 2708;  1, 177, 383;  1, 178, 2529;  1, 184, 1069;  1, 185, 1093;  1, 191, 2524;  1, 194, 844;  1, 195, 371;  1, 197, 1660;  1, 199, 1664;  1, 200, 2840;  1, 201, 355;  1, 202, 2431;  1, 205, 1829;  1, 206, 2811;  1, 208, 651;  1, 211, 892;  1, 212, 2447;  1, 213, 1610;  1, 214, 601;  1, 220, 2333;  1, 221, 230;  1, 223, 2254;  1, 224, 1787;  1, 229, 1544;  1, 231, 248;  1, 232, 628;  1, 235, 1978;  1, 236, 1681;  1, 237, 2107;  1, 241, 2078;  1, 242, 885;  1, 249, 389;  1, 250, 1007;  1, 251, 2509;  1, 254, 2090;  1, 255, 386;  1, 257, 1211;  1, 262, 2249;  1, 267, 2482;  1, 268, 2152;  1, 271, 1915;  1, 272, 2497;  1, 273, 467;  1, 274, 2721;  1, 275, 686;  1, 278, 653;  1, 284, 2836;  1, 286, 2137;  1, 287, 2653;  1, 290, 1947;  1, 293, 478;  1, 296, 1183;  1, 297, 1502;  1, 298, 2853;  1, 301, 1796;  1, 302, 1757;  1, 303, 2596;  1, 304, 1209;  1, 305, 2485;  1, 309, 891;  1, 310, 2227;  1, 314, 2521;  1, 315, 1304;  1, 316, 2009;  1, 320, 2204;  1, 321, 2024;  1, 322, 783;  1, 325, 2890;  1, 326, 2265;  1, 328, 1457;  1, 329, 1449;  1, 333, 670;  1, 334, 2537;  1, 337, 1923;  1, 338, 2648;  1, 340, 2468;  1, 341, 2461;  1, 346, 2198;  1, 347, 2114;  1, 350, 529;  1, 351, 578;  1, 352, 484;  1, 357, 1384;  1, 359, 2936;  1, 362, 2455;  1, 364, 2444;  1, 365, 1034;  1, 368, 2283;  1, 370, 1879;  1, 373, 1190;  1, 374, 622;  1, 376, 1777;  1, 380, 2056;  1, 381, 1634;  1, 382, 1592;  1, 385, 1322;  1, 387, 1222;  1, 392, 507;  1, 394, 1517;  1, 397, 2168;  1, 398, 604;  1, 400, 1041;  1, 401, 1430;  1, 403, 1105;  1, 404, 994;  1, 405, 2138;  1, 406, 2025;  1, 409, 2380;  1, 410, 2289;  1, 411, 2181;  1, 412, 479;  1, 413, 1024;  1, 415, 2600;  1, 416, 1119;  1, 417, 926;  1, 418, 1814;  1, 421, 2098;  1, 422, 2270;  1, 424, 2972;  1, 425, 1112;  1, 430, 1855;  1, 433, 2595;  1, 435, 1730;  1, 436, 1982;  1, 439, 1102;  1, 440, 1851;  1, 441, 718;  1, 446, 573;  1, 447, 833;  1, 448, 2017;  1, 452, 2534;  1, 454, 1803;  1, 458, 2150;  1, 459, 2258;  1, 460, 1364;  1, 461, 1382;  1, 463, 1742;  1, 465, 2704;  1, 466, 2321;  1, 476, 695;  1, 481, 1180;  1, 482, 1165;  1, 483, 2278;  1, 488, 659;  1, 490, 544;  1, 494, 2615;  1, 500, 2167;  1, 502, 1114;  1, 505, 2302;  1, 506, 835;  1, 508, 1363;  1, 509, 1966;  1, 512, 1498;  1, 514, 1945;  1, 517, 2738;  1, 518, 2733;  1, 519, 2560;  1, 524, 2325;  1, 526, 2585;  1, 531, 1731;  1, 533, 1456;  1, 536, 1574;  1, 538, 2306;  1, 543, 2667;  1, 545, 1154;  1, 547, 2522;  1, 548, 2489;  1, 549, 2081;  1, 550, 2553;  1, 551, 971;  1, 554, 1489;  1, 556, 2888;  1, 559, 2456;  1, 560, 2175;  1, 562, 596;  1, 563, 2218;  1, 567, 1702;  1, 568, 2197;  1, 572, 2396;  1, 574, 694;  1, 583, 1307;  1, 584, 710;  1, 585, 964;  1, 586, 2044;  1, 591, 649;  1, 595, 2481;  1, 597, 932;  1, 598, 771;  1, 602, 2624;  1, 608, 1005;  1, 609, 2300;  1, 610, 1462;  1, 611, 2236;  1, 614, 1919;  1, 617, 1029;  1, 620, 2906;  1, 625, 1636;  1, 627, 2174;  1, 633, 1016;  1, 634, 1871;  1, 638, 2558;  1, 640, 2353;  1, 644, 2132;  1, 645, 1960;  1, 646, 1204;  1, 647, 2774;  1, 650, 2385;  1, 652, 2919;  1, 656, 1841;  1, 658, 2188;  1, 662, 1508;  1, 674, 2199;  1, 675, 2960;  1, 676, 1039;  1, 680, 1886;  1, 681, 1479;  1, 687, 1640;  1, 688, 1318;  1, 689, 2740;  1, 692, 1695;  1, 698, 831;  1, 699, 2488;  1, 704, 1159;  1, 707, 1089;  1, 712, 2726;  1, 719, 842;  1, 724, 1933;  1, 730, 1948;  1, 731, 982;  1, 734, 2530;  1, 736, 2896;  1, 740, 2859;  1, 741, 1984;  1, 742, 2639;  1, 743, 1340;  1, 745, 2588;  1, 747, 1522;  1, 749, 2282;  1, 751, 2921;  1, 752, 1589;  1, 754, 1021;  1, 758, 1533;  1, 760, 2924;  1, 761, 2900;  1, 763, 1863;  1, 772, 2555;  1, 776, 2120;  1, 778, 1606;  1, 782, 1214;  1, 791, 2294;  1, 795, 2422;  1, 797, 946;  1, 802, 2041;  1, 805, 2768;  1, 807, 866;  1, 809, 2234;  1, 811, 2810;  1, 812, 1354;  1, 814, 2492;  1, 820, 868;  1, 824, 2944;  1, 825, 2224;  1, 830, 1946;  1, 836, 1679;  1, 841, 2242;  1, 850, 1040;  1, 856, 2942;  1, 860, 1588;  1, 862, 1487;  1, 867, 1372;  1, 872, 940;  1, 874, 1269;  1, 878, 1543;  1, 879, 1967;  1, 880, 2019;  1, 890, 2687;  1, 898, 1894;  1, 901, 2047;  1, 908, 2126;  1, 909, 913;  1, 910, 1210;  1, 911, 1690;  1, 916, 2206;  1, 920, 1195;  1, 928, 1251;  1, 934, 1869;  1, 943, 2480;  1, 950, 2477;  1, 952, 1276;  1, 956, 2825;  1, 961, 1934;  1, 962, 1881;  1, 969, 2213;  1, 970, 2631;  1, 976, 1258;  1, 977, 1718;  1, 980, 1331;  1, 981, 2782;  1, 985, 2165;  1, 986, 2824;  1, 992, 2855;  1, 993, 1418;  1, 995, 1216;  1, 997, 2367;  1, 998, 2979;  1, 999, 1822;  1, 1000, 2066;  1, 1006, 2225;  1, 1011, 2741;  1, 1018, 2086;  1, 1019, 2549;  1, 1022, 1305;  1, 1028, 2121;  1, 1030, 1115;  1, 1036, 1850;  1, 1037, 1172;  1, 1043, 2379;  1, 1047, 2452;  1, 1048, 1861;  1, 1053, 2128;  1, 1054, 2854;  1, 1055, 1907;  1, 1058, 1460;  1, 1060, 1311;  1, 1061, 1492;  1, 1064, 2030;  1, 1066, 1459;  1, 1067, 1892;  1, 1070, 1094;  1, 1072, 1385;  1, 1076, 2804;  1, 1077, 2201;  1, 1079, 1954;  1, 1082, 1291;  1, 1083, 2343;  1, 1084, 2284;  1, 1090, 1688;  1, 1095, 1227;  1, 1096, 1759;  1, 1100, 1540;  1, 1103, 2932;  1, 1107, 1670;  1, 1108, 2705;  1, 1118, 1996;  1, 1120, 1403;  1, 1124, 1525;  1, 1126, 1455;  1, 1130, 1337;  1, 1132, 2021;  1, 1133, 2320;  1, 1136, 1646;  1, 1148, 2416;  1, 1155, 2164;  1, 1156, 2866;  1, 1166, 1970;  1, 1168, 1763;  1, 1173, 2410;  1, 1174, 2613;  1, 1175, 2636;  1, 1178, 2950;  1, 1185, 1556;  1, 1192, 1691;  1, 1193, 2399;  1, 1196, 2609;  1, 1197, 2966;  1, 1198, 1378;  1, 1199, 2860;  1, 1202, 2036;  1, 1203, 1369;  1, 1205, 2535;  1, 1208, 1438;  1, 1213, 2780;  1, 1217, 2990;  1, 1220, 2765;  1, 1221, 1930;  1, 1228, 1531;  1, 1232, 2426;  1, 1233, 2691;  1, 1234, 2858;  1, 1235, 2362;  1, 1238, 2735;  1, 1240, 1846;  1, 1241, 2038;  1, 1250, 1990;  1, 1252, 1579;  1, 1261, 2770;  1, 1268, 2746;  1, 1285, 2626;  1, 1292, 2013;  1, 1298, 1598;  1, 1299, 2690;  1, 1300, 2392;  1, 1301, 2805;  1, 1309, 2984;  1, 1310, 1347;  1, 1317, 1480;  1, 1319, 1526;  1, 1321, 2696;  1, 1341, 1552;  1, 1352, 2018;  1, 1358, 2812;  1, 1361, 2650;  1, 1365, 2577;  1, 1388, 2434;  1, 1390, 2591;  1, 1396, 2279;  1, 1400, 2572;  1, 1401, 1672;  1, 1406, 1611;  1, 1408, 2345;  1, 1412, 2799;  1, 1420, 2216;  1, 1436, 2369;  1, 1439, 2192;  1, 1442, 1766;  1, 1443, 2246;  1, 1444, 2391;  1, 1448, 1983;  1, 1463, 2528;  1, 1467, 2026;  1, 1468, 1504;  1, 1478, 2080;  1, 1484, 1918;  1, 1491, 2614;  1, 1493, 2776;  1, 1503, 1604;  1, 1505, 2698;  1, 1509, 2006;  1, 1514, 1748;  1, 1516, 2465;  1, 1527, 1959;  1, 1529, 2948;  1, 1532, 2223;  1, 1539, 2255;  1, 1547, 2368;  1, 1553, 2458;  1, 1557, 2476;  1, 1580, 1870;  1, 1582, 2699;  1, 1586, 2536;  1, 1595, 1882;  1, 1599, 1654;  1, 1600, 1820;  1, 1612, 1768;  1, 1616, 2387;  1, 1618, 1732;  1, 1622, 2974;  1, 1624, 2813;  1, 1628, 1900;  1, 1630, 2744;  1, 1643, 2012;  1, 1647, 1988;  1, 1648, 2548;  1, 1652, 2627;  1, 1655, 2722;  1, 1658, 2042;  1, 1677, 2144;  1, 1682, 2054;  1, 1684, 2428;  1, 1689, 2397;  1, 1694, 1769;  1, 1696, 2745;  1, 1700, 2552;  1, 1712, 2187;  1, 1714, 2297;  1, 1724, 2531;  1, 1733, 2889;  1, 1745, 2212;  1, 1750, 2158;  1, 1751, 2644;  1, 1754, 2506;  1, 1760, 2096;  1, 1762, 1821;  1, 1774, 2819;  1, 1775, 2156;  1, 1778, 2243;  1, 1779, 2637;  1, 1780, 2674;  1, 1785, 2127;  1, 1792, 2252;  1, 1793, 2870;  1, 1802, 2475;  1, 1804, 2248;  1, 1823, 1955;  1, 1828, 2360;  1, 1834, 2002;  1, 1852, 2378;  1, 1858, 2681;  1, 1862, 2123;  1, 1888, 2290;  1, 1889, 2801;  1, 1893, 2951;  1, 1895, 2450;  1, 1904, 2170;  1, 1906, 1989;  1, 1912, 2122;  1, 1916, 2189;  1, 1924, 2621;  1, 1928, 2429;  1, 1935, 2326;  1, 1937, 2296;  1, 1941, 2267;  1, 1942, 2288;  1, 1952, 2517;  1, 1971, 2260;  1, 1977, 2818;  1, 2001, 2660;  1, 2014, 2986;  1, 2020, 2554;  1, 2039, 2884;  1, 2043, 2438;  1, 2068, 2513;  1, 2069, 2372;  1, 2072, 2933;  1, 2079, 2669;  1, 2085, 2135;  1, 2091, 2788;  1, 2103, 2980;  1, 2117, 2918;  1, 2129, 2200;  1, 2133, 2620;  1, 2210, 2319;  1, 2228, 2606;  1, 2230, 2645;  1, 2235, 2872;  1, 2240, 2927;  1, 2271, 2655;  1, 2276, 2823;  1, 2314, 2732;  1, 2315, 2440;  1, 2318, 2978;  1, 2324, 2337;  1, 2336, 2864;  1, 2338, 2883;  1, 2342, 2926;  1, 2351, 2607;  1, 2354, 2852;  1, 2356, 2975;  1, 2386, 2800;  1, 2420, 2445;  1, 2432, 2546;  1, 2446, 2462;  1, 2463, 2702;  1, 2504, 2759;  1, 2512, 2848;  1, 2570, 2902;  1, 2578, 2714;  1, 2582, 2762;  1, 2584, 2795;  1, 2590, 2693;  1, 2638, 2945;  1, 2642, 2703;  1, 2672, 2961;  1, 2729, 2794;  1, 2747, 2992;  1, 2798, 2930;  2, 5, 1234;  2, 8, 609;  2, 9, 2531;  2, 11, 1053;  2, 14, 2807;  2, 17, 1857;  2, 22, 611;  2, 23, 993;  2, 32, 713;  2, 35, 1976;  2, 41, 1199;  2, 45, 249;  2, 46, 2781;  2, 50, 2739;  2, 51, 2241;  2, 52, 1299;  2, 53, 1233;  2, 56, 1539;  2, 57, 797;  2, 59, 2787;  2, 65, 2482;  2, 68, 765;  2, 69, 2968;  2, 71, 1142;  2, 74, 2565;  2, 75, 976;  2, 76, 657;  2, 81, 242;  2, 82, 2355;  2, 83, 2381;  2, 87, 602;  2, 92, 1359;  2, 93, 1418;  2, 95, 2164;  2, 99, 2260;  2, 100, 425;  2, 101, 1106;  2, 105, 333;  2, 106, 2051;  2, 107, 322;  2, 113, 596;  2, 118, 783;  2, 119, 1311;  2, 122, 2799;  2, 123, 2949;  2, 125, 184;  2, 130, 873;  2, 131, 539;  2, 141, 1985;  2, 142, 1353;  2, 143, 2390;  2, 146, 533;  2, 147, 1725;  2, 149, 1839;  2, 153, 2956;  2, 155, 915;  2, 158, 1271;  2, 161, 975;  2, 164, 1145;  2, 165, 1796;  2, 171, 2576;  2, 172, 2326;  2, 176, 2427;  2, 179, 2879;  2, 183, 1329;  2, 185, 1988;  2, 189, 2768;  2, 191, 2501;  2, 194, 501;  2, 195, 2639;  2, 196, 795;  2, 200, 2325;  2, 201, 2201;  2, 203, 2819;  2, 206, 2633;  2, 212, 1017;  2, 214, 2645;  2, 215, 1534;  2, 219, 412;  2, 220, 677;  2, 224, 2597;  2, 227, 2361;  2, 231, 2246;  2, 233, 785;  2, 237, 473;  2, 238, 2450;  2, 239, 479;  2, 243, 2231;  2, 245, 1688;  2, 248, 1893;  2, 251, 418;  2, 255, 2446;  2, 256, 1047;  2, 260, 2901;  2, 261, 2657;  2, 269, 2729;  2, 272, 1677;  2, 273, 1265;  2, 278, 1077;  2, 279, 2505;  2, 280, 1594;  2, 284, 1946;  2, 287, 2519;  2, 296, 1769;  2, 297, 1510;  2, 298, 1133;  2, 303, 526;  2, 304, 1869;  2, 305, 1137;  2, 310, 1059;  2, 311, 2620;  2, 315, 2800;  2, 320, 1667;  2, 321, 388;  2, 323, 2933;  2, 328, 2027;  2, 329, 2621;  2, 335, 2631;  2, 341, 2591;  2, 344, 2553;  2, 345, 2157;  2, 351, 1895;  2, 357, 2003;  2, 358, 2915;  2, 359, 2872;  2, 365, 2885;  2, 370, 2722;  2, 371, 1487;  2, 375, 2909;  2, 376, 2021;  2, 381, 1262;  2, 382, 1451;  2, 383, 2169;  2, 387, 2319;  2, 395, 1887;  2, 398, 2351;  2, 401, 950;  2, 405, 1090;  2, 406, 1509;  2, 407, 743;  2, 410, 1331;  2, 411, 465;  2, 413, 956;  2, 416, 2069;  2, 423, 2543;  2, 429, 892;  2, 435, 682;  2, 437, 647;  2, 440, 2366;  2, 441, 860;  2, 446, 1904;  2, 447, 2516;  2, 453, 1898;  2, 459, 2187;  2, 460, 2423;  2, 461, 626;  2, 466, 2967;  2, 471, 2019;  2, 476, 2913;  2, 483, 1673;  2, 484, 621;  2, 485, 2354;  2, 488, 1373;  2, 489, 1352;  2, 491, 2272;  2, 496, 1781;  2, 497, 2385;  2, 506, 1757;  2, 507, 1084;  2, 508, 2444;  2, 509, 1809;  2, 513, 2830;  2, 515, 2675;  2, 518, 1397;  2, 519, 2656;  2, 520, 2943;  2, 524, 1613;  2, 525, 2469;  2, 527, 581;  2, 531, 1097;  2, 532, 687;  2, 536, 2031;  2, 542, 1155;  2, 550, 2843;  2, 551, 2668;  2, 555, 1078;  2, 560, 1731;  2, 561, 1454;  2, 563, 759;  2, 566, 1779;  2, 568, 747;  2, 569, 1732;  2, 573, 1011;  2, 574, 2067;  2, 575, 1214;  2, 585, 1186;  2, 587, 2944;  2, 605, 1156;  2, 610, 855;  2, 614, 2837;  2, 616, 1275;  2, 617, 1120;  2, 623, 898;  2, 627, 1847;  2, 628, 2775;  2, 629, 1954;  2, 633, 1073;  2, 634, 2278;  2, 635, 2871;  2, 641, 1317;  2, 646, 947;  2, 650, 1408;  2, 651, 1619;  2, 659, 1526;  2, 663, 2295;  2, 665, 2733;  2, 669, 1041;  2, 676, 2555;  2, 681, 2290;  2, 688, 951;  2, 698, 2409;  2, 701, 2595;  2, 707, 2080;  2, 717, 2002;  2, 718, 1226;  2, 719, 1634;  2, 728, 2163;  2, 729, 1979;  2, 741, 1240;  2, 746, 2957;  2, 748, 2853;  2, 753, 998;  2, 755, 1371;  2, 761, 2054;  2, 770, 2301;  2, 771, 1322;  2, 772, 2108;  2, 778, 1845;  2, 789, 813;  2, 790, 1864;  2, 796, 2363;  2, 800, 2313;  2, 808, 2489;  2, 815, 1203;  2, 819, 1691;  2, 826, 1625;  2, 827, 1918;  2, 830, 2153;  2, 831, 1395;  2, 837, 1102;  2, 842, 1739;  2, 843, 1103;  2, 849, 986;  2, 861, 1109;  2, 862, 1755;  2, 867, 1439;  2, 872, 2907;  2, 874, 2086;  2, 875, 2277;  2, 886, 2135;  2, 891, 1888;  2, 893, 1185;  2, 902, 2391;  2, 909, 2331;  2, 911, 1377;  2, 917, 1745;  2, 928, 2219;  2, 929, 969;  2, 933, 2626;  2, 941, 2530;  2, 953, 2206;  2, 957, 1000;  2, 958, 2867;  2, 963, 2302;  2, 970, 1643;  2, 971, 2938;  2, 980, 2265;  2, 987, 2223;  2, 994, 2085;  2, 1007, 1342;  2, 1012, 2321;  2, 1019, 2801;  2, 1023, 2434;  2, 1029, 1466;  2, 1030, 2073;  2, 1031, 2891;  2, 1035, 1335;  2, 1037, 1760;  2, 1054, 1859;  2, 1061, 1785;  2, 1064, 2903;  2, 1065, 2951;  2, 1071, 2847;  2, 1079, 2338;  2, 1082, 2746;  2, 1083, 1706;  2, 1085, 2109;  2, 1107, 1379;  2, 1108, 2717;  2, 1119, 1286;  2, 1125, 2638;  2, 1131, 2692;  2, 1136, 2721;  2, 1139, 2159;  2, 1143, 1323;  2, 1151, 1468;  2, 1157, 1923;  2, 1161, 2524;  2, 1163, 2579;  2, 1168, 2139;  2, 1175, 2614;  2, 1179, 2866;  2, 1181, 1682;  2, 1184, 2945;  2, 1191, 1425;  2, 1198, 1319;  2, 1204, 1355;  2, 1205, 1569;  2, 1209, 1816;  2, 1210, 1935;  2, 1211, 2897;  2, 1217, 1383;  2, 1222, 1733;  2, 1239, 2609;  2, 1241, 1574;  2, 1247, 1252;  2, 1264, 1631;  2, 1277, 2584;  2, 1283, 1697;  2, 1289, 1762;  2, 1295, 2747;  2, 1300, 2111;  2, 1301, 2884;  2, 1306, 2777;  2, 1312, 1959;  2, 1313, 1552;  2, 1341, 1949;  2, 1366, 1431;  2, 1367, 2805;  2, 1391, 1995;  2, 1402, 2825;  2, 1403, 2368;  2, 1419, 1523;  2, 1421, 2471;  2, 1444, 1721;  2, 1455, 1690;  2, 1467, 1792;  2, 1469, 2015;  2, 1479, 2848;  2, 1481, 2741;  2, 1486, 2061;  2, 1492, 2013;  2, 1504, 1511;  2, 1505, 1947;  2, 1517, 2177;  2, 1521, 2392;  2, 1540, 2793;  2, 1557, 2055;  2, 1559, 2421;  2, 1563, 2315;  2, 1581, 2267;  2, 1583, 1636;  2, 1589, 1990;  2, 1593, 2403;  2, 1595, 2811;  2, 1601, 1901;  2, 1606, 1827;  2, 1612, 2955;  2, 1618, 1767;  2, 1637, 2303;  2, 1641, 2045;  2, 1648, 2589;  2, 1661, 2842;  2, 1679, 2134;  2, 1689, 2254;  2, 1695, 2308;  2, 1696, 1709;  2, 1707, 2518;  2, 1756, 2927;  2, 1780, 1829;  2, 1804, 1875;  2, 1805, 2207;  2, 1811, 2854;  2, 1823, 2877;  2, 1833, 1948;  2, 1846, 1863;  2, 1851, 1913;  2, 1852, 2393;  2, 1870, 2033;  2, 1889, 2020;  2, 1894, 1973;  2, 1905, 2014;  2, 1907, 2386;  2, 1911, 2224;  2, 1924, 2547;  2, 1961, 2704;  2, 1971, 1972;  2, 1978, 2765;  2, 1984, 2985;  2, 2001, 2369;  2, 2026, 2397;  2, 2039, 2931;  2, 2044, 2890;  2, 2087, 2476;  2, 2093, 2590;  2, 2117, 2619;  2, 2145, 2812;  2, 2147, 2248;  2, 2165, 2823;  2, 2176, 2745;  2, 2188, 2759;  2, 2189, 2939;  2, 2205, 2817;  2, 2230, 2961;  2, 2247, 2753;  2, 2249, 2920;  2, 2259, 2297;  2, 2261, 2523;  2, 2283, 2573;  2, 2320, 2637;  2, 2344, 2349;  2, 2345, 2783;  2, 2350, 2649;  2, 2379, 2561;  2, 2451, 2651;  2, 2465, 2715;  2, 2477, 2572;  2, 2494, 2855;  2, 2507, 2925;  2, 2511, 2723;  2, 2535, 2740;  2, 2603, 2986;  2, 2667, 2981;  2, 2699, 2711;  2, 2710, 2963;  2, 2813, 2824;  2, 2883, 2889;  3, 10, 1107;  3, 15, 915;  3, 16, 543;  3, 22, 1169;  3, 33, 1853;  3, 34, 1175;  3, 47, 2656;  3, 52, 2013;  3, 64, 322;  3, 69, 1209;  3, 71, 2098;  3, 75, 2021;  3, 76, 1923;  3, 77, 1312;  3, 82, 2651;  3, 83, 2207;  3, 94, 261;  3, 100, 1570;  3, 101, 2200;  3, 106, 298;  3, 119, 2703;  3, 124, 2939;  3, 129, 1137;  3, 130, 2679;  3, 131, 651;  3, 136, 669;  3, 142, 1403;  3, 148, 1079;  3, 159, 1330;  3, 172, 772;  3, 173, 1144;  3, 177, 1534;  3, 178, 1198;  3, 190, 1348;  3, 195, 2638;  3, 201, 2074;  3, 202, 2337;  3, 213, 1985;  3, 225, 2493;  3, 232, 2621;  3, 233, 2669;  3, 243, 1545;  3, 244, 2727;  3, 256, 2303;  3, 285, 1738;  3, 299, 893;  3, 304, 2225;  3, 305, 1619;  3, 310, 2122;  3, 323, 1192;  3, 334, 753;  3, 335, 2728;  3, 339, 2950;  3, 346, 1311;  3, 352, 2968;  3, 353, 2255;  3, 357, 1942;  3, 358, 2369;  3, 359, 531;  3, 370, 2260;  3, 376, 1084;  3, 377, 2662;  3, 383, 945;  3, 388, 1960;  3, 405, 2301;  3, 406, 1552;  3, 411, 1917;  3, 417, 1582;  3, 418, 1977;  3, 423, 1864;  3, 436, 2921;  3, 437, 958;  3, 454, 1959;  3, 460, 957;  3, 478, 1966;  3, 484, 2963;  3, 485, 2764;  3, 520, 718;  3, 521, 1102;  3, 527, 946;  3, 532, 2404;  3, 533, 2896;  3, 537, 1451;  3, 538, 1665;  3, 544, 2740;  3, 555, 1450;  3, 556, 2866;  3, 561, 2542;  3, 587, 603;  3, 592, 645;  3, 597, 1780;  3, 609, 2890;  3, 617, 658;  3, 627, 1846;  3, 628, 1187;  3, 629, 2087;  3, 634, 2716;  3, 652, 1343;  3, 653, 2548;  3, 664, 2129;  3, 676, 785;  3, 677, 706;  3, 682, 694;  3, 683, 1720;  3, 736, 1787;  3, 742, 1469;  3, 754, 1264;  3, 766, 1929;  3, 779, 2494;  3, 784, 1546;  3, 796, 1024;  3, 797, 1426;  3, 809, 886;  3, 837, 2189;  3, 855, 1708;  3, 869, 2146;  3, 881, 1210;  3, 887, 2530;  3, 891, 2512;  3, 899, 2812;  3, 910, 1678;  3, 940, 2002;  3, 951, 1924;  3, 952, 1498;  3, 953, 2110;  3, 964, 2483;  3, 969, 2632;  3, 975, 2038;  3, 982, 1097;  3, 983, 2855;  3, 988, 2224;  3, 989, 1006;  3, 999, 2290;  3, 1001, 2927;  3, 1007, 2177;  3, 1013, 1600;  3, 1018, 1265;  3, 1042, 1473;  3, 1054, 1636;  3, 1055, 2278;  3, 1060, 2285;  3, 1072, 2758;  3, 1090, 1875;  3, 1096, 1293;  3, 1114, 1629;  3, 1120, 2908;  3, 1132, 1803;  3, 1145, 1522;  3, 1234, 1607;  3, 1235, 1912;  3, 1282, 2236;  3, 1301, 1990;  3, 1306, 2465;  3, 1313, 2722;  3, 1319, 1618;  3, 1324, 1533;  3, 1354, 2362;  3, 1367, 2872;  3, 1384, 2962;  3, 1396, 1816;  3, 1409, 1931;  3, 1420, 2597;  3, 1481, 2368;  3, 1487, 2590;  3, 1511, 2777;  3, 1528, 1799;  3, 1529, 2392;  3, 1541, 2518;  3, 1553, 2591;  3, 1565, 1961;  3, 1589, 2854;  3, 1594, 1768;  3, 1606, 1810;  3, 1660, 1691;  3, 1684, 2891;  3, 1726, 2279;  3, 1756, 2308;  3, 1762, 2116;  3, 1852, 1978;  3, 1865, 2884;  3, 1888, 2627;  3, 1895, 1918;  3, 1972, 2596;  3, 2080, 2489;  3, 2086, 2974;  3, 2158, 2302;  3, 2159, 2639;  3, 2170, 2351;  3, 2182, 2387;  3, 2201, 2345;  3, 2230, 2572;  3, 2309, 2770;  3, 2344, 2813;  3, 2380, 2993;  3, 2608, 2849;  3, 2692, 2987;  3, 2836, 2873;  4, 23, 341;  4, 28, 1619;  4, 29, 1678;  4, 34, 2915;  4, 47, 874;  4, 59, 2350;  4, 65, 1306;  4, 70, 1529;  4, 77, 814;  4, 94, 2189;  4, 95, 250;  4, 101, 754;  4, 149, 490;  4, 166, 1439;  4, 173, 2434;  4, 179, 544;  4, 191, 359;  4, 197, 1948;  4, 203, 376;  4, 226, 749;  4, 227, 2375;  4, 238, 2837;  4, 239, 1301;  4, 244, 1637;  4, 322, 1643;  4, 323, 808;  4, 335, 1655;  4, 353, 2381;  4, 383, 2302;  4, 388, 2675;  4, 395, 1367;  4, 401, 1834;  4, 425, 2087;  4, 431, 2057;  4, 436, 1157;  4, 437, 2249;  4, 443, 569;  4, 455, 797;  4, 466, 2315;  4, 478, 1781;  4, 479, 2003;  4, 484, 1144;  4, 497, 730;  4, 508, 2056;  4, 520, 1882;  4, 521, 2014;  4, 532, 1727;  4, 533, 1631;  4, 539, 767;  4, 598, 1511;  4, 647, 2009;  4, 683, 1583;  4, 706, 1889;  4, 779, 1372;  4, 785, 959;  4, 857, 2855;  4, 862, 2705;  4, 863, 1067;  4, 875, 2069;  4, 905, 1985;  4, 911, 2357;  4, 923, 970;  4, 965, 1336;  4, 977, 2141;  4, 982, 2801;  4, 983, 1547;  4, 989, 1774;  4, 1007, 1960;  4, 1013, 2771;  4, 1019, 1732;  4, 1025, 2735;  4, 1061, 1091;  4, 1097, 2243;  4, 1102, 2333;  4, 1115, 1391;  4, 1271, 1433;  4, 1379, 1661;  4, 1487, 1553;  4, 1517, 2165;  4, 1943, 2741;  4, 2081, 2339;  4, 2147, 2729;  4, 2159, 2645;  4, 2183, 2303;  4, 2273, 2279;  4, 2459, 2549;  5, 29, 629;  5, 41, 1115;  5, 161, 2387;  5, 437, 2489;  5, 647, 1991;  5, 809, 1649}.
The deficiency graph is connected and has girth 4.

\adfAppGap

{\boldmath $\adfPENT(3,310,33)$}, $d = 6$:
{0, 1, 2;  0, 3, 171;  0, 4, 484;  0, 5, 364;  0, 9, 384;  0, 10, 589;  0, 11, 517;  0, 12, 294;  0, 13, 580;  0, 14, 487;  0, 21, 296;  0, 22, 397;  0, 23, 174;  0, 36, 354;  0, 37, 493;  0, 38, 427;  0, 42, 180;  0, 43, 463;  0, 44, 343;  0, 51, 147;  0, 52, 523;  0, 53, 337;  0, 54, 264;  0, 55, 583;  0, 56, 451;  0, 57, 173;  0, 58, 262;  0, 59, 60;  0, 61, 485;  0, 62, 483;  0, 66, 266;  0, 67, 349;  0, 68, 75;  0, 69, 387;  0, 70, 598;  0, 71, 586;  0, 72, 149;  0, 73, 301;  0, 74, 126;  0, 76, 584;  0, 84, 369;  0, 85, 466;  0, 86, 445;  0, 90, 258;  0, 91, 469;  0, 92, 280;  0, 93, 182;  0, 94, 190;  0, 95, 114;  0, 102, 306;  0, 103, 535;  0, 104, 373;  0, 111, 389;  0, 112, 424;  0, 113, 201;  0, 115, 464;  0, 116, 462;  0, 120, 356;  0, 121, 406;  0, 122, 222;  0, 127, 524;  0, 128, 522;  0, 129, 645;  0, 130, 172;  0, 131, 449;  0, 133, 271;  0, 134, 183;  0, 139, 148;  0, 140, 422;  0, 162, 386;  0, 163, 475;  0, 164, 198;  0, 169, 181;  0, 170, 380;  0, 175, 581;  0, 176, 579;  0, 184, 470;  0, 185, 468;  0, 187, 391;  0, 188, 276;  0, 189, 342;  0, 191, 565;  0, 193, 361;  0, 194, 234;  0, 199, 590;  0, 202, 599;  0, 203, 597;  0, 205, 259;  0, 206, 434;  0, 211, 295;  0, 212, 530;  0, 223, 494;  0, 228, 633;  0, 229, 265;  0, 230, 542;  0, 235, 536;  0, 261, 363;  0, 263, 646;  0, 272, 553;  0, 277, 467;  0, 278, 465;  0, 281, 433;  0, 283, 385;  0, 284, 569;  0, 286, 307;  0, 287, 383;  0, 302, 613;  0, 303, 651;  0, 304, 388;  0, 305, 596;  0, 313, 355;  0, 314, 482;  0, 319, 370;  0, 320, 458;  0, 338, 421;  0, 344, 379;  0, 350, 634;  0, 362, 571;  0, 365, 448;  0, 374, 382;  0, 381, 572;  0, 392, 619;  0, 398, 601;  0, 407, 502;  0, 423, 585;  0, 425, 652;  0, 428, 481;  0, 446, 457;  0, 447, 647;  0, 452, 541;  0, 476, 643;  0, 488, 529;  0, 518, 568;  0, 567, 644;  0, 587, 595;  1, 3, 365;  1, 4, 363;  1, 5, 484;  1, 9, 518;  1, 11, 589;  1, 14, 580;  1, 21, 175;  1, 22, 176;  1, 23, 530;  1, 38, 493;  1, 44, 463;  1, 51, 338;  1, 53, 523;  1, 56, 583;  1, 57, 61;  1, 58, 62;  1, 59, 449;  1, 68, 542;  1, 69, 587;  1, 70, 585;  1, 71, 598;  1, 74, 422;  1, 75, 635;  1, 86, 466;  1, 91, 279;  1, 92, 469;  1, 93, 115;  1, 94, 116;  1, 95, 380;  1, 104, 535;  1, 111, 202;  1, 112, 203;  1, 113, 596;  1, 122, 482;  1, 129, 173;  1, 130, 483;  1, 131, 262;  1, 134, 434;  1, 140, 301;  1, 147, 337;  1, 148, 524;  1, 164, 569;  1, 170, 190;  1, 171, 364;  1, 172, 485;  1, 181, 464;  1, 183, 554;  1, 188, 458;  1, 194, 383;  1, 201, 653;  1, 206, 271;  1, 211, 579;  1, 212, 397;  1, 230, 349;  1, 261, 646;  1, 263, 447;  1, 265, 584;  1, 285, 308;  1, 287, 361;  1, 303, 389;  1, 304, 597;  1, 305, 424;  1, 314, 406;  1, 362, 381;  1, 370, 467;  1, 387, 586;  1, 388, 599;  1, 405, 502;  1, 423, 652;  1, 448, 645;  1, 476, 567;  2, 3, 483;  2, 4, 129;  2, 5, 62;  2, 11, 200;  2, 14, 176;  2, 23, 602;  2, 38, 224;  2, 44, 116;  2, 53, 128;  2, 57, 172;  2, 58, 364;  2, 59, 647;  2, 68, 635;  2, 69, 597;  2, 70, 303;  2, 71, 203;  2, 75, 452;  2, 76, 266;  2, 86, 278;  2, 92, 185;  2, 104, 236;  2, 111, 388;  2, 112, 586;  2, 113, 653;  2, 122, 503;  2, 130, 171;  2, 131, 363;  2, 183, 281;  2, 184, 260;  2, 201, 587;  2, 202, 389;  2, 261, 447;  2, 262, 645;  2, 263, 484;  2, 304, 387;  2, 305, 585;  2, 406, 501;  2, 424, 651;  2, 425, 598;  2, 448, 646;  2, 449, 485;  3, 4, 5;  3, 15, 143;  3, 16, 184;  3, 17, 69;  3, 39, 267;  3, 40, 472;  3, 41, 340;  3, 45, 357;  3, 46, 586;  3, 57, 261;  3, 58, 646;  3, 59, 448;  3, 70, 461;  3, 71, 459;  3, 75, 311;  3, 76, 394;  3, 77, 117;  3, 87, 297;  3, 88, 466;  3, 93, 477;  3, 95, 490;  3, 106, 526;  3, 107, 454;  3, 123, 299;  3, 124, 352;  3, 125, 285;  3, 130, 647;  3, 135, 359;  3, 136, 520;  3, 137, 237;  3, 166, 538;  3, 167, 346;  3, 172, 262;  3, 173, 485;  3, 178, 539;  3, 185, 496;  3, 190, 400;  3, 191, 225;  3, 195, 479;  3, 197, 279;  3, 208, 310;  3, 209, 593;  3, 214, 268;  3, 215, 383;  3, 226, 527;  3, 233, 437;  3, 238, 587;  3, 275, 376;  3, 286, 467;  3, 305, 622;  3, 316, 478;  3, 317, 545;  3, 322, 358;  3, 323, 533;  3, 341, 382;  3, 347, 436;  3, 353, 556;  3, 395, 604;  3, 401, 568;  3, 431, 592;  3, 449, 484;  3, 491, 544;  4, 16, 71;  4, 17, 377;  4, 41, 472;  4, 47, 586;  4, 77, 593;  4, 89, 466;  4, 107, 526;  4, 124, 287;  4, 125, 425;  4, 137, 533;  4, 142, 461;  4, 173, 364;  4, 197, 545;  4, 203, 383;  4, 209, 394;  4, 233, 304;  4, 262, 647;  5, 17, 497;  5, 47, 239;  5, 59, 131;  5, 89, 287;  5, 95, 281;  5, 107, 227;  0, 6, 274;  0, 7, 442;  0, 8, 436;  0, 15, 255;  0, 16, 257;  0, 17, 609;  0, 18, 357;  0, 19, 489;  0, 20, 431;  0, 24, 178;  0, 25, 556;  0, 26, 316;  0, 33, 65;  0, 34, 437;  0, 35, 80;  0, 39, 606;  0, 40, 144;  0, 41, 537;  0, 45, 641;  0, 46, 97;  0, 47, 410;  0, 49, 340;  0, 50, 315;  0, 63, 640;  0, 64, 353;  0, 78, 519;  0, 79, 591;  0, 81, 511;  0, 82, 321;  0, 83, 88;  0, 89, 217;  0, 96, 526;  0, 98, 557;  0, 105, 215;  0, 106, 608;  0, 107, 402;  0, 108, 401;  0, 109, 539;  0, 110, 346;  0, 117, 135;  0, 118, 547;  0, 119, 560;  0, 123, 275;  0, 124, 375;  0, 125, 623;  0, 136, 291;  0, 137, 460;  0, 145, 310;  0, 146, 250;  0, 150, 478;  0, 151, 551;  0, 152, 549;  0, 153, 248;  0, 155, 399;  0, 156, 394;  0, 157, 515;  0, 158, 532;  0, 159, 440;  0, 160, 290;  0, 161, 179;  0, 165, 408;  0, 166, 225;  0, 167, 604;  0, 177, 237;  0, 195, 242;  0, 196, 226;  0, 197, 603;  0, 207, 649;  0, 208, 533;  0, 209, 497;  0, 213, 358;  0, 214, 621;  0, 216, 525;  0, 218, 341;  0, 219, 227;  0, 220, 368;  0, 221, 330;  0, 239, 366;  0, 240, 592;  0, 241, 538;  0, 243, 611;  0, 244, 499;  0, 245, 267;  0, 247, 561;  0, 249, 607;  0, 251, 395;  0, 253, 412;  0, 254, 459;  0, 256, 367;  0, 269, 416;  0, 273, 639;  0, 289, 508;  0, 292, 311;  0, 297, 543;  0, 298, 377;  0, 299, 512;  0, 317, 631;  0, 322, 506;  0, 323, 575;  0, 325, 562;  0, 326, 520;  0, 327, 477;  0, 329, 638;  0, 331, 509;  0, 332, 573;  0, 345, 650;  0, 347, 559;  0, 351, 415;  0, 376, 632;  0, 393, 500;  0, 400, 578;  0, 403, 490;  0, 404, 610;  0, 409, 544;  0, 413, 563;  0, 429, 548;  0, 435, 514;  0, 439, 471;  0, 443, 473;  0, 461, 637;  0, 472, 577;  0, 479, 507;  0, 491, 505;  0, 495, 521;  0, 496, 574;  0, 513, 593;  0, 531, 555;  0, 605, 622;  1, 7, 549;  1, 8, 537;  1, 15, 242;  1, 16, 526;  1, 17, 157;  1, 19, 575;  1, 20, 610;  1, 25, 353;  1, 26, 244;  1, 34, 217;  1, 35, 50;  1, 39, 46;  1, 40, 441;  1, 41, 256;  1, 45, 98;  1, 47, 409;  1, 49, 509;  1, 63, 632;  1, 64, 609;  1, 65, 393;  1, 79, 611;  1, 80, 490;  1, 81, 412;  1, 82, 519;  1, 83, 560;  1, 87, 328;  1, 89, 411;  1, 97, 435;  1, 105, 135;  1, 106, 214;  1, 107, 511;  1, 109, 351;  1, 110, 322;  1, 117, 638;  1, 118, 358;  1, 119, 531;  1, 123, 267;  1, 124, 489;  1, 125, 158;  1, 137, 241;  1, 146, 395;  1, 151, 623;  1, 152, 376;  1, 153, 309;  1, 154, 442;  1, 155, 289;  1, 159, 404;  1, 161, 506;  1, 165, 273;  1, 167, 574;  1, 177, 440;  1, 178, 508;  1, 195, 520;  1, 196, 605;  1, 197, 345;  1, 207, 227;  1, 208, 403;  1, 209, 215;  1, 218, 437;  1, 219, 416;  1, 221, 299;  1, 237, 548;  1, 239, 347;  1, 245, 274;  1, 248, 555;  1, 249, 310;  1, 250, 357;  1, 254, 593;  1, 255, 507;  1, 257, 399;  1, 268, 377;  1, 269, 556;  1, 275, 352;  1, 290, 323;  1, 291, 325;  1, 311, 327;  1, 316, 561;  1, 317, 650;  1, 321, 478;  1, 326, 640;  1, 332, 525;  1, 340, 496;  1, 346, 562;  1, 368, 622;  1, 375, 410;  1, 394, 539;  1, 413, 512;  1, 429, 608;  1, 430, 459;  1, 477, 578;  1, 491, 495;  1, 497, 603;  1, 500, 538;  1, 514, 545;  1, 563, 639;  1, 573, 592;  2, 8, 441;  2, 15, 431;  2, 16, 33;  2, 17, 632;  2, 20, 81;  2, 34, 539;  2, 39, 255;  2, 45, 299;  2, 46, 640;  2, 47, 152;  2, 50, 245;  2, 64, 479;  2, 65, 315;  2, 80, 237;  2, 82, 88;  2, 83, 297;  2, 89, 525;  2, 98, 213;  2, 105, 242;  2, 107, 610;  2, 110, 545;  2, 118, 275;  2, 119, 274;  2, 123, 440;  2, 124, 532;  2, 136, 332;  2, 137, 377;  2, 146, 353;  2, 153, 178;  2, 155, 298;  2, 158, 513;  2, 160, 165;  2, 161, 562;  2, 166, 248;  2, 167, 321;  2, 177, 507;  2, 219, 225;  2, 227, 591;  2, 239, 368;  2, 244, 496;  2, 249, 395;  2, 250, 273;  2, 254, 520;  2, 257, 473;  2, 269, 593;  2, 291, 442;  2, 310, 317;  2, 322, 471;  2, 327, 358;  2, 328, 543;  2, 329, 436;  2, 340, 401;  2, 346, 489;  2, 352, 639;  2, 359, 605;  2, 394, 544;  2, 490, 514;  2, 491, 550;  2, 495, 573;  2, 497, 521;  2, 515, 575;  2, 533, 556;  2, 538, 563;  2, 604, 622}.
The deficiency graph is connected and has girth 4.

{\boldmath $\adfPENT(3,316,33)$}, $d = 6$:
{0, 1, 2;  0, 6, 372;  0, 7, 655;  0, 8, 463;  0, 12, 336;  0, 13, 622;  0, 14, 451;  0, 15, 246;  0, 16, 535;  0, 17, 397;  0, 18, 337;  0, 19, 475;  0, 20, 186;  0, 24, 228;  0, 25, 601;  0, 26, 403;  0, 48, 285;  0, 49, 592;  0, 50, 343;  0, 54, 195;  0, 55, 616;  0, 56, 352;  0, 60, 247;  0, 61, 445;  0, 62, 72;  0, 63, 273;  0, 64, 613;  0, 65, 364;  0, 66, 282;  0, 67, 619;  0, 68, 301;  0, 69, 274;  0, 70, 412;  0, 71, 81;  0, 73, 510;  0, 74, 79;  0, 75, 288;  0, 76, 499;  0, 77, 436;  0, 78, 534;  0, 80, 446;  0, 82, 561;  0, 83, 142;  0, 90, 283;  0, 91, 469;  0, 92, 222;  0, 114, 286;  0, 115, 349;  0, 116, 138;  0, 117, 196;  0, 118, 418;  0, 119, 123;  0, 124, 465;  0, 125, 136;  0, 133, 643;  0, 134, 454;  0, 135, 615;  0, 137, 419;  0, 139, 516;  0, 140, 271;  0, 143, 413;  0, 147, 211;  0, 148, 529;  0, 149, 192;  0, 151, 481;  0, 152, 198;  0, 157, 478;  0, 158, 171;  0, 168, 432;  0, 169, 661;  0, 170, 589;  0, 173, 217;  0, 174, 373;  0, 175, 595;  0, 176, 249;  0, 188, 262;  0, 194, 205;  0, 197, 329;  0, 200, 265;  0, 206, 530;  0, 212, 380;  0, 218, 479;  0, 223, 543;  0, 224, 238;  0, 230, 236;  0, 235, 554;  0, 239, 470;  0, 243, 433;  0, 244, 604;  0, 245, 318;  0, 248, 323;  0, 250, 609;  0, 251, 325;  0, 261, 621;  0, 263, 476;  0, 266, 482;  0, 270, 591;  0, 272, 350;  0, 275, 299;  0, 284, 296;  0, 287, 332;  0, 290, 422;  0, 295, 545;  0, 297, 363;  0, 298, 563;  0, 302, 544;  0, 320, 367;  0, 322, 512;  0, 326, 596;  0, 327, 351;  0, 328, 467;  0, 331, 518;  0, 338, 386;  0, 344, 517;  0, 353, 466;  0, 365, 562;  0, 368, 605;  0, 374, 440;  0, 379, 578;  0, 385, 608;  0, 398, 511;  0, 404, 553;  0, 411, 614;  0, 417, 617;  0, 421, 494;  0, 434, 458;  0, 437, 493;  0, 439, 611;  0, 452, 607;  0, 455, 577;  0, 457, 650;  0, 464, 610;  0, 477, 602;  0, 590, 649;  0, 603, 662;  1, 7, 325;  1, 8, 326;  1, 13, 262;  1, 14, 263;  1, 15, 61;  1, 16, 79;  1, 17, 80;  1, 20, 452;  1, 25, 217;  1, 26, 218;  1, 49, 271;  1, 50, 272;  1, 55, 136;  1, 56, 137;  1, 62, 398;  1, 63, 70;  1, 64, 142;  1, 65, 143;  1, 67, 238;  1, 68, 239;  1, 69, 563;  1, 71, 365;  1, 73, 535;  1, 75, 151;  1, 76, 265;  1, 77, 266;  1, 81, 274;  1, 83, 273;  1, 92, 302;  1, 116, 344;  1, 117, 467;  1, 119, 353;  1, 123, 196;  1, 124, 616;  1, 125, 195;  1, 134, 206;  1, 135, 197;  1, 139, 592;  1, 140, 285;  1, 141, 275;  1, 147, 578;  1, 149, 455;  1, 152, 437;  1, 158, 404;  1, 169, 367;  1, 170, 368;  1, 171, 229;  1, 176, 464;  1, 187, 622;  1, 236, 479;  1, 237, 284;  1, 243, 650;  1, 245, 590;  1, 249, 373;  1, 261, 338;  1, 296, 470;  1, 297, 411;  1, 298, 561;  1, 299, 413;  1, 323, 446;  1, 327, 417;  1, 328, 465;  1, 329, 419;  1, 332, 350;  1, 351, 617;  1, 352, 418;  1, 363, 614;  1, 364, 412;  1, 380, 530;  1, 386, 476;  1, 422, 482;  1, 435, 500;  1, 440, 596;  1, 453, 644;  1, 458, 605;  1, 466, 615;  2, 17, 321;  2, 56, 327;  2, 63, 561;  2, 64, 411;  2, 65, 297;  2, 69, 412;  2, 70, 141;  2, 71, 274;  2, 81, 365;  2, 82, 563;  2, 116, 286;  2, 117, 418;  2, 118, 135;  2, 119, 196;  2, 123, 353;  2, 124, 467;  2, 125, 616;  2, 136, 329;  2, 137, 351;  2, 140, 592;  2, 142, 299;  2, 143, 363;  2, 171, 404;  2, 188, 622;  2, 195, 617;  2, 197, 466;  2, 206, 453;  2, 238, 296;  2, 243, 604;  2, 250, 611;  2, 262, 386;  2, 275, 562;  2, 285, 593;  2, 328, 615;  2, 352, 419;  2, 364, 413;  3, 4, 5;  3, 9, 237;  3, 11, 352;  3, 15, 297;  3, 16, 664;  3, 17, 496;  3, 21, 238;  3, 22, 502;  3, 23, 81;  3, 28, 538;  3, 29, 460;  3, 51, 369;  3, 52, 604;  3, 53, 388;  3, 63, 333;  3, 64, 658;  3, 70, 646;  3, 71, 442;  3, 75, 298;  3, 77, 207;  3, 83, 214;  3, 94, 466;  3, 95, 141;  3, 118, 454;  3, 119, 267;  3, 135, 423;  3, 136, 610;  3, 142, 477;  3, 143, 292;  3, 153, 327;  3, 154, 622;  3, 155, 400;  3, 159, 424;  3, 160, 592;  3, 161, 225;  3, 172, 471;  3, 173, 178;  3, 179, 407;  3, 191, 251;  3, 197, 219;  3, 202, 448;  3, 209, 232;  3, 215, 503;  3, 221, 286;  3, 227, 274;  3, 239, 305;  3, 250, 473;  3, 275, 593;  3, 293, 467;  3, 299, 347;  3, 323, 533;  3, 329, 341;  3, 335, 485;  3, 340, 557;  3, 371, 377;  3, 376, 581;  3, 383, 472;  3, 389, 580;  3, 401, 556;  3, 436, 515;  3, 443, 514;  3, 461, 478;  3, 484, 653;  4, 10, 214;  4, 11, 215;  4, 16, 232;  4, 17, 233;  4, 23, 353;  4, 28, 292;  4, 29, 293;  4, 64, 286;  4, 65, 287;  4, 77, 497;  4, 95, 461;  4, 119, 443;  4, 136, 274;  4, 137, 275;  4, 154, 322;  4, 155, 323;  4, 203, 389;  4, 251, 407;  4, 305, 503;  4, 341, 533;  4, 377, 449;  4, 437, 455;  0, 3, 354;  0, 4, 278;  0, 5, 45;  0, 9, 626;  0, 10, 568;  0, 11, 624;  0, 21, 447;  0, 22, 121;  0, 23, 279;  0, 27, 305;  0, 28, 505;  0, 29, 550;  0, 30, 551;  0, 31, 599;  0, 32, 184;  0, 33, 242;  0, 34, 570;  0, 35, 153;  0, 36, 221;  0, 37, 303;  0, 38, 519;  0, 39, 292;  0, 40, 391;  0, 41, 291;  0, 43, 639;  0, 44, 128;  0, 46, 163;  0, 47, 252;  0, 51, 159;  0, 52, 558;  0, 84, 395;  0, 85, 640;  0, 86, 399;  0, 87, 514;  0, 88, 340;  0, 89, 496;  0, 93, 426;  0, 94, 572;  0, 95, 416;  0, 97, 497;  0, 98, 627;  0, 99, 202;  0, 100, 253;  0, 101, 377;  0, 102, 215;  0, 103, 371;  0, 104, 280;  0, 109, 491;  0, 110, 131;  0, 111, 566;  0, 112, 207;  0, 120, 334;  0, 122, 209;  0, 126, 484;  0, 127, 213;  0, 129, 571;  0, 144, 513;  0, 145, 537;  0, 146, 165;  0, 154, 314;  0, 155, 579;  0, 161, 587;  0, 162, 339;  0, 164, 381;  0, 166, 625;  0, 167, 632;  0, 178, 304;  0, 179, 532;  0, 180, 574;  0, 181, 569;  0, 182, 227;  0, 183, 268;  0, 189, 448;  0, 190, 523;  0, 191, 408;  0, 201, 637;  0, 203, 347;  0, 208, 597;  0, 219, 382;  0, 220, 547;  0, 225, 487;  0, 226, 580;  0, 231, 542;  0, 232, 375;  0, 233, 506;  0, 241, 269;  0, 254, 531;  0, 259, 346;  0, 260, 357;  0, 267, 428;  0, 276, 652;  0, 277, 405;  0, 281, 555;  0, 293, 345;  0, 309, 663;  0, 310, 586;  0, 313, 515;  0, 316, 584;  0, 317, 460;  0, 335, 646;  0, 341, 355;  0, 356, 359;  0, 370, 645;  0, 383, 409;  0, 387, 489;  0, 388, 471;  0, 389, 410;  0, 392, 631;  0, 393, 651;  0, 400, 653;  0, 401, 507;  0, 406, 459;  0, 407, 560;  0, 415, 629;  0, 427, 525;  0, 472, 509;  0, 473, 557;  0, 483, 520;  0, 485, 524;  0, 488, 526;  0, 490, 638;  0, 495, 533;  0, 508, 549;  0, 527, 647;  0, 538, 641;  0, 539, 665;  0, 541, 573;  0, 548, 585;  0, 556, 583;  0, 559, 581;  0, 565, 575;  0, 567, 598;  0, 628, 664;  1, 3, 507;  1, 4, 575;  1, 5, 392;  1, 9, 95;  1, 10, 52;  1, 21, 164;  1, 22, 51;  1, 27, 146;  1, 28, 565;  1, 31, 219;  1, 32, 557;  1, 34, 292;  1, 35, 154;  1, 37, 663;  1, 38, 304;  1, 39, 580;  1, 40, 44;  1, 41, 488;  1, 43, 551;  1, 45, 524;  1, 46, 127;  1, 47, 461;  1, 53, 573;  1, 85, 310;  1, 86, 178;  1, 89, 259;  1, 93, 369;  1, 94, 241;  1, 97, 484;  1, 98, 647;  1, 100, 254;  1, 101, 278;  1, 104, 382;  1, 109, 293;  1, 110, 629;  1, 111, 651;  1, 112, 527;  1, 113, 166;  1, 121, 628;  1, 122, 153;  1, 128, 525;  1, 131, 213;  1, 145, 358;  1, 155, 191;  1, 159, 201;  1, 160, 585;  1, 161, 638;  1, 163, 645;  1, 165, 209;  1, 167, 253;  1, 177, 341;  1, 179, 260;  1, 181, 537;  1, 182, 268;  1, 183, 305;  1, 184, 626;  1, 202, 513;  1, 207, 335;  1, 220, 355;  1, 221, 381;  1, 227, 242;  1, 232, 473;  1, 233, 333;  1, 277, 652;  1, 279, 309;  1, 280, 315;  1, 281, 560;  1, 291, 345;  1, 303, 359;  1, 311, 394;  1, 314, 489;  1, 317, 574;  1, 339, 448;  1, 346, 632;  1, 347, 506;  1, 356, 664;  1, 370, 542;  1, 371, 399;  1, 375, 416;  1, 377, 471;  1, 387, 531;  1, 395, 449;  1, 400, 584;  1, 406, 526;  1, 407, 665;  1, 410, 519;  1, 447, 548;  1, 459, 491;  1, 460, 515;  1, 472, 599;  1, 485, 587;  1, 490, 521;  1, 495, 579;  1, 496, 598;  1, 533, 567;  1, 538, 639;  1, 539, 566;  1, 549, 646;  1, 555, 572;  2, 3, 101;  2, 4, 640;  2, 9, 555;  2, 10, 129;  2, 11, 580;  2, 22, 448;  2, 27, 131;  2, 28, 598;  2, 29, 304;  2, 32, 556;  2, 34, 461;  2, 35, 538;  2, 38, 575;  2, 41, 166;  2, 44, 269;  2, 45, 100;  2, 46, 167;  2, 52, 560;  2, 53, 291;  2, 87, 572;  2, 93, 652;  2, 95, 202;  2, 104, 489;  2, 112, 369;  2, 113, 155;  2, 122, 513;  2, 128, 311;  2, 130, 215;  2, 146, 490;  2, 153, 405;  2, 159, 164;  2, 161, 665;  2, 165, 208;  2, 179, 533;  2, 182, 381;  2, 184, 537;  2, 207, 303;  2, 209, 519;  2, 214, 358;  2, 220, 497;  2, 225, 314;  2, 231, 267;  2, 232, 316;  2, 233, 375;  2, 242, 573;  2, 254, 569;  2, 260, 550;  2, 278, 585;  2, 293, 645;  2, 305, 401;  2, 334, 495;  2, 335, 376;  2, 339, 599;  2, 340, 383;  2, 341, 449;  2, 345, 472;  2, 359, 460;  2, 370, 532;  2, 371, 406;  2, 377, 407;  2, 388, 639;  2, 447, 568;  2, 471, 581;  2, 473, 597;  2, 485, 646;  2, 557, 586;  2, 574, 628;  3, 148, 257}.
The deficiency graph is connected and has girth 4.

\adfAppGap

{\boldmath $\adfPENT(4,60)$}, $d = 5$:
{1, 4, 5, 180;  0, 2, 11, 176;  1, 3, 7, 182;  2, 4, 13, 178;  0, 3, 19, 174;  11, 161, 87, 129;  66, 47, 152, 145;  92, 30, 28, 138;  17, 33, 180, 93;  180, 161, 16, 34;  129, 86, 28, 149;  13, 158, 96, 16;  53, 166, 141, 3;  63, 126, 44, 143;  50, 177, 132, 14;  112, 135, 34, 84;  100, 58, 123, 29;  142, 117, 84, 14;  110, 43, 94, 146;  49, 180, 80, 174;  89, 1, 155, 42;  33, 88, 111, 95;  182, 74, 169, 69;  178, 137, 119, 7;  102, 142, 23, 181;  46, 62, 178, 146;  121, 76, 147, 5;  87, 116, 162, 177;  31, 36, 148, 19;  105, 42, 179, 183;  0, 12, 26, 146;  0, 8, 101, 151;  0, 14, 41, 171;  0, 13, 51, 111;  0, 15, 61, 136;  0, 24, 66, 156;  0, 32, 56, 86;  0, 17, 81, 159;  0, 28, 76, 99;  0, 18, 126, 164;  0, 42, 91, 161;  0, 29, 109, 131;  1, 8, 14, 16;  1, 32, 69, 92;  1, 9, 53, 129;  0, 37, 49, 96;  1, 13, 49, 94;  1, 38, 57, 142;  1, 22, 74, 109;  0, 79, 104, 141;  1, 68, 97, 114;  1, 34, 158, 179;  1, 58, 82, 153;  1, 117, 152, 173;  1, 37, 99, 168;  0, 64, 139, 147;  2, 93, 124, 134;  0, 34, 89, 150;  0, 33, 59, 133;  2, 28, 67, 94;  2, 53, 88, 184;  3, 18, 48, 154;  0, 97, 129, 167;  0, 44, 112, 132;  0, 137, 144, 168;  0, 93, 152, 182;  0, 73, 107, 157;  0, 43, 68, 117;  0, 53, 110, 177;  0, 48, 60, 163;  0, 58, 63, 95;  0, 47, 83, 130;  0, 25, 113, 140;  0, 20, 50, 77;  0, 40, 92, 105}.
The deficiency graph is connected and has girth 5.
Identifying code: $\{x: 0 \le x < 185,\; x \equiv 0 \text{ or } 2 \adfmod{5}\}$,
$\pi_0 = 0$, $\pi_1 = 74$, $\pi_2 = 111$.

{\boldmath $\adfPENT(4,65)$}, $d = 2$:
{1, 2, 11, 198;  0, 5, 190, 197;  121, 189, 135, 100;  140, 168, 128, 45;  60, 126, 0, 191;  164, 26, 44, 87;  130, 108, 103, 1;  186, 116, 96, 130;  60, 34, 89, 102;  59, 108, 181, 11;  166, 11, 155, 193;  73, 186, 114, 32;  190, 103, 145, 101;  195, 18, 116, 2;  136, 160, 51, 57;  20, 167, 71, 189;  0, 6, 25, 54;  0, 8, 52, 116;  0, 17, 32, 63;  0, 23, 122, 161;  0, 57, 124, 183;  0, 30, 88, 125;  0, 38, 137, 157;  0, 69, 85, 149;  0, 36, 145, 179;  0, 49, 96, 163;  0, 33, 106, 181;  0, 107, 135, 167;  0, 47, 83, 175;  1, 25, 91, 131;  0, 53, 129, 141;  0, 81, 139, 165;  0, 50, 100, 150;  1, 51, 101, 151}.
The last 2 blocks represent short orbits.
The deficiency graph is connected and has girth 5.
Identifying code: $\{x + 20i: x \in \{0,2,8,9,11,13,14,15\}, 0 \le i < 10\}$,
$\pi_0 = 0$, $\pi_1 = 80$, $\pi_2 = 120$.

{\boldmath $\adfPENT(4,73)$}, $d = 2$:
{5, 11, 57, 165;  60, 168, 214, 220;  97, 99, 204, 121;  12, 181, 66, 28;  104, 22, 203, 37;  25, 35, 7, 117;  16, 142, 178, 218;  57, 4, 30, 197;  140, 19, 220, 121;  74, 206, 41, 73;  178, 137, 157, 77;  0, 1, 45, 222;  0, 4, 13, 131;  0, 7, 14, 215;  0, 8, 25, 97;  0, 10, 29, 185;  0, 12, 33, 163;  0, 18, 86, 106;  0, 24, 72, 102;  0, 31, 35, 120;  0, 28, 65, 77;  0, 34, 75, 128;  0, 42, 109, 143;  0, 32, 71, 166;  0, 44, 95, 149;  0, 43, 60, 219;  0, 66, 145, 187;  0, 83, 113, 179;  0, 55, 81, 155;  0, 63, 189, 197;  0, 69, 107, 147;  0, 73, 84, 177;  0, 133, 195, 209;  0, 87, 137, 173;  0, 85, 110, 221;  0, 61, 74, 124;  0, 56, 112, 168;  1, 57, 113, 169}.
The last 2 blocks represent short orbits.
The deficiency graph is connected and has girth 6.

{\boldmath $\adfPENT(4,80)$}, $d = 5$:
{1, 4, 5, 240;  0, 2, 11, 236;  1, 3, 7, 242;  2, 4, 13, 238;  0, 3, 19, 234;  53, 146, 172, 79;  3, 180, 55, 63;  80, 186, 42, 149;  142, 191, 232, 163;  71, 87, 142, 28;  31, 226, 219, 50;  201, 14, 182, 69;  114, 87, 198, 29;  52, 229, 27, 140;  84, 133, 156, 92;  81, 217, 40, 184;  144, 46, 204, 108;  36, 61, 127, 83;  123, 243, 187, 59;  84, 96, 2, 19;  57, 12, 152, 0;  66, 161, 226, 134;  220, 139, 190, 23;  47, 183, 143, 1;  171, 213, 25, 51;  16, 133, 94, 44;  214, 194, 90, 169;  87, 152, 116, 244;  61, 157, 39, 237;  33, 111, 51, 174;  108, 113, 208, 47;  29, 115, 223, 88;  126, 21, 123, 198;  173, 66, 149, 231;  215, 14, 230, 56;  0, 7, 21, 136;  0, 13, 46, 101;  0, 14, 31, 206;  0, 16, 171, 225;  0, 17, 51, 151;  0, 18, 56, 166;  0, 22, 66, 141;  0, 24, 121, 161;  0, 23, 96, 131;  0, 25, 181, 211;  0, 27, 81, 126;  0, 32, 201, 216;  0, 37, 111, 116;  0, 28, 76, 119;  0, 39, 91, 147;  0, 43, 196, 219;  0, 64, 221, 239;  0, 72, 172, 231;  1, 8, 32, 199;  1, 9, 14, 133;  0, 61, 93, 148;  1, 13, 28, 58;  1, 18, 39, 83;  1, 22, 38, 123;  1, 34, 68, 148;  1, 52, 74, 78;  1, 49, 128, 222;  1, 94, 134, 233;  1, 113, 119, 214;  1, 82, 122, 184;  1, 132, 162, 238;  1, 144, 157, 244;  1, 53, 193, 207;  1, 107, 139, 142;  0, 33, 74, 184;  0, 34, 105, 154;  0, 40, 99, 134;  0, 42, 109, 189;  0, 47, 54, 209;  0, 52, 149, 224;  0, 58, 114, 229;  2, 17, 59, 199;  0, 75, 204, 214;  0, 38, 84, 112;  0, 113, 199, 228;  2, 33, 144, 197;  2, 14, 123, 193;  2, 148, 198, 229;  0, 179, 198, 217;  2, 39, 162, 213;  2, 54, 77, 187;  0, 35, 127, 197;  0, 83, 122, 145;  0, 77, 163, 192;  0, 97, 143, 168;  0, 55, 118, 208;  0, 67, 130, 233;  0, 80, 178, 213;  0, 73, 95, 202;  0, 62, 65, 203;  0, 70, 158, 160;  0, 53, 60, 142;  0, 50, 137, 173;  0, 102, 110, 227;  0, 45, 212, 232}.
The deficiency graph is connected and has girth 5.
Identifying code: $\{x: 0 \le x < 245,\; x \equiv 0 \text{ or } 2 \adfmod{5}\}$,
$\pi_0 = 0$, $\pi_1 = 98$, $\pi_2 = 147$.

{\boldmath $\adfPENT(4,85)$}, $d = 2$:
{1, 2, 11, 258;  0, 5, 250, 257;  185, 77, 2, 217;  223, 67, 122, 72;  219, 241, 17, 1;  71, 101, 185, 234;  124, 68, 98, 50;  112, 69, 84, 224;  255, 1, 168, 73;  116, 235, 75, 156;  212, 161, 196, 243;  178, 258, 195, 145;  103, 122, 1, 26;  138, 69, 256, 167;  24, 237, 105, 68;  56, 148, 156, 78;  37, 205, 96, 245;  153, 0, 249, 125;  198, 60, 162, 74;  21, 214, 3, 89;  66, 46, 87, 244;  23, 200, 219, 128;  0, 6, 57, 95;  0, 12, 133, 157;  0, 23, 110, 214;  0, 24, 58, 85;  0, 25, 32, 98;  0, 33, 134, 176;  0, 55, 143, 152;  0, 45, 106, 174;  0, 53, 90, 206;  0, 38, 137, 166;  0, 35, 161, 184;  0, 43, 60, 123;  0, 71, 115, 146;  0, 64, 177, 239;  0, 167, 221, 233;  0, 39, 52, 193;  0, 129, 189, 203;  1, 3, 49, 119;  1, 35, 91, 185;  0, 59, 181, 207;  0, 65, 130, 195}.
The last block represents a short orbit.
The deficiency graph is connected and has girth 5.
Identifying code: $\{x + 20i: x \in \{0,2,8,9,11,13,14,15\}, 0 \le i < 13\}$,
$\pi_0 = 0$, $\pi_1 = 104$, $\pi_2 = 156$.

{\boldmath $\adfPENT(4,89)$}, $d = 2$:
{1, 4, 243, 268;  0, 25, 30, 249;  176, 92, 128, 235;  19, 34, 220, 219;  90, 216, 1, 204;  36, 51, 142, 169;  260, 65, 160, 209;  258, 5, 165, 185;  145, 226, 246, 187;  168, 72, 219, 250;  173, 49, 179, 213;  141, 108, 237, 137;  163, 77, 262, 165;  194, 173, 67, 129;  0, 2, 9, 265;  0, 3, 11, 229;  0, 6, 23, 105;  0, 10, 31, 165;  0, 13, 215, 250;  0, 14, 53, 245;  0, 16, 111, 209;  0, 18, 61, 223;  0, 24, 65, 211;  0, 26, 63, 153;  0, 28, 103, 163;  0, 34, 101, 115;  0, 40, 137, 159;  0, 32, 149, 201;  0, 45, 161, 208;  0, 91, 123, 168;  1, 13, 51, 115;  0, 52, 125, 203;  0, 83, 141, 261;  0, 131, 157, 167;  0, 85, 113, 189;  1, 19, 75, 141;  0, 47, 180, 235;  0, 93, 110, 148;  0, 89, 102, 164;  0, 42, 121, 140;  0, 58, 118, 196;  0, 54, 128, 184;  0, 50, 120, 200;  0, 44, 90, 156;  0, 68, 136, 204;  1, 69, 137, 205}.
The last 2 blocks represent short orbits.
The deficiency graph is connected and has girth 6.

{\boldmath $\adfPENT(4,100)$}, $d = 5$:
{1, 4, 5, 300;  0, 2, 11, 296;  1, 3, 7, 302;  2, 4, 13, 298;  0, 3, 19, 294;  56, 172, 286, 104;  75, 294, 157, 297;  177, 87, 245, 239;  90, 265, 11, 147;  100, 208, 25, 164;  21, 101, 29, 238;  13, 41, 215, 1;  206, 140, 162, 30;  151, 237, 143, 62;  113, 251, 300, 12;  108, 191, 50, 30;  60, 285, 301, 114;  279, 196, 96, 12;  83, 133, 206, 12;  262, 102, 81, 236;  164, 43, 146, 204;  188, 66, 193, 29;  99, 52, 297, 12;  156, 78, 207, 41;  22, 9, 181, 288;  84, 74, 171, 62;  191, 45, 226, 178;  75, 149, 56, 282;  30, 107, 220, 173;  261, 209, 13, 117;  273, 66, 153, 121;  205, 44, 270, 20;  288, 83, 249, 52;  8, 294, 243, 222;  168, 36, 224, 280;  220, 125, 66, 162;  21, 178, 216, 190;  155, 32, 239, 134;  65, 2, 273, 219;  301, 270, 149, 105;  0, 7, 21, 126;  0, 8, 41, 101;  0, 12, 36, 276;  0, 13, 71, 156;  0, 14, 81, 96;  0, 18, 86, 211;  0, 15, 121, 251;  0, 23, 46, 186;  0, 27, 56, 191;  0, 29, 76, 221;  0, 30, 201, 291;  0, 34, 111, 231;  1, 6, 31, 44;  1, 8, 236, 252;  1, 18, 24, 261;  0, 28, 216, 266;  1, 32, 211, 244;  0, 32, 206, 282;  0, 39, 47, 281;  0, 38, 166, 257;  0, 63, 232, 271;  1, 23, 48, 212;  0, 52, 59, 116;  1, 28, 47, 287;  1, 29, 53, 102;  1, 37, 73, 198;  1, 57, 82, 207;  1, 43, 74, 152;  1, 62, 79, 272;  1, 83, 129, 257;  1, 54, 103, 177;  1, 89, 93, 107;  1, 99, 104, 188;  1, 148, 169, 303;  1, 139, 159, 288;  1, 164, 193, 247;  1, 144, 179, 243;  1, 149, 194, 294;  1, 157, 263, 304;  1, 173, 202, 279;  1, 174, 253, 289;  1, 199, 232, 284;  1, 204, 274, 299;  1, 214, 237, 264;  0, 25, 114, 264;  0, 33, 124, 214;  0, 35, 104, 244;  0, 40, 149, 229;  0, 42, 234, 289;  0, 43, 129, 194;  0, 49, 174, 255;  0, 94, 137, 269;  0, 53, 179, 254;  0, 48, 159, 279;  0, 83, 164, 274;  2, 18, 84, 254;  0, 62, 119, 262;  0, 72, 142, 259;  2, 28, 99, 159;  0, 138, 199, 227;  0, 68, 127, 169;  2, 39, 98, 208;  2, 124, 158, 243;  2, 37, 137, 204;  2, 34, 82, 197;  2, 53, 129, 223;  0, 88, 204, 267;  0, 60, 162, 212;  0, 67, 70, 287;  0, 122, 167, 197;  2, 17, 58, 188;  0, 107, 135, 292;  0, 73, 97, 200;  0, 87, 198, 215;  0, 125, 253, 297;  0, 117, 150, 298;  0, 93, 173, 252;  0, 112, 188, 278;  0, 92, 243, 283;  0, 85, 248, 303;  0, 45, 145, 268;  0, 98, 203, 238;  0, 168, 233, 263;  0, 153, 213, 228;  0, 113, 158, 273}.
The deficiency graph is connected and has girth 5.
Identifying code: $\{x: 0 \le x < 305,\; x \equiv 0 \text{ or } 2 \adfmod{5}\}$,
$\pi_0 = 0$, $\pi_1 = 122$, $\pi_2 = 183$.

{\boldmath $\adfPENT(4,105)$}, $d = 2$:
{1, 2, 11, 318;  0, 5, 310, 317;  143, 162, 262, 49;  94, 246, 31, 188;  105, 2, 182, 32;  174, 276, 160, 75;  189, 227, 240, 61;  106, 215, 291, 308;  280, 21, 35, 254;  262, 18, 52, 241;  128, 303, 250, 83;  76, 39, 245, 127;  132, 40, 12, 277;  20, 281, 51, 279;  40, 296, 62, 257;  103, 78, 311, 264;  264, 158, 138, 174;  116, 151, 274, 186;  91, 220, 285, 313;  310, 128, 305, 219;  254, 306, 283, 225;  150, 278, 66, 109;  0, 6, 18, 260;  0, 8, 56, 77;  0, 17, 274, 293;  0, 23, 72, 96;  0, 27, 136, 173;  0, 33, 112, 144;  0, 39, 216, 278;  0, 40, 85, 206;  0, 41, 50, 238;  0, 44, 99, 174;  0, 49, 164, 231;  0, 57, 83, 222;  0, 59, 74, 252;  0, 71, 172, 251;  0, 89, 196, 313;  0, 38, 133, 149;  0, 54, 167, 179;  0, 115, 135, 157;  0, 129, 165, 273;  0, 121, 183, 289;  0, 137, 203, 267;  0, 97, 171, 253;  0, 215, 263, 287;  0, 131, 187, 255;  0, 139, 249, 309;  0, 147, 193, 225;  0, 161, 277, 295;  1, 31, 85, 189;  1, 7, 41, 143;  1, 51, 121, 173;  0, 80, 160, 240;  1, 81, 161, 241}.
The last 2 blocks represent short orbits.
The deficiency graph is connected and has girth 5.
Identifying code: $\{x + 20i: x \in \{0,2,8,9,11,13,14,15\}, 0 \le i < 16\}$,
$\pi_0 = 0$, $\pi_1 = 128$, $\pi_2 = 192$.

{\boldmath $\adfPENT(4,113)$}, $d = 2$:
{12, 27, 229, 332;  67, 116, 279, 318;  136, 274, 71, 161;  63, 248, 247, 262;  204, 238, 99, 254;  324, 271, 68, 248;  71, 181, 298, 211;  221, 233, 94, 25;  143, 171, 267, 198;  43, 21, 271, 312;  153, 253, 95, 246;  169, 313, 149, 278;  268, 5, 79, 162;  269, 312, 45, 24;  270, 35, 287, 343;  226, 13, 30, 222;  171, 122, 160, 92;  189, 221, 100, 20;  187, 96, 88, 203;  151, 249, 97, 198;  78, 228, 60, 3;  0, 1, 147, 342;  0, 5, 9, 265;  0, 6, 19, 37;  0, 10, 71, 105;  0, 20, 63, 143;  0, 22, 67, 277;  0, 26, 55, 137;  0, 28, 87, 237;  0, 32, 213, 285;  0, 33, 41, 339;  0, 40, 191, 197;  0, 42, 175, 219;  0, 36, 161, 283;  0, 44, 129, 311;  0, 54, 225, 249;  0, 47, 185, 315;  0, 57, 60, 323;  0, 97, 165, 333;  0, 83, 153, 204;  0, 65, 281, 307;  0, 136, 309, 319;  0, 98, 199, 244;  0, 92, 271, 313;  0, 66, 233, 240;  1, 41, 105, 167;  1, 53, 113, 227;  1, 3, 51, 157;  0, 62, 321, 335;  0, 103, 122, 236;  0, 94, 207, 220;  0, 64, 134, 224;  0, 52, 130, 242;  0, 58, 132, 216;  0, 48, 144, 226;  0, 46, 156, 228;  0, 86, 172, 258;  1, 87, 173, 259}.
The last 2 blocks represent short orbits.
The deficiency graph is connected and has girth 6.

{\boldmath $\adfPENT(4,120)$}, $d = 5$:
{1, 4, 5, 360;  0, 2, 11, 356;  1, 3, 7, 362;  2, 4, 13, 358;  0, 3, 19, 354;  77, 334, 69, 149;  321, 215, 351, 28;  301, 34, 177, 122;  315, 323, 212, 11;  58, 209, 73, 151;  251, 66, 244, 80;  348, 261, 151, 20;  141, 258, 118, 75;  199, 301, 221, 45;  351, 138, 39, 201;  4, 36, 278, 75;  293, 162, 130, 165;  298, 327, 144, 99;  200, 108, 103, 240;  338, 280, 115, 73;  23, 154, 231, 37;  259, 99, 343, 25;  299, 333, 35, 339;  3, 29, 122, 186;  244, 46, 360, 96;  266, 355, 158, 240;  321, 112, 196, 354;  154, 276, 2, 239;  53, 328, 119, 201;  167, 68, 295, 116;  286, 349, 96, 131;  169, 58, 102, 182;  306, 106, 187, 41;  57, 199, 111, 94;  114, 96, 302, 155;  203, 24, 53, 81;  143, 362, 74, 17;  40, 221, 28, 322;  282, 358, 255, 249;  353, 68, 211, 243;  124, 107, 166, 251;  308, 233, 119, 339;  316, 227, 162, 277;  180, 107, 296, 198;  41, 319, 312, 118;  286, 82, 274, 65;  76, 264, 223, 242;  342, 147, 58, 358;  289, 16, 363, 242;  203, 34, 305, 230;  277, 217, 150, 92;  3, 314, 119, 191;  162, 138, 194, 5;  218, 99, 345, 352;  277, 10, 140, 235;  161, 215, 243, 298;  82, 90, 42, 67;  362, 113, 304, 265;  50, 32, 162, 94;  155, 233, 305, 327;  296, 215, 2, 199;  164, 11, 271, 122;  78, 54, 31, 57;  332, 199, 278, 316;  25, 200, 147, 325;  0, 12, 47, 132;  0, 13, 72, 142;  0, 14, 52, 327;  0, 15, 77, 242;  0, 16, 37, 197;  0, 20, 107, 202;  0, 21, 117, 257;  0, 23, 57, 207;  0, 24, 177, 222;  0, 29, 92, 192;  0, 31, 167, 322;  0, 33, 82, 337;  0, 53, 277, 352;  1, 6, 62, 322;  1, 8, 77, 267;  0, 34, 272, 302;  0, 38, 102, 247;  0, 36, 162, 297;  0, 48, 212, 281;  1, 9, 32, 246;  0, 54, 56, 287;  0, 49, 246, 332;  0, 73, 206, 252;  0, 63, 191, 232;  1, 13, 117, 351;  0, 59, 217, 231;  1, 23, 68, 107;  1, 14, 287, 306;  1, 37, 146, 184;  1, 18, 53, 332;  1, 29, 142, 171;  1, 44, 252, 276;  1, 49, 67, 311;  1, 94, 113, 192;  1, 38, 114, 197;  2, 33, 58, 294;  2, 28, 89, 109;  2, 38, 98, 258;  2, 104, 153, 193;  2, 29, 164, 223;  2, 124, 168, 219;  2, 14, 138, 253;  2, 43, 163, 249;  2, 229, 289, 293;  2, 178, 228, 324;  2, 103, 174, 264;  2, 189, 314, 339;  2, 48, 218, 319;  0, 25, 143, 278;  0, 30, 153, 333;  0, 68, 85, 193;  0, 55, 313, 343;  0, 88, 134, 173;  0, 113, 146, 243;  0, 148, 156, 218;  0, 198, 201, 363;  1, 46, 103, 161;  1, 108, 213, 226;  0, 98, 308, 329;  1, 69, 144, 293;  0, 46, 248, 316;  0, 96, 174, 358;  0, 93, 266, 291;  0, 188, 271, 346;  1, 93, 188, 231;  0, 64, 208, 321;  1, 139, 278, 314;  0, 86, 216, 299;  0, 90, 211, 251;  0, 70, 141, 205;  0, 50, 126, 289;  0, 91, 220, 319;  0, 41, 111, 344;  0, 69, 261, 285;  0, 100, 229, 296;  0, 120, 259, 286;  0, 89, 185, 336;  0, 105, 284, 331;  0, 45, 124, 155;  0, 184, 219, 339;  0, 114, 244, 314;  0, 104, 109, 195;  0, 159, 214, 224;  0, 60, 204, 254;  0, 84, 199, 309}.
The deficiency graph is connected and has girth 5.
Identifying code: $\{x: 0 \le x < 365,\; x \equiv 0 \text{ or } 2 \adfmod{5}\}$,
$\pi_0 = 0$, $\pi_1 = 146$, $\pi_2 = 219$.

{\boldmath $\adfPENT(4,121)$}, $d = 2$:
{16, 43, 289, 352;  37, 80, 326, 333;  341, 177, 289, 356;  337, 121, 297, 4;  33, 319, 58, 244;  130, 41, 32, 96;  195, 257, 50, 322;  205, 6, 67, 330;  44, 57, 326, 222;  344, 118, 25, 247;  287, 297, 183, 155;  249, 96, 186, 210;  146, 157, 363, 274;  203, 179, 305, 323;  292, 103, 186, 74;  15, 327, 232, 212;  204, 13, 331, 339;  113, 81, 269, 333;  131, 305, 327, 130;  85, 8, 244, 345;  202, 149, 268, 322;  71, 314, 201, 83;  209, 309, 338, 215;  328, 182, 156, 197;  258, 53, 39, 310;  0, 2, 5, 21;  0, 4, 37, 219;  0, 6, 73, 335;  0, 8, 91, 139;  0, 10, 35, 81;  0, 12, 121, 205;  0, 14, 69, 155;  0, 17, 204, 227;  0, 18, 65, 283;  0, 22, 213, 309;  0, 28, 85, 161;  0, 31, 188, 233;  0, 36, 123, 201;  0, 30, 299, 357;  0, 38, 267, 355;  0, 40, 223, 359;  0, 42, 363, 365;  0, 103, 263, 361;  0, 167, 235, 295;  0, 307, 311, 345;  1, 31, 155, 199;  0, 53, 107, 187;  0, 119, 185, 259;  0, 277, 347, 367;  0, 60, 173, 351;  0, 88, 231, 257;  0, 93, 130, 214;  0, 62, 221, 292;  0, 51, 78, 194;  0, 68, 202, 349;  0, 70, 144, 268;  0, 56, 136, 208;  0, 46, 140, 266;  0, 58, 168, 250;  0, 48, 156, 206;  0, 92, 184, 276;  1, 93, 185, 277}.
The last 2 blocks represent short orbits.
The deficiency graph is connected and has girth 6.

{\boldmath $\adfPENT(4,125)$}, $d = 2$:
{1, 2, 11, 378;  0, 5, 370, 377;  287, 281, 255, 161;  220, 142, 169, 5;  97, 17, 268, 257;  138, 104, 21, 357;  65, 1, 243, 297;  13, 225, 118, 31;  61, 353, 42, 217;  42, 148, 93, 206;  371, 333, 172, 297;  150, 285, 118, 170;  371, 268, 176, 132;  323, 358, 32, 309;  300, 14, 214, 295;  375, 377, 125, 304;  201, 312, 42, 276;  264, 297, 56, 227;  335, 250, 274, 13;  234, 240, 56, 356;  348, 274, 154, 109;  221, 335, 267, 34;  66, 163, 362, 224;  362, 202, 341, 164;  129, 207, 214, 278;  136, 329, 144, 167;  34, 351, 299, 178;  14, 128, 71, 365;  281, 337, 30, 132;  232, 295, 372, 21;  93, 44, 315, 293;  277, 165, 324, 311;  0, 12, 37, 67;  0, 14, 59, 211;  0, 16, 91, 353;  0, 17, 299, 358;  0, 18, 83, 279;  0, 21, 40, 268;  0, 26, 217, 313;  0, 28, 127, 318;  0, 41, 132, 162;  0, 35, 60, 284;  0, 42, 98, 246;  0, 46, 118, 188;  0, 47, 88, 245;  0, 111, 128, 341;  0, 48, 153, 298;  0, 43, 108, 314;  0, 69, 166, 349;  0, 87, 210, 365;  0, 68, 147, 280;  0, 101, 124, 327;  0, 50, 126, 276;  0, 93, 117, 225;  0, 123, 151, 163;  0, 207, 223, 273;  0, 77, 137, 347;  0, 89, 179, 255;  0, 113, 229, 371;  1, 21, 145, 193;  0, 53, 145, 227;  1, 43, 105, 177;  0, 95, 190, 285}.
The last block represents a short orbit.
The deficiency graph is connected and has girth 5.
Identifying code: $\{x + 20i: x \in \{0,2,8,9,11,13,14,15\}, 0 \le i < 19\}$,
$\pi_0 = 0$, $\pi_1 = 152$, $\pi_2 = 228$.

{\boldmath $\adfPENT(4,129)$}, $d = 2$:
{30, 271, 289, 362;  69, 104, 122, 325;  199, 73, 87, 338;  260, 368, 20, 181;  223, 304, 284, 351;  98, 293, 304, 182;  42, 349, 165, 383;  38, 89, 48, 177;  125, 254, 147, 26;  110, 172, 310, 46;  124, 226, 53, 363;  89, 115, 319, 143;  93, 218, 37, 100;  311, 256, 267, 306;  257, 376, 234, 350;  351, 294, 248, 104;  70, 278, 242, 347;  171, 247, 388, 156;  345, 236, 292, 10;  391, 371, 390, 147;  31, 29, 44, 201;  42, 105, 254, 207;  79, 262, 20, 237;  217, 355, 143, 352;  333, 27, 59, 237;  69, 274, 134, 206;  94, 184, 273, 263;  97, 143, 0, 354;  70, 297, 27, 147;  0, 2, 9, 235;  0, 4, 25, 49;  0, 6, 37, 107;  0, 8, 93, 155;  0, 13, 156, 168;  0, 17, 86, 100;  0, 16, 48, 70;  0, 19, 28, 244;  0, 24, 104, 218;  0, 27, 58, 124;  0, 33, 92, 352;  0, 29, 262, 296;  0, 35, 182, 256;  0, 39, 298, 350;  0, 43, 170, 304;  0, 61, 246, 359;  0, 65, 178, 293;  0, 71, 272, 347;  0, 76, 269, 310;  0, 154, 317, 355;  0, 153, 213, 230;  0, 87, 177, 204;  0, 112, 231, 297;  0, 78, 223, 369;  0, 106, 305, 389;  0, 95, 173, 363;  0, 225, 261, 325;  0, 83, 215, 371;  0, 281, 387, 391;  0, 117, 189, 197;  0, 151, 303, 343;  0, 229, 287, 337;  1, 13, 155, 207;  1, 7, 49, 141;  1, 17, 131, 161;  0, 98, 196, 294;  1, 99, 197, 295}.
The last 2 blocks represent short orbits.
The deficiency graph is connected and has girth 6.

{\boldmath $\adfPENT(4,137)$}, $d = 2$:
{166, 199, 250, 271;  146, 195, 218, 223;  232, 315, 45, 198;  381, 232, 385, 372;  18, 69, 404, 129;  225, 117, 106, 330;  131, 1, 283, 64;  281, 99, 20, 297;  222, 338, 365, 105;  63, 329, 23, 136;  37, 52, 262, 257;  299, 326, 197, 241;  28, 409, 120, 87;  215, 387, 311, 135;  236, 174, 361, 304;  251, 357, 162, 387;  69, 304, 300, 118;  362, 412, 386, 298;  315, 10, 267, 301;  273, 248, 41, 396;  365, 119, 155, 41;  97, 282, 362, 351;  115, 153, 27, 108;  395, 412, 158, 216;  137, 100, 17, 139;  318, 211, 412, 240;  62, 321, 274, 289;  215, 84, 169, 239;  157, 177, 59, 219;  193, 293, 305, 362;  133, 336, 352, 159;  239, 367, 18, 280;  328, 241, 155, 114;  0, 1, 53, 414;  0, 6, 23, 385;  0, 8, 99, 327;  0, 10, 145, 407;  0, 12, 43, 159;  0, 14, 107, 297;  0, 18, 121, 253;  0, 19, 109, 233;  0, 20, 315, 337;  0, 22, 137, 395;  0, 29, 32, 171;  0, 28, 101, 245;  0, 35, 48, 313;  0, 38, 161, 361;  0, 36, 133, 201;  0, 54, 129, 321;  0, 44, 157, 207;  1, 19, 167, 231;  0, 167, 251, 307;  0, 173, 341, 351;  0, 95, 269, 275;  0, 281, 355, 363;  0, 98, 273, 339;  0, 71, 110, 232;  0, 74, 285, 330;  0, 169, 176, 216;  0, 63, 134, 290;  0, 66, 142, 391;  0, 102, 279, 280;  0, 65, 124, 180;  0, 46, 152, 272;  0, 52, 164, 246;  0, 70, 188, 288;  0, 42, 138, 284;  0, 60, 150, 308;  0, 104, 208, 312;  1, 105, 209, 313}.
The last 2 blocks represent short orbits.
The deficiency graph is connected and has girth 6.

{\boldmath $\adfPENT(4,140)$}, $d = 5$:
{1, 4, 5, 420;  0, 2, 11, 416;  1, 3, 7, 422;  2, 4, 13, 418;  0, 3, 19, 414;  34, 285, 230, 278;  231, 351, 156, 209;  253, 324, 200, 111;  411, 273, 82, 357;  282, 307, 345, 268;  102, 260, 161, 218;  77, 160, 395, 221;  148, 313, 60, 54;  50, 187, 150, 315;  302, 203, 380, 37;  47, 226, 154, 142;  186, 284, 335, 264;  250, 318, 117, 184;  79, 286, 161, 69;  52, 170, 2, 55;  209, 78, 174, 63;  17, 35, 386, 353;  3, 316, 170, 271;  215, 84, 193, 327;  242, 324, 22, 51;  382, 120, 161, 26;  238, 409, 274, 197;  50, 223, 264, 418;  412, 388, 29, 98;  171, 229, 178, 372;  51, 122, 48, 279;  223, 176, 27, 126;  33, 424, 393, 175;  389, 79, 407, 350;  49, 134, 401, 251;  194, 252, 421, 231;  121, 323, 405, 336;  31, 388, 201, 167;  35, 277, 246, 373;  104, 150, 351, 16;  175, 337, 352, 397;  402, 132, 385, 63;  197, 370, 229, 161;  229, 99, 334, 402;  206, 314, 153, 220;  98, 201, 372, 345;  330, 401, 282, 38;  39, 295, 58, 220;  223, 391, 113, 349;  358, 123, 131, 236;  231, 285, 349, 201;  421, 38, 161, 334;  208, 254, 313, 357;  62, 249, 160, 287;  79, 83, 133, 16;  26, 408, 353, 330;  283, 100, 315, 243;  294, 367, 333, 56;  377, 32, 105, 207;  182, 68, 213, 105;  9, 58, 136, 120;  370, 314, 291, 49;  143, 245, 52, 181;  179, 324, 136, 69;  198, 224, 176, 68;  293, 415, 214, 71;  89, 299, 293, 373;  147, 81, 115, 262;  142, 360, 221, 382;  330, 3, 209, 211;  241, 56, 408, 209;  153, 327, 176, 248;  52, 117, 247, 19;  128, 211, 111, 391;  91, 345, 394, 52;  1, 187, 207, 324;  0, 7, 21, 56;  0, 8, 66, 241;  0, 12, 36, 151;  0, 13, 76, 186;  0, 14, 31, 111;  0, 18, 161, 176;  0, 15, 181, 236;  0, 20, 46, 386;  0, 24, 51, 256;  0, 28, 156, 316;  0, 25, 231, 271;  0, 29, 136, 196;  0, 34, 96, 296;  0, 38, 311, 381;  0, 43, 131, 261;  0, 42, 106, 376;  0, 72, 91, 116;  0, 40, 126, 266;  0, 52, 301, 396;  1, 9, 14, 66;  0, 83, 401, 406;  1, 13, 62, 238;  0, 44, 191, 268;  0, 73, 321, 408;  1, 17, 33, 38;  1, 27, 34, 243;  1, 19, 107, 133;  1, 28, 84, 248;  1, 29, 42, 234;  1, 52, 104, 263;  1, 47, 87, 293;  1, 77, 94, 333;  1, 114, 139, 272;  1, 39, 132, 167;  1, 112, 134, 302;  1, 53, 117, 264;  1, 102, 183, 204;  1, 122, 169, 249;  1, 93, 217, 244;  1, 127, 157, 269;  1, 142, 212, 267;  1, 147, 252, 419;  1, 197, 233, 337;  1, 124, 262, 308;  1, 198, 289, 408;  1, 173, 227, 317;  1, 213, 232, 352;  1, 129, 292, 379;  1, 273, 332, 398;  1, 242, 314, 378;  1, 254, 294, 349;  1, 237, 322, 414;  0, 30, 153, 308;  0, 33, 93, 213;  0, 35, 97, 203;  0, 45, 163, 193;  0, 54, 198, 233;  0, 58, 158, 298;  0, 63, 138, 250;  0, 67, 223, 398;  2, 39, 163, 188;  2, 53, 123, 229;  0, 99, 273, 423;  0, 113, 199, 228;  0, 128, 157, 288;  0, 69, 117, 413;  0, 152, 208, 378;  2, 63, 148, 404;  0, 122, 293, 402;  0, 139, 247, 373;  2, 244, 268, 384;  2, 219, 263, 308;  0, 144, 243, 397;  2, 78, 154, 399;  2, 73, 209, 262;  3, 124, 199, 289;  0, 227, 328, 389;  0, 92, 192, 333;  0, 60, 202, 417;  0, 80, 317, 392;  2, 59, 137, 247;  0, 79, 197, 340;  0, 182, 195, 384;  0, 87, 149, 334;  0, 219, 269, 332;  0, 105, 234, 382;  0, 167, 299, 399;  0, 65, 155, 367;  0, 135, 319, 322;  0, 47, 185, 255;  0, 134, 329, 337;  0, 279, 339, 404;  0, 140, 349, 394;  0, 84, 220, 330;  0, 74, 194, 295;  0, 114, 264, 280;  0, 50, 159, 275;  0, 119, 274, 344;  0, 59, 120, 245}.
The deficiency graph is connected and has girth 5.
Identifying code: $\{x: 0 \le x < 425,\; x \equiv 0 \text{ or } 2 \adfmod{5}\}$,
$\pi_0 = 0$, $\pi_1 = 170$, $\pi_2 = 255$.

{\boldmath $\adfPENT(4,145)$}, $d = 2$:
{1, 2, 11, 438;  0, 5, 430, 437;  38, 219, 269, 231;  398, 240, 153, 46;  421, 43, 293, 94;  306, 197, 238, 292;  32, 265, 96, 337;  87, 119, 132, 11;  59, 403, 292, 439;  106, 259, 76, 397;  118, 290, 314, 192;  131, 305, 160, 154;  236, 106, 412, 164;  72, 343, 238, 161;  332, 310, 162, 78;  94, 407, 218, 416;  193, 250, 389, 218;  128, 123, 269, 222;  284, 22, 178, 5;  437, 333, 190, 112;  361, 65, 363, 183;  168, 189, 268, 103;  75, 435, 172, 321;  76, 205, 139, 369;  148, 106, 261, 203;  406, 45, 326, 26;  210, 118, 189, 401;  419, 341, 174, 85;  299, 381, 224, 433;  305, 366, 139, 52;  411, 24, 249, 374;  200, 107, 39, 428;  155, 275, 60, 23;  0, 59, 86, 259;  0, 8, 31, 381;  0, 12, 40, 56;  0, 17, 280, 422;  0, 20, 45, 234;  0, 26, 62, 108;  0, 27, 33, 230;  0, 29, 336, 402;  0, 34, 73, 202;  0, 41, 146, 364;  0, 43, 278, 392;  0, 49, 132, 390;  0, 52, 109, 252;  0, 53, 190, 255;  0, 61, 96, 224;  0, 70, 163, 185;  0, 69, 138, 219;  0, 77, 184, 338;  0, 83, 116, 266;  0, 85, 103, 236;  0, 121, 135, 288;  0, 125, 165, 304;  0, 137, 144, 355;  0, 180, 377, 397;  0, 112, 339, 393;  0, 131, 201, 276;  0, 120, 323, 349;  0, 175, 367, 401;  0, 127, 285, 369;  0, 123, 249, 409;  0, 239, 337, 425;  0, 329, 385, 429;  0, 187, 309, 421;  0, 235, 299, 391;  0, 223, 265, 359;  0, 101, 269, 317;  1, 17, 47, 317;  1, 25, 173, 289;  1, 29, 103, 233;  0, 110, 220, 330;  1, 111, 221, 331}.
The last 2 blocks represent short orbits.
The deficiency graph is connected and has girth 5.
Identifying code: $\{x + 20i: x \in \{0,2,8,9,11,13,14,15\}, 0 \le i < 22\}$,
$\pi_0 = 0$, $\pi_1 = 176$, $\pi_2 = 264$.

{\boldmath $\adfPENT(4,153)$}, $d = 2$:
{71, 121, 186, 278;  225, 241, 344, 394;  427, 454, 14, 93;  12, 442, 11, 59;  228, 359, 24, 387;  235, 133, 18, 267;  388, 143, 424, 366;  16, 270, 357, 201;  357, 222, 148, 399;  419, 333, 125, 29;  448, 129, 247, 431;  307, 413, 208, 117;  342, 65, 296, 90;  22, 63, 123, 175;  212, 101, 408, 323;  278, 156, 158, 6;  148, 20, 9, 80;  1, 356, 215, 4;  442, 379, 169, 375;  145, 199, 47, 235;  182, 287, 437, 192;  217, 37, 408, 168;  367, 149, 284, 314;  450, 211, 408, 272;  105, 238, 92, 40;  267, 348, 344, 223;  441, 220, 158, 300;  82, 53, 247, 179;  304, 202, 431, 197;  114, 149, 135, 65;  421, 349, 344, 258;  12, 129, 202, 119;  278, 421, 329, 52;  76, 50, 32, 158;  91, 418, 239, 319;  34, 147, 130, 107;  298, 147, 437, 222;  387, 240, 95, 268;  21, 204, 43, 67;  196, 438, 98, 176;  272, 208, 444, 251;  0, 1, 88, 458;  0, 3, 40, 174;  0, 8, 19, 304;  0, 9, 250, 450;  0, 15, 56, 448;  0, 12, 37, 306;  0, 27, 302, 432;  0, 29, 48, 364;  0, 39, 276, 321;  0, 57, 166, 220;  0, 61, 70, 154;  0, 38, 144, 284;  0, 55, 75, 90;  0, 67, 69, 332;  0, 63, 280, 346;  0, 85, 234, 385;  0, 110, 235, 248;  0, 169, 176, 347;  0, 123, 237, 282;  0, 104, 297, 421;  0, 114, 275, 308;  0, 149, 208, 375;  0, 164, 337, 367;  0, 213, 239, 425;  0, 175, 231, 269;  0, 155, 259, 359;  0, 133, 243, 309;  0, 197, 261, 279;  0, 89, 287, 433;  0, 289, 411, 417;  1, 89, 197, 333;  1, 9, 167, 201;  0, 253, 265, 407;  1, 63, 139, 387;  0, 207, 351, 409;  0, 116, 232, 348;  1, 117, 233, 349}.
The last 2 blocks represent short orbits.
The deficiency graph is connected and has girth 6.

{\boldmath $\adfPENT(4,160)$}, $d = 5$:
{1, 4, 5, 480;  0, 2, 11, 476;  1, 3, 7, 482;  2, 4, 13, 478;  0, 3, 19, 474;  337, 157, 378, 322;  118, 269, 447, 484;  9, 454, 250, 12;  385, 364, 122, 275;  20, 377, 402, 432;  327, 466, 186, 250;  4, 311, 482, 382;  250, 192, 464, 39;  413, 319, 249, 57;  231, 473, 93, 455;  401, 37, 261, 335;  385, 29, 383, 180;  364, 163, 127, 184;  111, 308, 330, 15;  367, 353, 68, 308;  296, 455, 124, 6;  150, 191, 147, 263;  230, 3, 136, 65;  477, 267, 88, 185;  432, 96, 225, 119;  179, 275, 47, 302;  288, 440, 186, 141;  210, 410, 158, 91;  382, 161, 49, 207;  249, 149, 463, 166;  174, 431, 361, 133;  453, 54, 261, 280;  394, 18, 231, 78;  236, 372, 63, 167;  377, 330, 464, 133;  242, 372, 352, 94;  288, 423, 447, 281;  445, 183, 84, 71;  147, 249, 446, 237;  433, 295, 421, 118;  383, 254, 293, 121;  475, 353, 321, 158;  262, 422, 400, 13;  468, 74, 82, 209;  241, 293, 468, 63;  372, 217, 175, 440;  7, 384, 302, 25;  257, 194, 39, 470;  241, 472, 84, 280;  397, 461, 57, 279;  408, 457, 73, 320;  400, 188, 333, 260;  201, 478, 57, 415;  424, 115, 7, 352;  460, 339, 91, 144;  241, 447, 81, 345;  422, 472, 36, 234;  397, 23, 282, 325;  484, 293, 145, 112;  436, 326, 484, 418;  38, 329, 469, 3;  222, 156, 65, 273;  192, 58, 257, 297;  153, 98, 228, 278;  254, 36, 483, 212;  366, 388, 189, 46;  376, 453, 16, 229;  417, 166, 356, 451;  183, 325, 268, 168;  411, 165, 465, 284;  323, 53, 306, 255;  42, 262, 332, 479;  433, 132, 31, 199;  242, 78, 326, 147;  129, 265, 83, 437;  270, 126, 444, 360;  230, 185, 114, 218;  307, 370, 431, 281;  359, 90, 5, 165;  214, 11, 25, 433;  3, 317, 84, 458;  63, 88, 182, 371;  153, 349, 74, 235;  341, 286, 11, 480;  139, 460, 61, 3;  458, 387, 265, 374;  418, 4, 310, 451;  49, 465, 250, 270;  373, 249, 6, 3;  14, 353, 387, 66;  207, 7, 124, 175;  116, 389, 360, 88;  121, 211, 0, 386;  331, 380, 424, 480;  305, 122, 160, 46;  70, 356, 397, 476;  92, 295, 310, 270;  450, 120, 259, 114;  316, 187, 203, 216;  54, 433, 76, 37;  333, 16, 154, 442;  145, 69, 335, 202;  435, 305, 461, 246;  386, 483, 172, 422;  338, 450, 126, 18;  0, 74, 115, 437;  0, 7, 35, 52;  0, 8, 92, 435;  0, 13, 182, 455;  0, 12, 210, 224;  0, 21, 425, 449;  0, 31, 49, 235;  0, 23, 390, 477;  0, 34, 135, 240;  0, 37, 360, 415;  0, 38, 64, 175;  0, 46, 335, 383;  0, 56, 230, 297;  0, 59, 225, 305;  0, 39, 76, 120;  0, 65, 143, 217;  0, 97, 144, 342;  0, 62, 176, 468;  0, 79, 83, 153;  0, 101, 128, 337;  0, 103, 181, 263;  0, 106, 171, 323;  0, 93, 99, 353;  0, 94, 104, 163;  0, 133, 184, 228;  0, 117, 136, 243;  0, 131, 146, 242;  0, 112, 147, 462;  0, 151, 167, 313;  0, 132, 178, 398;  0, 158, 202, 234;  0, 177, 268, 361;  0, 188, 227, 378;  0, 198, 206, 312;  0, 159, 191, 387;  0, 199, 321, 478;  0, 218, 314, 407;  0, 236, 316, 358;  0, 201, 232, 448;  0, 252, 276, 443;  0, 248, 372, 453;  0, 229, 254, 368;  0, 249, 338, 458;  0, 253, 306, 392;  0, 298, 401, 438;  0, 344, 394, 469;  0, 256, 318, 439;  0, 329, 367, 393;  0, 351, 402, 424;  0, 397, 451, 472;  0, 362, 454, 456;  0, 301, 376, 416;  0, 278, 432, 459;  0, 399, 419, 461;  1, 154, 398, 463;  1, 59, 88, 388;  2, 23, 402, 468;  1, 34, 208, 418;  1, 128, 283, 344;  1, 133, 138, 282;  2, 64, 198, 433;  1, 48, 171, 327;  1, 114, 303, 413;  1, 64, 117, 328;  1, 57, 223, 349;  2, 127, 263, 429;  1, 182, 268, 324;  2, 234, 258, 469;  1, 58, 294, 382;  1, 152, 212, 443;  2, 314, 409, 458;  2, 253, 284, 414;  1, 104, 132, 253;  2, 79, 283, 319;  2, 169, 203, 344;  2, 82, 359, 464;  1, 257, 439, 472;  1, 146, 292, 412;  1, 86, 247, 397;  1, 29, 192, 377;  1, 36, 127, 352;  1, 217, 269, 379;  2, 159, 204, 389;  1, 26, 227, 381;  1, 149, 287, 474;  1, 51, 236, 334;  1, 31, 256, 384;  1, 136, 394, 399;  1, 6, 239, 409;  1, 174, 181, 241;  1, 189, 224, 414;  1, 69, 124, 389;  1, 9, 89, 209;  1, 254, 369, 459;  1, 44, 109, 194}.
The deficiency graph is connected and has girth 5.
Identifying code: $\{x: 0 \le x < 485,\; x \equiv 0 \text{ or } 2 \adfmod{5}\}$,
$\pi_0 = 0$, $\pi_1 = 194$, $\pi_2 = 291$.

{\boldmath $\adfPENT(4,161)$}, $d = 2$:
{62, 315, 426, 451;  35, 38, 174, 455;  386, 288, 107, 136;  327, 266, 399, 124;  147, 234, 359, 164;  356, 477, 19, 484;  374, 270, 261, 31;  120, 299, 92, 291;  194, 145, 435, 80;  211, 302, 154, 199;  5, 46, 52, 105;  203, 104, 117, 280;  340, 376, 430, 467;  279, 265, 448, 468;  426, 47, 392, 122;  383, 50, 66, 407;  217, 171, 215, 210;  413, 283, 45, 349;  148, 228, 451, 317;  236, 147, 413, 192;  345, 318, 159, 368;  361, 287, 479, 202;  239, 58, 168, 264;  224, 413, 373, 118;  70, 333, 344, 441;  309, 446, 113, 168;  230, 286, 288, 145;  246, 457, 260, 284;  99, 301, 425, 224;  308, 7, 473, 152;  465, 455, 326, 245;  465, 241, 181, 478;  458, 332, 52, 232;  298, 438, 341, 374;  456, 270, 7, 363;  465, 135, 218, 28;  180, 293, 274, 10;  478, 85, 152, 468;  77, 203, 409, 192;  95, 1, 44, 448;  80, 59, 247, 199;  96, 475, 315, 331;  474, 34, 216, 217;  186, 428, 212, 445;  165, 255, 481, 107;  0, 3, 4, 470;  0, 8, 40, 350;  0, 9, 416, 428;  0, 15, 260, 289;  0, 30, 118, 358;  0, 31, 232, 436;  0, 33, 86, 442;  0, 35, 164, 213;  0, 47, 66, 268;  0, 63, 254, 420;  0, 55, 212, 380;  0, 67, 73, 290;  0, 42, 117, 376;  0, 92, 193, 288;  0, 87, 144, 222;  0, 115, 174, 294;  0, 102, 236, 339;  0, 161, 192, 483;  0, 111, 261, 330;  0, 123, 227, 300;  0, 150, 335, 473;  0, 74, 367, 387;  0, 116, 361, 453;  0, 205, 369, 423;  0, 157, 175, 273;  0, 147, 257, 427;  0, 137, 327, 365;  0, 239, 301, 381;  0, 153, 425, 461;  0, 215, 383, 409;  1, 53, 129, 235;  1, 5, 33, 89;  1, 43, 113, 289;  1, 79, 233, 315;  1, 51, 147, 387;  0, 122, 244, 366;  1, 123, 245, 367}.
The last 2 blocks represent short orbits.
The deficiency graph is connected and has girth 6.

{\boldmath $\adfPENT(4,165)$}, $d = 2$:
{1, 2, 11, 498;  0, 5, 490, 497;  356, 397, 242, 121;  219, 311, 376, 57;  37, 379, 441, 422;  116, 281, 7, 40;  437, 97, 248, 374;  478, 15, 315, 181;  72, 470, 185, 404;  434, 216, 278, 17;  438, 28, 432, 71;  382, 302, 185, 2;  338, 74, 225, 370;  246, 215, 69, 463;  344, 337, 207, 225;  449, 495, 311, 92;  95, 100, 332, 152;  175, 207, 403, 34;  455, 341, 48, 476;  141, 55, 107, 324;  43, 167, 416, 235;  15, 13, 363, 387;  374, 45, 192, 324;  199, 182, 328, 100;  255, 167, 98, 190;  18, 293, 277, 448;  127, 56, 147, 390;  24, 131, 38, 271;  361, 328, 141, 62;  101, 137, 419, 388;  99, 231, 422, 110;  193, 111, 426, 210;  497, 112, 130, 471;  327, 125, 414, 72;  141, 425, 219, 114;  436, 250, 48, 409;  158, 187, 482, 348;  418, 87, 439, 194;  317, 51, 354, 422;  45, 104, 392, 134;  159, 137, 78, 253;  407, 79, 349, 2;  0, 8, 55, 95;  0, 12, 35, 451;  0, 16, 61, 135;  0, 20, 227, 307;  0, 22, 419, 449;  0, 24, 167, 423;  0, 25, 67, 465;  0, 26, 187, 295;  0, 28, 131, 453;  0, 34, 229, 305;  0, 36, 111, 321;  0, 38, 471, 485;  0, 39, 137, 335;  0, 42, 277, 431;  0, 49, 373, 429;  0, 51, 89, 101;  0, 57, 179, 289;  0, 44, 117, 297;  0, 64, 325, 481;  1, 67, 137, 241;  0, 149, 291, 459;  1, 49, 213, 313;  0, 199, 243, 437;  0, 163, 191, 377;  0, 109, 279, 351;  0, 193, 206, 415;  1, 55, 119, 411;  0, 128, 455, 461;  0, 48, 145, 170;  0, 123, 178, 350;  0, 85, 164, 270;  0, 78, 225, 316;  0, 130, 331, 340;  0, 74, 162, 382;  0, 84, 208, 362;  0, 86, 226, 326;  0, 56, 116, 252;  0, 46, 194, 302;  0, 40, 94, 442;  0, 104, 214, 358;  0, 125, 250, 375}.
The last block represents a short orbit.
The deficiency graph is connected and has girth 5.
Identifying code: $\{x + 20i: x \in \{0,2,8,9,11,13,14,15\}, 0 \le i < 25\}$,
$\pi_0 = 0$, $\pi_1 = 200$, $\pi_2 = 300$.

{\boldmath $\adfPENT(4,169)$}, $d = 2$:
{188, 199, 324, 445;  68, 175, 314, 339;  274, 151, 497, 21;  322, 99, 180, 176;  160, 427, 489, 324;  99, 293, 372, 155;  225, 329, 149, 154;  130, 468, 465, 61;  191, 219, 469, 21;  252, 446, 168, 313;  365, 204, 306, 145;  373, 436, 97, 392;  56, 174, 240, 387;  476, 143, 47, 345;  47, 352, 161, 147;  267, 107, 351, 42;  265, 410, 394, 318;  74, 159, 355, 324;  203, 205, 59, 416;  470, 438, 397, 194;  6, 453, 340, 253;  347, 418, 155, 481;  47, 403, 198, 168;  17, 242, 133, 224;  258, 401, 372, 459;  182, 86, 446, 78;  215, 222, 505, 95;  237, 182, 125, 272;  1, 243, 10, 177;  363, 120, 239, 81;  389, 206, 372, 112;  355, 36, 229, 437;  253, 299, 75, 164;  406, 217, 476, 179;  250, 277, 278, 347;  222, 331, 296, 191;  411, 239, 171, 259;  340, 257, 9, 224;  86, 216, 347, 328;  310, 211, 89, 346;  469, 6, 464, 0;  257, 376, 209, 239;  77, 213, 88, 127;  499, 357, 507, 458;  357, 454, 200, 431;  0, 1, 108, 111;  0, 2, 23, 75;  0, 7, 13, 197;  0, 9, 38, 53;  0, 10, 151, 495;  0, 12, 79, 409;  0, 14, 51, 325;  0, 20, 77, 137;  0, 24, 115, 327;  0, 22, 191, 263;  0, 26, 237, 491;  0, 42, 195, 229;  0, 46, 363, 395;  0, 45, 99, 462;  0, 47, 427, 467;  0, 43, 275, 437;  0, 106, 245, 461;  0, 86, 313, 337;  0, 81, 123, 333;  0, 129, 302, 487;  1, 91, 189, 295;  1, 5, 17, 27;  0, 127, 221, 374;  1, 79, 227, 307;  0, 58, 159, 356;  0, 64, 343, 380;  0, 98, 288, 451;  0, 60, 208, 417;  0, 177, 192, 232;  0, 110, 308, 479;  0, 78, 238, 371;  0, 68, 341, 354;  0, 93, 206, 386;  0, 80, 269, 362;  0, 56, 218, 340;  0, 72, 212, 312;  0, 52, 134, 222;  0, 34, 202, 326;  0, 62, 182, 358;  0, 128, 256, 384;  1, 129, 257, 385}.
The last 2 blocks represent short orbits.
The deficiency graph is connected and has girth 6.

{\boldmath $\adfPENT(4,177)$}, $d = 2$:
{232, 304, 423, 493;  44, 114, 165, 373;  290, 49, 180, 427;  231, 100, 322, 475;  358, 65, 129, 332;  447, 32, 412, 255;  338, 458, 13, 411;  115, 442, 120, 104;  463, 458, 358, 366;  335, 98, 373, 137;  139, 229, 150, 14;  198, 247, 75, 256;  203, 8, 279, 513;  492, 19, 94, 27;  69, 140, 446, 85;  367, 260, 61, 191;  23, 112, 150, 240;  356, 108, 106, 467;  236, 311, 494, 218;  289, 33, 237, 355;  214, 499, 175, 397;  95, 297, 51, 315;  205, 88, 289, 15;  406, 521, 111, 209;  533, 38, 132, 283;  98, 526, 382, 263;  377, 355, 483, 433;  198, 299, 485, 6;  103, 60, 190, 470;  40, 331, 154, 153;  264, 338, 159, 274;  31, 289, 186, 285;  35, 159, 98, 241;  141, 277, 257, 510;  216, 186, 349, 253;  218, 137, 406, 196;  183, 177, 494, 103;  326, 270, 506, 16;  203, 450, 131, 156;  104, 206, 503, 161;  471, 336, 309, 21;  160, 456, 18, 77;  381, 188, 242, 15;  521, 303, 46, 270;  98, 63, 360, 207;  369, 247, 502, 221;  312, 74, 160, 215;  128, 153, 372, 151;  0, 41, 187, 397;  0, 1, 468, 483;  0, 3, 301, 532;  0, 6, 27, 523;  0, 9, 12, 515;  0, 14, 325, 335;  0, 13, 127, 513;  0, 17, 45, 149;  0, 19, 53, 323;  0, 20, 205, 365;  0, 24, 89, 451;  0, 28, 377, 435;  0, 31, 173, 197;  0, 29, 36, 313;  0, 32, 155, 265;  0, 42, 249, 433;  0, 44, 343, 411;  0, 40, 387, 499;  0, 78, 429, 471;  1, 63, 157, 417;  0, 87, 179, 395;  1, 15, 61, 169;  1, 13, 153, 349;  0, 147, 235, 352;  0, 158, 385, 485;  0, 50, 231, 279;  0, 77, 172, 371;  0, 129, 208, 369;  0, 146, 487, 519;  0, 85, 196, 358;  0, 76, 171, 200;  0, 83, 176, 366;  0, 81, 96, 404;  0, 48, 234, 437;  0, 80, 164, 246;  0, 62, 168, 386;  0, 104, 216, 420;  0, 34, 174, 376;  0, 52, 118, 382;  0, 60, 148, 414;  0, 134, 268, 402;  1, 135, 269, 403}.
The last 2 blocks represent short orbits.
The deficiency graph is connected and has girth 6.

{\boldmath $\adfPENT(4,180)$}, $d = 5$:
{1, 4, 5, 540;  0, 2, 11, 536;  1, 3, 7, 542;  2, 4, 13, 538;  0, 3, 19, 534;  25, 377, 285, 409;  146, 165, 379, 188;  345, 383, 255, 464;  291, 504, 26, 155;  382, 35, 136, 201;  157, 386, 407, 94;  416, 70, 245, 482;  415, 445, 16, 479;  371, 174, 418, 234;  264, 456, 443, 449;  511, 257, 57, 124;  452, 106, 353, 494;  302, 434, 27, 162;  504, 77, 268, 127;  164, 404, 267, 22;  377, 52, 16, 103;  339, 508, 227, 310;  421, 97, 146, 173;  501, 354, 151, 463;  470, 97, 513, 258;  248, 169, 36, 458;  509, 321, 173, 213;  376, 361, 439, 189;  155, 25, 381, 482;  124, 178, 42, 39;  398, 383, 465, 4;  129, 413, 544, 121;  119, 296, 84, 489;  536, 490, 59, 176;  529, 418, 479, 334;  382, 407, 127, 473;  293, 31, 14, 260;  163, 406, 231, 16;  393, 495, 304, 357;  400, 241, 485, 151;  403, 359, 417, 15;  209, 467, 135, 358;  44, 199, 331, 301;  27, 258, 542, 176;  539, 246, 460, 361;  65, 416, 253, 336;  108, 496, 493, 290;  540, 484, 316, 227;  464, 288, 185, 266;  281, 298, 45, 442;  519, 465, 436, 235;  495, 387, 75, 20;  431, 335, 63, 426;  240, 268, 25, 252;  111, 242, 371, 448;  72, 462, 203, 92;  120, 219, 280, 227;  6, 66, 231, 160;  459, 125, 10, 122;  222, 381, 534, 203;  304, 361, 533, 124;  32, 388, 158, 461;  459, 181, 195, 486;  390, 346, 508, 6;  480, 400, 387, 472;  185, 230, 38, 162;  523, 393, 9, 121;  135, 104, 425, 184;  301, 371, 464, 511;  323, 518, 106, 500;  160, 533, 62, 365;  322, 137, 531, 255;  327, 58, 535, 287;  447, 237, 336, 302;  392, 126, 13, 100;  359, 30, 293, 458;  523, 164, 72, 368;  520, 219, 268, 316;  543, 328, 298, 203;  327, 130, 501, 104;  263, 432, 149, 73;  277, 524, 459, 478;  341, 501, 333, 48;  204, 290, 114, 459;  487, 526, 504, 323;  254, 331, 523, 404;  398, 434, 416, 160;  502, 537, 108, 197;  394, 404, 145, 278;  357, 155, 281, 31;  115, 272, 246, 291;  524, 364, 350, 452;  531, 136, 187, 220;  314, 539, 350, 75;  377, 497, 504, 350;  230, 281, 324, 203;  327, 248, 74, 304;  498, 524, 273, 328;  189, 72, 98, 218;  71, 512, 111, 69;  219, 311, 543, 436;  492, 230, 103, 414;  113, 313, 520, 252;  157, 446, 264, 507;  5, 125, 12, 518;  30, 237, 215, 431;  287, 412, 254, 499;  518, 374, 335, 36;  420, 141, 462, 524;  449, 17, 220, 402;  106, 412, 233, 164;  474, 519, 304, 492;  345, 339, 229, 532;  73, 2, 253, 17;  461, 226, 278, 257;  146, 192, 251, 292;  430, 288, 333, 77;  519, 200, 109, 539;  294, 88, 225, 272;  394, 87, 428, 371;  0, 212, 318, 456;  0, 8, 234, 424;  0, 13, 109, 394;  0, 16, 134, 474;  0, 15, 59, 159;  0, 17, 269, 389;  0, 20, 314, 419;  0, 21, 149, 529;  0, 24, 36, 359;  0, 25, 219, 439;  0, 32, 199, 204;  0, 31, 139, 374;  0, 41, 89, 164;  0, 37, 404, 499;  0, 35, 444, 469;  0, 53, 254, 324;  0, 61, 309, 364;  1, 13, 184, 224;  1, 8, 29, 299;  0, 52, 379, 504;  0, 58, 82, 479;  1, 17, 114, 379;  0, 57, 179, 527;  0, 173, 417, 494;  0, 87, 289, 317;  0, 76, 129, 467;  1, 33, 82, 274;  1, 36, 244, 318;  1, 14, 57, 461;  1, 26, 264, 496;  1, 38, 272, 284;  1, 34, 72, 96;  1, 56, 168, 494;  1, 39, 446, 513;  0, 77, 472, 524;  1, 63, 133, 219;  1, 73, 174, 246;  1, 87, 188, 419;  1, 89, 153, 212;  1, 111, 228, 514;  1, 107, 269, 328;  1, 144, 238, 412;  1, 104, 172, 232;  1, 177, 353, 394;  2, 43, 192, 349;  1, 123, 192, 229;  1, 103, 167, 319;  2, 88, 312, 499;  2, 59, 72, 167;  2, 83, 229, 432;  2, 82, 474, 513;  2, 269, 428, 463;  2, 284, 308, 388;  2, 268, 319, 518;  2, 149, 338, 413;  2, 47, 224, 367;  2, 239, 243, 393;  0, 40, 177, 307;  0, 60, 222, 251;  0, 62, 117, 390;  0, 50, 422, 491;  0, 78, 132, 426;  0, 88, 302, 366;  0, 66, 442, 470;  0, 97, 111, 486;  0, 105, 247, 511;  0, 122, 196, 382;  0, 106, 383, 487;  1, 127, 297, 418;  1, 136, 332, 422;  0, 167, 306, 506;  0, 135, 387, 497;  0, 152, 261, 481;  1, 97, 181, 323;  0, 287, 381, 502;  1, 137, 242, 488;  0, 127, 241, 361;  0, 108, 200, 482;  0, 348, 392, 436;  0, 86, 322, 376;  1, 131, 357, 388;  1, 138, 191, 467;  0, 377, 473, 496;  2, 48, 183, 328;  0, 100, 357, 395;  0, 156, 378, 471;  0, 121, 323, 521;  0, 140, 288, 476;  0, 186, 283, 335;  0, 178, 281, 488;  0, 145, 298, 326;  0, 123, 221, 355;  0, 56, 343, 483;  0, 48, 286, 365;  0, 218, 278, 431;  0, 113, 235, 300;  0, 83, 193, 325;  0, 110, 268, 305;  0, 93, 143, 380;  0, 73, 503, 528;  0, 95, 328, 375;  0, 208, 320, 533;  0, 98, 163, 468;  0, 438, 538, 543;  0, 63, 248, 523}.
The deficiency graph is connected and has girth 5.
Identifying code: $\{x: 0 \le x < 545,\; x \equiv 0 \text{ or } 2 \adfmod{5}\}$,
$\pi_0 = 0$, $\pi_1 = 218$, $\pi_2 = 327$.

{\boldmath $\adfPENT(4,181)$}, $d = 2$:
{17, 206, 342, 415;  59, 134, 491, 532;  513, 213, 246, 462;  325, 82, 271, 527;  298, 275, 479, 441;  133, 478, 52, 245;  434, 154, 348, 11;  229, 448, 545, 169;  442, 354, 434, 275;  315, 225, 246, 432;  340, 188, 301, 463;  212, 122, 516, 121;  214, 121, 267, 320;  95, 33, 92, 122;  474, 188, 533, 181;  152, 114, 458, 174;  20, 195, 325, 184;  188, 69, 522, 79;  186, 334, 103, 512;  529, 82, 308, 335;  398, 306, 444, 159;  458, 248, 292, 235;  517, 473, 543, 110;  179, 14, 389, 88;  334, 465, 344, 302;  146, 142, 148, 212;  486, 67, 257, 73;  362, 191, 112, 539;  528, 17, 4, 82;  306, 488, 436, 116;  308, 93, 52, 532;  194, 2, 222, 395;  499, 108, 484, 411;  102, 160, 521, 329;  299, 342, 65, 425;  461, 453, 12, 247;  259, 520, 439, 290;  507, 157, 110, 15;  494, 130, 93, 207;  149, 448, 464, 379;  59, 55, 499, 64;  171, 84, 182, 341;  227, 338, 282, 489;  478, 100, 49, 129;  213, 85, 395, 110;  0, 1, 124, 530;  0, 5, 270, 277;  0, 9, 96, 534;  0, 12, 43, 404;  0, 20, 114, 350;  0, 21, 408, 512;  0, 25, 266, 300;  0, 26, 134, 188;  0, 33, 202, 402;  0, 35, 290, 498;  0, 39, 100, 145;  0, 49, 63, 352;  0, 48, 174, 239;  0, 55, 118, 420;  0, 61, 288, 355;  0, 40, 111, 336;  0, 72, 171, 380;  0, 76, 160, 327;  0, 89, 238, 331;  0, 101, 276, 383;  0, 112, 229, 232;  0, 133, 155, 390;  0, 62, 219, 247;  0, 115, 215, 368;  0, 205, 234, 451;  0, 127, 237, 254;  0, 132, 413, 443;  0, 82, 279, 347;  0, 116, 481, 501;  0, 213, 225, 533;  0, 387, 423, 471;  0, 343, 477, 529;  0, 367, 417, 503;  0, 241, 337, 409;  0, 231, 297, 371;  0, 195, 369, 403;  0, 335, 351, 499;  0, 289, 353, 513;  1, 121, 253, 405;  1, 47, 223, 305;  1, 19, 43, 269;  1, 41, 119, 451;  1, 77, 179, 337;  1, 3, 95, 331;  1, 33, 155, 425;  0, 137, 274, 411;  1, 138, 275, 412}.
The last 2 blocks represent short orbits.
The deficiency graph is connected and has girth 6.

{\boldmath $\adfPENT(4,185)$}, $d = 2$:
{1, 2, 11, 558;  0, 5, 550, 557;  212, 327, 166, 384;  32, 310, 476, 516;  162, 104, 179, 75;  124, 158, 223, 256;  34, 88, 16, 525;  232, 526, 52, 15;  202, 179, 106, 406;  431, 360, 381, 160;  129, 91, 298, 379;  189, 14, 94, 313;  516, 475, 61, 511;  68, 218, 536, 473;  26, 235, 180, 34;  420, 317, 345, 133;  22, 455, 311, 381;  296, 422, 383, 341;  159, 112, 306, 97;  542, 56, 363, 464;  165, 46, 308, 277;  97, 118, 429, 3;  210, 537, 190, 25;  48, 371, 388, 501;  435, 188, 250, 335;  405, 131, 423, 249;  492, 270, 331, 549;  342, 151, 390, 549;  376, 153, 246, 73;  75, 133, 411, 377;  253, 523, 375, 61;  539, 4, 227, 279;  404, 193, 299, 345;  253, 435, 17, 200;  107, 56, 531, 373;  97, 502, 25, 144;  293, 39, 295, 226;  253, 313, 491, 269;  403, 53, 397, 230;  201, 33, 52, 115;  547, 71, 318, 154;  559, 39, 360, 483;  359, 381, 448, 132;  526, 350, 22, 99;  531, 367, 103, 301;  434, 425, 85, 361;  325, 478, 42, 237;  0, 39, 281, 470;  0, 6, 41, 97;  0, 12, 43, 511;  0, 14, 107, 203;  0, 16, 143, 345;  0, 19, 168, 191;  0, 22, 125, 215;  0, 24, 163, 241;  0, 25, 230, 259;  0, 26, 59, 263;  0, 28, 205, 325;  0, 27, 38, 461;  0, 30, 243, 451;  0, 32, 335, 389;  0, 36, 269, 301;  0, 37, 44, 197;  0, 42, 293, 481;  0, 52, 277, 415;  0, 50, 447, 515;  0, 64, 425, 533;  0, 79, 181, 456;  0, 88, 403, 517;  0, 117, 245, 310;  0, 187, 419, 449;  1, 27, 207, 361;  0, 157, 169, 409;  0, 111, 261, 332;  1, 25, 195, 371;  0, 141, 238, 505;  0, 121, 135, 390;  0, 145, 165, 331;  0, 66, 431, 466;  0, 106, 264, 401;  0, 101, 178, 320;  0, 82, 171, 352;  0, 179, 192, 312;  0, 131, 184, 306;  0, 136, 287, 404;  0, 109, 234, 416;  0, 108, 226, 412;  0, 68, 196, 370;  0, 100, 210, 372;  0, 84, 198, 336;  0, 60, 274, 344;  0, 102, 236, 348;  0, 140, 280, 420;  1, 141, 281, 421}.
The last 2 blocks represent short orbits.
The deficiency graph is connected and has girth 5.
Identifying code: $\{x + 20i: x \in \{0,2,8,9,11,13,14,15\}, 0 \le i < 28\}$,
$\pi_0 = 0$, $\pi_1 = 224$, $\pi_2 = 336$.

{\boldmath $\adfPENT(4,189)$}, $d = 2$:
{94, 233, 267, 478;  189, 306, 340, 385;  111, 35, 64, 302;  448, 129, 98, 337;  401, 553, 194, 230;  71, 215, 325, 94;  121, 185, 83, 236;  441, 361, 272, 461;  474, 287, 472, 350;  235, 357, 464, 45;  377, 63, 320, 105;  212, 262, 550, 169;  276, 199, 96, 427;  171, 527, 333, 87;  436, 161, 213, 147;  82, 214, 495, 196;  512, 491, 85, 4;  41, 180, 483, 15;  342, 195, 170, 94;  392, 224, 538, 359;  108, 147, 260, 214;  159, 345, 451, 317;  77, 479, 42, 439;  478, 3, 277, 360;  463, 184, 447, 270;  385, 148, 315, 271;  4, 14, 214, 505;  5, 396, 301, 97;  354, 50, 387, 233;  547, 202, 245, 452;  161, 63, 120, 329;  302, 94, 88, 299;  194, 141, 180, 201;  391, 393, 380, 148;  235, 110, 173, 334;  75, 535, 418, 486;  183, 239, 231, 364;  352, 15, 437, 116;  386, 31, 530, 401;  463, 90, 159, 571;  481, 204, 370, 390;  219, 350, 506, 422;  537, 549, 38, 357;  157, 266, 212, 154;  304, 505, 392, 12;  330, 30, 362, 166;  99, 472, 94, 407;  184, 0, 165, 360;  221, 410, 358, 463;  0, 1, 241, 564;  0, 4, 87, 409;  0, 12, 149, 367;  0, 16, 115, 287;  0, 17, 22, 559;  0, 19, 29, 255;  0, 23, 141, 257;  0, 24, 187, 399;  0, 26, 133, 415;  0, 27, 77, 203;  0, 28, 65, 213;  0, 30, 431, 571;  0, 38, 157, 379;  0, 40, 309, 403;  0, 42, 109, 353;  0, 51, 55, 445;  0, 44, 175, 265;  0, 59, 485, 557;  0, 56, 525, 531;  0, 61, 259, 317;  0, 71, 159, 295;  0, 90, 219, 513;  0, 75, 468, 561;  0, 223, 547, 565;  1, 97, 221, 359;  0, 339, 523, 545;  1, 47, 129, 285;  0, 275, 417, 449;  0, 247, 301, 325;  0, 147, 347, 467;  0, 127, 195, 535;  0, 225, 261, 270;  0, 110, 307, 452;  0, 60, 395, 470;  0, 62, 351, 368;  0, 78, 329, 398;  0, 150, 328, 483;  0, 66, 293, 412;  0, 96, 254, 370;  0, 80, 206, 434;  0, 98, 246, 374;  0, 108, 262, 402;  0, 70, 260, 352;  0, 74, 216, 316;  0, 48, 130, 438;  0, 143, 286, 429}.
The last block represents a short orbit.
The deficiency graph is connected and has girth 5.

{\boldmath $\adfPENT(4,193)$}, $d = 2$:
{178, 375, 406, 513;  72, 137, 210, 449;  171, 219, 490, 371;  341, 235, 7, 323;  118, 64, 51, 392;  313, 403, 406, 97;  227, 558, 359, 123;  384, 419, 385, 380;  273, 236, 14, 155;  196, 301, 118, 318;  131, 270, 455, 133;  313, 26, 110, 515;  177, 51, 505, 422;  322, 362, 168, 23;  279, 412, 129, 151;  387, 456, 141, 328;  285, 256, 390, 188;  194, 371, 53, 227;  44, 359, 403, 323;  90, 283, 162, 357;  504, 568, 127, 134;  417, 471, 429, 233;  233, 392, 507, 163;  333, 86, 398, 356;  350, 46, 498, 483;  358, 166, 579, 450;  171, 374, 511, 347;  2, 552, 329, 429;  567, 3, 173, 304;  132, 320, 195, 296;  13, 321, 317, 255;  107, 234, 304, 577;  304, 465, 493, 446;  191, 360, 565, 556;  169, 350, 237, 226;  22, 252, 208, 173;  57, 250, 580, 44;  173, 457, 166, 326;  61, 316, 106, 201;  581, 231, 408, 272;  165, 41, 328, 348;  35, 82, 266, 552;  178, 135, 297, 495;  315, 334, 274, 516;  69, 498, 165, 203;  441, 215, 478, 61;  252, 577, 77, 424;  324, 427, 129, 215;  215, 535, 4, 567;  305, 397, 426, 148;  563, 518, 469, 306;  0, 2, 23, 487;  0, 3, 8, 101;  0, 6, 31, 363;  0, 10, 175, 191;  0, 12, 81, 91;  0, 14, 87, 345;  0, 15, 223, 546;  0, 16, 367, 523;  0, 17, 77, 235;  0, 18, 75, 187;  0, 22, 201, 231;  0, 26, 225, 281;  0, 27, 85, 501;  0, 28, 347, 399;  0, 30, 525, 575;  0, 49, 89, 113;  0, 90, 229, 583;  0, 36, 159, 341;  0, 43, 373, 528;  0, 71, 322, 467;  0, 55, 303, 349;  0, 62, 369, 441;  0, 51, 217, 365;  0, 109, 258, 551;  1, 7, 179, 371;  0, 227, 433, 509;  1, 15, 117, 405;  1, 27, 109, 187;  0, 116, 411, 533;  0, 117, 125, 180;  0, 111, 240, 473;  0, 127, 234, 316;  0, 108, 248, 419;  0, 80, 215, 376;  0, 76, 302, 573;  0, 50, 162, 499;  0, 88, 334, 481;  0, 58, 299, 324;  0, 52, 126, 478;  0, 100, 252, 408;  0, 94, 204, 440;  0, 32, 118, 486;  0, 96, 198, 416;  0, 46, 166, 410;  0, 66, 170, 360;  0, 146, 292, 438;  1, 147, 293, 439}.
The last 2 blocks represent short orbits.
The deficiency graph is connected and has girth 6.

{\boldmath $\adfPENT(4,197)$}, $d = 2$:
{20, 545, 576, 587;  10, 52, 265, 333;  390, 203, 301, 265;  105, 139, 285, 368;  299, 345, 543, 273;  515, 4, 347, 151;  2, 66, 181, 318;  494, 247, 268, 254;  194, 433, 112, 285;  12, 253, 410, 586;  442, 77, 483, 191;  81, 3, 352, 440;  505, 255, 397, 131;  178, 445, 278, 520;  537, 247, 368, 440;  479, 477, 91, 502;  163, 28, 61, 347;  216, 114, 121, 106;  217, 144, 255, 272;  291, 510, 383, 482;  101, 170, 477, 509;  61, 374, 505, 506;  210, 317, 299, 488;  52, 144, 471, 327;  98, 393, 397, 176;  282, 497, 344, 564;  497, 352, 491, 432;  264, 569, 307, 470;  591, 55, 92, 159;  110, 512, 106, 49;  361, 381, 317, 113;  62, 50, 431, 143;  160, 367, 87, 359;  362, 202, 564, 19;  355, 576, 445, 46;  101, 332, 436, 319;  64, 122, 367, 83;  454, 47, 192, 287;  445, 7, 84, 432;  506, 538, 166, 295;  103, 76, 327, 100;  230, 56, 236, 391;  536, 271, 553, 501;  286, 482, 87, 342;  45, 258, 344, 35;  323, 403, 294, 151;  83, 573, 1, 82;  299, 446, 485, 96;  154, 15, 365, 351;  487, 91, 550, 44;  247, 452, 564, 323;  481, 569, 453, 111;  203, 436, 376, 497;  0, 2, 85, 259;  0, 5, 21, 483;  0, 9, 570, 593;  0, 10, 63, 175;  0, 16, 103, 285;  0, 18, 49, 505;  0, 25, 119, 552;  0, 30, 347, 401;  0, 34, 421, 495;  0, 37, 77, 271;  0, 38, 181, 395;  0, 36, 351, 481;  0, 45, 94, 531;  0, 46, 101, 233;  0, 57, 493, 526;  0, 71, 193, 249;  0, 54, 485, 543;  0, 76, 311, 553;  0, 75, 195, 291;  0, 68, 209, 509;  0, 79, 386, 591;  0, 125, 555, 585;  1, 127, 277, 405;  1, 51, 207, 435;  0, 157, 184, 577;  0, 243, 265, 313;  1, 67, 237, 323;  0, 117, 162, 455;  0, 123, 277, 292;  0, 188, 417, 517;  0, 124, 301, 503;  0, 191, 324, 549;  0, 106, 219, 380;  0, 114, 250, 473;  0, 122, 264, 537;  0, 74, 306, 411;  0, 50, 208, 360;  0, 130, 268, 424;  0, 52, 168, 426;  0, 118, 244, 448;  0, 98, 276, 410;  0, 108, 228, 378;  0, 48, 214, 414;  0, 84, 238, 384;  0, 96, 260, 404;  0, 149, 298, 447}.
The last block represents a short orbit.
The deficiency graph is connected and has girth 6.

\adfAppGap

{\boldmath $\adfPENT(4,57,28)$}, $d = 40$:
{0, 1, 2, 3;  0, 4, 92, 192;  0, 5, 36, 135;  0, 6, 134, 193;  0, 7, 93, 133;  0, 28, 39, 95;  0, 29, 38, 132;  0, 30, 37, 195;  0, 31, 94, 194;  0, 32, 128, 164;  0, 33, 108, 159;  0, 34, 158, 165;  0, 35, 129, 157;  0, 88, 111, 131;  0, 89, 110, 156;  0, 90, 109, 167;  0, 91, 130, 166;  1, 4, 30, 36;  1, 5, 39, 94;  1, 6, 38, 192;  1, 7, 95, 134;  1, 28, 92, 133;  1, 29, 135, 195;  1, 31, 37, 193;  1, 32, 90, 108;  1, 33, 111, 130;  1, 34, 110, 164;  1, 35, 131, 158;  1, 88, 128, 157;  1, 89, 159, 167;  1, 91, 109, 165;  1, 93, 132, 194;  1, 129, 156, 166;  2, 4, 28, 37;  2, 5, 29, 133;  2, 6, 31, 95;  2, 7, 135, 193;  2, 30, 39, 134;  2, 32, 88, 109;  2, 33, 89, 157;  2, 34, 91, 131;  2, 35, 159, 165;  2, 36, 92, 194;  2, 38, 93, 195;  2, 90, 111, 158;  2, 94, 132, 192;  2, 108, 128, 166;  2, 110, 129, 167;  2, 130, 156, 164;  3, 4, 38, 133;  3, 5, 31, 195;  3, 6, 36, 94;  3, 7, 28, 192;  3, 29, 92, 193;  3, 30, 95, 132;  3, 32, 110, 157;  3, 33, 91, 167;  3, 34, 108, 130;  3, 35, 88, 164;  3, 37, 93, 134;  3, 39, 135, 194;  3, 89, 128, 165;  3, 90, 131, 156;  3, 109, 129, 158;  3, 111, 159, 166;  4, 5, 6, 7;  4, 29, 95, 194;  4, 31, 93, 135;  4, 32, 120, 188;  4, 33, 116, 143;  4, 34, 142, 189;  4, 35, 121, 141;  4, 39, 132, 193;  4, 68, 144, 156;  4, 69, 104, 155;  4, 70, 154, 157;  4, 71, 145, 153;  4, 76, 112, 160;  4, 77, 108, 139;  4, 78, 138, 161;  4, 79, 113, 137;  4, 80, 107, 147;  4, 81, 106, 152;  4, 82, 105, 159;  4, 83, 146, 158;  4, 94, 134, 195;  4, 96, 111, 115;  4, 97, 110, 136;  4, 98, 109, 163;  4, 99, 114, 162;  4, 100, 119, 123;  4, 101, 118, 140;  4, 102, 117, 191;  4, 103, 122, 190;  5, 28, 134, 194;  5, 30, 93, 192;  5, 32, 102, 116;  5, 33, 119, 122;  5, 34, 118, 188;  5, 35, 123, 142;  5, 37, 92, 132;  5, 38, 95, 193;  5, 68, 82, 104;  5, 69, 107, 146;  5, 70, 106, 156;  5, 71, 147, 154;  5, 76, 98, 108;  5, 77, 111, 114;  5, 78, 110, 160;  5, 79, 115, 138;  5, 80, 144, 153;  5, 81, 155, 159;  5, 83, 105, 157;  5, 96, 112, 137;  5, 97, 139, 163;  5, 99, 109, 161;  5, 100, 120, 141;  5, 101, 143, 191;  5, 103, 117, 189;  5, 113, 136, 162;  5, 121, 140, 190;  5, 145, 152, 158;  6, 28, 132, 195;  6, 29, 39, 93;  6, 30, 92, 135;  6, 32, 100, 117;  6, 33, 101, 141;  6, 34, 103, 123;  6, 35, 143, 189;  6, 37, 133, 194;  6, 68, 80, 105;  6, 69, 81, 153;  6, 70, 83, 147;  6, 71, 155, 157;  6, 76, 96, 109;  6, 77, 97, 137;  6, 78, 99, 115;  6, 79, 139, 161;  6, 82, 107, 154;  6, 98, 111, 138;  6, 102, 119, 142;  6, 104, 144, 158;  6, 106, 145, 159;  6, 108, 112, 162;  6, 110, 113, 163;  6, 114, 136, 160;  6, 116, 120, 190;  6, 118, 121, 191;  6, 122, 140, 188;  6, 146, 152, 156;  7, 29, 37, 94;  7, 30, 38, 194;  7, 31, 36, 132;  7, 32, 118, 141;  7, 33, 103, 191;  7, 34, 116, 122;  7, 35, 100, 188;  7, 39, 92, 195;  7, 68, 106, 153;  7, 69, 83, 159;  7, 70, 104, 146;  7, 71, 80, 156;  7, 76, 110, 137;  7, 77, 99, 163;  7, 78, 108, 114;  7, 79, 96, 160;  7, 81, 144, 157;  7, 82, 147, 152;  7, 97, 112, 161;  7, 98, 115, 136;  7, 101, 120, 189;  7, 102, 123, 140;  7, 105, 145, 154;  7, 107, 155, 158;  7, 109, 113, 138;  7, 111, 139, 162;  7, 117, 121, 142;  7, 119, 143, 190;  8, 9, 10, 11;  8, 12, 100, 160;  8, 13, 56, 147;  8, 14, 146, 161;  8, 15, 101, 145;  8, 20, 104, 192;  8, 21, 72, 139;  8, 22, 138, 193;  8, 23, 105, 137;  8, 40, 59, 103;  8, 41, 58, 144;  8, 42, 57, 163;  8, 43, 102, 162;  8, 52, 75, 107;  8, 53, 74, 136;  8, 54, 73, 195;  8, 55, 106, 194;  9, 12, 42, 56;  9, 13, 59, 102;  9, 14, 58, 160;  9, 15, 103, 146;  9, 20, 54, 72;  9, 21, 75, 106;  9, 22, 74, 192;  9, 23, 107, 138;  9, 40, 100, 145;  9, 41, 147, 163;  9, 43, 57, 161;  9, 52, 104, 137;  9, 53, 139, 195;  9, 55, 73, 193;  9, 101, 144, 162;  9, 105, 136, 194;  10, 12, 40, 57;  10, 13, 41, 145;  10, 14, 43, 103;  10, 15, 147, 161;  10, 20, 52, 73;  10, 21, 53, 137;  10, 22, 55, 107;  10, 23, 139, 193;  10, 42, 59, 146;  10, 54, 75, 138;  10, 56, 100, 162;  10, 58, 101, 163;  10, 72, 104, 194;  10, 74, 105, 195;  10, 102, 144, 160;  10, 106, 136, 192;  11, 12, 58, 145;  11, 13, 43, 163;  11, 14, 56, 102;  11, 15, 40, 160;  11, 20, 74, 137;  11, 21, 55, 195;  11, 22, 72, 106;  11, 23, 52, 192;  11, 41, 100, 161;  11, 42, 103, 144;  11, 53, 104, 193;  11, 54, 107, 136;  11, 57, 101, 146;  11, 59, 147, 162;  11, 73, 105, 138;  11, 75, 139, 194;  12, 13, 14, 15;  12, 16, 44, 164;  12, 17, 24, 51;  12, 18, 50, 165;  12, 19, 45, 49;  12, 20, 27, 47;  12, 21, 26, 48;  12, 22, 25, 167;  12, 23, 46, 166;  12, 41, 103, 162;  12, 43, 101, 147;  12, 59, 144, 161;  12, 60, 136, 188;  12, 61, 128, 187;  12, 62, 186, 189;  12, 63, 137, 185;  12, 68, 131, 139;  12, 69, 130, 184;  12, 70, 129, 191;  12, 71, 138, 190;  12, 76, 140, 196;  12, 77, 104, 179;  12, 78, 178, 197;  12, 79, 141, 177;  12, 88, 107, 143;  12, 89, 106, 176;  12, 90, 105, 199;  12, 91, 142, 198;  12, 102, 146, 163;  13, 16, 22, 24;  13, 17, 27, 46;  13, 18, 26, 164;  13, 19, 47, 50;  13, 20, 44, 49;  13, 21, 51, 167;  13, 23, 25, 165;  13, 40, 146, 162;  13, 42, 101, 160;  13, 45, 48, 166;  13, 57, 100, 144;  13, 58, 103, 161;  13, 60, 70, 128;  13, 61, 131, 138;  13, 62, 130, 188;  13, 63, 139, 186;  13, 68, 136, 185;  13, 69, 187, 191;  13, 71, 129, 189;  13, 76, 90, 104;  13, 77, 107, 142;  13, 78, 106, 196;  13, 79, 143, 178;  13, 88, 140, 177;  13, 89, 179, 199;  13, 91, 105, 197;  13, 137, 184, 190;  13, 141, 176, 198;  14, 16, 20, 25;  14, 17, 21, 49;  14, 18, 23, 47;  14, 19, 51, 165;  14, 22, 27, 50;  14, 24, 44, 166;  14, 26, 45, 167;  14, 40, 144, 163;  14, 41, 59, 101;  14, 42, 100, 147;  14, 46, 48, 164;  14, 57, 145, 162;  14, 60, 68, 129;  14, 61, 69, 185;  14, 62, 71, 139;  14, 63, 187, 189;  14, 70, 131, 186;  14, 76, 88, 105;  14, 77, 89, 177;  14, 78, 91, 143;  14, 79, 179, 197;  14, 90, 107, 178;  14, 104, 140, 198;  14, 106, 141, 199;  14, 128, 136, 190;  14, 130, 137, 191;  14, 138, 184, 188;  14, 142, 176, 196;  15, 16, 26, 49;  15, 17, 23, 167;  15, 18, 24, 46;  15, 19, 20, 164;  15, 21, 44, 165;  15, 22, 47, 48;  15, 25, 45, 50;  15, 27, 51, 166;  15, 41, 57, 102;  15, 42, 58, 162;  15, 43, 56, 144;  15, 59, 100, 163;  15, 60, 130, 185;  15, 61, 71, 191;  15, 62, 128, 138;  15, 63, 68, 188;  15, 69, 136, 189;  15, 70, 139, 184;  15, 76, 106, 177;  15, 77, 91, 199;  15, 78, 104, 142;  15, 79, 88, 196;  15, 89, 140, 197;  15, 90, 143, 176;  15, 105, 141, 178;  15, 107, 179, 198;  15, 129, 137, 186;  15, 131, 187, 190;  16, 17, 18, 19;  16, 21, 47, 166;  16, 23, 45, 51;  16, 27, 48, 165;  16, 46, 50, 167;  17, 20, 50, 166;  17, 22, 45, 164;  17, 25, 44, 48;  17, 26, 47, 165;  18, 20, 48, 167;  18, 21, 27, 45;  18, 22, 44, 51;  18, 25, 49, 166;  19, 21, 25, 46;  19, 22, 26, 166;  19, 23, 24, 48;  19, 27, 44, 167;  20, 21, 22, 23;  20, 24, 45, 165;  20, 26, 46, 51;  20, 53, 107, 194;  20, 55, 105, 139;  20, 75, 136, 193;  20, 106, 138, 195;  21, 24, 50, 164;  21, 52, 138, 194;  21, 54, 105, 192;  21, 73, 104, 136;  21, 74, 107, 193;  22, 46, 49, 165;  22, 52, 136, 195;  22, 53, 75, 105;  22, 54, 104, 139;  22, 73, 137, 194;  23, 26, 44, 50;  23, 27, 49, 164;  23, 53, 73, 106;  23, 54, 74, 194;  23, 55, 72, 136;  23, 75, 104, 195;  24, 25, 26, 27;  24, 47, 49, 167;  25, 47, 51, 164;  28, 29, 30, 31;  28, 36, 93, 193;  28, 38, 94, 135;  29, 36, 134, 192;  30, 94, 133, 193;  31, 38, 92, 134;  31, 39, 133, 192;  32, 33, 34, 35;  32, 89, 131, 166;  32, 91, 129, 159;  32, 101, 123, 190;  32, 103, 121, 143;  32, 111, 156, 165;  32, 119, 140, 189;  32, 122, 142, 191;  32, 130, 158, 167;  33, 88, 158, 166;  33, 90, 129, 164;  33, 100, 142, 190;  33, 102, 121, 188;  33, 109, 128, 156;  33, 110, 131, 165;  33, 117, 120, 140;  33, 118, 123, 189;  34, 88, 156, 167;  34, 89, 111, 129;  34, 90, 128, 159;  34, 100, 140, 191;  34, 101, 119, 121;  34, 102, 120, 143;  34, 109, 157, 166;  34, 117, 141, 190;  35, 89, 109, 130;  35, 90, 110, 166;  35, 91, 108, 156;  35, 101, 117, 122;  35, 102, 118, 190;  35, 103, 116, 140;  35, 111, 128, 167;  35, 119, 120, 191;  36, 37, 38, 39;  36, 95, 133, 195;  37, 95, 135, 192;  40, 56, 101, 161;  40, 58, 102, 147;  41, 56, 146, 160;  42, 102, 145, 161;  43, 58, 100, 146;  43, 59, 145, 160;  52, 72, 105, 193;  52, 74, 106, 139;  53, 72, 138, 192;  54, 106, 137, 193;  55, 74, 104, 138;  55, 75, 137, 192;  56, 103, 145, 163;  57, 103, 147, 160;  60, 69, 139, 190;  60, 71, 137, 187;  60, 131, 184, 189;  60, 138, 186, 191;  61, 68, 186, 190;  61, 70, 137, 188;  61, 129, 136, 184;  61, 130, 139, 189;  62, 68, 184, 191;  62, 69, 131, 137;  62, 70, 136, 187;  62, 129, 185, 190;  63, 69, 129, 138;  63, 70, 130, 190;  63, 71, 128, 184;  63, 131, 136, 191;  68, 81, 147, 158;  68, 83, 145, 155;  68, 107, 152, 157;  68, 128, 137, 189;  68, 130, 138, 187;  68, 146, 154, 159;  69, 80, 154, 158;  69, 82, 145, 156;  69, 105, 144, 152;  69, 106, 147, 157;  69, 128, 186, 188;  70, 80, 152, 159;  70, 81, 107, 145;  70, 82, 144, 155;  70, 105, 153, 158;  70, 138, 185, 189;  71, 81, 105, 146;  71, 82, 106, 158;  71, 83, 104, 152;  71, 107, 144, 159;  71, 130, 136, 186;  71, 131, 185, 188;  72, 107, 137, 195;  73, 107, 139, 192;  76, 89, 143, 198;  76, 91, 141, 179;  76, 97, 115, 162;  76, 99, 113, 139;  76, 107, 176, 197;  76, 111, 136, 161;  76, 114, 138, 163;  76, 142, 178, 199;  77, 88, 178, 198;  77, 90, 141, 196;  77, 96, 138, 162;  77, 98, 113, 160;  77, 105, 140, 176;  77, 106, 143, 197;  77, 109, 112, 136;  77, 110, 115, 161;  78, 88, 176, 199;  78, 89, 107, 141;  78, 90, 140, 179;  78, 96, 136, 163;  78, 97, 111, 113;  78, 98, 112, 139;  78, 105, 177, 198;  78, 109, 137, 162;  79, 89, 105, 142;  79, 90, 106, 198;  79, 91, 104, 176;  79, 97, 109, 114;  79, 98, 110, 162;  79, 99, 108, 136;  79, 107, 140, 199;  79, 111, 112, 163;  80, 104, 145, 157;  80, 106, 146, 155;  81, 104, 154, 156;  82, 146, 153, 157;  83, 106, 144, 154;  83, 107, 153, 156;  88, 104, 141, 197;  88, 106, 142, 179;  88, 108, 129, 165;  88, 110, 130, 159;  89, 104, 178, 196;  89, 108, 158, 164;  90, 130, 157, 165;  90, 142, 177, 197;  91, 106, 140, 178;  91, 107, 177, 196;  91, 110, 128, 158;  91, 111, 157, 164;  96, 108, 113, 161;  96, 110, 114, 139;  97, 108, 138, 160;  98, 114, 137, 161;  99, 110, 112, 138;  99, 111, 137, 160;  100, 116, 121, 189;  100, 118, 122, 143;  101, 116, 142, 188;  102, 122, 141, 189;  103, 118, 120, 142;  103, 119, 141, 188;  104, 143, 177, 199;  104, 147, 153, 159;  105, 143, 179, 196;  105, 147, 155, 156;  108, 115, 137, 163;  108, 131, 157, 167;  109, 115, 139, 160;  109, 131, 159, 164;  116, 123, 141, 191;  117, 123, 143, 188;  128, 139, 185, 191;  129, 139, 187, 188}.
The deficiency graph is connected and has girth 4.

\adfAppGap

{\boldmath $\adfPENT(7,50,49)$}, $d = 70$:
{0, 1, 2, 3, 4, 5, 6;  0, 7, 50, 66, 167, 234, 337;  0, 8, 52, 69, 164, 232, 336;  0, 9, 54, 65, 161, 237, 342;  0, 10, 49, 68, 165, 235, 341;  0, 11, 51, 64, 162, 233, 340;  0, 12, 53, 67, 166, 231, 339;  0, 13, 55, 63, 163, 236, 338;  0, 56, 155, 192, 230, 276, 288;  0, 57, 157, 195, 227, 274, 287;  0, 58, 159, 191, 224, 279, 293;  0, 59, 154, 194, 228, 277, 292;  0, 60, 156, 190, 225, 275, 291;  0, 61, 158, 193, 229, 273, 290;  0, 62, 160, 189, 226, 278, 289;  1, 7, 49, 64, 164, 237, 339;  1, 8, 51, 67, 161, 235, 338;  1, 9, 53, 63, 165, 233, 337;  1, 10, 55, 66, 162, 231, 336;  1, 11, 50, 69, 166, 236, 342;  1, 12, 52, 65, 163, 234, 341;  1, 13, 54, 68, 167, 232, 340;  1, 56, 154, 190, 227, 279, 290;  1, 57, 156, 193, 224, 277, 289;  1, 58, 158, 189, 228, 275, 288;  1, 59, 160, 192, 225, 273, 287;  1, 60, 155, 195, 229, 278, 293;  1, 61, 157, 191, 226, 276, 292;  1, 62, 159, 194, 230, 274, 291;  2, 7, 55, 69, 161, 233, 341;  2, 8, 50, 65, 165, 231, 340;  2, 9, 52, 68, 162, 236, 339;  2, 10, 54, 64, 166, 234, 338;  2, 11, 49, 67, 163, 232, 337;  2, 12, 51, 63, 167, 237, 336;  2, 13, 53, 66, 164, 235, 342;  2, 56, 160, 195, 224, 275, 292;  2, 57, 155, 191, 228, 273, 291;  2, 58, 157, 194, 225, 278, 290;  2, 59, 159, 190, 229, 276, 289;  2, 60, 154, 193, 226, 274, 288;  2, 61, 156, 189, 230, 279, 287;  2, 62, 158, 192, 227, 277, 293;  3, 7, 54, 67, 165, 236, 336;  3, 8, 49, 63, 162, 234, 342;  3, 9, 51, 66, 166, 232, 341;  3, 10, 53, 69, 163, 237, 340;  3, 11, 55, 65, 167, 235, 339;  3, 12, 50, 68, 164, 233, 338;  3, 13, 52, 64, 161, 231, 337;  3, 56, 159, 193, 228, 278, 287;  3, 57, 154, 189, 225, 276, 293;  3, 58, 156, 192, 229, 274, 292;  3, 59, 158, 195, 226, 279, 291;  3, 60, 160, 191, 230, 277, 290;  3, 61, 155, 194, 227, 275, 289;  3, 62, 157, 190, 224, 273, 288;  4, 7, 53, 65, 162, 232, 338;  4, 8, 55, 68, 166, 237, 337;  4, 9, 50, 64, 163, 235, 336;  4, 10, 52, 67, 167, 233, 342;  4, 11, 54, 63, 164, 231, 341;  4, 12, 49, 66, 161, 236, 340;  4, 13, 51, 69, 165, 234, 339;  4, 56, 158, 191, 225, 274, 289;  4, 57, 160, 194, 229, 279, 288;  4, 58, 155, 190, 226, 277, 287;  4, 59, 157, 193, 230, 275, 293;  4, 60, 159, 189, 227, 273, 292;  4, 61, 154, 192, 224, 278, 291;  4, 62, 156, 195, 228, 276, 290;  5, 7, 52, 63, 166, 235, 340;  5, 8, 54, 66, 163, 233, 339;  5, 9, 49, 69, 167, 231, 338;  5, 10, 51, 65, 164, 236, 337;  5, 11, 53, 68, 161, 234, 336;  5, 12, 55, 64, 165, 232, 342;  5, 13, 50, 67, 162, 237, 341;  5, 56, 157, 189, 229, 277, 291;  5, 57, 159, 192, 226, 275, 290;  5, 58, 154, 195, 230, 273, 289;  5, 59, 156, 191, 227, 278, 288;  5, 60, 158, 194, 224, 276, 287;  5, 61, 160, 190, 228, 274, 293;  5, 62, 155, 193, 225, 279, 292;  6, 7, 51, 68, 163, 231, 342;  6, 8, 53, 64, 167, 236, 341;  6, 9, 55, 67, 164, 234, 340;  6, 10, 50, 63, 161, 232, 339;  6, 11, 52, 66, 165, 237, 338;  6, 12, 54, 69, 162, 235, 337;  6, 13, 49, 65, 166, 233, 336;  6, 56, 156, 194, 226, 273, 293;  6, 57, 158, 190, 230, 278, 292;  6, 58, 160, 193, 227, 276, 291;  6, 59, 155, 189, 224, 274, 290;  6, 60, 157, 192, 228, 279, 289;  6, 61, 159, 195, 225, 277, 288;  6, 62, 154, 191, 229, 275, 287;  7, 8, 9, 10, 11, 12, 13;  7, 56, 176, 206, 216, 248, 330;  7, 57, 178, 209, 213, 246, 329;  7, 58, 180, 205, 210, 251, 335;  7, 59, 175, 208, 214, 249, 334;  7, 60, 177, 204, 211, 247, 333;  7, 61, 179, 207, 215, 245, 332;  7, 62, 181, 203, 212, 250, 331;  7, 119, 141, 185, 258, 269, 274;  7, 120, 143, 188, 255, 267, 273;  7, 121, 145, 184, 252, 272, 279;  7, 122, 140, 187, 256, 270, 278;  7, 123, 142, 183, 253, 268, 277;  7, 124, 144, 186, 257, 266, 276;  7, 125, 146, 182, 254, 271, 275;  7, 133, 169, 192, 202, 241, 281;  7, 134, 171, 195, 199, 239, 280;  7, 135, 173, 191, 196, 244, 286;  7, 136, 168, 194, 200, 242, 285;  7, 137, 170, 190, 197, 240, 284;  7, 138, 172, 193, 201, 238, 283;  7, 139, 174, 189, 198, 243, 282;  8, 56, 175, 204, 213, 251, 332;  8, 57, 177, 207, 210, 249, 331;  8, 58, 179, 203, 214, 247, 330;  8, 59, 181, 206, 211, 245, 329;  8, 60, 176, 209, 215, 250, 335;  8, 61, 178, 205, 212, 248, 334;  8, 62, 180, 208, 216, 246, 333;  8, 119, 140, 183, 255, 272, 276;  8, 120, 142, 186, 252, 270, 275;  8, 121, 144, 182, 256, 268, 274;  8, 122, 146, 185, 253, 266, 273;  8, 123, 141, 188, 257, 271, 279;  8, 124, 143, 184, 254, 269, 278;  8, 125, 145, 187, 258, 267, 277;  8, 133, 168, 190, 199, 244, 283;  8, 134, 170, 193, 196, 242, 282;  8, 135, 172, 189, 200, 240, 281;  8, 136, 174, 192, 197, 238, 280;  8, 137, 169, 195, 201, 243, 286;  8, 138, 171, 191, 198, 241, 285;  8, 139, 173, 194, 202, 239, 284;  9, 56, 181, 209, 210, 247, 334;  9, 57, 176, 205, 214, 245, 333;  9, 58, 178, 208, 211, 250, 332;  9, 59, 180, 204, 215, 248, 331;  9, 60, 175, 207, 212, 246, 330;  9, 61, 177, 203, 216, 251, 329;  9, 62, 179, 206, 213, 249, 335;  9, 119, 146, 188, 252, 268, 278;  9, 120, 141, 184, 256, 266, 277;  9, 121, 143, 187, 253, 271, 276;  9, 122, 145, 183, 257, 269, 275;  9, 123, 140, 186, 254, 267, 274;  9, 124, 142, 182, 258, 272, 273;  9, 125, 144, 185, 255, 270, 279;  9, 133, 174, 195, 196, 240, 285;  9, 134, 169, 191, 200, 238, 284;  9, 135, 171, 194, 197, 243, 283;  9, 136, 173, 190, 201, 241, 282;  9, 137, 168, 193, 198, 239, 281;  9, 138, 170, 189, 202, 244, 280;  9, 139, 172, 192, 199, 242, 286;  10, 56, 180, 207, 214, 250, 329;  10, 57, 175, 203, 211, 248, 335;  10, 58, 177, 206, 215, 246, 334;  10, 59, 179, 209, 212, 251, 333;  10, 60, 181, 205, 216, 249, 332;  10, 61, 176, 208, 213, 247, 331;  10, 62, 178, 204, 210, 245, 330;  10, 119, 145, 186, 256, 271, 273;  10, 120, 140, 182, 253, 269, 279;  10, 121, 142, 185, 257, 267, 278;  10, 122, 144, 188, 254, 272, 277;  10, 123, 146, 184, 258, 270, 276;  10, 124, 141, 187, 255, 268, 275;  10, 125, 143, 183, 252, 266, 274;  10, 133, 173, 193, 200, 243, 280;  10, 134, 168, 189, 197, 241, 286;  10, 135, 170, 192, 201, 239, 285;  10, 136, 172, 195, 198, 244, 284;  10, 137, 174, 191, 202, 242, 283;  10, 138, 169, 194, 199, 240, 282;  10, 139, 171, 190, 196, 238, 281;  11, 56, 179, 205, 211, 246, 331;  11, 57, 181, 208, 215, 251, 330;  11, 58, 176, 204, 212, 249, 329;  11, 59, 178, 207, 216, 247, 335;  11, 60, 180, 203, 213, 245, 334;  11, 61, 175, 206, 210, 250, 333;  11, 62, 177, 209, 214, 248, 332;  11, 119, 144, 184, 253, 267, 275;  11, 120, 146, 187, 257, 272, 274;  11, 121, 141, 183, 254, 270, 273;  11, 122, 143, 186, 258, 268, 279;  11, 123, 145, 182, 255, 266, 278;  11, 124, 140, 185, 252, 271, 277;  11, 125, 142, 188, 256, 269, 276;  11, 133, 172, 191, 197, 239, 282;  11, 134, 174, 194, 201, 244, 281;  11, 135, 169, 190, 198, 242, 280;  11, 136, 171, 193, 202, 240, 286;  11, 137, 173, 189, 199, 238, 285;  11, 138, 168, 192, 196, 243, 284;  11, 139, 170, 195, 200, 241, 283;  12, 56, 178, 203, 215, 249, 333;  12, 57, 180, 206, 212, 247, 332;  12, 58, 175, 209, 216, 245, 331;  12, 59, 177, 205, 213, 250, 330;  12, 60, 179, 208, 210, 248, 329;  12, 61, 181, 204, 214, 246, 335;  12, 62, 176, 207, 211, 251, 334;  12, 119, 143, 182, 257, 270, 277;  12, 120, 145, 185, 254, 268, 276;  12, 121, 140, 188, 258, 266, 275;  12, 122, 142, 184, 255, 271, 274;  12, 123, 144, 187, 252, 269, 273;  12, 124, 146, 183, 256, 267, 279;  12, 125, 141, 186, 253, 272, 278;  12, 133, 171, 189, 201, 242, 284;  12, 134, 173, 192, 198, 240, 283;  12, 135, 168, 195, 202, 238, 282;  12, 136, 170, 191, 199, 243, 281;  12, 137, 172, 194, 196, 241, 280;  12, 138, 174, 190, 200, 239, 286;  12, 139, 169, 193, 197, 244, 285;  13, 56, 177, 208, 212, 245, 335;  13, 57, 179, 204, 216, 250, 334;  13, 58, 181, 207, 213, 248, 333;  13, 59, 176, 203, 210, 246, 332;  13, 60, 178, 206, 214, 251, 331;  13, 61, 180, 209, 211, 249, 330;  13, 62, 175, 205, 215, 247, 329;  13, 119, 142, 187, 254, 266, 279;  13, 120, 144, 183, 258, 271, 278;  13, 121, 146, 186, 255, 269, 277;  13, 122, 141, 182, 252, 267, 276;  13, 123, 143, 185, 256, 272, 275;  13, 124, 145, 188, 253, 270, 274;  13, 125, 140, 184, 257, 268, 273;  13, 133, 170, 194, 198, 238, 286;  13, 134, 172, 190, 202, 243, 285;  13, 135, 174, 193, 199, 241, 284;  13, 136, 169, 189, 196, 239, 283;  13, 137, 171, 192, 200, 244, 282;  13, 138, 173, 195, 197, 242, 281;  13, 139, 168, 191, 201, 240, 280;  14, 15, 16, 17, 18, 19, 20;  14, 21, 71, 101, 181, 255, 281;  14, 22, 73, 104, 178, 253, 280;  14, 23, 75, 100, 175, 258, 286;  14, 24, 70, 103, 179, 256, 285;  14, 25, 72, 99, 176, 254, 284;  14, 26, 74, 102, 180, 252, 283;  14, 27, 76, 98, 177, 257, 282;  14, 35, 92, 129, 188, 241, 337;  14, 36, 94, 132, 185, 239, 336;  14, 37, 96, 128, 182, 244, 342;  14, 38, 91, 131, 186, 242, 341;  14, 39, 93, 127, 183, 240, 340;  14, 40, 95, 130, 187, 238, 339;  14, 41, 97, 126, 184, 243, 338;  15, 21, 70, 99, 178, 258, 283;  15, 22, 72, 102, 175, 256, 282;  15, 23, 74, 98, 179, 254, 281;  15, 24, 76, 101, 176, 252, 280;  15, 25, 71, 104, 180, 257, 286;  15, 26, 73, 100, 177, 255, 285;  15, 27, 75, 103, 181, 253, 284;  15, 35, 91, 127, 185, 244, 339;  15, 36, 93, 130, 182, 242, 338;  15, 37, 95, 126, 186, 240, 337;  15, 38, 97, 129, 183, 238, 336;  15, 39, 92, 132, 187, 243, 342;  15, 40, 94, 128, 184, 241, 341;  15, 41, 96, 131, 188, 239, 340;  16, 21, 76, 104, 175, 254, 285;  16, 22, 71, 100, 179, 252, 284;  16, 23, 73, 103, 176, 257, 283;  16, 24, 75, 99, 180, 255, 282;  16, 25, 70, 102, 177, 253, 281;  16, 26, 72, 98, 181, 258, 280;  16, 27, 74, 101, 178, 256, 286;  16, 35, 97, 132, 182, 240, 341;  16, 36, 92, 128, 186, 238, 340;  16, 37, 94, 131, 183, 243, 339;  16, 38, 96, 127, 187, 241, 338;  16, 39, 91, 130, 184, 239, 337;  16, 40, 93, 126, 188, 244, 336;  16, 41, 95, 129, 185, 242, 342;  17, 21, 75, 102, 179, 257, 280;  17, 22, 70, 98, 176, 255, 286;  17, 23, 72, 101, 180, 253, 285;  17, 24, 74, 104, 177, 258, 284;  17, 25, 76, 100, 181, 256, 283;  17, 26, 71, 103, 178, 254, 282;  17, 27, 73, 99, 175, 252, 281;  17, 35, 96, 130, 186, 243, 336;  17, 36, 91, 126, 183, 241, 342;  17, 37, 93, 129, 187, 239, 341;  17, 38, 95, 132, 184, 244, 340;  17, 39, 97, 128, 188, 242, 339;  17, 40, 92, 131, 185, 240, 338;  17, 41, 94, 127, 182, 238, 337;  18, 21, 74, 100, 176, 253, 282;  18, 22, 76, 103, 180, 258, 281;  18, 23, 71, 99, 177, 256, 280;  18, 24, 73, 102, 181, 254, 286;  18, 25, 75, 98, 178, 252, 285;  18, 26, 70, 101, 175, 257, 284;  18, 27, 72, 104, 179, 255, 283;  18, 35, 95, 128, 183, 239, 338;  18, 36, 97, 131, 187, 244, 337;  18, 37, 92, 127, 184, 242, 336;  18, 38, 94, 130, 188, 240, 342;  18, 39, 96, 126, 185, 238, 341;  18, 40, 91, 129, 182, 243, 340;  18, 41, 93, 132, 186, 241, 339;  19, 21, 73, 98, 180, 256, 284;  19, 22, 75, 101, 177, 254, 283;  19, 23, 70, 104, 181, 252, 282;  19, 24, 72, 100, 178, 257, 281;  19, 25, 74, 103, 175, 255, 280;  19, 26, 76, 99, 179, 253, 286;  19, 27, 71, 102, 176, 258, 285;  19, 35, 94, 126, 187, 242, 340;  19, 36, 96, 129, 184, 240, 339;  19, 37, 91, 132, 188, 238, 338;  19, 38, 93, 128, 185, 243, 337;  19, 39, 95, 131, 182, 241, 336;  19, 40, 97, 127, 186, 239, 342;  19, 41, 92, 130, 183, 244, 341;  20, 21, 72, 103, 177, 252, 286;  20, 22, 74, 99, 181, 257, 285;  20, 23, 76, 102, 178, 255, 284;  20, 24, 71, 98, 175, 253, 283;  20, 25, 73, 101, 179, 258, 282;  20, 26, 75, 104, 176, 256, 281;  20, 27, 70, 100, 180, 254, 280;  20, 35, 93, 131, 184, 238, 342;  20, 36, 95, 127, 188, 243, 341;  20, 37, 97, 130, 185, 241, 340;  20, 38, 92, 126, 182, 239, 339;  20, 39, 94, 129, 186, 244, 338;  20, 40, 96, 132, 183, 242, 337;  20, 41, 91, 128, 187, 240, 336;  21, 22, 23, 24, 25, 26, 27;  21, 28, 36, 45, 83, 87, 288;  21, 29, 38, 48, 80, 85, 287;  21, 30, 40, 44, 77, 90, 293;  21, 31, 35, 47, 81, 88, 292;  21, 32, 37, 43, 78, 86, 291;  21, 33, 39, 46, 82, 84, 290;  21, 34, 41, 42, 79, 89, 289;  21, 105, 120, 227, 244, 325, 330;  21, 106, 122, 230, 241, 323, 329;  21, 107, 124, 226, 238, 328, 335;  21, 108, 119, 229, 242, 326, 334;  21, 109, 121, 225, 239, 324, 333;  21, 110, 123, 228, 243, 322, 332;  21, 111, 125, 224, 240, 327, 331;  21, 133, 155, 185, 251, 311, 344;  21, 134, 157, 188, 248, 309, 343;  21, 135, 159, 184, 245, 314, 349;  21, 136, 154, 187, 249, 312, 348;  21, 137, 156, 183, 246, 310, 347;  21, 138, 158, 186, 250, 308, 346;  21, 139, 160, 182, 247, 313, 345;  22, 28, 35, 43, 80, 90, 290;  22, 29, 37, 46, 77, 88, 289;  22, 30, 39, 42, 81, 86, 288;  22, 31, 41, 45, 78, 84, 287;  22, 32, 36, 48, 82, 89, 293;  22, 33, 38, 44, 79, 87, 292;  22, 34, 40, 47, 83, 85, 291;  22, 105, 119, 225, 241, 328, 332;  22, 106, 121, 228, 238, 326, 331;  22, 107, 123, 224, 242, 324, 330;  22, 108, 125, 227, 239, 322, 329;  22, 109, 120, 230, 243, 327, 335;  22, 110, 122, 226, 240, 325, 334;  22, 111, 124, 229, 244, 323, 333;  22, 133, 154, 183, 248, 314, 346;  22, 134, 156, 186, 245, 312, 345;  22, 135, 158, 182, 249, 310, 344;  22, 136, 160, 185, 246, 308, 343;  22, 137, 155, 188, 250, 313, 349;  22, 138, 157, 184, 247, 311, 348;  22, 139, 159, 187, 251, 309, 347;  23, 28, 41, 48, 77, 86, 292;  23, 29, 36, 44, 81, 84, 291;  23, 30, 38, 47, 78, 89, 290;  23, 31, 40, 43, 82, 87, 289;  23, 32, 35, 46, 79, 85, 288;  23, 33, 37, 42, 83, 90, 287;  23, 34, 39, 45, 80, 88, 293;  23, 105, 125, 230, 238, 324, 334;  23, 106, 120, 226, 242, 322, 333;  23, 107, 122, 229, 239, 327, 332;  23, 108, 124, 225, 243, 325, 331;  23, 109, 119, 228, 240, 323, 330;  23, 110, 121, 224, 244, 328, 329;  23, 111, 123, 227, 241, 326, 335;  23, 133, 160, 188, 245, 310, 348;  23, 134, 155, 184, 249, 308, 347;  23, 135, 157, 187, 246, 313, 346;  23, 136, 159, 183, 250, 311, 345;  23, 137, 154, 186, 247, 309, 344;  23, 138, 156, 182, 251, 314, 343;  23, 139, 158, 185, 248, 312, 349;  24, 28, 40, 46, 81, 89, 287;  24, 29, 35, 42, 78, 87, 293;  24, 30, 37, 45, 82, 85, 292;  24, 31, 39, 48, 79, 90, 291;  24, 32, 41, 44, 83, 88, 290;  24, 33, 36, 47, 80, 86, 289;  24, 34, 38, 43, 77, 84, 288;  24, 105, 124, 228, 242, 327, 329;  24, 106, 119, 224, 239, 325, 335;  24, 107, 121, 227, 243, 323, 334;  24, 108, 123, 230, 240, 328, 333;  24, 109, 125, 226, 244, 326, 332;  24, 110, 120, 229, 241, 324, 331;  24, 111, 122, 225, 238, 322, 330;  24, 133, 159, 186, 249, 313, 343;  24, 134, 154, 182, 246, 311, 349;  24, 135, 156, 185, 250, 309, 348;  24, 136, 158, 188, 247, 314, 347;  24, 137, 160, 184, 251, 312, 346;  24, 138, 155, 187, 248, 310, 345;  24, 139, 157, 183, 245, 308, 344;  25, 28, 39, 44, 78, 85, 289;  25, 29, 41, 47, 82, 90, 288;  25, 30, 36, 43, 79, 88, 287;  25, 31, 38, 46, 83, 86, 293;  25, 32, 40, 42, 80, 84, 292;  25, 33, 35, 45, 77, 89, 291;  25, 34, 37, 48, 81, 87, 290;  25, 105, 123, 226, 239, 323, 331;  25, 106, 125, 229, 243, 328, 330;  25, 107, 120, 225, 240, 326, 329;  25, 108, 122, 228, 244, 324, 335;  25, 109, 124, 224, 241, 322, 334;  25, 110, 119, 227, 238, 327, 333;  25, 111, 121, 230, 242, 325, 332;  25, 133, 158, 184, 246, 309, 345;  25, 134, 160, 187, 250, 314, 344;  25, 135, 155, 183, 247, 312, 343;  25, 136, 157, 186, 251, 310, 349;  25, 137, 159, 182, 248, 308, 348;  25, 138, 154, 185, 245, 313, 347;  25, 139, 156, 188, 249, 311, 346;  26, 28, 38, 42, 82, 88, 291;  26, 29, 40, 45, 79, 86, 290;  26, 30, 35, 48, 83, 84, 289;  26, 31, 37, 44, 80, 89, 288;  26, 32, 39, 47, 77, 87, 287;  26, 33, 41, 43, 81, 85, 293;  26, 34, 36, 46, 78, 90, 292;  26, 105, 122, 224, 243, 326, 333;  26, 106, 124, 227, 240, 324, 332;  26, 107, 119, 230, 244, 322, 331;  26, 108, 121, 226, 241, 327, 330;  26, 109, 123, 229, 238, 325, 329;  26, 110, 125, 225, 242, 323, 335;  26, 111, 120, 228, 239, 328, 334;  26, 133, 157, 182, 250, 312, 347;  26, 134, 159, 185, 247, 310, 346;  26, 135, 154, 188, 251, 308, 345;  26, 136, 156, 184, 248, 313, 344;  26, 137, 158, 187, 245, 311, 343;  26, 138, 160, 183, 249, 309, 349;  26, 139, 155, 186, 246, 314, 348;  27, 28, 37, 47, 79, 84, 293;  27, 29, 39, 43, 83, 89, 292;  27, 30, 41, 46, 80, 87, 291;  27, 31, 36, 42, 77, 85, 290;  27, 32, 38, 45, 81, 90, 289;  27, 33, 40, 48, 78, 88, 288;  27, 34, 35, 44, 82, 86, 287;  27, 105, 121, 229, 240, 322, 335;  27, 106, 123, 225, 244, 327, 334;  27, 107, 125, 228, 241, 325, 333;  27, 108, 120, 224, 238, 323, 332;  27, 109, 122, 227, 242, 328, 331;  27, 110, 124, 230, 239, 326, 330;  27, 111, 119, 226, 243, 324, 329;  27, 133, 156, 187, 247, 308, 349;  27, 134, 158, 183, 251, 313, 348;  27, 135, 160, 186, 248, 311, 347;  27, 136, 155, 182, 245, 309, 346;  27, 137, 157, 185, 249, 314, 345;  27, 138, 159, 188, 246, 312, 344;  27, 139, 154, 184, 250, 310, 343;  28, 29, 30, 31, 32, 33, 34;  35, 36, 37, 38, 39, 40, 41;  42, 43, 44, 45, 46, 47, 48;  49, 50, 51, 52, 53, 54, 55;  56, 57, 58, 59, 60, 61, 62;  63, 64, 65, 66, 67, 68, 69}.
The deficiency graph is connected and has girth 4.

}

%

\end{document}